\newcommand{\vol}{\operatorname{vol}}
\newcommand{\Btilde}{{\widetilde B}}
\newcommand{\Ktilde}{{\widetilde K}}
\newcommand{\cBtilde}{{\widetilde \cB}}
\newcommand{\btilde}{{\tilde b}}
\newcommand{\ctilde}{{\tilde c}}
\newcommand{\vbtilde}{{\tilde \vb}}
\newcommand{\vctilde}{{\tilde \vc}}
\newcommand{\cDtilde}{{\widetilde \cD}}
\newcommand{\cMtilde}{{\widetilde \cM}}
\newcommand{\WSd}{{{\rm WS}_d}}
\newcommand{\WS}{{{\rm WS}_3}}
\newcommand{\spSd}{\spS_d^{d-1}}
\newcommand{\spline}{s}
\newcommand{\s}{e}
\newcommand{\greville}{b^*}
\newcommand{\vgreville}{\vb^*}
\newcommand{\vgrevilleL}{\vb^{*L}}
\newcommand{\vgrevilleR}{\vb^{*R}}
\newcommand{\vgrevilletilde}{\tilde \vb^*}
\newcommand{\vgrevilletildeL}{\tilde \vb^{*L}}
\newcommand{\vgrevilletildeR}{\tilde \vb^{*R}}
\newcommand{\vgrevilletildeindex}{\vgrevilletilde\hspace*{-0.07cm}}
\newcommand{\mods}[2]{(#1 \text{ mod } #2)}
\newtheorem{theorem}{Theorem}
\newtheorem{definition}[theorem]{Definition}
\newtheorem{corollary}[theorem]{Corollary}
\newtheorem{proposition}[theorem]{Proposition}
\newcommand{\vb}{{\boldsymbol{b}}}
\newcommand{\vc}{{\boldsymbol{c}}}
\newcommand{\vm}{{\boldsymbol{m}}}
\newcommand{\vn}{{\boldsymbol{n}}}
\newcommand{\vp}{{\boldsymbol{p}}}
\newcommand{\vq}{{\boldsymbol{q}}}
\newcommand{\vu}{{\boldsymbol{u}}}
\newcommand{\vv}{{\boldsymbol{v}}}
\newcommand{\vw}{{\boldsymbol{w}}}
\newcommand{\vx}{{\boldsymbol{x}}}
\newcommand{\vy}{{\boldsymbol{y}}}
\newcommand{\vA}{{\boldsymbol{A}}}
\newcommand{\vC}{{\boldsymbol{C}}}
\newcommand{\vxi}{{\boldsymbol{\xi}}}
\newcommand{\spP}{{\mathbb{P}}}
\newcommand{\spR}{{\mathbb{R}}}
\newcommand{\spS}{{\mathbb{S}}}
\newcommand{\cB}{{\mathcal{B}}}
\newcommand{\cD}{{\mathcal{D}}}
\newcommand{\cM}{{\mathcal{M}}}
\newcommand{\cP}{{\mathcal{P}}}
\newcommand{\cT}{{\mathcal{T}}}
\newcommand{\NN}{{\mathbb{N}}}
\newcommand{\RR}{{\mathbb{R}}}
\newcommand{\abs}[1]{\lvert#1\rvert}
\newcommand{\makehosquare}[3]%
{\dimen0=#1\advance\dimen0 by -#3%
\vrule height#1 width#2 depth0pt \kern-#2%
\vrule height#1 width#1 depth-\dimen0 \kern-#1%
\vrule height#2 width#1 depth0pt \kern-#3%
\vrule height#1 width#3 depth0pt%
}
\begin{document}
\captionsetup[subfigure]{labelformat=empty}
\graphicspath{{figs/}}

\title{\textbf{Construction of $C^2$ cubic splines on arbitrary triangulations}}

\author{Tom Lyche\footnote{Tom Lyche, Dept. of Mathematics, University of Oslo, Norway, {\it email: tom@math.uio.no}}
,
Carla Manni\footnote{Carla Manni, Dept. of Mathematics, University of Rome Tor Vergata, Italy, {\it email: manni@mat.uniroma2.it}}
,
Hendrik Speleers\footnote{Hendrik Speleers, Dept. of Mathematics, University of Rome Tor Vergata, Italy,
{\it email: speleers@mat.uniroma2.it}}}

\date{}

\maketitle
\begin{abstract}
In this paper, we address the problem of constructing $C^2$ cubic spline functions on a given arbitrary triangulation $\cT$. To this end, we endow every triangle of $\cT$ with a Wang--Shi macro-structure. The $C^2$ cubic space on such a refined triangulation has a stable dimension and optimal approximation power. Moreover, any spline function in such space can be locally built on each of the macro-triangles independently via Hermite interpolation.
We provide a simplex spline basis for the space of $C^2$ cubics defined on a single macro-triangle which behaves like a Bernstein/B-spline basis over the triangle.
The basis functions inherit recurrence relations and differentiation formulas from the simplex spline construction, they form a nonnegative partition of unity, they admit simple conditions for $C^2$ joins across the edges of neighboring triangles, and they enjoy a Marsden-like identity. Also, there is a single control net to facilitate control and early visualization of a spline function over the macro-triangle.
Thanks to these properties, the complex geometry of the Wang--Shi macro-structure is transparent to the user. Stable global bases for the full space of $C^2$ cubics on the Wang--Shi refined triangulation $\cT$ are deduced from the local simplex spline basis by extending the concept of minimal determining sets.
\end{abstract}

{\em Keywords:} B-splines, Simplex splines, Macro-elements, Triangulations

\section{Introduction}
Piecewise polynomial spaces defined over polygonal partitions, usually triangulations, have applications in several branches of the sciences including geometric modeling, signal processing, data analysis, visualization, and numerical simulation; 
we refer the reader to \cites{Cohen.Riensenfeld.Elber01,Hughesbook,Lai.Schumaker07} and Section~\ref{sec:applications} for some examples.
For many of these applications, a smooth join between the different pieces is beneficial or even required; $C^2$ smoothness is often preferred. Such spaces are commonly referred to as (bivariate) spline spaces.
According to \cite[page~197]{Lai.Schumaker07}, {\it in general we would like to work with low degree splines: they involve fewer coefficients, and have less tendency to oscillate}.

An indispensable feature for a spline space to be useful in practice is having a stable dimension that only depends on the degree ($d$), the order of smoothness ($r$), and combinatorial --- or other easy to check --- properties of the partition ($\cT$). 
When $\cT$ is a triangulation, the dimension can be expressed in terms of the above quantities for spline spaces with $d\geq 3r+2$; see \cite{IbrS91} and \cite[Chapter~9]{Lai.Schumaker07}. On the other hand, instability in the dimension has been illustrated for $d=2r$ in \cite{DD}. We refer the reader to \cite{Toshniwal19} for recent results on the dimension of spline spaces on triangulations with nonuniform degrees.
Similar results are known for spline spaces over general rectilinear partitions; see \cites{Chui.Wang.83,Manni92} and references therein.

Spline spaces with too low degree compared to the smoothness are also exposed to several other shortcomings. In particular, they might lack optimal approximation power, a property strongly related to the possibility of constructing stable bases with local support for the considered spaces \cite{Lai.Schumaker07}. In this perspective, the bound $d\geq 3r+2$ plays again an important role in identifying the spline spaces with optimal approximation power on a given triangulation \cite[Chapter~10]{Lai.Schumaker07}.
Furthermore, the possibility of constructing any function of the spline space locally on each of the elements of $\cT$ is often seen as a desirable, if not imperative, property for practical purposes.
On a triangulation, a degree $d\geq 4r+1$ is necessary to admit such a local construction \cites{Ciarlet.78,Zenisek.74,Lai.Schumaker07}.

The above lower bounds on the degree can be alleviated by considering so-called macro-elements, where the partition $\cT$ is further refined in a specific manner (often referred to as splits). In case $\cT$ is a triangulation, the most famous examples are the Clough--Tocher (CT) split \cites{Clough.Tocher.65,Lai.Schumaker07,Sabl85,Ciarlet.78} and the Powell--Sabin (PS) 6 and 12 splits \cites{Alfeld.Schumaker02,Powell.Sabin77,Sabl85,Lai.Schumaker07,Schumaker.Sorokina06}. They subdivide each triangle of $\cT$ into $3$, $6$, and $12$ subtriangles, respectively.
To achieve global $C^2$ smoothness, polynomial pieces of at least degree $d=7$ are necessary for the CT split, while at least degree $d=5$ is required for both PS splits of $\cT$. All these spline spaces have a stable dimension and possess optimal approximation power \cites{Lai.Schumaker01,Lai.Schumaker03,Lai.Schumaker07}. 
Other common macro-elements also require at least degree $d=5$ to realize $C^2$ splines with the above properties on a refined partition only consisting of triangles \cite[Section~7.7]{Lai.Schumaker07}.

The Bernstein polynomial basis is the most common tool for the construction and analysis of splines on a given triangulation 
$\cT$ \cite{Lai.Schumaker07}, as it helps in localizing imposition of smoothness conditions across edges of (the refinement of) $\cT$. 
Interesting alternatives have been developed for CT and PS splits in \cites{Cohen.Lyche.Riesenfeld13,Lyche.Merrien18,Lyche.Merrien.Sauer.21,Lyche.Muntingh17}, where a simplex spline basis for the local spline space over a triangle of $\cT$ has been considered. Such a basis behaves like a Bernstein polynomial basis for imposing smoothness across edges of $\cT$ and like a B-spline basis internal to each triangle of $\cT$. 
Neither the Bernstein polynomial basis nor the simplex spline basis provide a global basis for the full spline space on (the refinement of) $\cT$. To achieve a global basis, one may apply the general framework of minimal determining sets via the local Bernstein basis; see \cite{Lai.Schumaker07}. 
Global B-spline bases have been constructed for $C^1$ PS spline spaces on triangulations \cites{Dierckx97,Groselj.Speleers17,Groselj.Speleers21,Speleers15b}, for PS spline spaces with higher smoothness \cites{Groselj16,Speleers10,Speleers13}, and for CT spline spaces \cite{Speleers10b}.

While in the univariate case $C^2$ cubics are probably the best known and most used splines, the above discussion shows that dealing with $C^2$ cubics in the bivariate setting is an arduous task.
In this paper, we address the problem of building and handling $C^2$ cubic splines on a suitable refinement of any given triangulation $\cT$.
Our wish list for the spline space consists of stable dimension, optimal approximation power, and local construction on any (refined) triangle of~$\cT$. Moreover, we want a practical construction of a stable global basis for the space at our disposal. 

Despite the high smoothness and the minimum gap between degree and smoothness, a $C^2$ cubic space can be obtained by splitting any triangle $\Delta$ in $\cT$ according to a degree-dependent scheme introduced by Wang and Shi \cite{Wang.90}. Contrarily to the well-known splits mentioned above, the family of Wang--Shi (WS) splits generates a very large number of polygonal pieces in each $\Delta$; for cubics we get a set of 75 polygons which includes triangles, quadrilaterals, and pentagons. In practice, this complex geometry hampers a piecewise treatment --- in terms of a local polynomial basis --- of spline functions on WS splits and discourages the use of such an interesting space.

To overcome this issue, we propose a simplex spline basis for the local space of $C^2$ cubics on the (cubic) WS split of any $\Delta$ in $\cT$. The basis functions enjoy the following properties:
\begin{itemize}
  \item they form a nonnegative partition of unity;
  \item they inherit recurrence relations and differentiation formulas from the simplex spline structure;
  \item for each of them, the restriction to a boundary edge of $\Delta$ reduces to a classical $C^2$ cubic univariate B-spline;
  \item they admit simple conditions for $C^2$ joins to neighboring triangles in $\cT$;
  \item cubic polynomials can be represented through a Marsden-like identity;
  \item they lead to well-conditioned collocation matrices for Lagrange and Hermite interpolation using certain sites;
  \item a control net can be formed that mimics the shape of the spline function and exhibits distance $O(h^2)$ to any one of its control points from its surface, where $h$ is the length of the longest edge.
\end{itemize}
Thanks to the characteristics of the simplex spline basis, one can avoid to consider separate polynomial representations on each of the polygonal subelements of $\Delta$. Instead, there is a single control net to facilitate control and early visualization of a spline function over each element $\Delta$ in $\cT$. 
This makes that the complex geometry of the WS split is transparent to the user.
However, the simplex spline basis is a local basis and does not provide a global basis for the full space of $C^2$ cubics on the (cubic) WS refinement of $\cT$. To this end, we extend the concept of minimal determining sets and use the simplex spline basis as a stepping stone to the construction of a stable global basis for the full space.

The remainder of this paper is divided into four sections. In Section~\ref{sec:preliminaries}, we summarize the definition and some properties of simplex splines and describe the family of WS splits. In Section~\ref{sec:simplex-bases}, we present a local simplex spline basis for the refinement of a single triangle and discuss some of its properties. To simplify some computations, an alternative basis is also provided. Smoothness conditions across the edges of the given triangulation and stable global bases for the $C^2$ cubic space on the (cubic) WS refinement of any triangulation are considered in Section~\ref{sec:spline-space}. Section~\ref{sec:conclusion} collects some concluding remarks about implementation aspects, possible application areas, and a higher-order extension of the basis.
Finally, the appendix aggregates data related to the presented simplex spline bases that might be useful for practical computations.

Throughout the paper, we use small boldface letters for vectors and capital boldface letters for matrices. Calligraphic letters like $\cB$ indicate sets, and we write $\#\cB$ for the cardinality of $\cB$. Function spaces are denoted by symbols like $\spS$. In particular, $\spP_d$ stands for the space of bivariate polynomials with real coefficients of total degree $\leq d$. The partial derivatives in $x$ and $y$ are denoted by $D_x$ and $D_y$, respectively. Given a vector $\vu$, the associated directional derivative is denoted by~$D_\vu$. The directional derivative in the direction of the vector from point $\vp_1$ to $\vp_2$ is denoted by~$D_{\vp_1\vp_2}$.

\section{Preliminaries}
\label{sec:preliminaries}
This section contains some preliminary material about simplex splines and the splits of interest in the rest of the paper.

\subsection{A short summary of simplex splines}\label{sec:summary_simplex}
For $\s\in\NN$, $d\in\NN_0$, let $n:=d+\s$ and $\Xi:=\{\vxi_1,\ldots,\vxi_{n+1}\}$ be a sequence of possibly repeated points in $\RR^\s$ called {knots}. The {multiplicity} of a knot is the number of times it occurs in the sequence.
Let $\langle\cdot\rangle$ denote the convex hull of a sequence of points.
For the sake of simplicity, we assume $\langle\Xi\rangle$ is nondegenerate, i.e., $\vol_\s(\langle\Xi\rangle)>0$. 
Let $\sigma = \langle\overline\vxi_1,\ldots,\overline\vxi_{n+1}\rangle$
be any simplex in $\RR^n$ with $\vol_n(\sigma)>0$, whose projection $\pi: \RR^n \to  \RR^\s$ onto the first $\s$ coordinates satisfies $\pi(\overline\vxi_i) = \vxi_i$ for $i = 1, \ldots, n+1$.

The {simplex spline} $M_\Xi$ can be defined geometrically by
$$
M_\Xi: \RR^\s\to  \RR,\quad M_\Xi(\vx) := \dfrac{\vol_{n-\s} \big(\sigma\cap \pi^{-1}(\vx)\big)}{\vol_n(\sigma)}.
$$
For $d=0$ we have
\begin{equation*} 
M_\Xi(\vx)=\begin{cases}
1/\vol_n(\langle \Xi\rangle), &\vx\in \text{interior of } \langle \Xi\rangle,\\ 
0,&\text{if } \vx\notin \langle \Xi\rangle,
\end{cases}
\end{equation*}
and the value of $M_\Xi$ on the boundary of $\langle \Xi\rangle$ has to be dealt with separately.
For properties of $M_\Xi$ and proofs, we refer the reader to, e.g., \cites{Micchelli79,Prautsch.Boehm.Paluszny02}. Here, we mention:
\begin{itemize}
\item \textbf{Knot dependence}: $M_\Xi$ only depends on $\Xi$; in particular, it is independent of the choice of $\sigma$ and the ordering of the knots.
\item \textbf{Support}:  
$M_\Xi$ has support $\langle \Xi\rangle$.
\item \textbf{Normalization}: 
$M_\Xi$ has unit integral.
\item \textbf{Nonnegativity}: 
$M_\Xi$ is a nonnegative piecewise polynomial of total degree $d$.
\item \textbf{Differentiation formula ($A$-recurrence)}:
For any $\vu\in\RR^\s$ and any $a_1,\ldots,a_{d+\s+1}$ such that $\sum_i a_i\vxi_i=\vu$, $\sum_i a_i=0$, we have
$$
D_\vu M_\Xi=(d+\s)\sum_{i=1}^ {d+\s+1} a_iM_{[\Xi\setminus \vxi_i]}.
$$
\item \textbf{Recurrence relation ($B$-recurrence)}:
For any $\vx\in\RR^\s$ and any $b_1,\ldots,b_{d+\s+1}$ such that $\sum_i b_i\vxi_i=\vx$, $\sum_i b_i=1$, we have
$$
M_\Xi(\vx)=\frac{d+\s}{d}\sum_{i=1}^ {d+\s+1} b_iM_{[\Xi\setminus \vxi_i]}(\vx).
$$
\item \textbf{Knot insertion formula ($C$-recurrence)}:
For any $\vy\in\RR^\s$ and any $c_1,\ldots,c_{d+\s+1}$ such that $\sum_i c_i\vxi_i=\vy$, $\sum_i c_i=1$, we have
$$M_\Xi=\sum_{i=1}^ {d+\s+1} c_iM_{[\Xi\cup \vy\setminus \vxi_i]}.
$$
\end{itemize}
If $\s=1$ then $M_\Xi$ is the univariate B-spline of degree $d$ with knots $\Xi$, normalized to have its integral equal to one.

In the bivariate case, $\s=2$, the lines in the complete graph of $\Xi$ are called {knot lines}. They provide a partition of $\langle\Xi\rangle$ into polygonal elements. The simplex spline $M_\Xi$ is a polynomial of degree $d=\# \Xi-3$ in each region of this partition, and across a knot line
\begin{equation*} 
M_\Xi\in C^{d+1-\mu},
\end{equation*}
 where
$\mu$ is the number of knots on that knot line, including multiplicities.

\subsection{The Wang--Shi splits}
Given three noncollinear points $\vp_1,\vp_2,\vp_3$ in $\RR^2$, the triangle
$\Delta:=\langle\vp_1,\vp_2,\vp_3\rangle$
with vertices $\vp_1,\vp_2,\vp_3$ will serve as our macro-triangle.
Given a degree $ d\in\NN$, we divide each edge of $\Delta$ into $d$ equal segments, respectively, resulting into $3d$ boundary points. Then, we refine $\Delta$ into a number of subelements delineated by the complete graph connecting those boundary points. This is called the $\WSd$ split of $\Delta$ as it was originally proposed by Wang and Shi \cite{Wang.90}.
We denote by $\Delta_{\WSd}$ the obtained mesh structure,
and by $\cP_d$ the set of polygons in $\Delta_{\WSd}$. All the possible intersections of the various lines connecting the boundary points are called vertices of $\Delta_{\WSd}$. In particular, the boundary points are vertices of $\Delta_{\WSd}$.
The cases $d=2,3,4$ are shown in Figure~\ref{fig:splits}. For $d=2$ we obtain the well-known PS-12 split \cite{Powell.Sabin77}, while for $d=1$ we have $\cP_1=\{\Delta\}$.
Note that for $d>2$ not all elements of $\cP_d$ are triangles.
We consider the space
\begin{equation} \label{eq:defsplinespace}
\spSd(\Delta_{\WSd}):=\{\spline\in C^{d-1}(\Delta): \spline_{|\tau} \in\spP_d,\  \forall\ \tau\in\cP_d\}.
\end{equation}
When the degree increases, the complexity of the mesh grows quickly.
There are
\begin{itemize}
\item $3d$ boundary points and $3d(d-1)$ interior lines in the complete graph;
\item the maximum number of lines intersecting at an interior vertex is $3,3,4,5,6,7,6$ for $d=2,3,\ldots,8$;
\item the number of vertices of $\Delta_{\WSd}$ is $10, 58, 178, 558,1255,2532, 4786$ for $d=2,3,\ldots,8$.
\end{itemize}

\begin{figure}[t!]
\centering
\includegraphics[width=4.3cm]{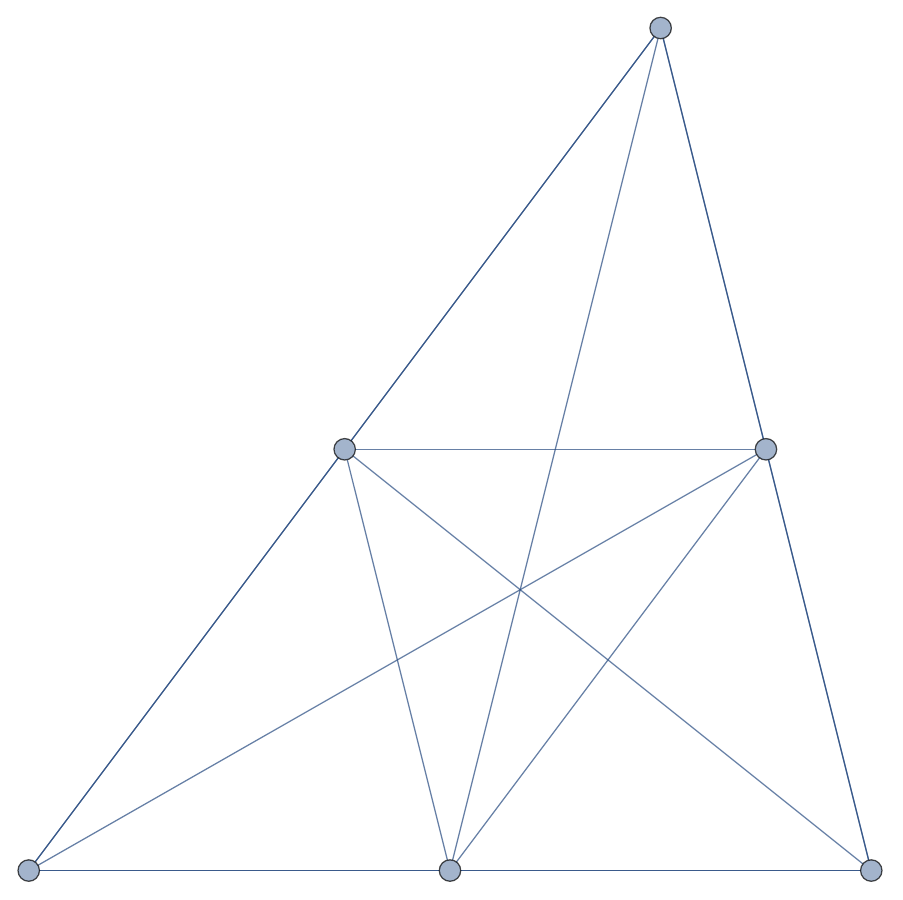} \quad
\includegraphics[width=4.3cm]{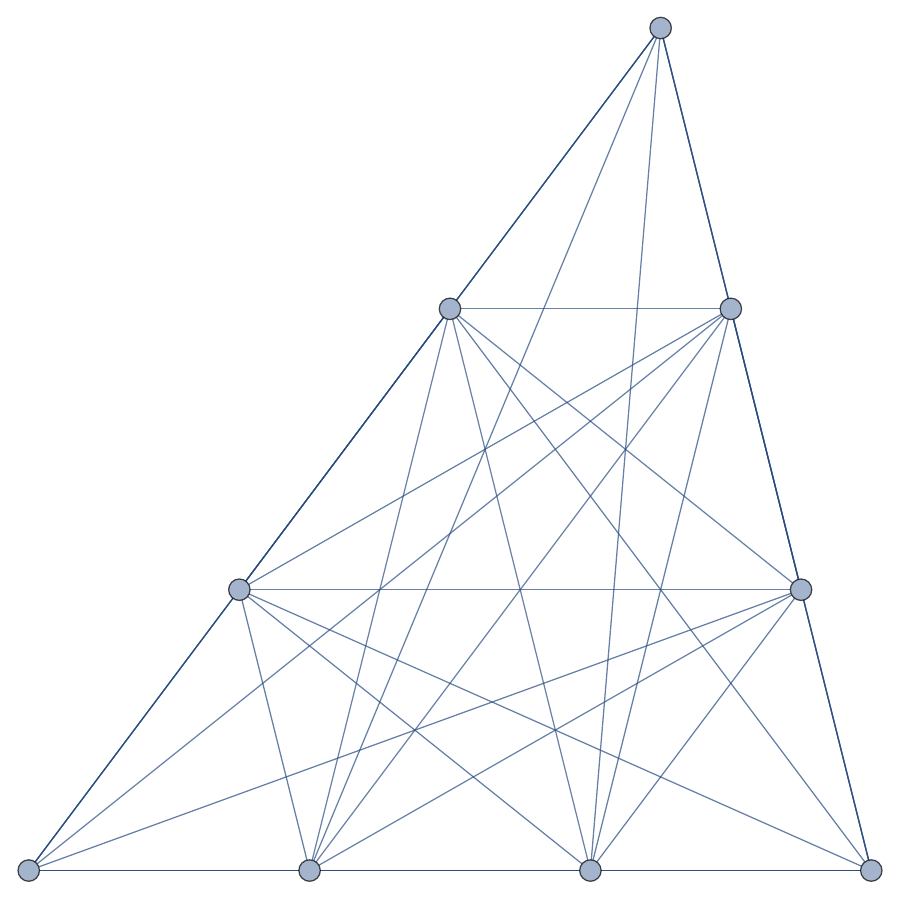} \quad
\includegraphics[width=4.3cm]{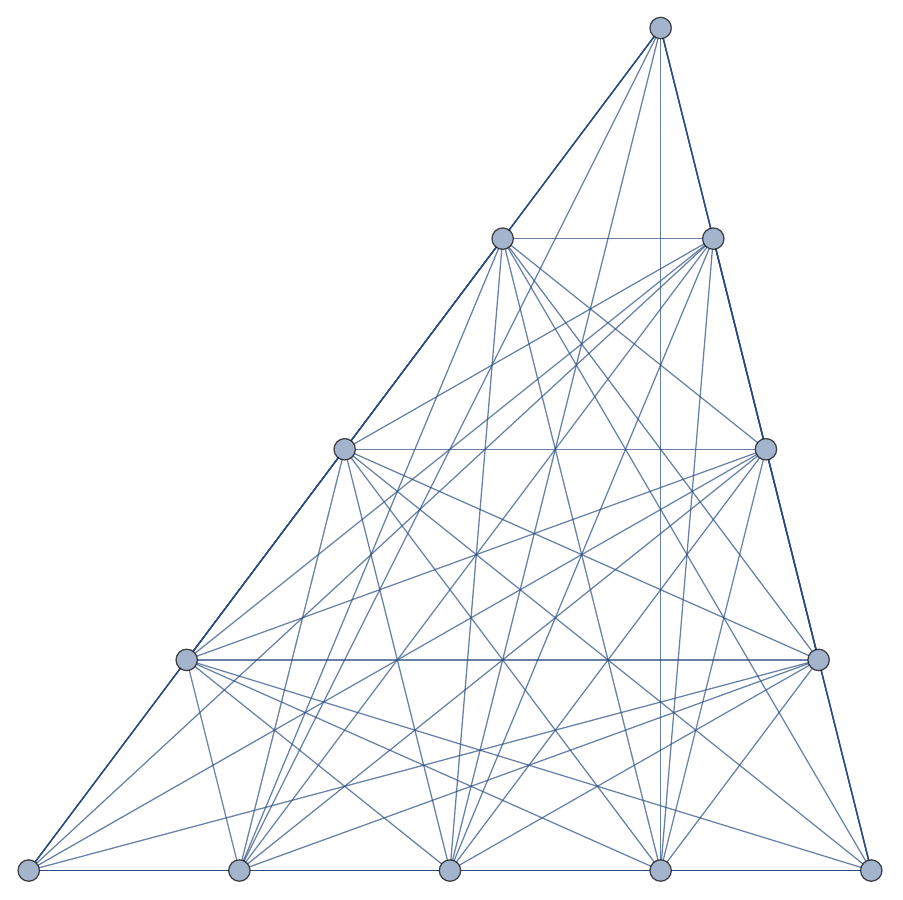} \\[0.2cm]
\caption{WS$_d$ splits for $d=2,3,4$.}
\label{fig:splits}
\end{figure}

The dimension of $\spSd(\Delta_{\WSd})$ can be computed using the general dimension formula for spline spaces over cross-cut partitions from \cite[Theorem~3.1]{Chui.Wang.83}.
A partition $\cT_c$ of a domain $\Omega$ is called a cross-cut partition if it is obtained by drawing lines across $\Omega$. Let $\spS^r_d(\cT_c)$ be the space of functions in $C^r(\Omega)$ which belong to $\spP_d$ when restricted to any polygon of $\cT_c$.

\begin{theorem} \label{thm:chui-wang}
Let $\Omega$ be a simply connected domain in $\spR^2$. Let $\cT_c$ be a cross-cut partition of $\Omega$, with $m$ cross-cuts, $n$ interior vertices $\vv_1,\ldots,\vv_n$, and $m_k$ cross-cuts intersecting at $\vv_k$, $k=1,\ldots,n$.
Then, the dimension of the spline space $\spS^r_d(\cT_c)$, $0\leq r\leq d-1$ is
\begin{equation} \label{eq:dim-crosscut}
  \dim(\spS^r_d(\cT_c))=\binom{d+2}{2} + m \binom{d-r+1}{2}+\sum_{k=1}^{n}\varsigma(m_k),
\end{equation}
where
\begin{equation*}
  \varsigma(l):=\frac{1}{2}\left(d-r-\left\lfloor\frac{r+1}{l-1}\right\rfloor\right)_+
  \left((l-1)d -(l+1)r +(l-3) + (l-1)\left\lfloor\frac{r+1}{l-1}\right\rfloor\right).
\end{equation*}
As usual, $\lfloor x\rfloor$ denotes the largest integer smaller than or equal to $x$, and $(x)_+ := \max\{x,0\}$.
\end{theorem}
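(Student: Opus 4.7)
The plan is to realize $\spS^r_d(\cT_c)$ as the direct sum of a global polynomial piece, one independent contribution per cross-cut, and one local contribution per interior vertex, and then to add the three dimensions. The starting point is the elementary fact that two polynomials $p_1,p_2\in\spP_d$ join with $C^r$ smoothness across a line $\ell=0$ if and only if $p_1-p_2=\ell^{r+1}q$ for some $q\in\spP_{d-r-1}$; equivalently, on one side of the cross-cut we may add any multiple of the truncated power $(\ell(x,y))_+^{r+1}q(x,y)$ without destroying $C^r$ smoothness. Because $\Omega$ is simply connected and every cross-cut traverses $\Omega$, fixing a base polygon of $\cT_c$ together with a base polynomial $p\in\spP_d$, any $\spline\in\spS^r_d(\cT_c)$ can be reconstructed by adjoining such a truncated power as each cross-cut is crossed. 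Following this recipe, every spline admits a representation
\[
\spline(x,y)=p(x,y)+\sum_{i=1}^{m}(\ell_i(x,y))_+^{r+1}q_i(x,y)+\sum_{k=1}^{n}w_k(x,y),
\]
with $\ell_i$ a linear form defining the $i$-th cross-cut, $q_i\in\spP_{d-r-1}$, and $w_k$ a residual correction at the interior vertex $\vv_k$ that can be localized to vanish outside the star of $\vv_k$.

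The polynomial and cross-cut contributions are linearly independent: at a generic point of the $i$-th cross-cut (away from every interior vertex) the jump of the transverse $(r{+}1)$-st derivative of $\spline$ depends only on $q_i$, so each $q_i$ is uniquely recovered; once all the $q_i$ are fixed, the polynomial $p$ is determined as the value of $\spline$ on the base polygon. This contributes $\binom{d+2}{2}$ and $m\binom{d-r+1}{2}$ to the dimension, matching the first two summands of \eqref{eq:dim-crosscut}. The vertex corrections $w_k$ are supported at distinct interior vertices and so are mutually independent and independent of the first two contributions, so the proof reduces to computing $\dim V_k=\varsigma(m_k)$, where $V_k$ is the local space of vertex corrections at a vertex where $l=m_k$ cross-cuts meet.

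Passing to local coordinates centered at $\vv_k$, the space $V_k$ is spanned by the truncated products
\[
(\ell_{i_1})_+^{a_1}\cdots(\ell_{i_l})_+^{a_l},\quad a_j\ge r+1,\quad a_1+\cdots+a_l\le d+r+1,
\]
modulo the contributions already absorbed into the single-line truncations $(\ell_{i_j})_+^{r+1}q_{i_j}$ from the previous step. Since any three linear forms through a common point in $\spR^2$ are linearly dependent, these truncated products satisfy nontrivial syzygies, and a careful combinatorial count of the independent exponent vectors $(a_1,\ldots,a_l)$ modulo those syzygies yields exactly the formula for $\varsigma(l)$. The hard part will be this final bookkeeping: the floor $\lfloor(r+1)/(l-1)\rfloor$ and the positive-part cutoff in $\varsigma$ indicate that the count must be split into cases according to how $r+1$, $d-r$, and $l-1$ compare, and one must verify that no additional relations arise beyond those produced by the basic three-line syzygies. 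Once $\dim V_k=\varsigma(m_k)$ is established, summing the three contributions produces \eqref{eq:dim-crosscut}.
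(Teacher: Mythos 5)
First, a point of reference: the paper does not prove this statement at all --- Theorem~\ref{thm:chui-wang} is imported verbatim from \cite[Theorem~3.1]{Chui.Wang.83} and is only used as a tool in the proof of Theorem~\ref{thm:dimension}. Your attempt must therefore be judged as a reconstruction of the Chui--Wang argument. Your global strategy --- a base polynomial, one truncated-power family $(\ell_i)_+^{r+1}q_i$ with $q_i\in\spP_{d-r-1}$ per cross-cut, plus extra contributions at interior vertices --- is indeed the classical route, and it correctly accounts for the first two terms $\binom{d+2}{2}+m\binom{d-r+1}{2}$ (modulo the small imprecision that the jump of the single transverse derivative $D_\nu^{r+1}\spline$ along a cross-cut only recovers the restriction of $q_i$ to that line; one needs the jumps of $D_\nu^{r+1+j}\spline$ for $j=0,\ldots,d-r-1$ to recover all of $q_i$).

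The vertex term is where the argument breaks down, and it is the only nontrivial part of the formula. You describe the local space $V_k$ as spanned by products $(\ell_{i_1})_+^{a_1}\cdots(\ell_{i_l})_+^{a_l}$ with every $a_j\ge r+1$. Take $d=3$, $r=1$, $l=3$: then
\begin{equation*}
\varsigma(3)=\tfrac12\,\bigl(3-1-\lfloor 2/2\rfloor\bigr)_+\bigl(2\cdot 3-4\cdot 1+0+2\cdot 1\bigr)=\tfrac12\cdot 1\cdot 4=2,
\end{equation*}
yet any product of the proposed form has degree at least $l(r+1)=6>d$, so your spanning set is \emph{empty} and would give $\dim V_k=0$. (A sanity check with $d=1$, $r=0$, $l=3$ gives $\varsigma(3)=1$ and total dimension $3+3+1=7$, which is correct for $C^0$ piecewise linears on six sectors; again no product of two or more truncated powers of degree $\le 1$ exists.) The genuine extra functions are cone splines: they arise from the kernel of the closure condition obtained by summing the jumps $\pm\,\ell^{r+1}q$ over the $2l$ rays surrounding $\vv_k$, they are supported on cones running from $\vv_k$ all the way to $\partial\Omega$ (so your claim that $w_k$ can be localized to vanish outside the star of $\vv_k$ is also false for this construction), and they are not products of truncated powers. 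Computing the rank of that closure condition --- which is where the floor $\lfloor (r+1)/(l-1)\rfloor$ actually originates --- is the entire content of $\varsigma(l)$, and you defer it to unspecified bookkeeping. As it stands, the proposal neither identifies the correct local space at a vertex nor computes its dimension, so the theorem is not proved.
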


\begin{theorem} \label{thm:dimension}
Assuming at most $d+1$ lines intersect at an interior vertex of $\Delta_{\WSd}$, we have
$$
\dim(\spSd(\Delta_{\WSd}))=\dim\spP_d+m,
$$
where $m=3d(d-1)$ is the number of interior lines in the complete graph.
\end{theorem}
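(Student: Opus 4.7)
The strategy is to obtain the dimension formula as a direct application of Theorem~\ref{thm:chui-wang} with $r=d-1$. The first thing to check is that the $\WSd$ split of $\Delta$ is genuinely a cross-cut partition in the sense of that theorem: every interior line in the complete graph connects two of the $3d$ boundary points, so each such line is a chord running from the boundary of $\Delta$ to the boundary and therefore qualifies as a cross-cut. Consequently the number of cross-cuts is $m=3d(d-1)$, matching the number of interior lines, and the interior vertices of $\Delta_{\WSd}$ are exactly the pairwise intersections of these cross-cuts in the interior of $\Delta$, with respective multiplicities $m_1,\ldots,m_n$.

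With $r=d-1$, the first two terms of \eqref{eq:dim-crosscut} simplify immediately: one has $\binom{d+2}{2}=\dim\spP_d$ and $\binom{d-r+1}{2}=\binom{2}{2}=1$, so
$$
\dim(\spSd(\Delta_{\WSd}))=\dim\spP_d+m+\sum_{k=1}^{n}\varsigma(m_k).
$$
The proof then reduces to checking that $\varsigma(m_k)=0$ for every interior vertex under the stated hypothesis. Substituting $r=d-1$ in the definition of $\varsigma$, the prefactor becomes
$$
\left(d-r-\left\lfloor\tfrac{r+1}{l-1}\right\rfloor\right)_+=\left(1-\left\lfloor\tfrac{d}{l-1}\right\rfloor\right)_+.
$$
If $l\leq d+1$, then $l-1\leq d$, so $\lfloor d/(l-1)\rfloor\geq 1$ and the prefactor vanishes, making $\varsigma(l)=0$. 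Under the hypothesis that at most $d+1$ lines meet at any interior vertex, we conclude $\sum_k\varsigma(m_k)=0$, yielding the claimed identity.

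The proof is therefore a one-step reduction to \cite{Chui.Wang.83} once one observes two things: that $\WSd$ is a cross-cut partition (immediate from how it is generated), and that the quadratic factor in $\varsigma$ is irrelevant because the floor-based prefactor already annihilates every admissible vertex multiplicity. There is no real obstacle in the argument; the only delicate point is selecting the right cutoff $d+1$ in the hypothesis so that the floor trigger $\lfloor d/(l-1)\rfloor\geq 1$ applies uniformly across all interior vertices.
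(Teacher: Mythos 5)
Your proof is correct and takes essentially the same route as the paper: both apply the Chui--Wang formula \eqref{eq:dim-crosscut} with $r=d-1$ and reduce the claim to $\varsigma(l)=0$ for $l=1,\ldots,d+1$. The only difference is that you explicitly work out the floor-based prefactor $\bigl(1-\lfloor d/(l-1)\rfloor\bigr)_+=0$ for $2\le l\le d+1$, a step the paper dismisses as ``easy to check.''
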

\begin{proof}
We make use of the dimension formula \eqref{eq:dim-crosscut} in Theorem~\ref{thm:chui-wang}. In our case we have $r=d-1$, and it is easy to check that
$$
\varsigma(l)=0, \quad l=1,\ldots,d+1.
$$
Since at most $d+1$ lines cross at each interior vertex,
it immediately follows from \eqref{eq:dim-crosscut} that
$$
\dim(\spSd(\Delta_{\WSd})) = \binom{d+2}{2} + m,
$$
which completes the proof.
\end{proof}

\begin{figure}[t!]
\centering
\begin{picture}(130,160)(0,-5)
\put(0,10){\includegraphics[width=4.75cm]{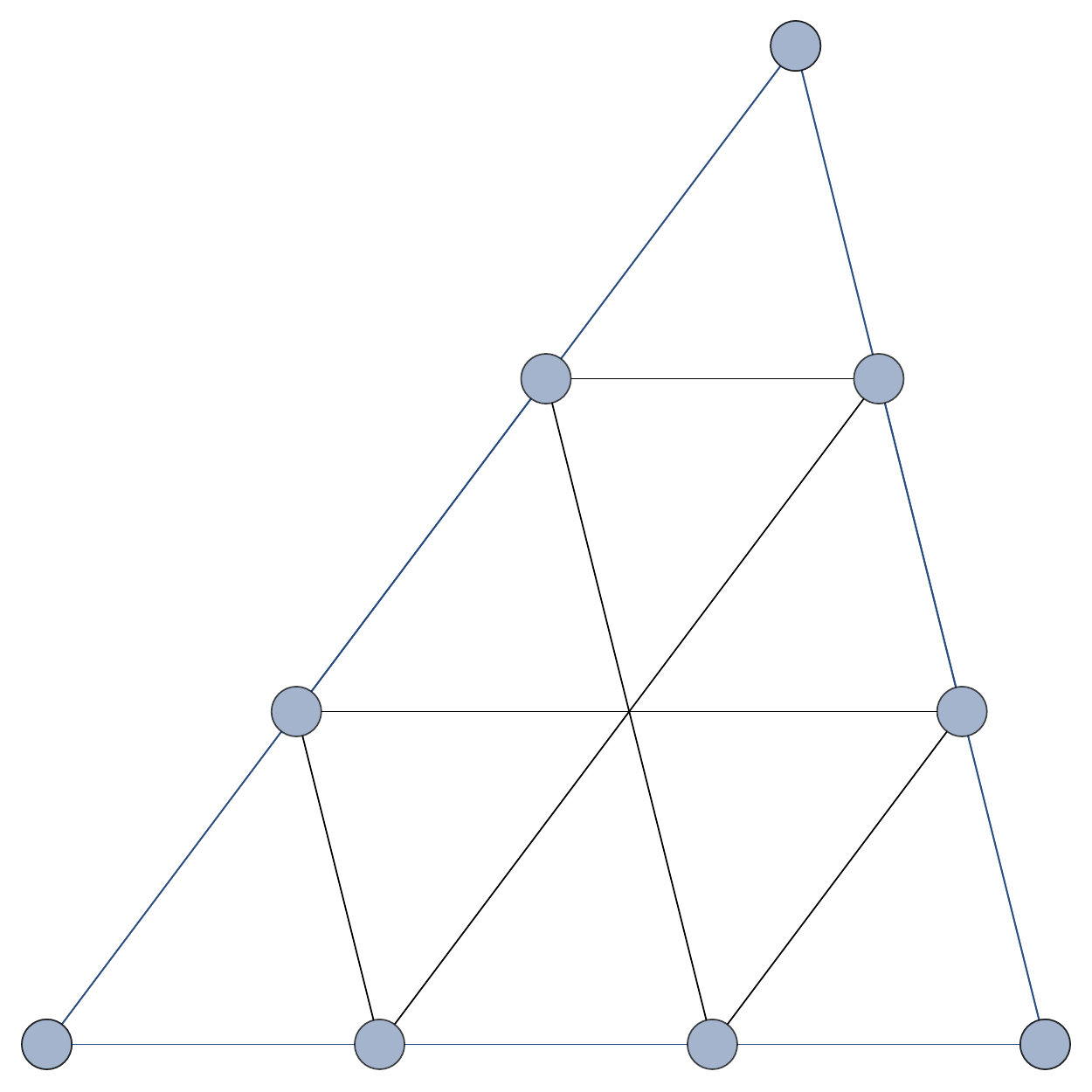}}
\put(0,0){$\vp_1$}
\put(125,0){$\vp_2$}
\put(95,150){$\vp_3$}
\put(40,0){$\vp_{3,1}$}
\put(81,0){$\vp_{3,2}$}
\put(41,97){$\vp_{2,3}$}
\put(10,55){$\vp_{2,1}$}
\put(129,55){$\vp_{1,2}$}
\put(119,97){$\vp_{1,3}$}
\end{picture}
\caption{Labeling of the knots on the boundary of the triangle $\Delta$.}
\label{fig:bvertexorder}
\end{figure}

\section{Simplex spline bases for $\spS_3^2(\Delta_{\WS})$}
\label{sec:simplex-bases}
In this section, we focus on the case $d=3$, provide two (scaled) simplex spline bases for the space $\spS_3^2(\Delta_{\WS})$ in \eqref{eq:defsplinespace}, and prove some properties of these bases.
With a slight abuse of notation we also refer to the corresponding basis functions as simplex splines.

\begin{figure}[t!]
\centering
\subfloat[1]{\includegraphics[width=1.5cm]{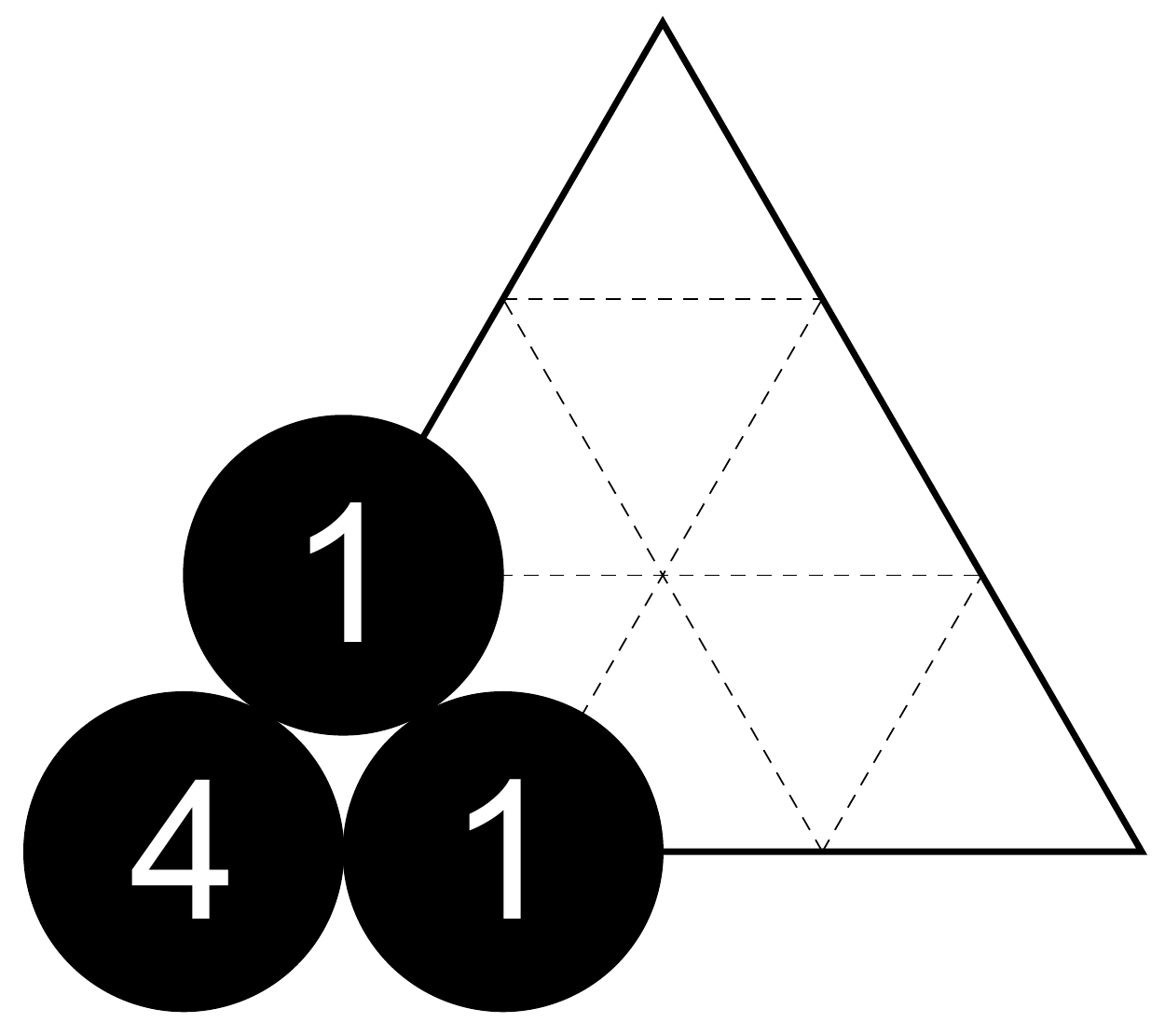}} \quad
\subfloat[2]{\includegraphics[width=1.5cm]{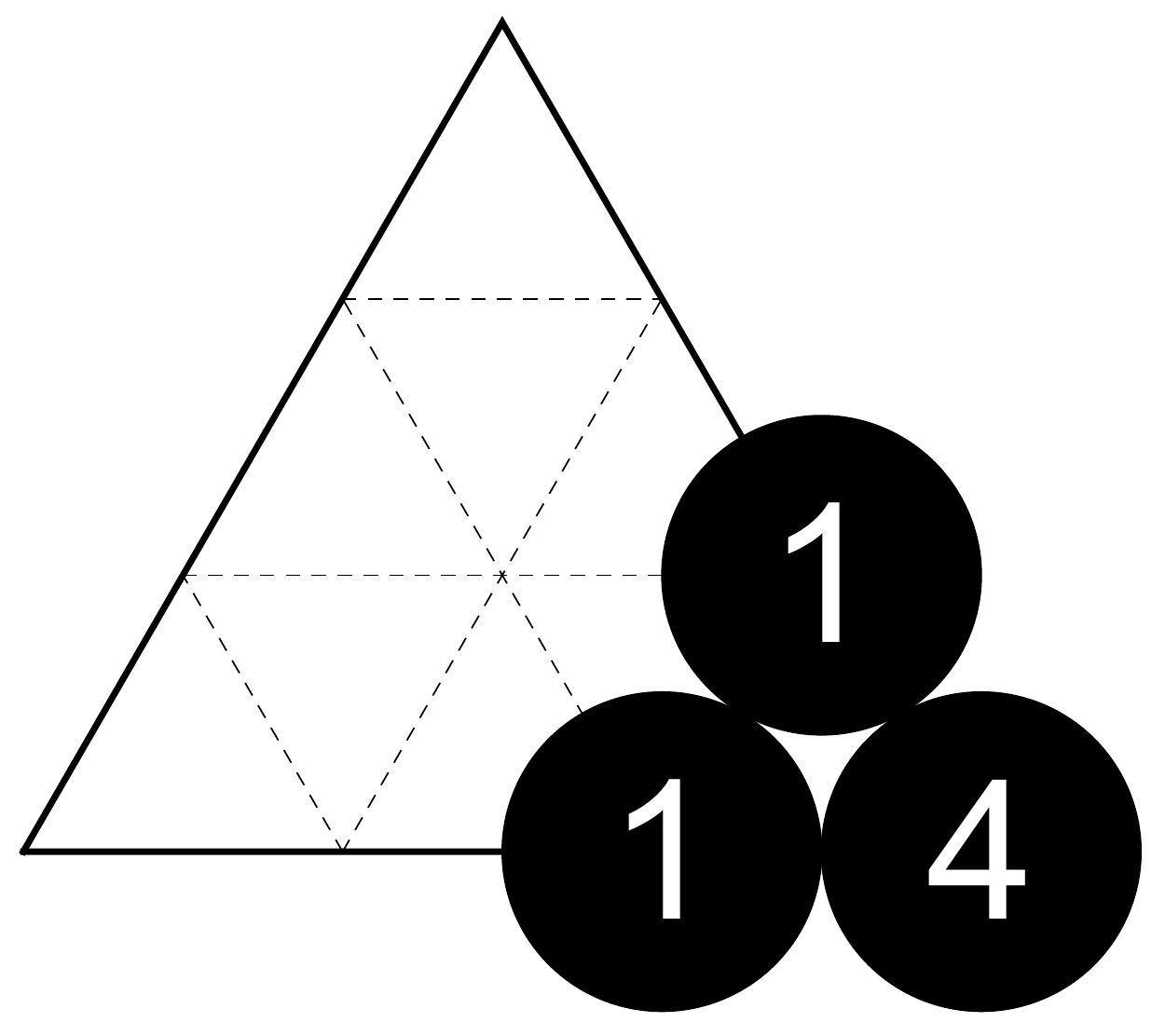}} \quad
\subfloat[3]{\includegraphics[width=1.3cm]{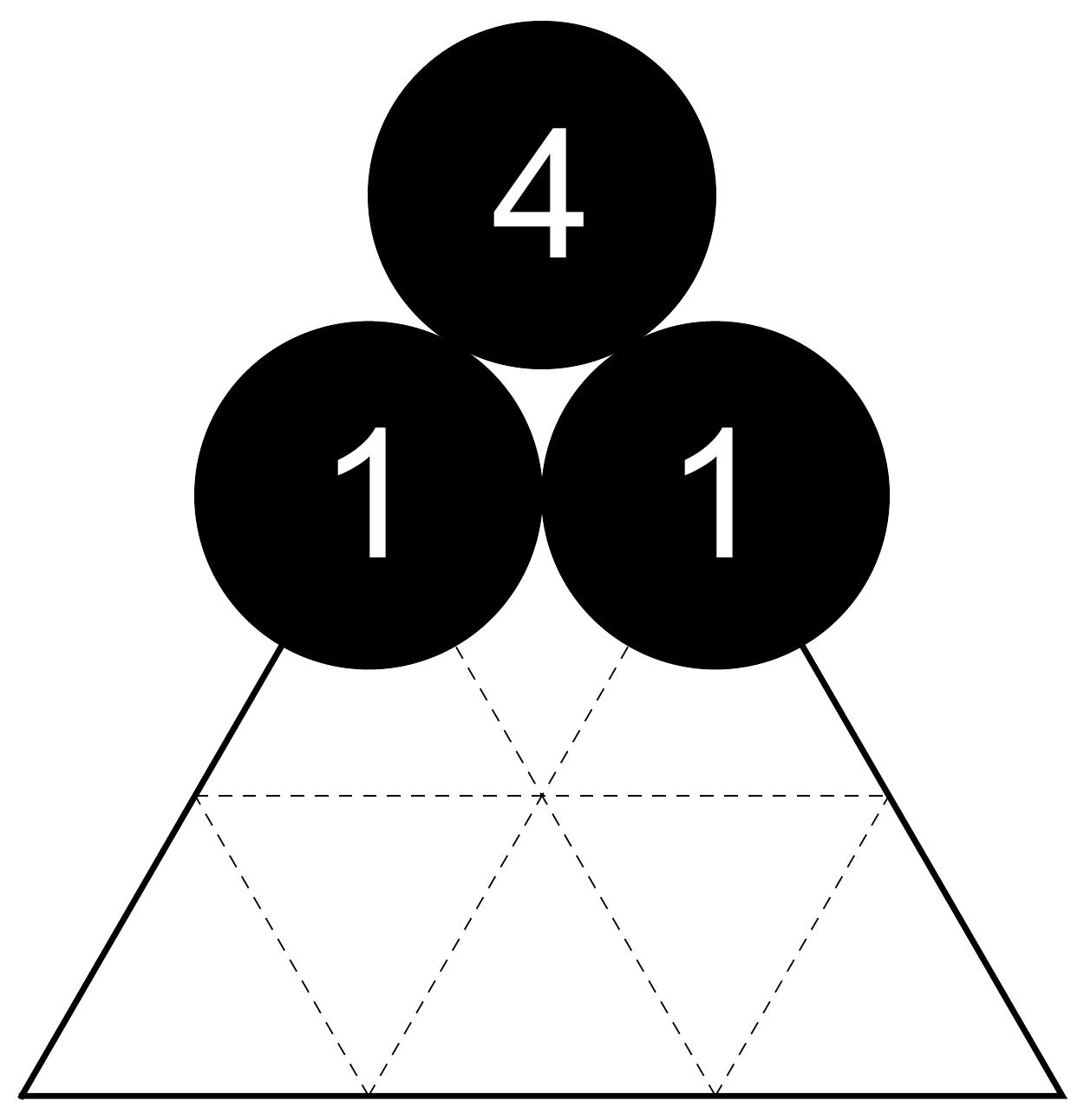}} \quad
\subfloat[4]{\includegraphics[width=1.5cm]{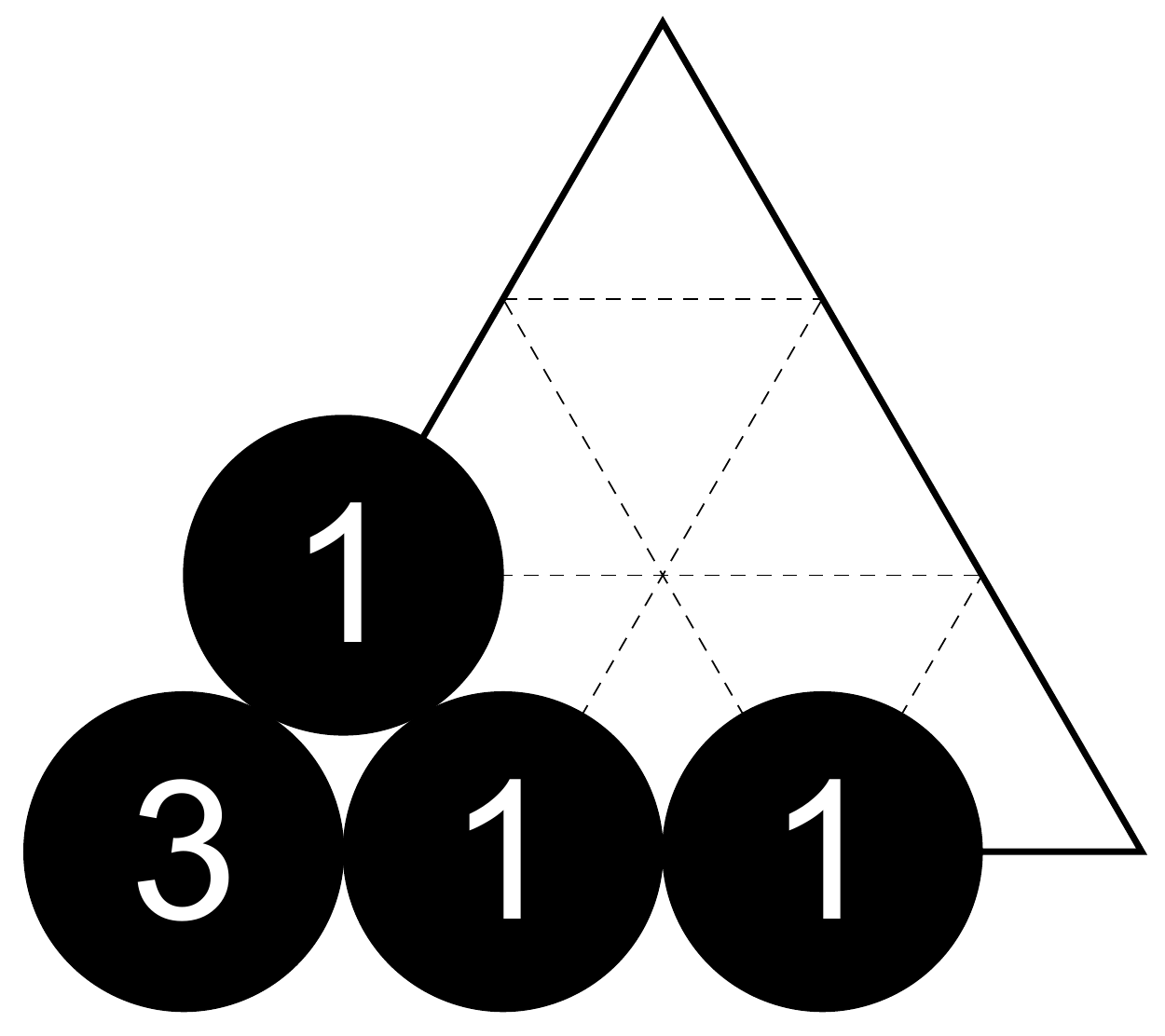}} \quad
\subfloat[5]{\includegraphics[width=1.5cm]{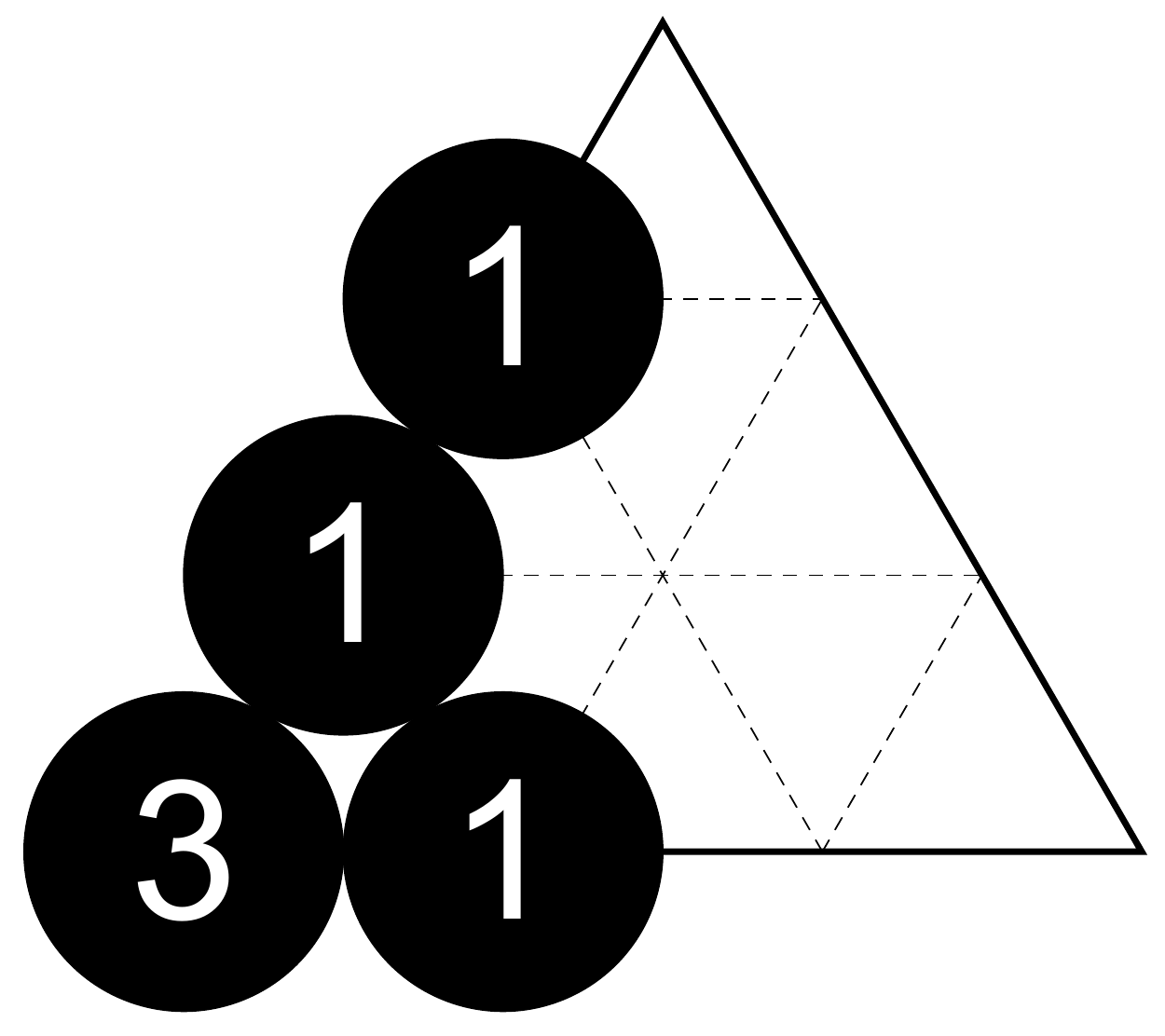}} \quad
\subfloat[6]{\includegraphics[width=1.5cm]{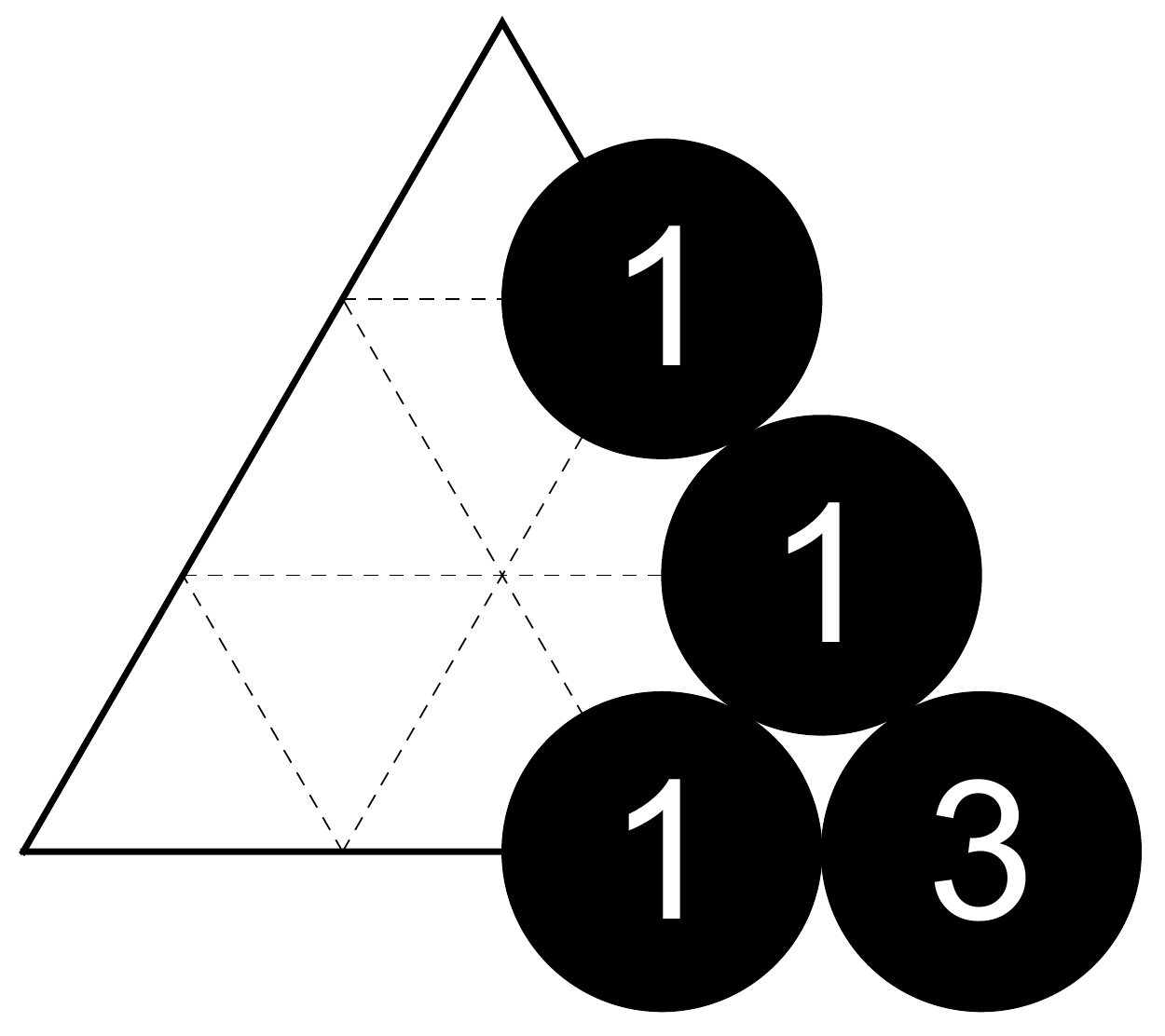}} \quad
\subfloat[7]{\includegraphics[width=1.5cm]{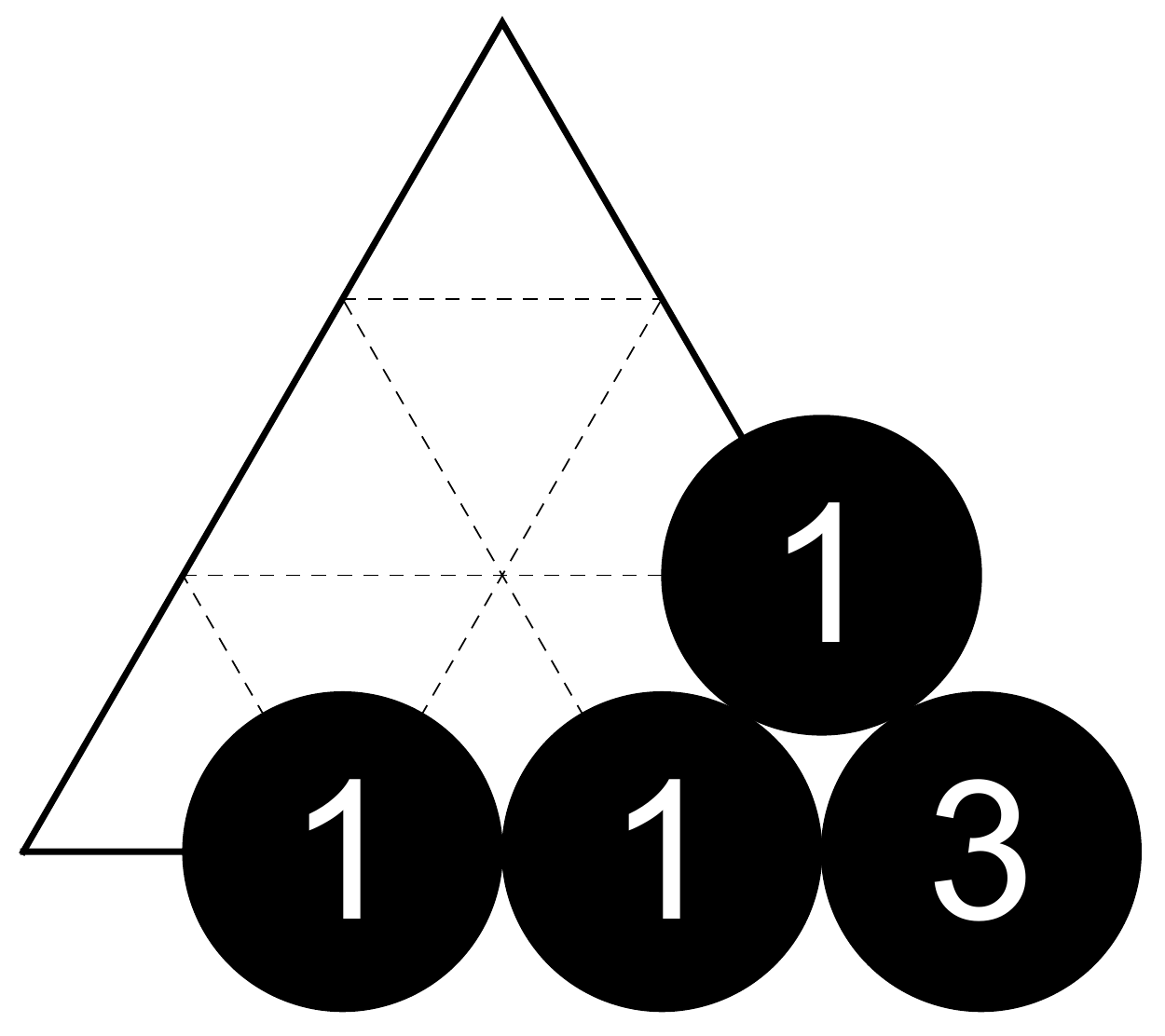}}
\\
\subfloat[8]{\includegraphics[width=1.35cm]{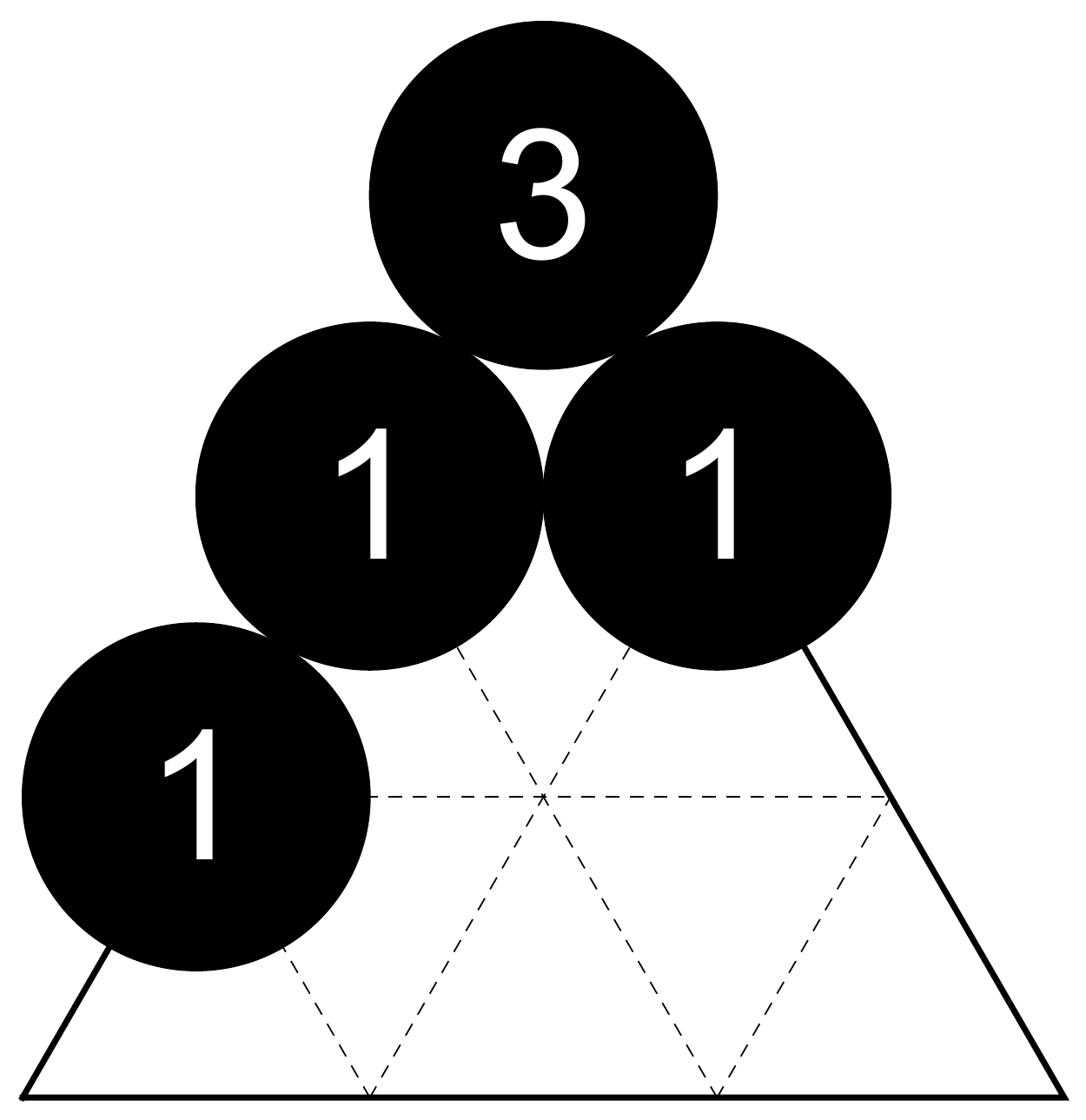}} \quad
\subfloat[9]{\includegraphics[width=1.35cm]{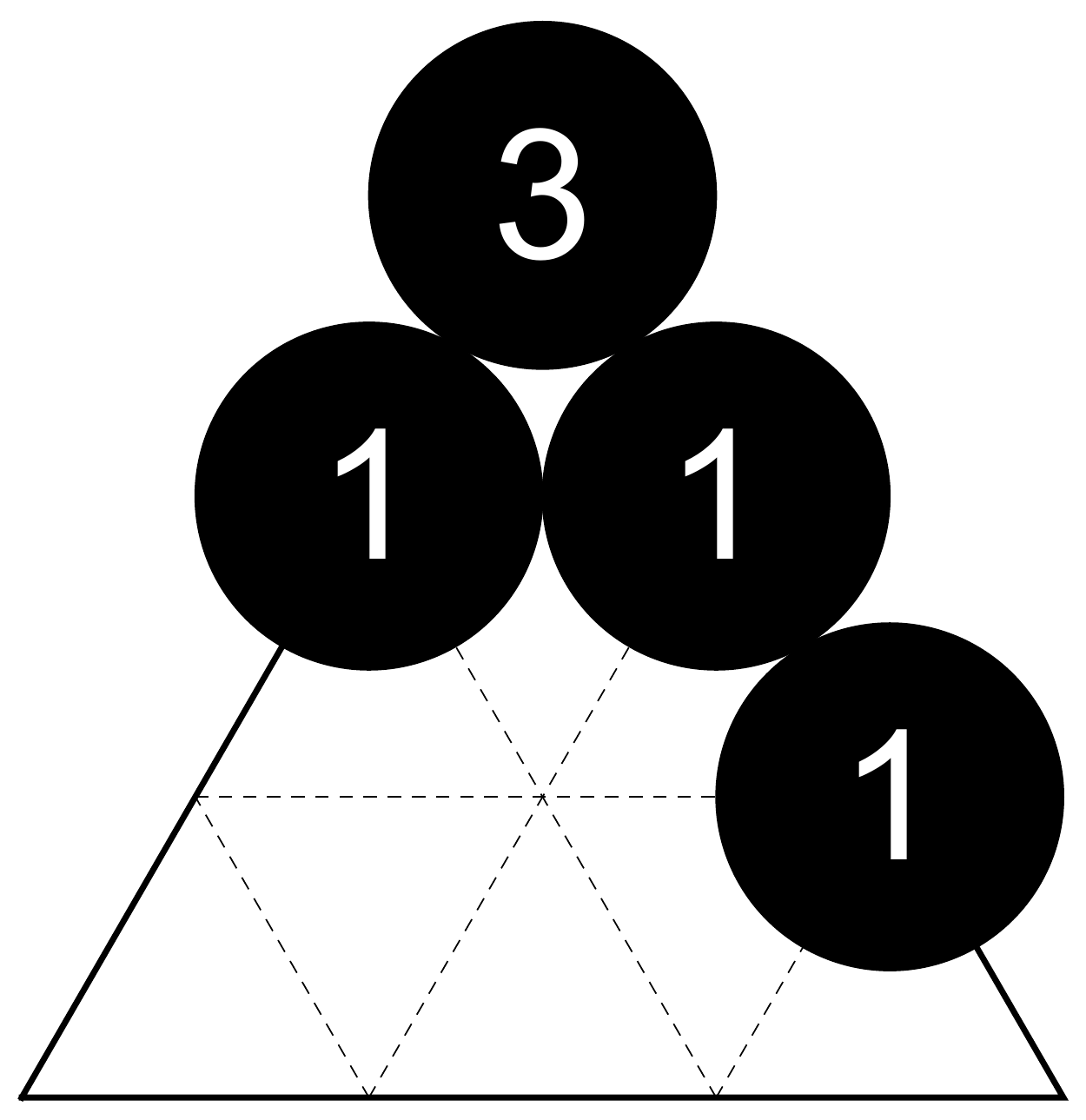}} \quad
\subfloat[10]{\includegraphics[width=1.7cm]{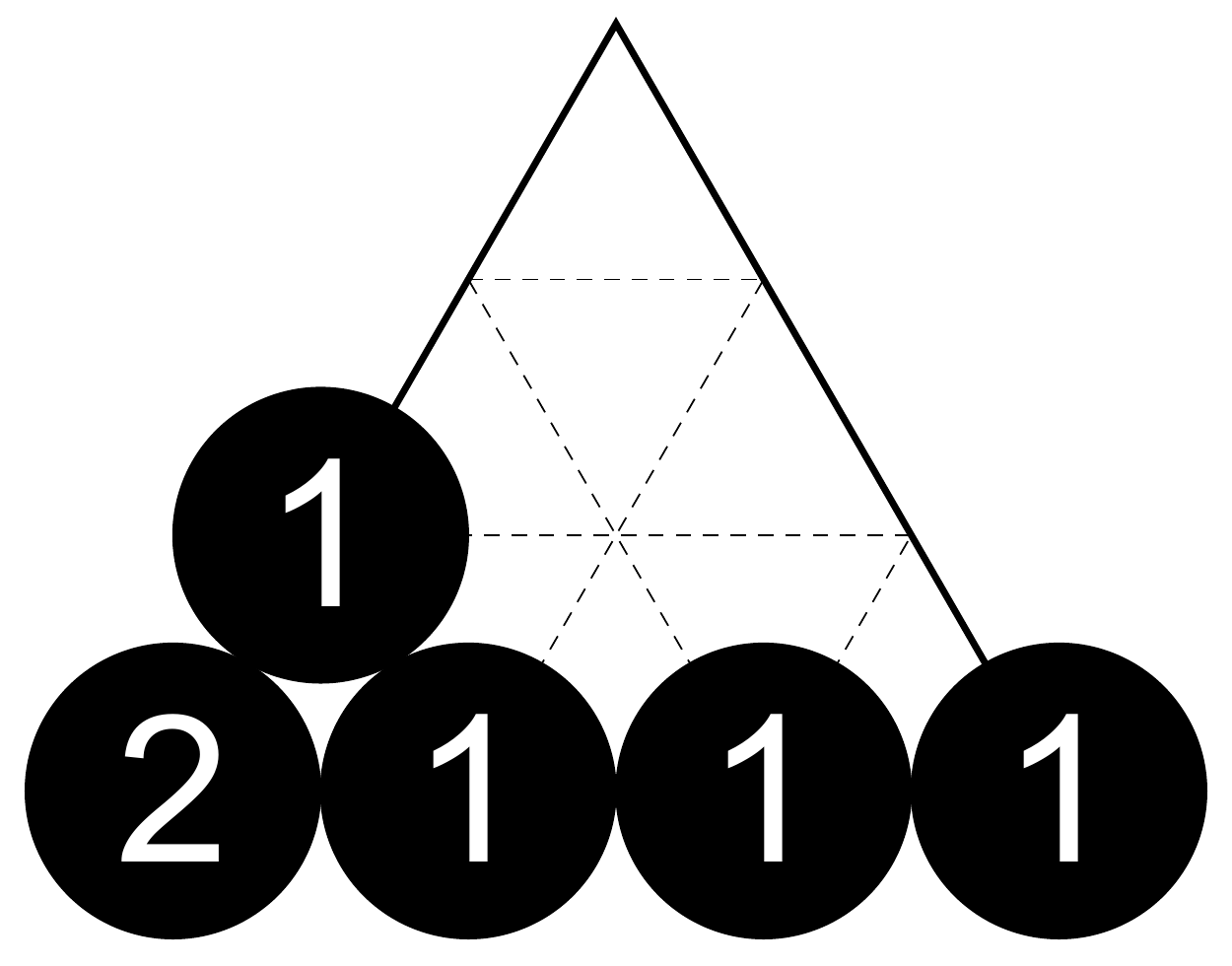}} \quad
\subfloat[11]{\includegraphics[width=1.5cm]{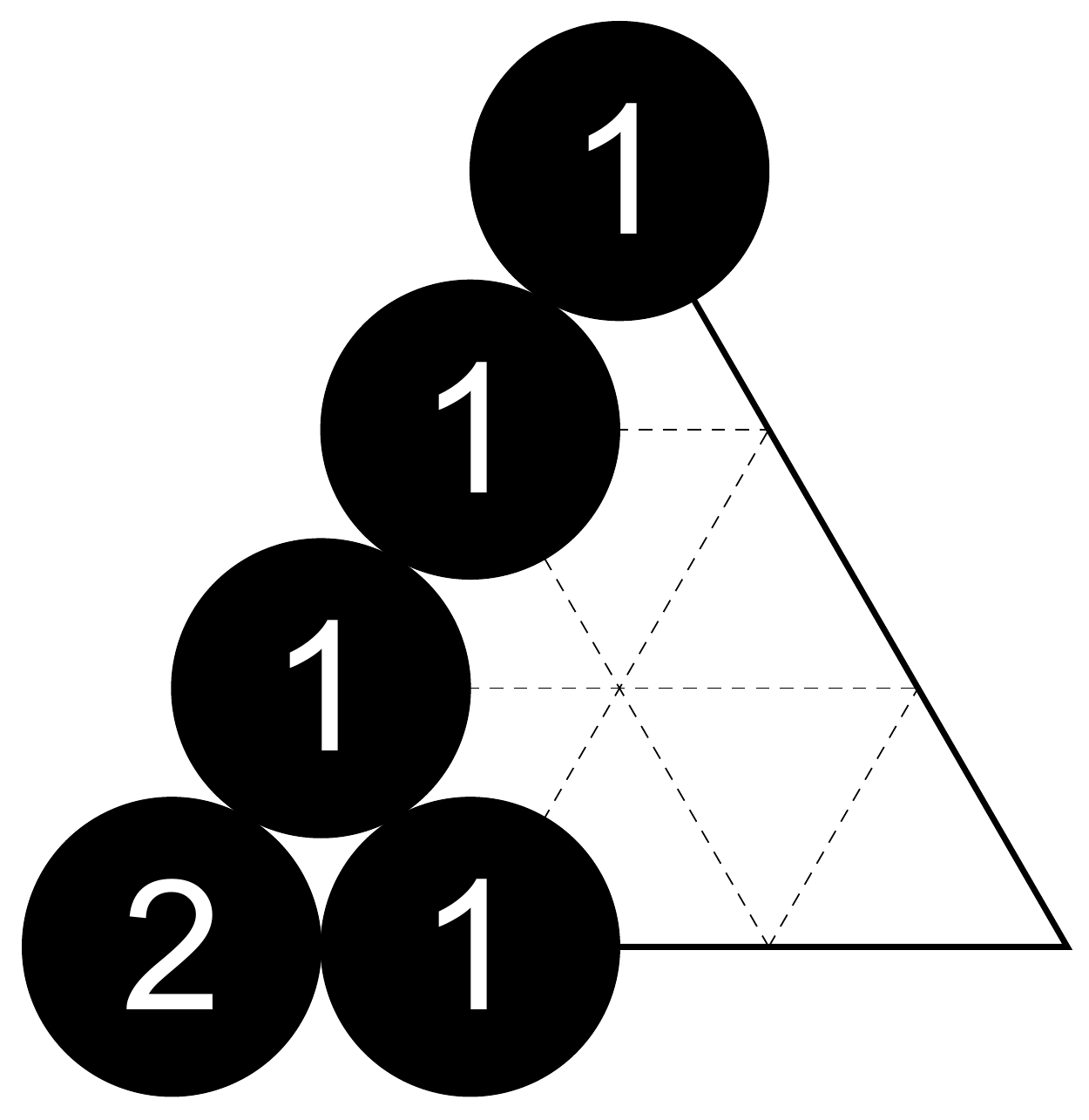}} \quad
\subfloat[12]{\includegraphics[width=1.5cm]{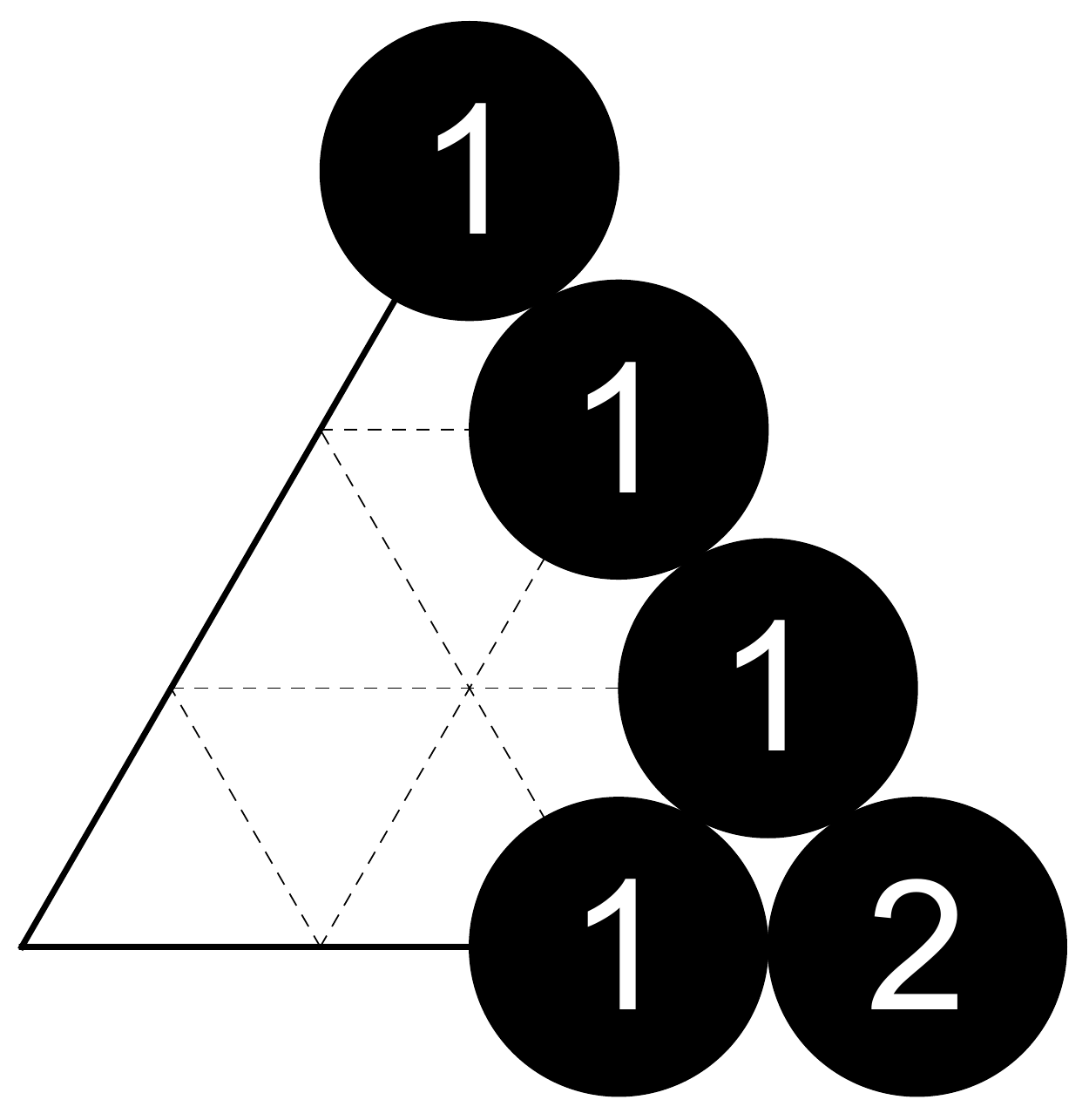}} \quad
\subfloat[13]{\includegraphics[width=1.5cm]{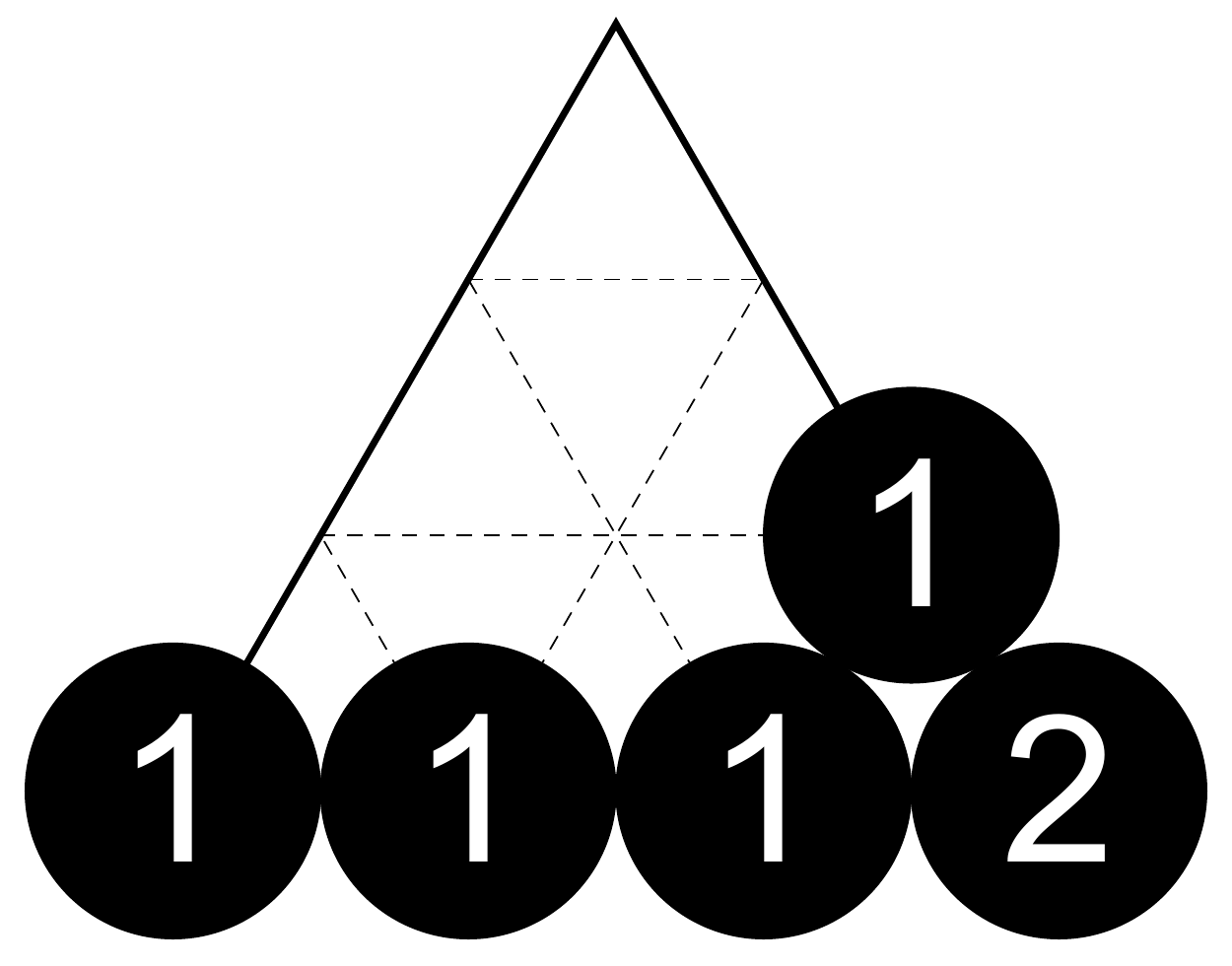}} \quad
\subfloat[14]{\includegraphics[width=1.5cm]{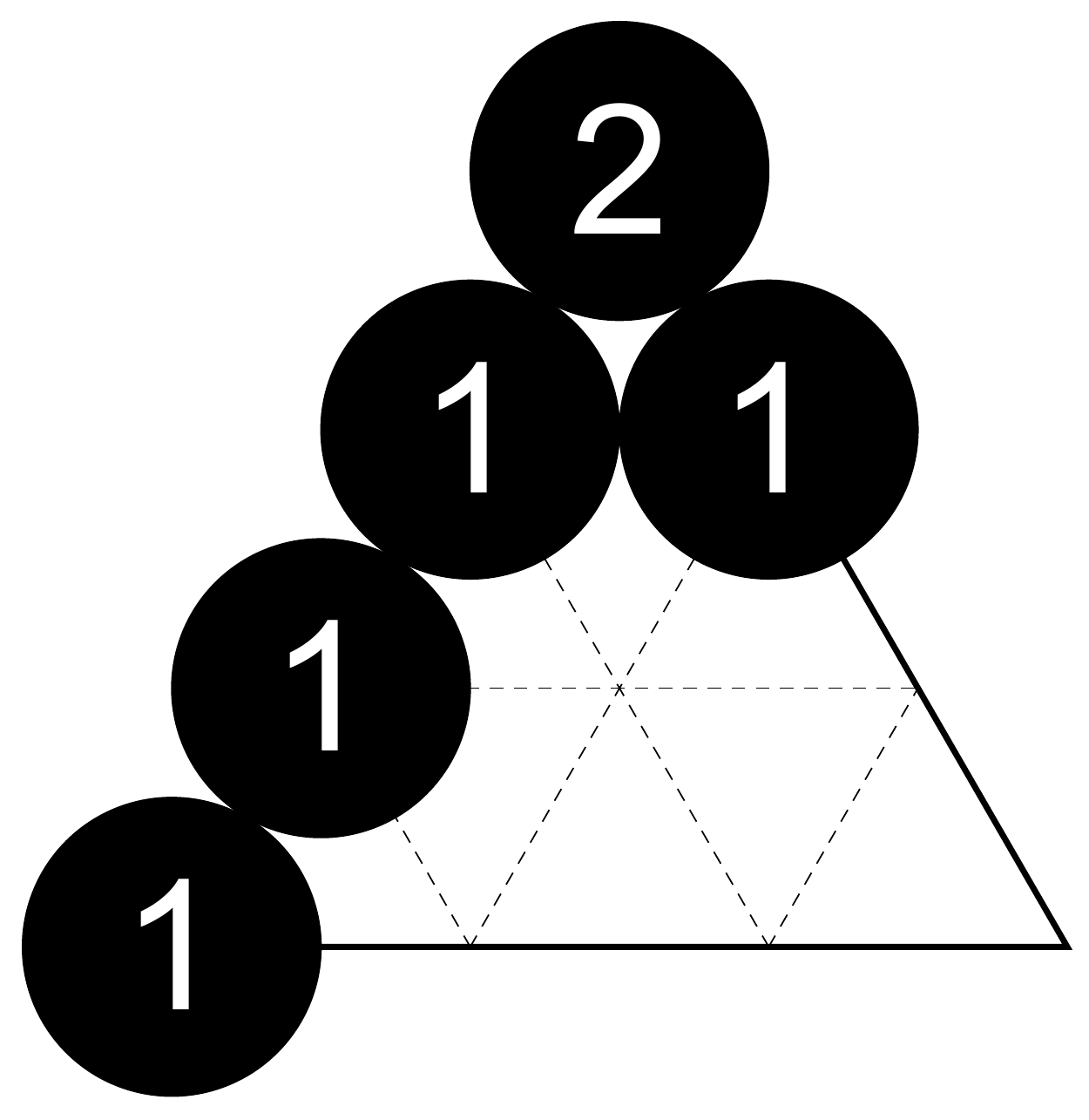}}
\\
\subfloat[15]{\includegraphics[width=1.5cm]{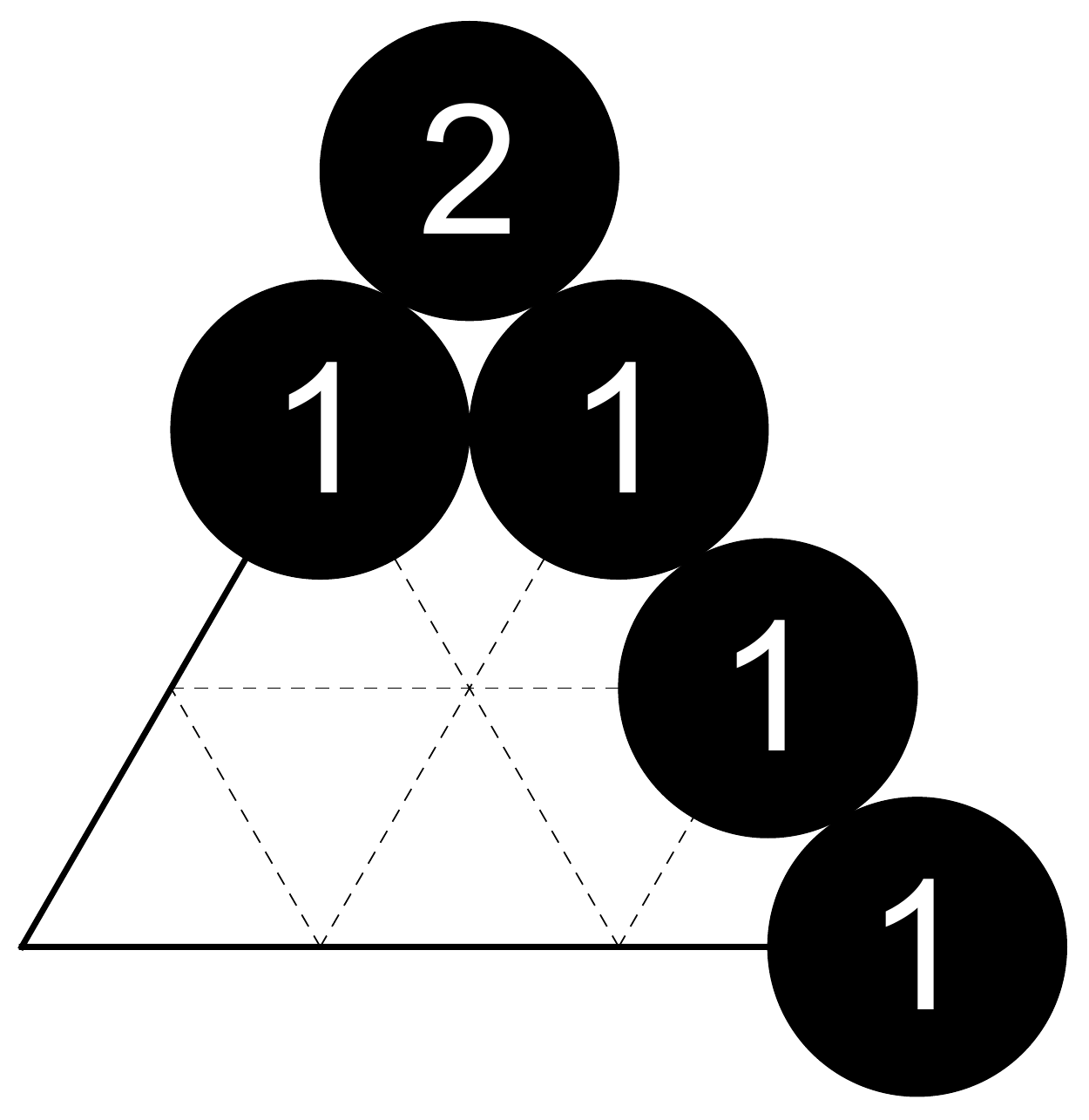}} \quad
\subfloat[16]{\includegraphics[width=1.35cm]{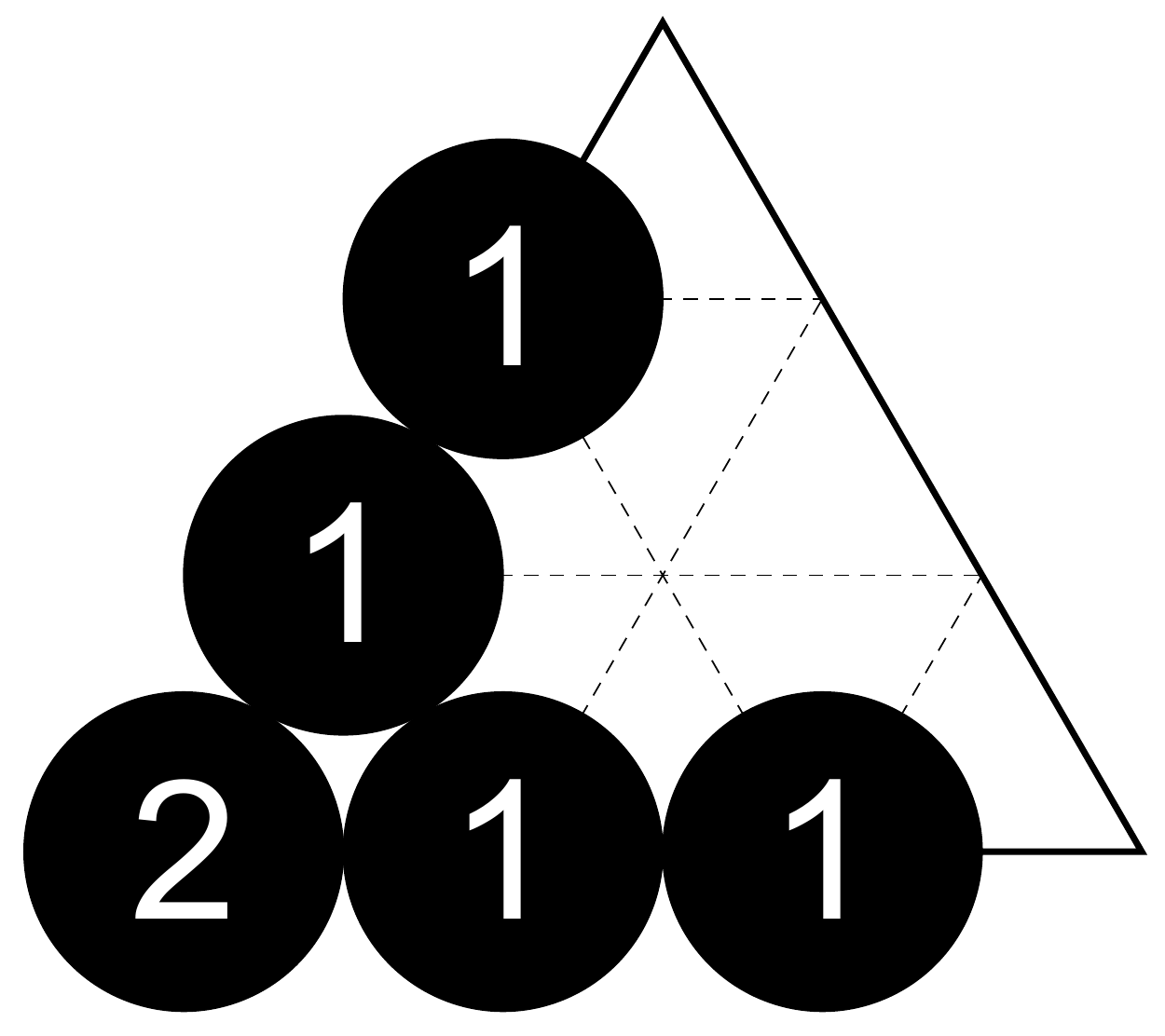}} \quad
\subfloat[17]{\includegraphics[width=1.5cm]{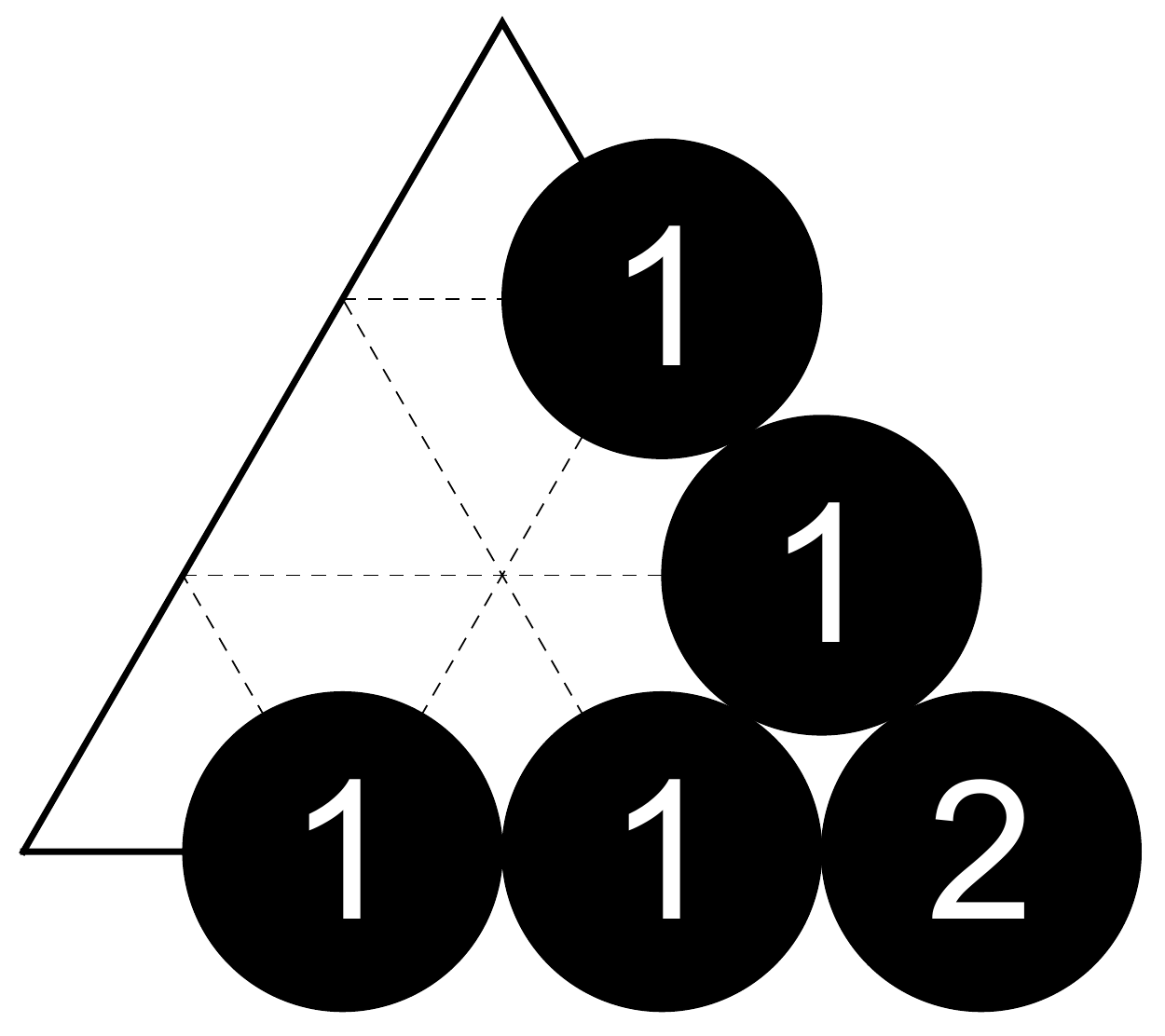}} \quad
\subfloat[18]{\includegraphics[width=1.4cm]{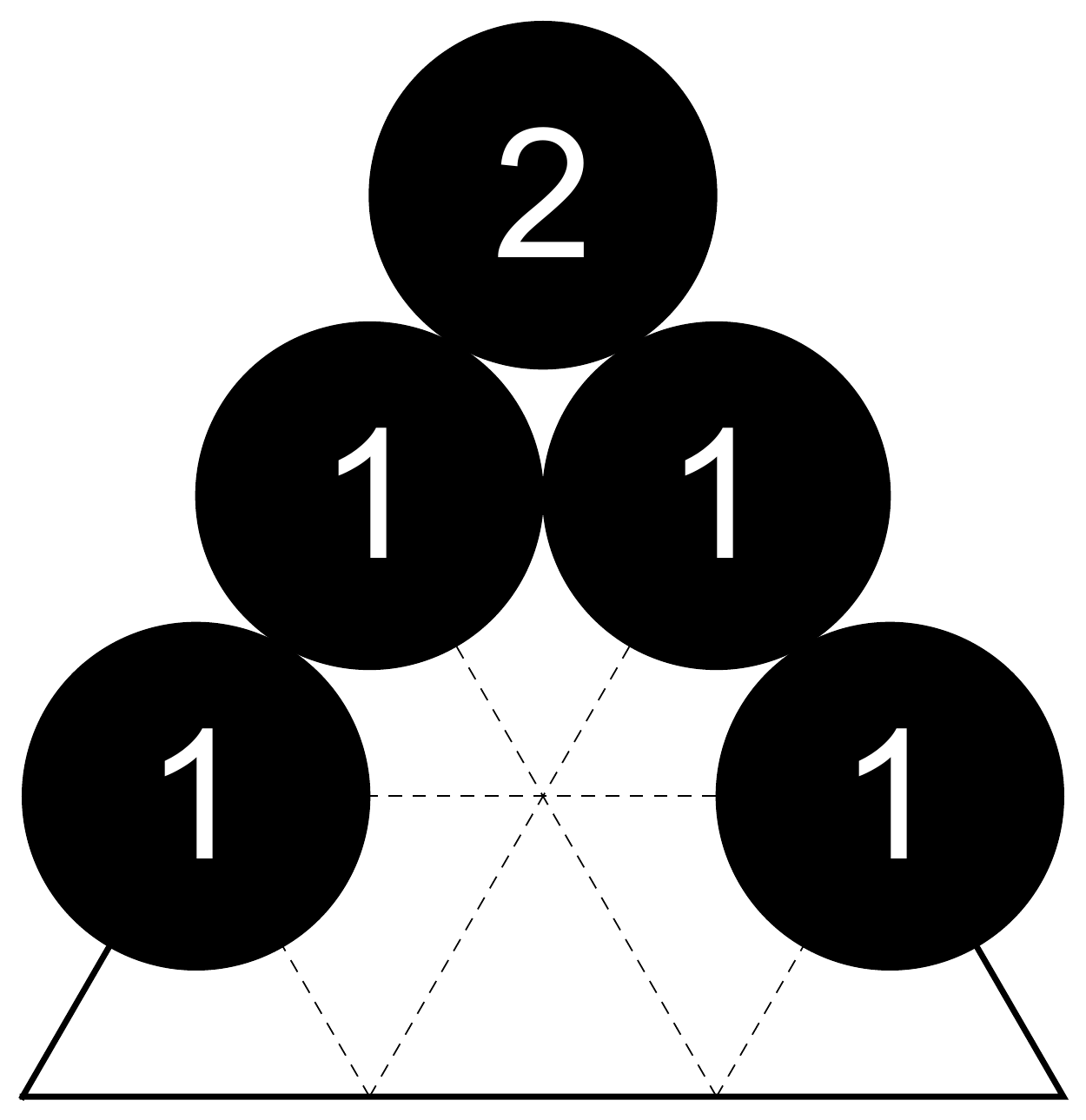}} \quad
\subfloat[19]{\includegraphics[width=1.6cm]{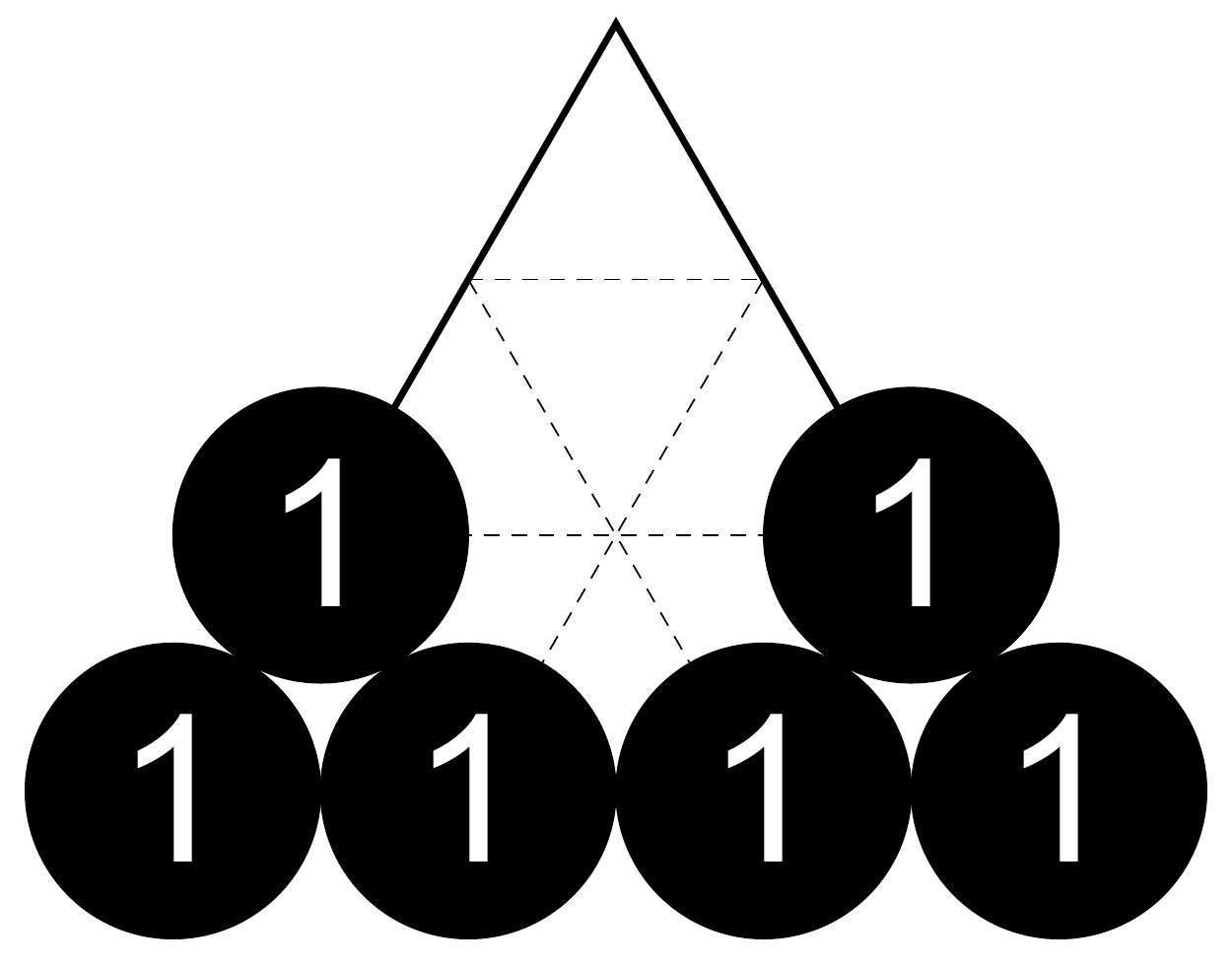}} \quad
\subfloat[20]{\includegraphics[width=1.5cm]{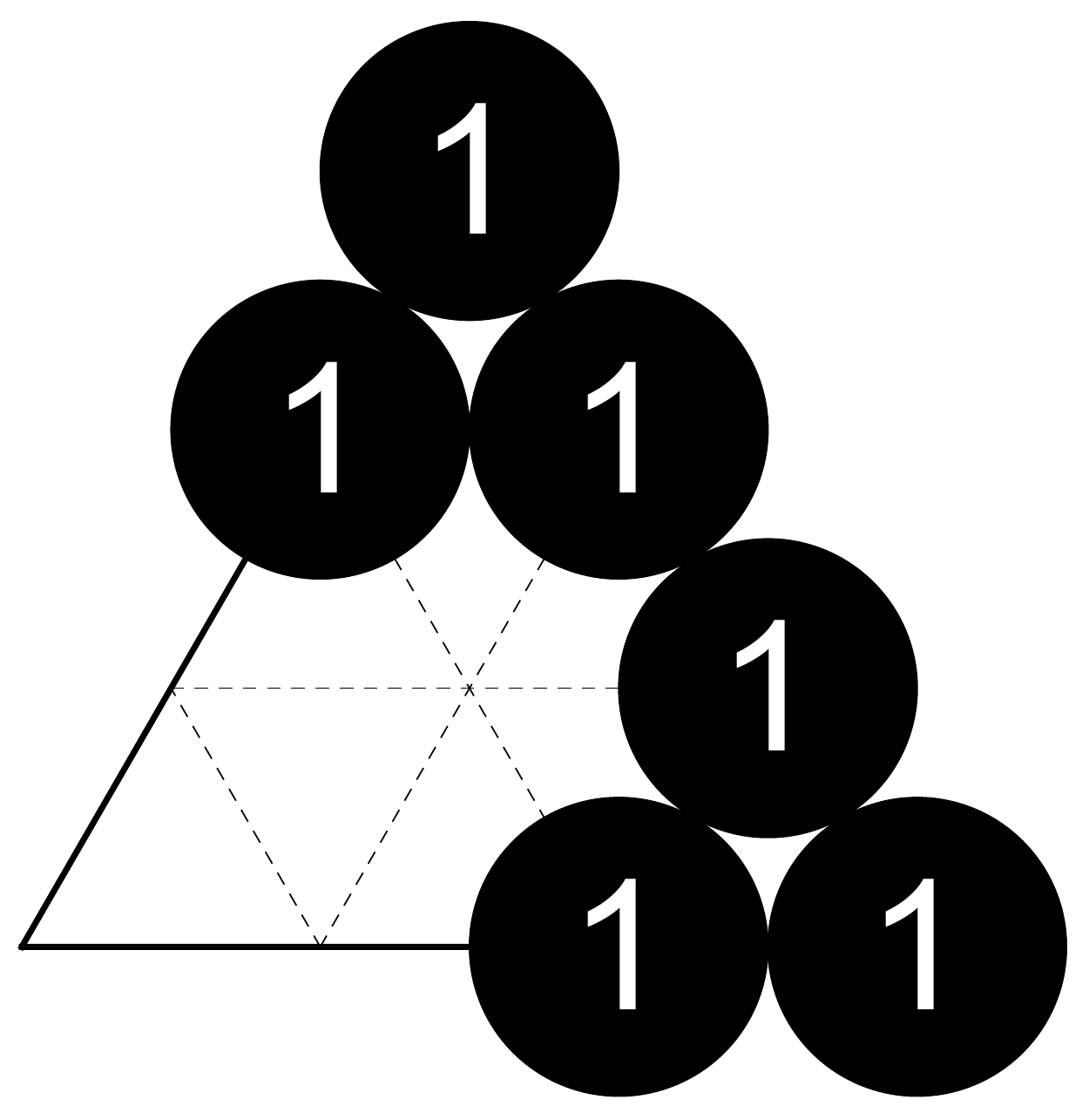}} \quad
\subfloat[21]{\includegraphics[width=1.5cm]{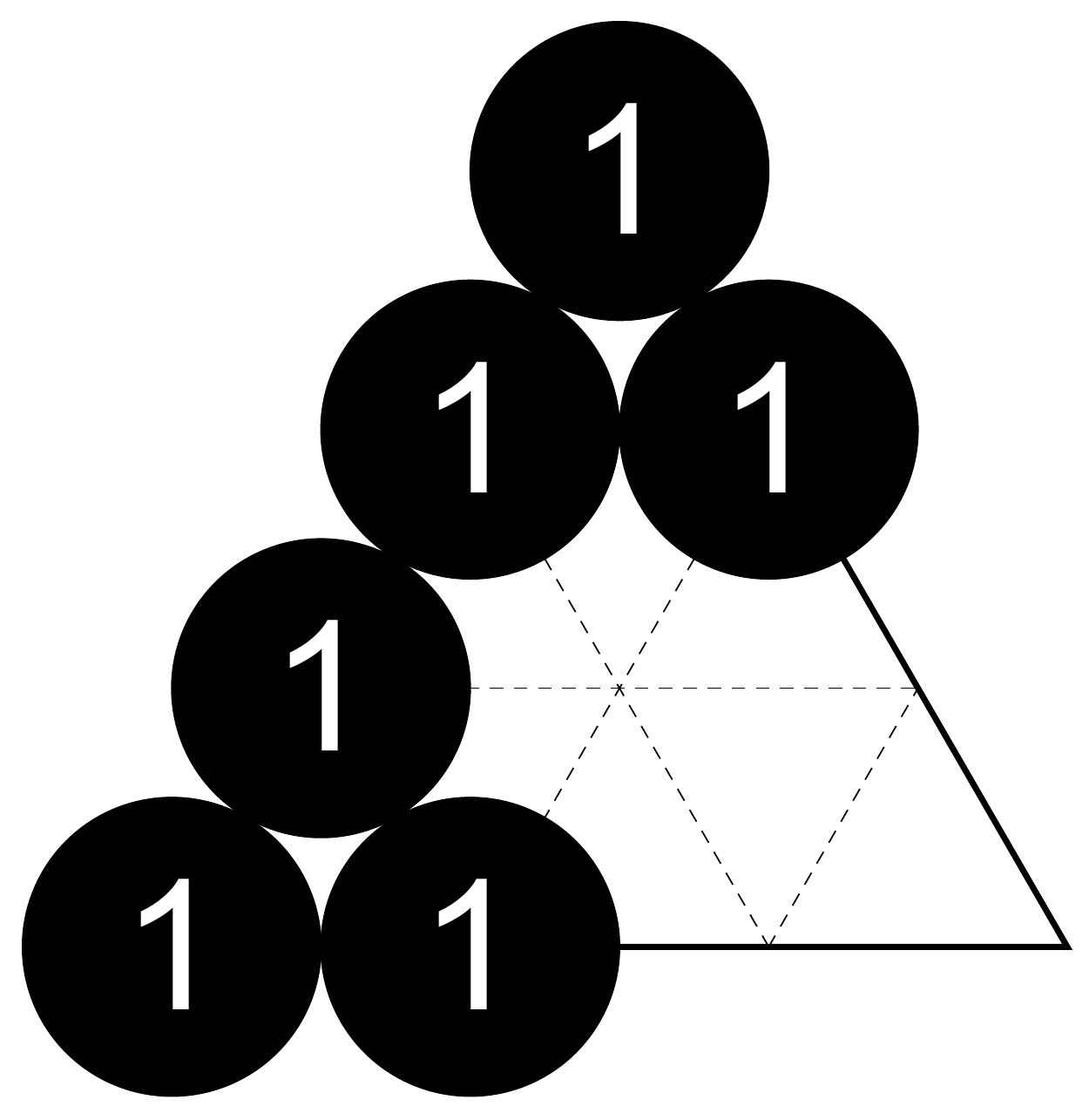}}
\\
\subfloat[22]{\includegraphics[width=1.5cm]{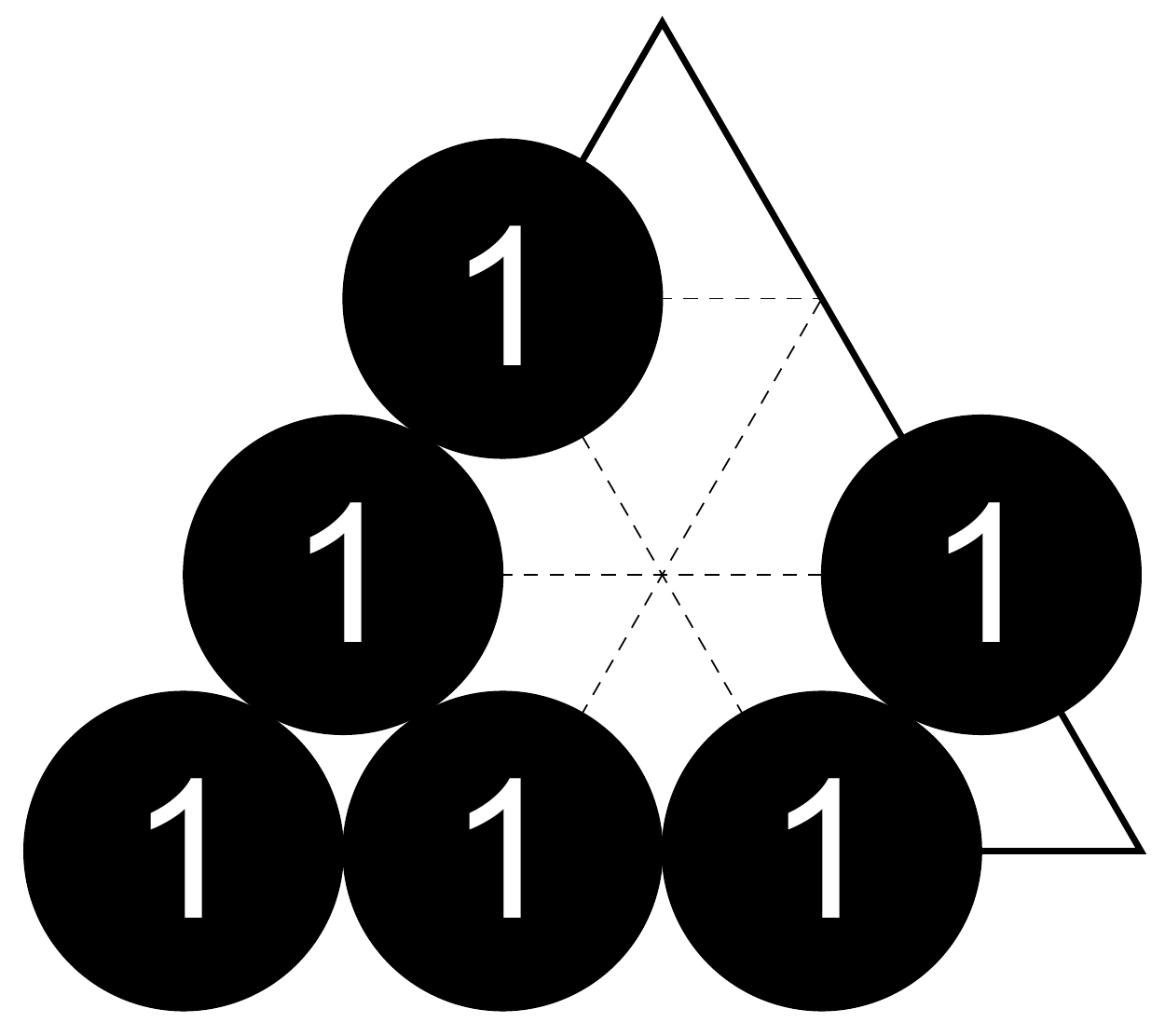}} \quad
\subfloat[23]{\includegraphics[width=1.5cm]{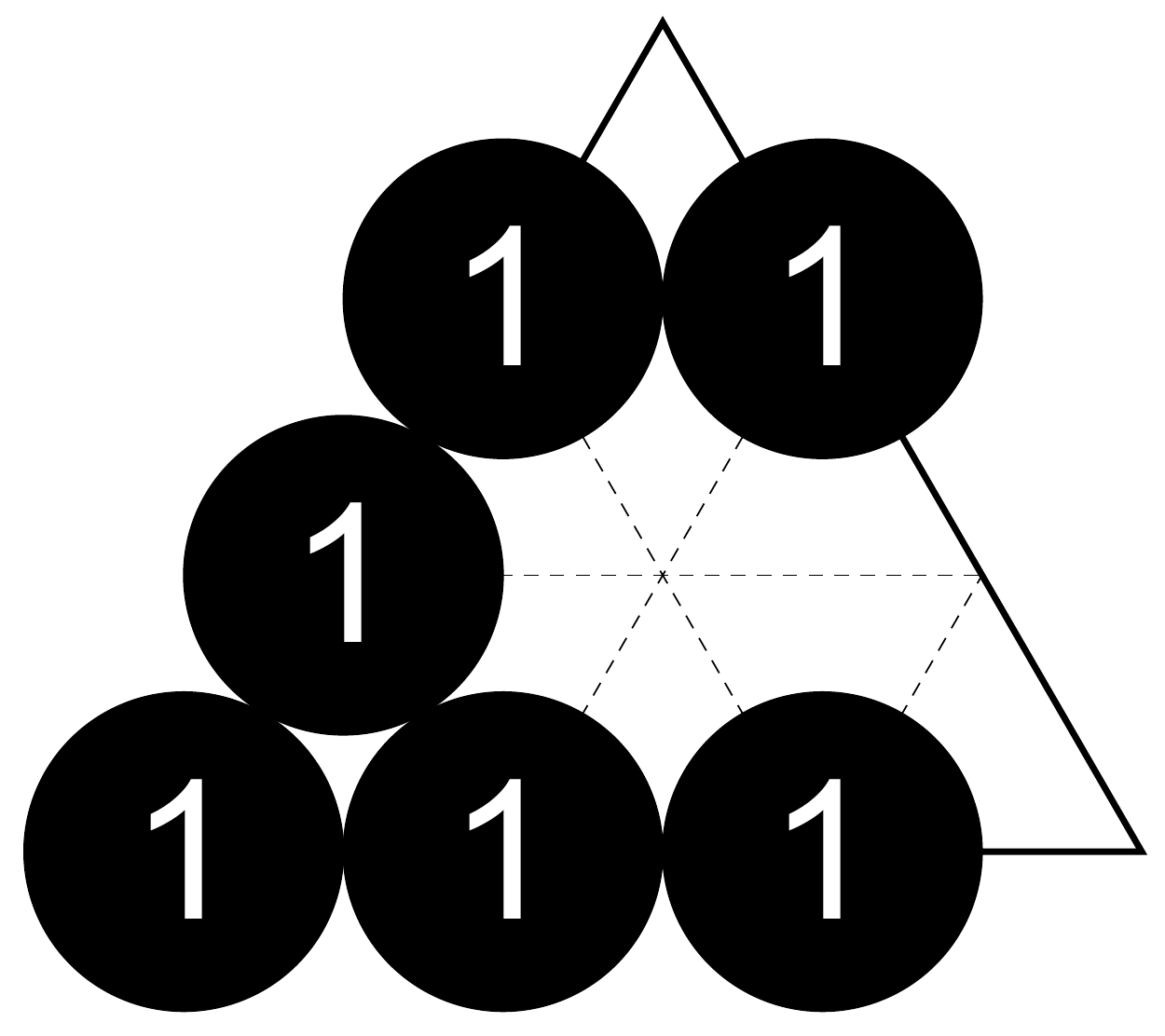}} \quad
\subfloat[24]{\includegraphics[width=1.35cm]{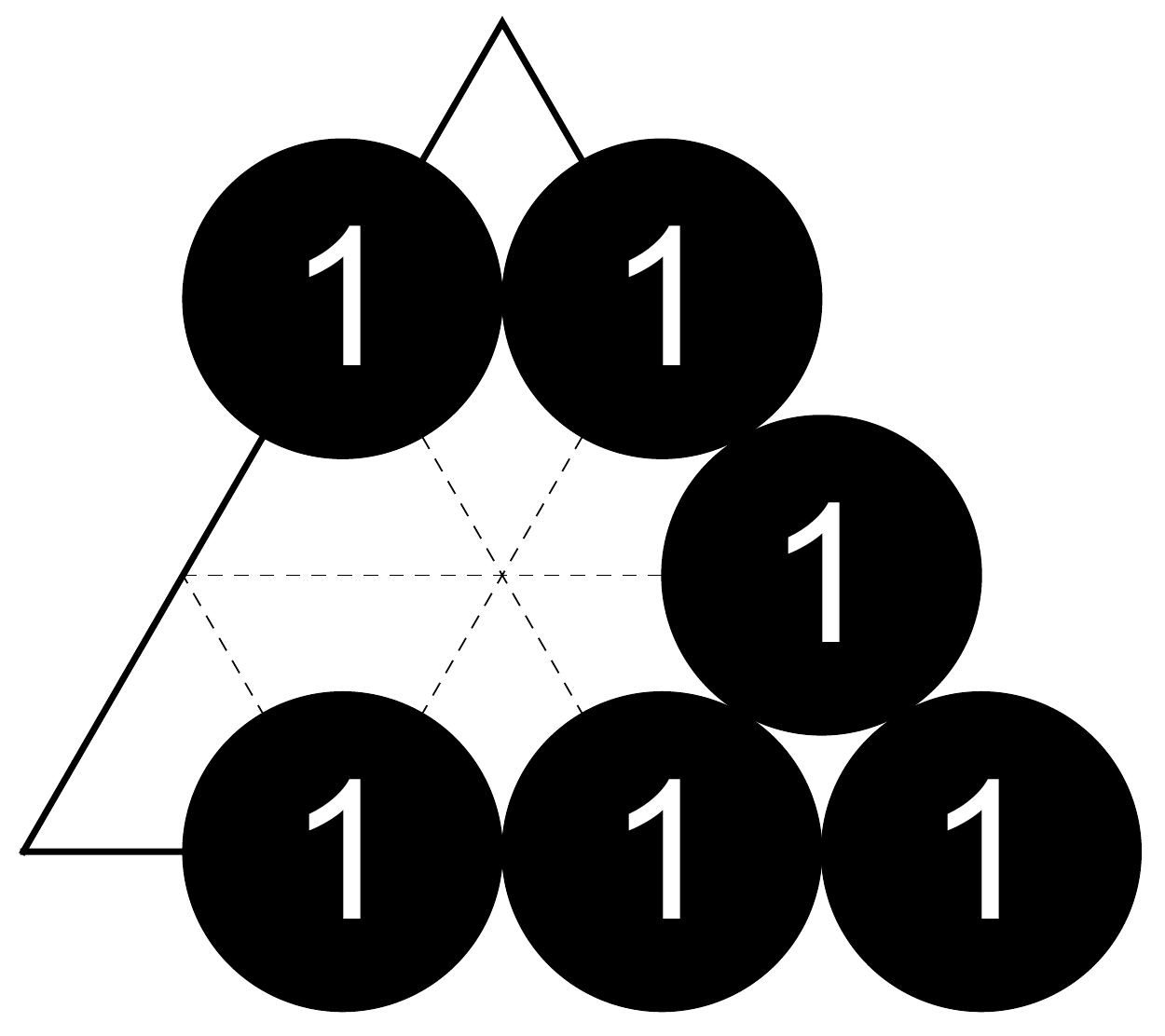}} \quad
\subfloat[25]{\includegraphics[width=1.5cm]{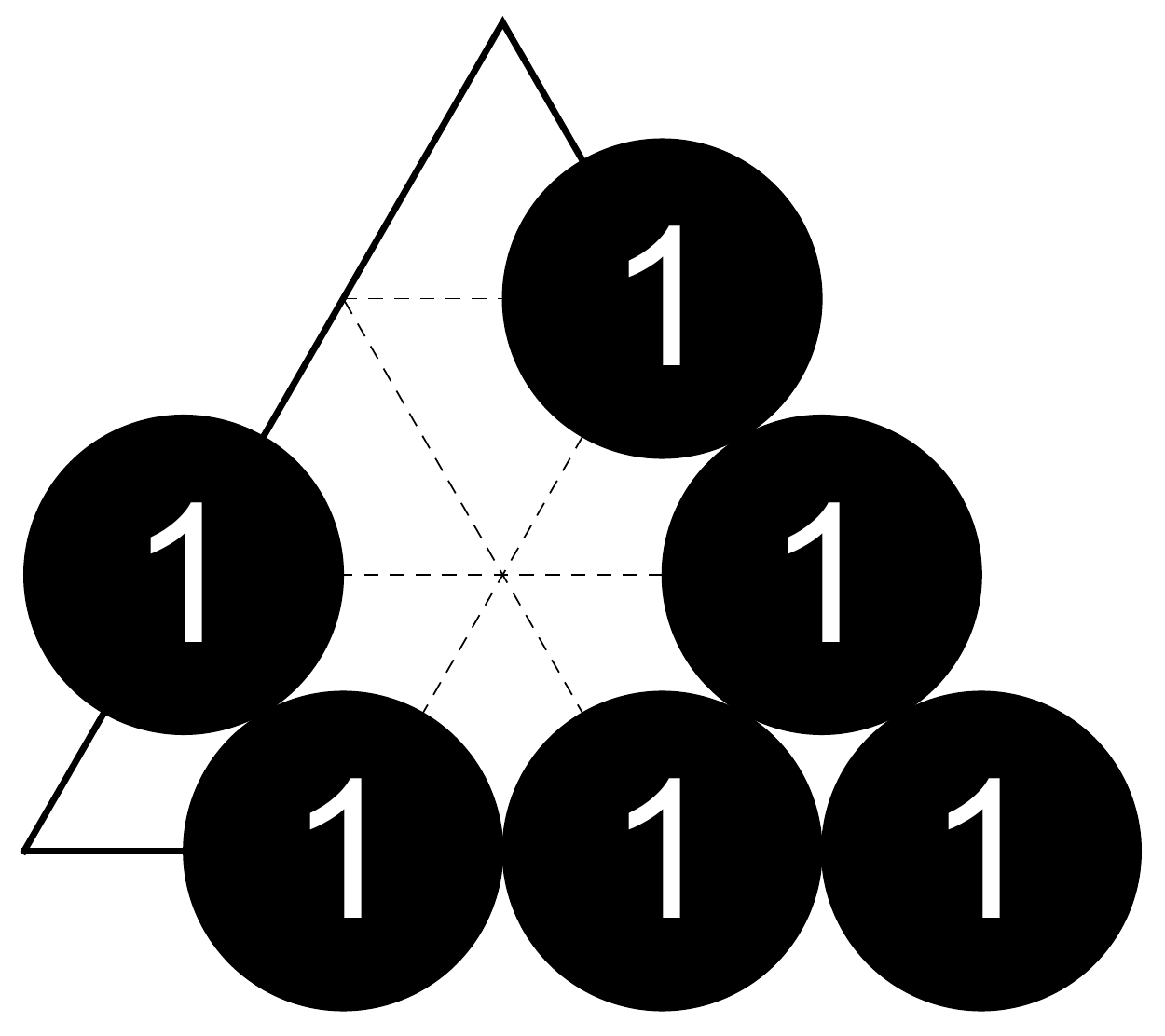}} \quad
\subfloat[26]{\includegraphics[width=1.3cm]{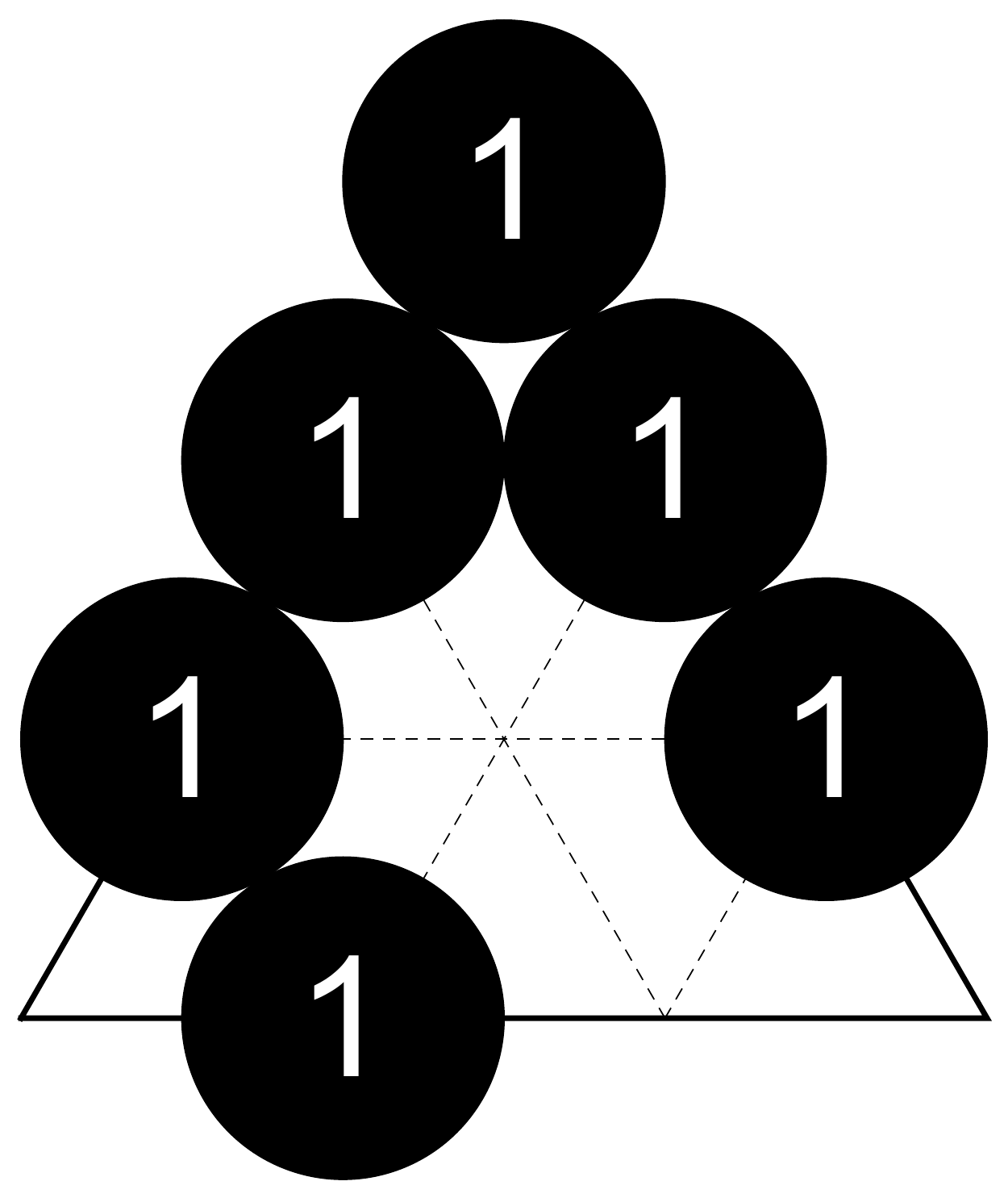}} \quad
\subfloat[27]{\includegraphics[width=1.35cm]{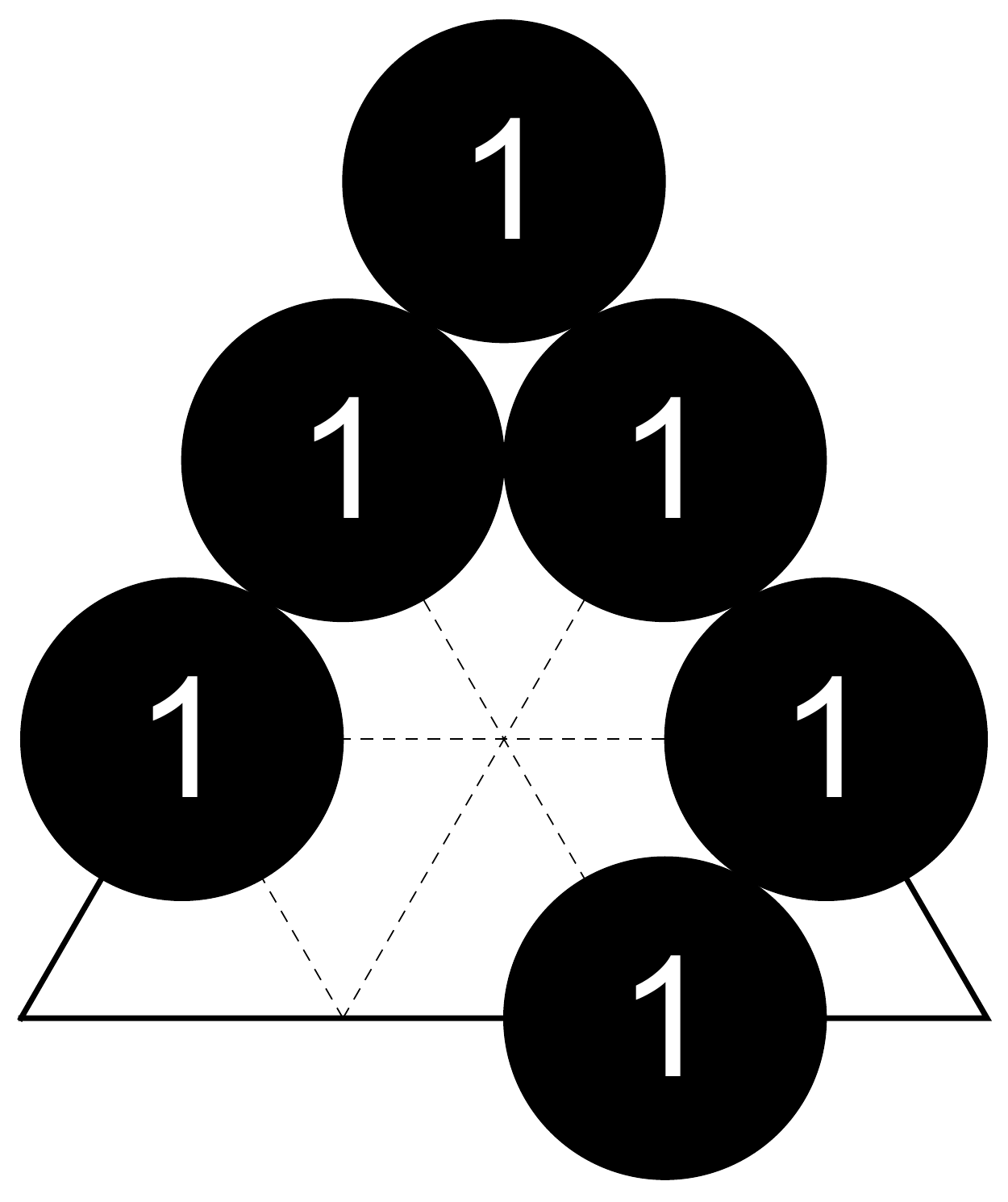}} \quad
\subfloat[28]{\includegraphics[width=1.3cm]{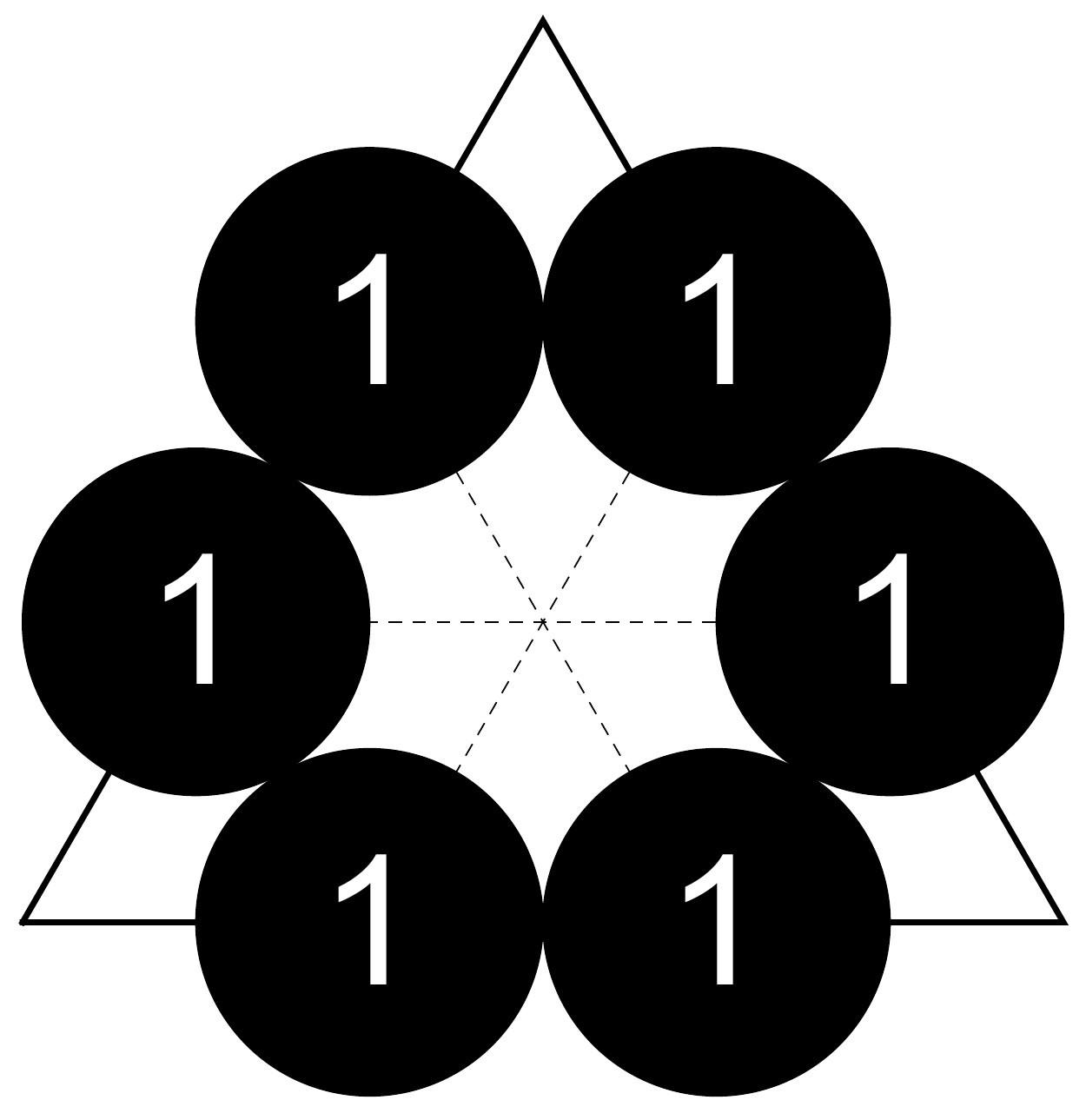}}
\caption{Sequences of knots for a set of simplex spline basis functions for $\spS_3^2(\Delta_{\WS})$. Each black disc shows the position of a knot and the number inside indicates its multiplicity.}
\label{fig:allknots}
\end{figure}

\subsection{A simplex spline basis}\label{sec:simplex-basis-1}
For a given triangle $\Delta=\langle\vp_1,\vp_2,\vp_3\rangle$, the $\WS$ split is shown in the middle plot of Figure~\ref{fig:splits}. From Theorem~\ref{thm:dimension} we know that the dimension of $\spS_3^2(\Delta_{\WS})$ is 28.
In order to construct a basis for this space, we first specify nine points along the boundary of the triangle (see Figure~\ref{fig:bvertexorder}): the three vertices $\vp_1,\vp_2,\vp_3$ and
\begin{equation}\label{eq:knot-points}
\begin{alignedat}{4}
\vp_{1,2}&:=\frac{2}{3}\vp_2+\frac{1}{3}\vp_3, &\quad
\vp_{1,3}&:=\frac{1}{3}\vp_2+\frac{2}{3}\vp_3, \\
\vp_{2,1}&:=\frac{2}{3}\vp_1+\frac{1}{3}\vp_3, &
\vp_{2,3}&:=\frac{1}{3}\vp_1+\frac{2}{3}\vp_3, \\
\vp_{3,1}&:=\frac{2}{3}\vp_1+\frac{1}{3}\vp_2,&
\vp_{3,2}&:=\frac{1}{3}\vp_1+\frac{2}{3}\vp_2.
\end{alignedat}
\end{equation}
Note that these points are part of the $\WS$ split.
We then consider the cubic simplex splines $M_1,\ldots, M_{28}$ as schematically illustrated in Figure~\ref{fig:allknots}, where each simplex spline has six (including multiplicity) knots chosen among the nine points above. For instance, $M_4$ is defined by the sequence
$\{\vxi_1,\ldots,\vxi_6\}=\{ \vp_1,\vp_1,\vp_1,\vp_{3,1},\vp_{3,2},\vp_{2,1}\}$.
Each of them can be computed using the B-recurrence relation.
We define the following set of 28 (scaled) simplex splines:
\begin{equation}\label{eq:basis}
\cB:=\{B_i:=w_iM_i, \ i=1,\ldots,28\},
\end{equation}
where, denoting by $|\Delta|$ the area of $\Delta$, the scaling factors are given by
$$
\vw:=\frac{\abs{\Delta}}{15}\left\{\frac{1}{6},\frac{1}{6},
  \frac{1}{6},\frac{1}{3},\frac{1}{3},\frac{1}{3},\frac{1}{3},
  \frac{1}{3},\frac{1}{3},\frac{1}{2},\frac{1}{2},\frac{1}{2},
  \frac{1}{2},\frac{1}{2},\frac{1}{2},\frac{2}{3},\frac{2}{3},
  \frac{2}{3},\frac{5}{6},\frac{5}{6},\frac{5}{6},\frac{2}{3},
  \frac{2}{3},\frac{2}{3},\frac{2}{3},\frac{2}{3},\frac{2}{3},1
\right\}.
$$
Note that the scaling factors sum up to $|\Delta|$.
There are seven different types of simplex splines in $\cB$. For each type, a representative $B_i$ is depicted in Figures~\ref{fig:B1}--\ref{fig:B28} in the appendix.
Explicit expressions of their polynomial pieces are given in Table~\ref{tab:polynomial-expressions} in the appendix; the remaining ones can be obtained by symmetry.

On any edge of $\Delta$, there are six basis functions nonzero. Their restrictions to that edge are nothing but the set of univariate $C^2$ cubic B-splines defined on a uniform open-knot sequence with two interior knots. For instance, for the edge $\vp_1\vp_2$, they correspond to the univariate cubic B-splines on the knot sequence specified by $\{\vp_1,\vp_1,\vp_1,\vp_1,\vp_{3,1},\vp_{3,2},\vp_2,\vp_2,\vp_2,\vp_2\}$.

\begin{theorem}\label{thm:basis}
The simplex splines $\{B_1,\ldots,B_{28}\}$ in \eqref{eq:basis} form a nonnegative partition of unity basis for the space $\spS_3^2(\Delta_{\WS})$.
\end{theorem}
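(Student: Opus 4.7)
My plan is to split the theorem into four sub-claims---membership in $\spS_3^2(\Delta_{\WS})$, nonnegativity, partition of unity, and the basis property---and use the dimension count from Theorem~\ref{thm:dimension} to reduce the basis claim to plain linear independence.

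For membership, each $B_i = w_i M_i$ is a scaled simplex spline with six knots chosen from the nine points $\vp_1,\vp_2,\vp_3,\vp_{i,j}$, so it has degree $d = 6-3 = 3$; the complete graph on these nine points is precisely the line configuration defining $\Delta_{\WS}$, so each $M_i$ is polynomial on every $\tau \in \cP_3$. The smoothness rule $M_\Xi \in C^{d+1-\mu}$ of Section~\ref{sec:summary_simplex} yields $C^2$ across any interior knot line as soon as $\mu \leq 2$, and a direct inspection of Figure~\ref{fig:allknots} confirms this bound on every interior line in each of the seven configuration types (multiple knots at a single vertex, such as the triple in $M_4$, concentrate multiplicity at a point rather than along a line). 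Nonnegativity is then immediate from $M_\Xi \geq 0$ and $w_i > 0$.

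The partition of unity is where I expect the real work to lie. My strategy is a boundary-to-interior reduction. Along each edge of $\Delta$ exactly six of the $B_i$ have nonzero trace, and the paragraph preceding the theorem identifies those traces as the six $C^2$ cubic B-splines on a uniform open-knot sequence; their classical univariate partition of unity gives $\sum_{i=1}^{28} B_i \equiv 1$ on $\partial\Delta$. Writing $g := \sum B_i - 1 \in \spS_3^2(\Delta_{\WS})$, I would next use the $A$-recurrence to rewrite the transverse derivatives $D_\vnu B_i$ and $D_\vnu^2 B_i$ as sums of lower-degree simplex splines whose edge traces are again univariate B-splines; the partition-of-unity identities for those lower-degree families then force $D_\vnu g = D_\vnu^2 g = 0$ on $\partial\Delta$. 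Finally, on each corner subelement of $\cP_3$ the cubic polynomial $g$ vanishes to order two on two distinct edges of $\Delta$, hence is divisible by two independent cubed linear forms and must be zero; propagating this across the interior knot lines using $C^2$-matching one subelement at a time completes $g \equiv 0$. Arranging the transverse-derivative bookkeeping against the WS geometry is the technical heart of the argument.

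For linear independence, since $\dim \spS_3^2(\Delta_{\WS}) = 28 = \#\cB$ by Theorem~\ref{thm:dimension}, only this last step remains. Given $\sum c_i B_i = 0$, restricting to each edge forces $c_i = 0$ for every $B_i$ whose support meets the boundary, by the independence of the six univariate cubic B-splines on that edge. The residual identity in the interior-supported $B_i$ can then be resolved by evaluations (and low-order derivatives computed via the $A$- and $B$-recurrences of Section~\ref{sec:summary_simplex}) at strategically chosen interior vertices of $\Delta_{\WS}$, peeling off coefficients one configuration type at a time as catalogued by Figure~\ref{fig:allknots}.
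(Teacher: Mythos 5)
Your membership and nonnegativity arguments coincide with the paper's, and your linear-independence plan (kill the boundary-trace coefficients via the univariate B-splines, then resolve the rest with point evaluations and derivatives) is in the same spirit as the paper's, which exhibits $28$ Hermite functionals $\rho_1,\dots,\rho_{28}$ for which the collocation matrix $[\rho_j(B_i)]$ is block upper triangular with nonsingular blocks. Note, however, that restricting to the edges only annihilates the coefficients of the basis functions with \emph{nonzero trace} there (six per edge, fifteen in total), not of ``every $B_i$ whose support meets the boundary'': functions such as $B_{16}$ touch $\partial\Delta$ but have vanishing trace, so your ``residual identity'' involves thirteen functions, several of them supported up to the boundary, and the derivative bookkeeping you defer is exactly the content of Table~\ref{tab:hermiteB}.

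The genuine gap is in the partition-of-unity step. You propose to show that $g:=\sum_i B_i-1$ has vanishing $2$-jet on $\partial\Delta$, deduce $g\equiv 0$ on the corner subelements, and then ``propagate across the interior knot lines using $C^2$-matching''. This propagation cannot succeed: two cubics joined $C^2$ across a line $\ell=0$ may differ by $c\,\ell^3$, and in fact the subspace of $\spS_3^2(\Delta_{\WS})$ consisting of splines whose $2$-jet vanishes on all of $\partial\Delta$ is one-dimensional --- it is spanned by $B_{28}$, whose support is the interior hexagon $\langle\vp_{3,1},\vp_{3,2},\vp_{1,2},\vp_{1,3},\vp_{2,1},\vp_{2,3}\rangle$, so that $B_{28}$ vanishes identically on every corner subelement and in a neighborhood of almost all of the boundary while being nonzero at the centroid. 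Your boundary argument can therefore conclude at best that $g=cB_{28}$ for some constant $c$; one interior condition is indispensable. This is precisely why the paper's list of functionals includes $\rho_{28}(f)=f(\vq)$ at the centroid: it verifies $\rho_j\bigl(\sum_iB_i\bigr)=\rho_j(1)$ for all $28$ functionals using Table~\ref{tab:hermiteB} and invokes their linear independence, which comes for free from the invertibility of $[\rho_j(B_i)]$. Adding the single check $\sum_iB_i(\vq)=1$ would close your argument.
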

\begin{proof}
Let $B$ be one of the functions $B_i$. We first prove that $B\in\spS_3^2(\Delta_{\WS})$. Since $B$ has six knots, it is a piecewise cubic polynomial. Moreover,
the knots of $B$ are a subset of the knots shown in Figure~\ref{fig:bvertexorder}. Thus, the knot lines of $B$ are a subset of the knot lines in the complete graph; see Figure~\ref{fig:splits}. Since each interior knot line contains exactly two knots, $B$ has $C^2$ smoothness according to the smoothness property of simplex splines. It follows that $B\in\spS_3^2(\Delta_{\WS})$.

We now consider linear independence. Using the recurrence relation and differentiation formula for simplex splines and the scaling factors $\vw$, we compute values and derivatives of $B$ corresponding to the following $28$ operators: $\rho_1,\ldots,\rho_{18}$ are related to the vertices,
\begin{equation} \label{eq:rho-1}
\begin{alignedat}{4}
\rho_1(f)&:=f(\vp_1), &\quad \rho_2(f)&:=f(\vp_2), &\quad \rho_3(f)&:=f(\vp_3),\\
\rho_4(f)&:=D_{\vp_1\vp_2}f(\vp_1), & \quad
\rho_5(f)&:=D_{\vp_1\vp_3}f(\vp_1), & \quad
\rho_6(f)&:=D_{\vp_2\vp_3}f(\vp_2), \\
\rho_7(f)&:=D_{\vp_2\vp_1}f(\vp_2), &
\rho_8(f)&:=D_{\vp_3\vp_1}f(\vp_3), &
\rho_9(f)&:=D_{\vp_3\vp_2}f(\vp_3), \\
\rho_{10}(f)&:=D_{\vp_1\vp_2}^2f(\vp_1), & \quad 
\rho_{11}(f)&:=D_{\vp_1\vp_3}^2f(\vp_1), & \quad 
\rho_{12}(f)&:=D_{\vp_2\vp_3}^2f(\vp_2), \\ 
\rho_{13}(f)&:=D_{\vp_2\vp_1}^2f(\vp_2), & 
\rho_{14}(f)&:=D_{\vp_3\vp_1}^2f(\vp_3), & 
\rho_{15}(f)&:=D_{\vp_3\vp_2}^2f(\vp_3), \\ 
\rho_{16}(f)&:=D_{\vp_1\vp_2}D_{\vp_1\vp_3}f(\vp_1), &
\rho_{17}(f)&:=D_{\vp_2\vp_3}D_{\vp_2\vp_1}f(\vp_2), &
\rho_{18}(f)&:=D_{\vp_3\vp_1}D_{\vp_3\vp_2}f(\vp_3);
\end{alignedat}
\end{equation}
$\rho_{19},\ldots,\rho_{27}$ are related to the edges,
\begin{equation} \label{eq:rho-2}
\begin{alignedat}{4}
\rho_{19}(f)&:=D_{\vq_3\vp_3}f(\vq_3), &
\rho_{20}(f)&:=D_{\vq_1\vp_1}f(\vq_1),&
\rho_{21}(f)&:=D_{\vq_2\vp_2}f(\vq_2), \\
\rho_{22}(f)&:=D_{\vp_{3,1}\vp_3}^2f(\vp_{3,1}), & 
\rho_{23}(f)&:=D_{\vp_{2,1}\vp_2}^2f(\vp_{2,1}), & 
\rho_{24}(f)&:=D_{\vp_{1,2}\vp_1}^2f(\vp_{1,2}), \\ 
\rho_{25}(f)&:=D_{\vp_{3,2}\vp_3}^2f(\vp_{3,2}), & \quad 
\rho_{26}(f)&:=D_{\vp_{2,3}\vp_2}^2f(\vp_{2,3}), & \quad 
\rho_{27}(f)&:=D_{\vp_{1,3}\vp_1}^2f(\vp_{1,3}), 
\end{alignedat}
\end{equation}
where
\begin{equation} \label{eq:dof-points-1}
\vq_1 :=\frac{1}{2}\vp_2+\frac{1}{2}\vp_3, \quad
\vq_2 :=\frac{1}{2}\vp_1+\frac{1}{2}\vp_3, \quad
\vq_3 :=\frac{1}{2}\vp_1+\frac{1}{2}\vp_2;
\end{equation}
and the final $\rho_{28}$ is related to the triangle,
\begin{equation}\label{eq:rho-3}
\rho_{28}(f) :=f(\vq), \quad
\vq:=\frac{1}{3}\vp_1+\frac{1}{3}\vp_2+\frac{1}{3}\vp_3.
\end{equation}
The computed values are shown in Table~\ref{tab:hermiteB} in the appendix.
Since the matrix $[\rho_j(B_i)]\in\RR^{28\times 28}$ is block upper triangular with nonsingular $2\times2$ blocks, linear independence of the set of functions $\{B_1,\ldots,B_{28}\}$ follows.
From Theorem~\ref{thm:dimension} we know that the dimension of $\spS_3^2(\Delta_{\WS})$ is 28, and thus the 28 linearly independent functions in \eqref{eq:basis} form a basis of this space. At the same time, we may conclude linear independence of the set of operators $\{\rho_1,\ldots,\rho_{28}\}$ defined on $\spS_3^2(\Delta_{\WS})$.

Simplex splines are nonnegative, so it only remains to prove that the functions in \eqref{eq:basis} sum up to one on $\Delta$. An inspection of
Table~\ref{tab:hermiteB} shows that 
$$\rho_j\biggl(\sum_{i=1}^{28}B_i\biggr)=\sum_{i=1}^{28}\rho_j(B_i)=\rho_j(1), \quad j=1,\ldots,28.$$
By linear independence of the operators $\rho_j$, $\sum_{i=1}^{28} B_i$ must be equal to the unity function which belongs to $\spS_3^2(\Delta_{\WS})$.
\end{proof}

From the proof of Theorem~\ref{thm:basis} it follows that we can formulate a Hermite interpolation problem to characterize any spline in $\spS_3^2(\Delta_{\WS})$.
\begin{corollary} \label{cor:hermite}
For given data $f_{k,\alpha,\beta}$, $g_k$, $g_{k,l}$, and $h_0$, there exists a unique spline $\spline\in\spS_3^2(\Delta_{\WS})$ such that
\begin{alignat*}{3}
D_x^\alpha D_y^\beta\spline(\vp_k) &= f_{k,\alpha,\beta}, \quad 0\leq \alpha+\beta\leq 2,\quad k=1,2,3, \\
D_{\vn_k}\spline(\vq_{k}) &= g_{k},\quad  D_{\vn_k}^2\spline(\vp_{k,l}) = g_{k,l},\quad  k,l=1,2,3, \quad k\neq l, \\
\spline(\vq) &= h_0,
\end{alignat*}
where $\vn_k$ is the normal direction of the edge opposite to vertex $\vp_k$, and the points $\vp_{k,l}$, $\vq_{k}$, and $\vq$ are defined in \eqref{eq:knot-points}, \eqref{eq:dof-points-1}, and \eqref{eq:rho-3}, respectively.
\end{corollary}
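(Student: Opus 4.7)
The total number of conditions is $3\cdot 6+3+6+1=28$, which by Theorem~\ref{thm:dimension} equals $\dim\spS_3^2(\Delta_{\WS})$. Consequently, the system is square and it suffices to prove uniqueness: if every listed datum vanishes on some $\spline\in\spS_3^2(\Delta_{\WS})$, then $\spline\equiv 0$. My plan is to recover the vanishing of the 28 operators $\rho_1,\ldots,\rho_{28}$ defined in the proof of Theorem~\ref{thm:basis}, whose linear independence was already established there; this forces $\spline\equiv 0$.

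The 18 vertex conditions prescribe the full $2$-jet of $\spline$ at each $\vp_k$ and therefore capture the information of $\rho_1,\ldots,\rho_{18}$; the centroid datum is exactly $\rho_{28}(\spline)$. To obtain the remaining nine, I would first use the vanishing vertex $2$-jets together with the observation (made just before Theorem~\ref{thm:basis}) that $\spline$ restricted to any edge of $\Delta$ lies in the six-dimensional space of univariate $C^2$ cubic splines on a clamped knot sequence with two interior breakpoints. Clamped Hermite interpolation of values, first, and second tangential derivatives at the two endpoints is unisolvent on this space, hence $\spline\equiv 0$ on each edge of $\Delta$. Decomposing the vector $\vp_k-\vq_k$ as $\alpha\vn_k+\beta\vtau_k$ (with $\alpha\neq 0$ and $\vtau_k$ tangent to the edge opposite $\vp_k$) then gives
\[
D_{\vq_k\vp_k}\spline(\vq_k)=\alpha D_{\vn_k}\spline(\vq_k)+\beta D_{\vtau_k}\spline(\vq_k)=0,
\]
where the first term is zero by the corollary and the second by $\spline\equiv 0$ on the edge. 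Hence $\rho_{19}(\spline)=\rho_{20}(\spline)=\rho_{21}(\spline)=0$.

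For $\rho_{22},\ldots,\rho_{27}$ the analogous decomposition of $\vp_k-\vp_{k,l}$ yields
\[
D_{\vp_{k,l}\vp_k}^2\spline(\vp_{k,l})=\alpha^2 D_{\vn_k}^2\spline(\vp_{k,l})+2\alpha\beta D_{\vtau_k}D_{\vn_k}\spline(\vp_{k,l})+\beta^2 D_{\vtau_k}^2\spline(\vp_{k,l}),
\]
where the first term vanishes by the corollary, the third because $\spline\equiv 0$ on the edge, and the cross term equals $D_{\vtau_k}(D_{\vn_k}\spline)(\vp_{k,l})$ by symmetry of mixed partials, and therefore requires $D_{\vn_k}\spline\equiv 0$ along the entire edge opposite $\vp_k$. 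Establishing this last fact is the main obstacle of the proof.

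Since $\spline\equiv 0$ on the edge, on each adjacent subtriangle $\spline$ factors as $\ell\cdot q$ with $\ell$ linear vanishing on the edge and $q$ quadratic; hence $D_{\vn_k}\spline$ restricted to the edge is a $C^1$ piecewise quadratic with breakpoints $\vp_{k,l},\vp_{k,j}$, a five-dimensional space. The vertex $2$-jets provide $D_{\vn_k}\spline=0$ and $D_{\vtau_k}D_{\vn_k}\spline=0$ at both endpoints $\vp_j,\vp_l$ of the edge, while the corollary supplies $D_{\vn_k}\spline(\vq_k)=0$. I would verify unisolvency of these five Hermite-type conditions on this edge-spline space via the clamped quadratic B-spline basis (the associated collocation matrix is block triangular with nonzero diagonal entries), thereby concluding $D_{\vn_k}\spline\equiv 0$ on the edge and completing the proof.
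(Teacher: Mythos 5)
Your proposal is correct and follows the same route the paper intends: the paper's proof of Corollary~\ref{cor:hermite} is a one-line appeal to the unisolvency of the functionals $\rho_1,\ldots,\rho_{28}$ established in the proof of Theorem~\ref{thm:basis}, and you reduce the stated Hermite problem to exactly that result. The only difference is that you spell out the (block-triangular) passage between the two sets of functionals --- vanishing of the edge restrictions, then of the normal derivative along each edge, then of the mixed and tangential terms in $D^2_{\vp_{k,l}\vp_k}$ --- which the paper leaves implicit; these verifications are all sound.
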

The Hermite degrees of freedom specified in Corollary~\ref{cor:hermite} are schematically visualized in Figure~\ref{fig:dof} using graphical symbols that are common in finite element literature; see, e.g., \cite{Ciarlet.78}.

\begin{figure}[t!]
\centering
\includegraphics[trim=60 30 60 20,clip,width=7.5cm]{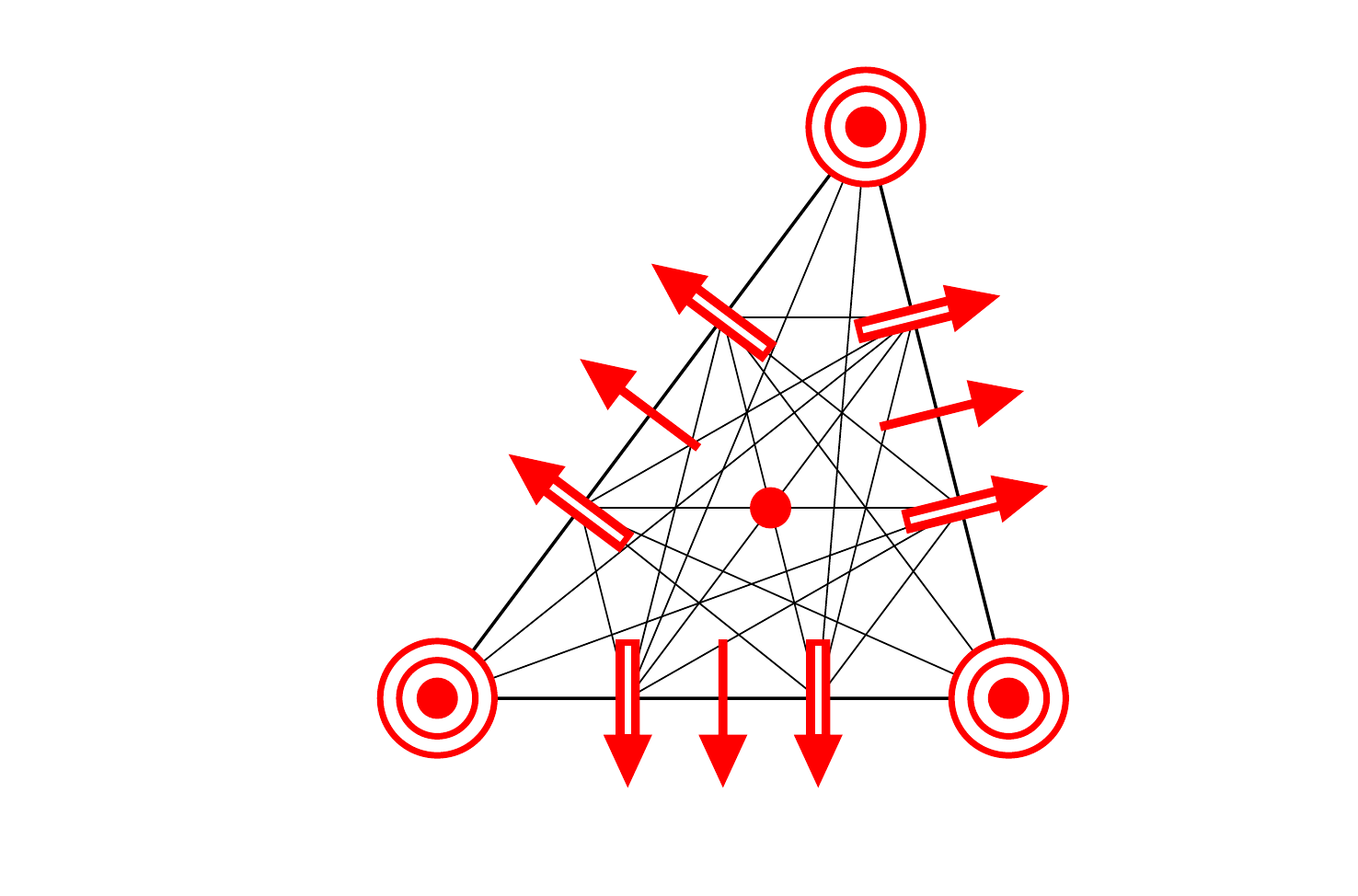}
\caption{Hermite degrees of freedom on the $\WS$ split.}
\label{fig:dof}
\end{figure}

\subsection{Domain points and condition number} \label{sec:domain-point}
We now associate a special point in $\Delta$ with each basis function $B_i$ in \eqref{eq:basis}, that plays an important role in geometric modeling.
We solve the system $\rho_j(\sum_{i=1}^{28}\greville_i(f)B_i)=\rho_j(f)$ for
the two functions $f_1(x,y):=x$ and $f_2(x,y):=y$. The points
\begin{equation} \label{eq:domain-points}
\vgreville_i:=(\greville_i(f_1),\greville_i(f_2)),\quad i=1,\ldots,28,
\end{equation}
are called the {domain points} of the basis \eqref{eq:basis}. Together with the partition of unity
the domain points provide an explicit representation of any affine function with respect to the basis \eqref{eq:basis}.
The barycentric coordinates with respect to the triangle $\Delta$ of the domain points \eqref{eq:domain-points} are given
by
\begin{equation}\label{eq:domain-points-bary}
\begin{alignedat}{5}
 \vgreville_1 &: \left(1,0,0\right), &\hspace*{0.5cm}
 \vgreville_2 &: \left(0,1,0\right), &\hspace*{0.5cm}
 \vgreville_3 &: \left(0,0,1\right), &\hspace*{0.5cm}
 \vgreville_4 &: \left(\frac{8}{9},\frac{1}{9},0\right), \\
 \vgreville_5 &: \left(\frac{8}{9},0,\frac{1}{9}\right), &
 \vgreville_6 &: \left(0,\frac{8}{9},\frac{1}{9}\right), &
 \vgreville_7 &: \left(\frac{1}{9},\frac{8}{9},0\right), &
 \vgreville_8 &: \left(\frac{1}{9},0,\frac{8}{9}\right), \\
 \vgreville_{9} &: \left(0,\frac{1}{9},\frac{8}{9}\right), &
 \vgreville_{10} &: \left(\frac{2}{3},\frac{1}{3},0\right), &
 \vgreville_{11} &: \left(\frac{2}{3},0,\frac{1}{3}\right), &
 \vgreville_{12} &: \left(0,\frac{2}{3},\frac{1}{3}\right), \\
 \vgreville_{13} &: \left(\frac{1}{3},\frac{2}{3},0\right), &
 \vgreville_{14} &: \left(\frac{1}{3},0,\frac{2}{3}\right), &
 \vgreville_{15} &: \left(0,\frac{1}{3},\frac{2}{3}\right), &
 \vgreville_{16} &: \left(\frac{7}{9},\frac{1}{9},\frac{1}{9}\right), \\
 \vgreville_{17} &: \left(\frac{1}{9},\frac{7}{9},\frac{1}{9}\right), &
 \vgreville_{18} &: \left(\frac{1}{9},\frac{1}{9},\frac{7}{9}\right), &
 \vgreville_{19} &: \left(\frac{7}{15},\frac{7}{15},\frac{1}{15}\right), &
 \vgreville_{20} &: \left(\frac{1}{15},\frac{7}{15},\frac{7}{15}\right), \\
 \vgreville_{21} &: \left(\frac{7}{15},\frac{1}{15},\frac{7}{15}\right), &
 \vgreville_{22} &: \left(\frac{5}{9},\frac{1}{3},\frac{1}{9}\right), &
 \vgreville_{23} &: \left(\frac{5}{9},\frac{1}{9},\frac{1}{3}\right), &
 \vgreville_{24} &: \left(\frac{1}{9},\frac{5}{9},\frac{1}{3}\right), \\
 \vgreville_{25} &: \left(\frac{1}{3},\frac{5}{9},\frac{1}{9}\right), &
 \vgreville_{26} &: \left(\frac{1}{3},\frac{1}{9},\frac{5}{9}\right), &
 \vgreville_{27} &: \left(\frac{1}{9},\frac{1}{3},\frac{5}{9}\right), &
 \vgreville_{28} &: \left(\frac{1}{3},\frac{1}{3},\frac{1}{3}\right).
\end{alignedat}
\end{equation}
These points are visualized in Figure~\ref{fig:domainpoints} (left). When representing a spline $\spline\in\spS_3^2(\Delta_{\WS})$ in the basis~\eqref{eq:basis},
\begin{equation}
\label{eq:rep-local}
\spline=\sum_{i=1}^{28} b_iB_i,
\end{equation}
it is common to organize the coefficients in terms of control points $(\vgreville_i,b_i)$, $i=1,\ldots,28$. There are several possibilities to connect these points into a control net. Such a control net forms a caricatural approximation for the graph of the function that is useful for geometric modeling. A viable option for connecting these points is shown in Figure~\ref{fig:domainpoints} (left); the configuration consists of a small number of regions but both triangles and quadrilaterals are involved.
As shown in Section~\ref{sec:smootheness-cond}, this choice allows for a geometric interpretation of $C^1$ smoothness conditions analogous to the classical Bernstein representation for polynomial triangular patches.
An example spline and its corresponding control net is illustrated in Figure~\ref{fig:controlnet}.

\begin{figure}[t!]
\centering
\includegraphics[width=7cm]{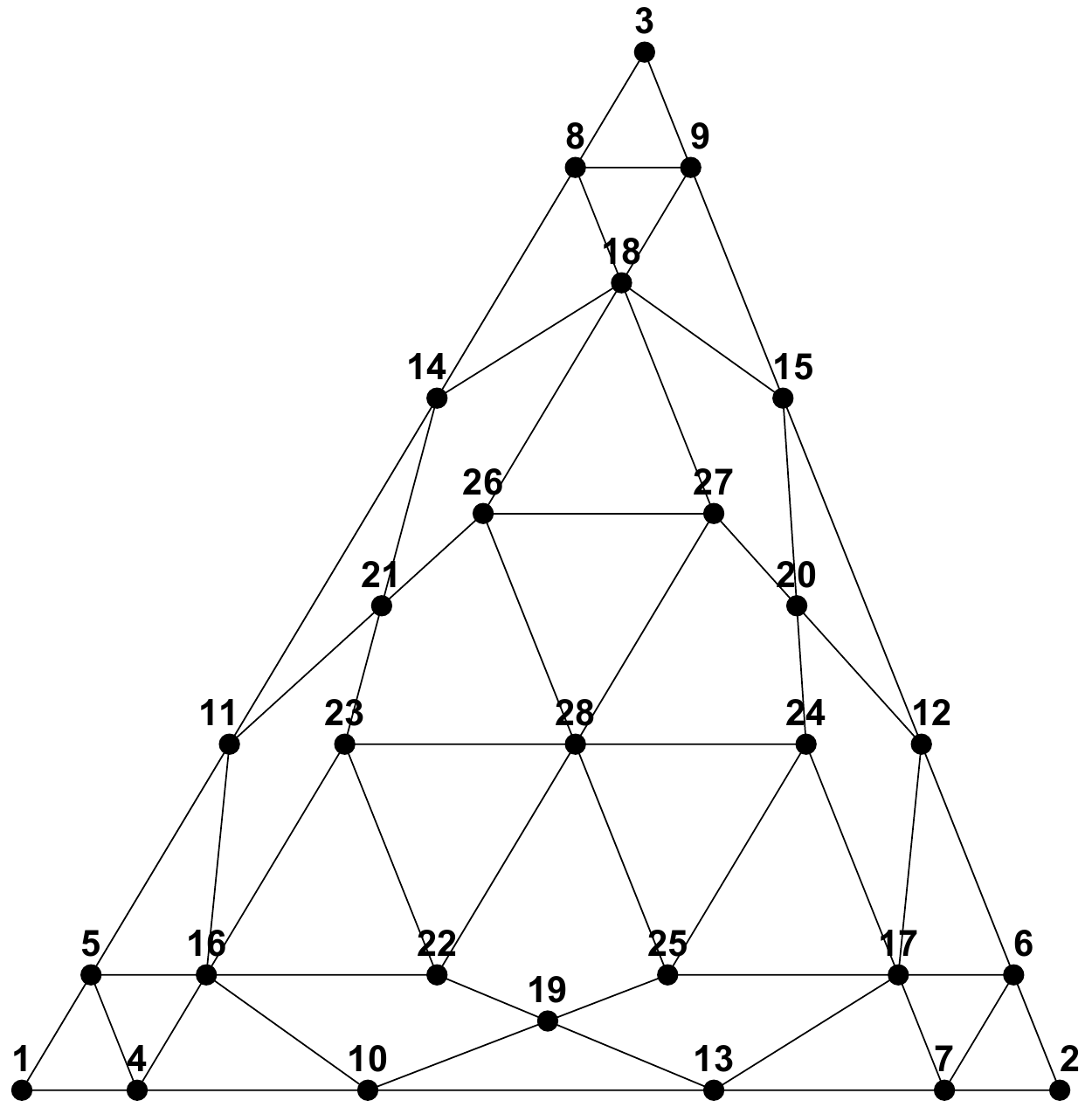}\qquad
\includegraphics[width=7cm]{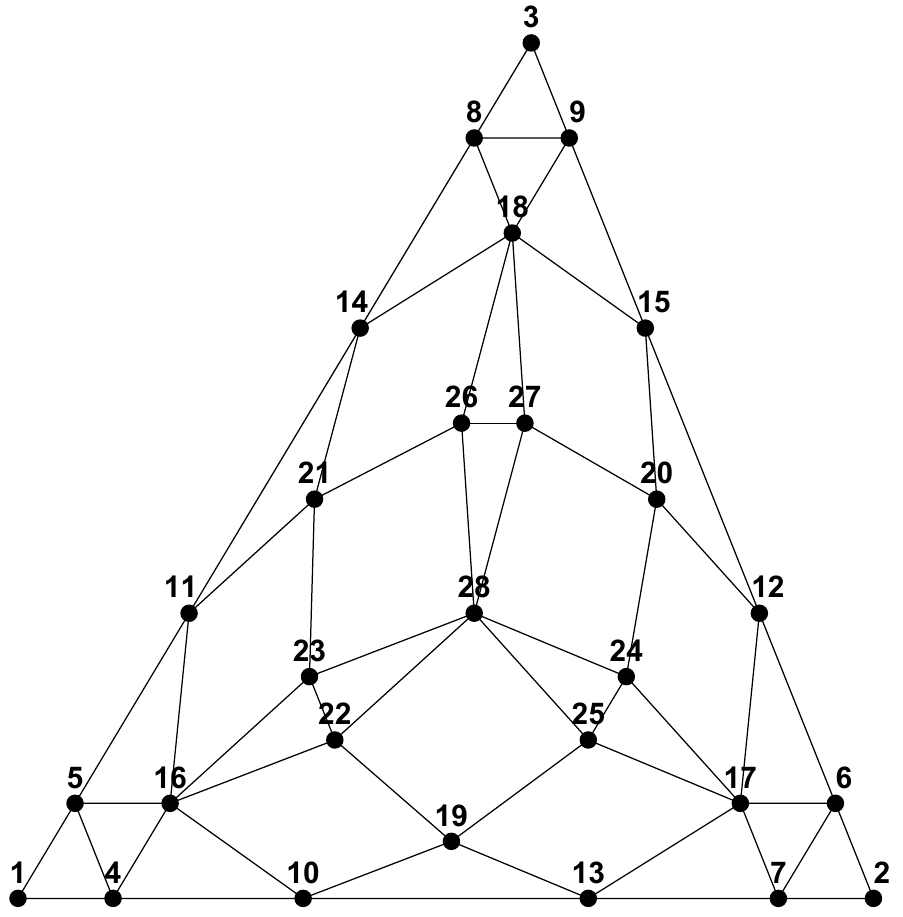}
\caption{A possible net configuration for $\spS_3^2(\Delta_{\WS})$. Left: the domain points \eqref{eq:domain-points-bary}. Right: the domain points \eqref{eq:domain-points-bary-2}.}
\label{fig:domainpoints}
\end{figure}
\begin{figure}[t!]
\centering
\includegraphics[trim=0 0 40 40,clip,width=7.1cm]{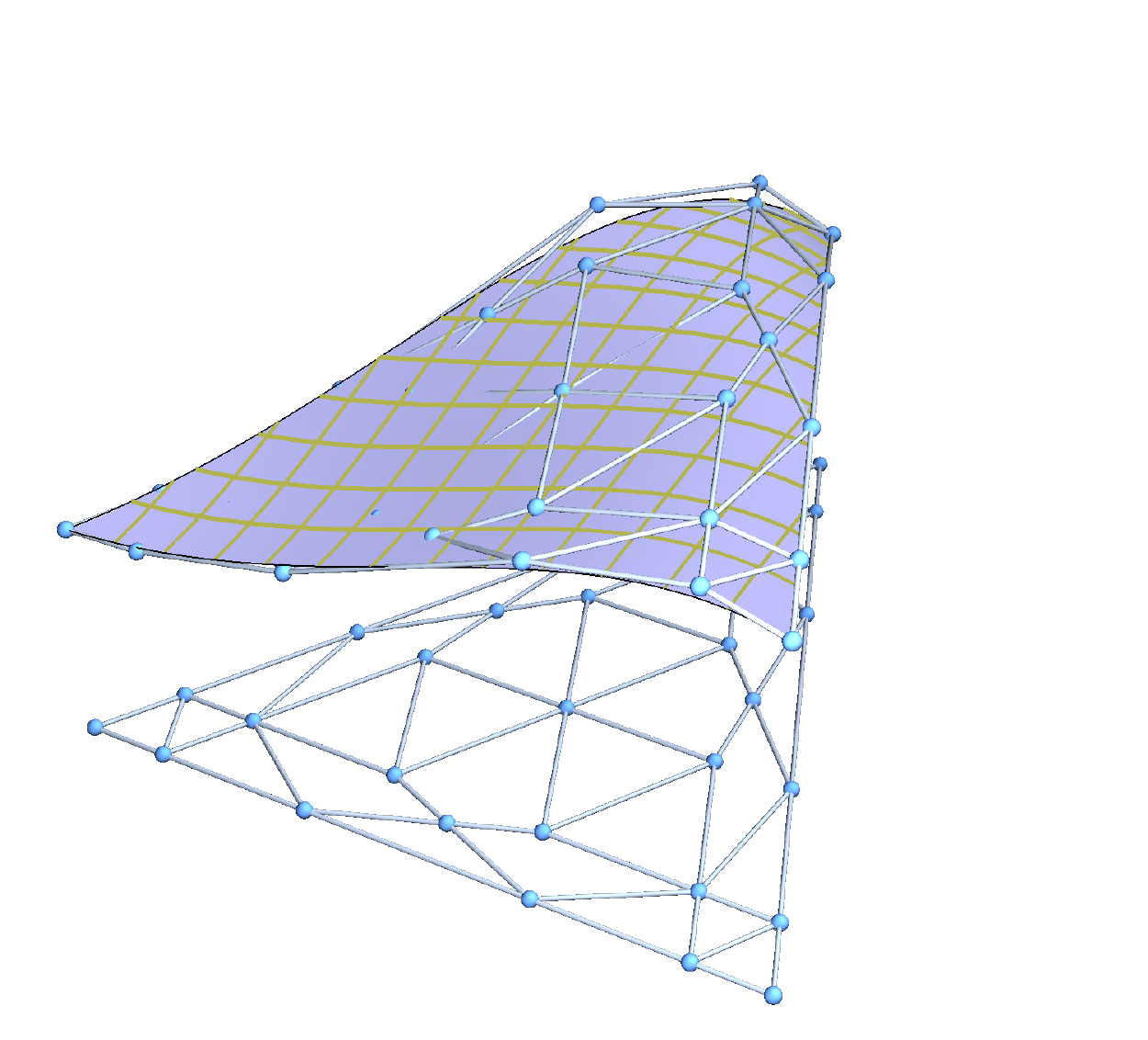}
\caption{A simplex spline surface and its control net for the domain points \eqref{eq:domain-points-bary}.}
\label{fig:controlnet}
\end{figure}

In many applications it is of interest to have a bound on the condition number of the basis we are dealing with. We consider the infinity norm, 
and we look for constants $K^{-}_\infty,K^{+}_\infty>0$ such that
for all $\vb:=(b_1,\ldots,b_{28})^T\in\RR^{28}$,
\begin{equation}
\label{eq:local-stability}
K^{-}_\infty\|\vb\|_\infty\leq\biggl\|\sum_{i=1}^{28}b_iB_i\biggr\|_\infty \leq K^{+}_\infty\|\vb\|_\infty.
\end{equation}
The condition number of the basis is then defined by
\begin{equation*}
\kappa_\infty(\cB):=\inf\{K^{+}_\infty/K^{-}_\infty : K^{-}_\infty \text{ and } K^{+}_\infty\text{ satisfy \eqref{eq:local-stability}}\}.
\end{equation*}
\begin{proposition}\label{prop:condition}
The condition number of the simplex spline basis \eqref{eq:basis} is bounded by 
$$
\kappa_\infty(\cB)<37.
$$
\end{proposition}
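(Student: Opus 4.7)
The upper bound is the easy half: since the $B_i$ are nonnegative and $\sum_{i=1}^{28}B_i\equiv 1$ on $\Delta$ by Theorem~\ref{thm:basis}, for any $\vb\in\RR^{28}$ and any $\vx\in\Delta$,
$$
\biggl|\sum_{i=1}^{28}b_iB_i(\vx)\biggr|
\le \sum_{i=1}^{28}|b_i|\,B_i(\vx)
\le \|\vb\|_\infty\sum_{i=1}^{28}B_i(\vx)=\|\vb\|_\infty,
$$
so $K^{+}_\infty=1$ is admissible and it suffices to produce a valid $K^{-}_\infty>1/37$ to conclude $\kappa_\infty(\cB)<37$.

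My plan for the lower bound is to construct dual linear functionals $\lambda_1,\ldots,\lambda_{28}$ on $\spS_3^2(\Delta_{\WS})$ satisfying $\lambda_i(B_j)=\delta_{ij}$: then any $\spline=\sum_j b_jB_j$ obeys $b_i=\lambda_i(\spline)$. If each $\lambda_i$ is expressed as a finite linear combination of point evaluations on $\Delta$ with coefficient vector $\vc_i$, then $|b_i|\le \|\vc_i\|_1\,\|\spline\|_\infty$, and taking $K^{-}_\infty:=1/\max_i\|\vc_i\|_1$ gives $\kappa_\infty(\cB)\le \max_i\|\vc_i\|_1$. Note that the Hermite operators $\rho_j$ in \eqref{eq:rho-1}--\eqref{eq:rho-3} cannot be used directly for this purpose, since several of them involve derivatives that are not controlled by the sup norm of $\spline$.

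Concretely, I would build the $\lambda_i$ by Lagrange collocation: pick $28$ evaluation points $\vx_1,\ldots,\vx_{28}\in\Delta$ (the domain points $\vgreville_i$ of \eqref{eq:domain-points-bary} being the natural first candidate, exploiting that $B_i$ is peaked near $\vgreville_i$ so that the resulting matrix should be strongly diagonally dominant), and form the collocation matrix $\Mm:=[B_j(\vx_k)]_{k,j=1}^{28}$. If $\Mm$ is nonsingular, then
$$
\vb=\Mm^{-1}\bigl(\spline(\vx_1),\ldots,\spline(\vx_{28})\bigr)^T,
$$
the rows of $\Mm^{-1}$ are the desired coefficient vectors $\vc_i$, and $\kappa_\infty(\cB)\le \|\Mm^{-1}\|_\infty$. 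Since $B_i$ depends only on the barycentric coordinates of its arguments, the resulting matrix $\Mm$ is independent of the shape of $\Delta$, so the bound is universal.

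The main obstacle is the explicit computation and verification. One must (i)~evaluate every $B_j$ at every $\vx_k$, which requires locating each $\vx_k$ in the appropriate polygon of $\Delta_{\WS}$ and invoking the polynomial pieces tabulated in Table~\ref{tab:polynomial-expressions} together with the symmetries of the basis; (ii)~verify that $\Mm$ is nonsingular (i.e.\ that the chosen points are unisolvent for $\spS_3^2(\Delta_{\WS})$); and (iii)~bound $\|\Mm^{-1}\|_\infty$ arithmetically, either by exact rational inversion or by a Gershgorin/diagonal-dominance argument refined enough to drop below $37$. If the first candidate set fails unisolvence at some $\vx_k$ (e.g.\ because several $B_i$ vanish simultaneously there), I would perturb a few of the points inward to a neighboring polygon and repeat the estimate; the diagonal-dominance structure inherited from the B-spline-like behavior of $\cB$ makes a modest bound plausible, and the specific constant $37$ is then read off from the numerical value of $\|\Mm^{-1}\|_\infty$.
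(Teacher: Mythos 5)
Your proposal is correct and follows essentially the same route as the paper: the upper bound from the nonnegative partition of unity, and the lower bound from $\|\vb\|_\infty\leq\|\vA^{-1}\|_\infty\|\sum_i b_iB_i\|_\infty$ for the Lagrange collocation matrix $\vA$ at the domain points \eqref{eq:domain-points-bary}, whose inverse the paper also bounds by a direct computation to get $\|\vA^{-1}\|_\infty<37$. The only difference is that the paper asserts unisolvence at the domain points outright, whereas you leave it (and the arithmetic) as a verification step.
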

\begin{proof}
Since the simplex spline basis \eqref{eq:basis} forms a nonnegative partition of unity, it is clear that $K^{+}_\infty=1$ satisfies \eqref{eq:local-stability}.
Let $\vA$ be the matrix for any unisolvent Lagrange interpolation problem with respect to the basis \eqref{eq:basis}. Then,
\begin{equation*}
\|\vb\|_\infty\leq \|\vA^{-1}\|_\infty\biggl\|\sum_{i=1}^{28}b_iB_i\biggr\|_\infty.
\end{equation*}
Considering interpolation at the domain points \eqref{eq:domain-points-bary}, a direct computation gives
$\|\vA^{-1}\|_\infty<37$.
This implies that $K^{-}_\infty=1/37$ satisfies \eqref{eq:local-stability}.
\end{proof}
Note that the bound on the condition number in Proposition~\ref{prop:condition} is independent of the shape of the triangle $\Delta$. From the proof we also deduce that the condition number of $\cB$ in \eqref{eq:basis} can be computed as
\begin{equation*}
\kappa_\infty(\cB)=\inf\{1/K^{-}_\infty : K^{-}_\infty \text{ satisfies \eqref{eq:local-stability}}\}.
\end{equation*}
By means of this number we can easily obtain the following distance result.

\begin{proposition}\label{prop:distance-cp}
Let $\spline\in\spS_3^2(\Delta_{\WS})$ be given as in \eqref{eq:rep-local}. Then,
$$
  |b_i-\spline(\vgreville_i)|\leq 2 \kappa_\infty(\cB)\, h^2 \max_{\alpha+\beta=2}\|D_x^\alpha D_y^\beta\spline\|_{\infty}, \quad i=1,\ldots, 28,
$$
where $h$ is the length of the largest edge of $\Delta$.
\end{proposition}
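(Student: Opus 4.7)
The plan is to combine three ingredients: affine functions are reproduced in $\cB$ by sampling at the domain points; the coefficient bound encoded in Proposition~\ref{prop:condition}; and a Taylor remainder estimate for $C^2$ functions on~$\Delta$.

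First I would verify the \emph{affine reproduction} property: for every $\ell\in\spP_1$,
\[
\ell=\sum_{j=1}^{28}\ell(\vgreville_j)\,B_j.
\]
This is essentially how the $\vgreville_j$ were defined in \eqref{eq:domain-points}: the system there forces $f_1(x,y)=x$ and $f_2(x,y)=y$ to be represented with coefficients equal to the two coordinates of $\vgreville_j$, and combined with the partition-of-unity identity $1=\sum_{j=1}^{28}B_j$ from Theorem~\ref{thm:basis}, linearity extends this to every affine function.

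Next, fix $i$ and let $\ell_i\in\spP_1$ be the first-order Taylor polynomial of $\spline$ at $\vgreville_i$, so $\ell_i(\vgreville_i)=\spline(\vgreville_i)$. Since $\ell_i\in\spP_1\subset\spS_3^2(\Delta_{\WS})$, the uniqueness of the representation in $\cB$ gives $\spline-\ell_i=\sum_{j=1}^{28}(b_j-\ell_i(\vgreville_j))B_j$. The coefficient bound $\|\vc\|_\infty\leq\kappa_\infty(\cB)\,\|\sum_j c_j B_j\|_\infty$ implicit in the definition of $\kappa_\infty(\cB)$ then yields
\[
|b_i-\spline(\vgreville_i)|=|b_i-\ell_i(\vgreville_i)|\leq\kappa_\infty(\cB)\,\|\spline-\ell_i\|_{L^\infty(\Delta)}.
\]

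It remains to estimate $\|\spline-\ell_i\|_{L^\infty(\Delta)}$. For this I would apply Taylor's theorem with integral remainder along the segment from $\vgreville_i$ to any $\vx=\vgreville_i+(u_1,u_2)\in\Delta$; this is legitimate because $\spline\in C^2(\Delta)$ globally, which is exactly the smoothness built into $\spS_3^2(\Delta_{\WS})$. Bounding the Hessian by $M:=\max_{\alpha+\beta=2}\|D_x^\alpha D_y^\beta\spline\|_\infty$ gives $|\spline(\vx)-\ell_i(\vx)|\leq\tfrac12(|u_1|+|u_2|)^2 M$. Since each of $|u_1|,|u_2|$ is at most the projected diameter of~$\Delta$, hence at most~$h$, one obtains $(|u_1|+|u_2|)^2\leq 4h^2$ and $\|\spline-\ell_i\|_{L^\infty(\Delta)}\leq 2h^2 M$. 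Substituting into the previous display yields the claim. The only real conceptual step is the affine reproduction property through the $\vgreville_j$; the rest is routine, and the factor~$2$ in the final bound comes precisely from the crude coordinate-wise estimate $|u_k|\leq h$ rather than from the Euclidean diameter of~$\Delta$.
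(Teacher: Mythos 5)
Your proof is correct and follows essentially the same route as the paper: represent the linear Taylor polynomial $\ell_i$ of $\spline$ at $\vgreville_i$ via the affine reproduction property of the domain points, apply the stability bound \eqref{eq:local-stability} to the coefficients of $\spline-\ell_i$, and finish with a second-order Taylor remainder estimate giving $\|\spline-\ell_i\|_\infty\leq 2h^2\max_{\alpha+\beta=2}\|D_x^\alpha D_y^\beta\spline\|_\infty$. The only difference is that you spell out the affine reproduction step and the origin of the factor $2$ in the Taylor bound more explicitly than the paper does.
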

\begin{proof}
Let $\Pi_i\in\spP_1$ be the linear Taylor approximation to $\spline$ at the domain point $\vgreville_i$. Note that $\Pi_i(\vgreville_i)=\spline(\vgreville_i)$.
Then, using the definition of domain points, we have
$$
\spline-\Pi_i=\sum_{j=1}^{28}(b_j-\Pi_i(\vgreville_j))B_j,
$$
so that from \eqref{eq:local-stability} we obtain
$$
|b_i-\spline(\vgreville_i)|=|b_i-\Pi_i(\vgreville_i)|\leq \max_{j=1,\ldots,28}|b_j-\Pi_i(\vgreville_j)|\leq \kappa_\infty(\cB) \| \spline-\Pi_i\|_{\infty}.
$$
Furthermore, since $\spline\in C^2(\Delta)$, Taylor approximation error analysis tells us that
$$
\| \spline-\Pi_i\|_{\infty} \leq 2h^2 \max_{\alpha+\beta=2}\|D_x^\alpha D_y^\beta \spline\|_{\infty},
$$
which completes the proof.
\end{proof}
Proposition~\ref{prop:distance-cp} implies that the control points of the spline $\spline$ in \eqref{eq:rep-local} converge like $O(h^2)$ to the graph of $\spline$.

\subsection{A Marsden-like identity}
In Section~\ref{sec:domain-point}, we have provided the representation of any affine function with respect to the basis \eqref{eq:basis}. We now extend this result by providing a Marsden-like identity which allows us to represent any cubic polynomial. 
In the univariate B-spline case, the Marsden identity is given by
$$
(z -x)^d =\sum_j \prod_{k=j+1}^{j+d} (z-\xi_k)B_{j,d}(x),
$$
where $B_{j,d}$ is a normalized B-spline of degree $d$ defined by the knots $\xi_j,\xi_{j+1},\ldots,\xi_{j+d+1}$; see, e.g., \cite[Theorem~2]{LycheMS08}. Dividing both sides by $z^d$ and setting
$y := z^{-1}$ we obtain a form more amenable to multivariate generalization
\begin{equation}\label{eq:Marsden-uni}
(1 - xy)^d =\sum_j \psi_{j,d}(y)B_{j,d}(x), \quad \psi_{j,d}(y):=\prod_{k=j+1}^{j+d} (1-y\xi_k).
\end{equation}
The functions $\psi_{j,d}$ are polynomials of degree $d$ and are called dual polynomials.
The following result is obtained by a direct computation.

\begin{theorem}\label{thm:Marsden}
We have
$$
(1+\vy^T\vx)^3=\sum_{i=1}^{28}\psi_i(\vy)B_i(\vx),\quad \vy\in\RR^2,\quad \vx\in\Delta,
$$
where the dual polynomials $\psi_i$, $i=1,\ldots,28$ are defined as follows.
Recalling the points in \eqref{eq:knot-points} and~\eqref{eq:rho-3},
\begin{alignat*}{5}
\psi_1(\vy) &:= (1+\vy^T\vp_1)^3, \quad
\psi_2(\vy) := (1+\vy^T\vp_2)^3, \quad
\psi_3(\vy) := (1+\vy^T\vp_3)^3, \\
\psi_4(\vy) &:= (1+\vy^T\vp_1)^2(1+\vy^T\vp_{3,1}), \quad
\psi_5(\vy) := (1+\vy^T\vp_1)^2(1+\vy^T\vp_{2,1}), \\
\psi_6(\vy) &:= (1+\vy^T\vp_2)^2(1+\vy^T\vp_{1,2}), \quad
\psi_7(\vy) := (1+\vy^T\vp_2)^2(1+\vy^T\vp_{3,2}), \\
\psi_8(\vy) &:= (1+\vy^T\vp_3)^2(1+\vy^T\vp_{2,3}), \quad
\psi_9(\vy) := (1+\vy^T\vp_3)^2(1+\vy^T\vp_{1,3}),
\end{alignat*}
and
\begin{alignat*}{5}
\psi_{10}(\vy) &:= (1+\vy^T\vp_1)(1+\vy^T\vp_{3,1})(1+\vy^T\vp_{3,2}), &\
\psi_{11}(\vy) &:= (1+\vy^T\vp_1)(1+\vy^T\vp_{2,3})(1+\vy^T\vp_{2,1}), \\
\psi_{12}(\vy) &:= (1+\vy^T\vp_2)(1+\vy^T\vp_{1,2})(1+\vy^T\vp_{1,3}), &
\psi_{13}(\vy) &:= (1+\vy^T\vp_2)(1+\vy^T\vp_{3,1})(1+\vy^T\vp_{3,2}), \\
\psi_{14}(\vy) &:= (1+\vy^T\vp_3)(1+\vy^T\vp_{2,3})(1+\vy^T\vp_{2,1}), &
\psi_{15}(\vy) &:= (1+\vy^T\vp_3)(1+\vy^T\vp_{1,2})(1+\vy^T\vp_{1,3}), \\
\psi_{16}(\vy) &:= (1+\vy^T\vp_1)(1+\vy^T\vp_{3,1})(1+\vy^T\vp_{2,1}), &
\psi_{17}(\vy) &:= (1+\vy^T\vp_2)(1+\vy^T\vp_{3,2})(1+\vy^T\vp_{1,2}), \\
\psi_{18}(\vy) &:= (1+\vy^T\vp_3)(1+\vy^T\vp_{1,3})(1+\vy^T\vp_{2,3}), &
\psi_{19}(\vy) &:= (1+\vy^T\vp_{3,1})(1+\vy^T\vp_{3,2})(1+\vy^T\vm_1), \\
\psi_{20}(\vy) &:= (1+\vy^T\vp_{1,2})(1+\vy^T\vp_{1,3})(1+\vy^T\vm_2), &\
\psi_{21}(\vy) &:= (1+\vy^T\vp_{2,3})(1+\vy^T\vp_{2,1})(1+\vy^T\vm_3), \\
\psi_{22}(\vy) &:= (1+\vy^T\vp_{3,1})(1+\vy^T\vp_{3,2})(1+\vy^T\vp_{2,1}), &
\psi_{23}(\vy) &:= (1+\vy^T\vp_{3,1})(1+\vy^T\vp_{2,3})(1+\vy^T\vp_{2,1}), \\
\psi_{24}(\vy) &:= (1+\vy^T\vp_{3,2})(1+\vy^T\vp_{1,2})(1+\vy^T\vp_{1,3}), &
\psi_{25}(\vy) &:= (1+\vy^T\vp_{3,1})(1+\vy^T\vp_{3,2})(1+\vy^T\vp_{1,2}), \\
\psi_{26}(\vy) &:= (1+\vy^T\vp_{1,3})(1+\vy^T\vp_{2,3})(1+\vy^T\vp_{2,1}), &
\psi_{27}(\vy) &:= (1+\vy^T\vp_{1,2})(1+\vy^T\vp_{1,3})(1+\vy^T\vp_{2,3}),
\end{alignat*}
and
\begin{align*}
&\psi_{28}(\vy) := (1+\vy^T\vq)\bigl[2(1+\vy^T\vq)^2\\
&\quad -\frac{1}{3}\bigl((1+\vy^T\vp_{1,3})(1+\vy^T\vp_{2,3})+
(1+\vy^T\vp_{3,2})(1+\vy^T\vp_{1,2})+(1+\vy^T\vp_{3,1})(1+\vy^T\vp_{2,1})\bigr)\bigr],
\end{align*}
where
$$
\vm_1 := \frac{2}{5} \vp_1 + \frac{2}{5} \vp_2 + \frac{1}{5} \vp_3, \quad
\vm_2 := \frac{1}{5} \vp_1 + \frac{2}{5} \vp_2 + \frac{2}{5} \vp_3, \quad
\vm_3 := \frac{2}{5} \vp_1 + \frac{1}{5} \vp_2 + \frac{2}{5} \vp_3.
$$
\end{theorem}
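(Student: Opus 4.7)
The strategy is to fix $\vy\in\RR^2$ and view both sides of the identity as elements of $\spS_3^2(\Delta_{\WS})$ in the variable $\vx$. The left-hand side is a cubic polynomial, which lies trivially in the space, and the right-hand side is a linear combination of the basis $\{B_1,\ldots,B_{28}\}$ from Theorem~\ref{thm:basis}. Since the dual operators $\{\rho_1,\ldots,\rho_{28}\}$ of \eqref{eq:rho-1}--\eqref{eq:rho-3} are linearly independent on $\spS_3^2(\Delta_{\WS})$, the identity reduces to checking the 28 polynomial identities in $\vy$
\begin{equation*}
\rho_j\bigl((1+\vy^T\vx)^3\bigr)=\sum_{i=1}^{28}\psi_i(\vy)\,\rho_j(B_i),\quad j=1,\ldots,28.
\end{equation*}
The values $\rho_j(B_i)$ are already tabulated in Table~\ref{tab:hermiteB}, while the left-hand sides are produced directly by the chain rule, yielding expressions such as $\rho_1\mapsto(1+\vy^T\vp_1)^3$, $\rho_4\mapsto 3(1+\vy^T\vp_1)^2\,\vy^T(\vp_2-\vp_1)$, $\rho_{10}\mapsto 6(1+\vy^T\vp_1)\bigl(\vy^T(\vp_2-\vp_1)\bigr)^2$, and $\rho_{28}\mapsto(1+\vy^T\vq)^3$.

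The upper triangular block structure of $[\rho_j(B_i)]$ noted in the proof of Theorem~\ref{thm:basis} allows the 28 checks to be organized by back-substitution: the vertex operators fix the $\psi_i$ associated with the six basis functions concentrated at each of $\vp_1,\vp_2,\vp_3$; the edge operators $\rho_{19},\ldots,\rho_{27}$ then fix the $\psi_i$ for the edge-centered functions; and finally the central operator $\rho_{28}$ fixes $\psi_{28}$. The closed-form expressions obtained for $\psi_1,\ldots,\psi_{27}$ all take the shape of a product $(1+\vy^T\vxi_{i_1})(1+\vy^T\vxi_{i_2})(1+\vy^T\vxi_{i_3})$, which is natural: each such product is the blossom (symmetric polarization) of the cubic $(1+\vy^T\vx)^3$ evaluated at three specific active knots of the corresponding simplex spline $M_i$, mirroring the univariate pattern of \eqref{eq:Marsden-uni}. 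Along each vertex block the three factors can be read off from which knots of $M_i$ remain when one peels off the boundary multiplicities, and the same blossom interpretation along the edges produces $\psi_{19},\ldots,\psi_{27}$.

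The main obstacle, and the only step that resists a clean blossoming interpretation, is the central coefficient $\psi_{28}$. Because $B_{28}$ has no canonical triple of knots whose blossom would collapse to a single product, $\psi_{28}$ must be extracted from the last equation
\begin{equation*}
(1+\vy^T\vq)^3=\sum_{i=1}^{27}\psi_i(\vy)\,B_i(\vq)+\psi_{28}(\vy)\,B_{28}(\vq),
\end{equation*}
where the values $B_i(\vq)$ are read off Table~\ref{tab:hermiteB} via $\rho_{28}$ and the $\psi_1,\ldots,\psi_{27}$ have been previously determined. Solving this linear equation for $\psi_{28}(\vy)$ and simplifying is the only genuinely lengthy computation; the output must match the stated nonproduct expression $(1+\vy^T\vq)\bigl[2(1+\vy^T\vq)^2-\tfrac{1}{3}\sum\bigr]$, in which the subtraction of the three pairwise products $(1+\vy^T\vp_{k,l})(1+\vy^T\vp_{k',l'})$ is precisely what is needed to compensate for the contributions of the boundary-centered $\psi_i$ at the centroid. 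All remaining identities reduce to multilinear expansion and coefficient matching in $\vy$, which is routine.
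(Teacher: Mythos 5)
Your proposal is correct and is consistent with the paper, which offers no argument beyond the remark that the identity is ``obtained by a direct computation.'' Reducing the claim to the $28$ functional identities $\rho_j\bigl((1+\vy^T\vx)^3\bigr)=\sum_i\psi_i(\vy)\rho_j(B_i)$ via the linear independence of the $\rho_j$ established in the proof of Theorem~\ref{thm:basis} together with Table~\ref{tab:hermiteB}, and then isolating $\psi_{28}$ from the $\rho_{28}$ equation (where the relations $\vp_{3,1}+\vp_{1,3}=\vp_{1,2}+\vp_{2,1}=\vp_{3,2}+\vp_{2,3}=2\vq$ collapse $\tfrac{1}{12}\sum_{i=22}^{27}\psi_i$ into the stated non-product form), is a sound and complete way to organize exactly that computation.
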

It is remarkable that the dual polynomials $\psi_i$, $i=1,\ldots,27$, can be written as products of three linear polynomials and mimic the classical univariate Marsden identity \eqref{eq:Marsden-uni}. It is worth noting that these functions are the polar forms (or blossoms) of $(1+\vy^T\vx)^3$ evaluated at appropriate points \cite{Ramshaw89}. Similarly, polar forms were used in \cite{Neamtu07} for the representation of polynomials in terms of simplex splines whose knots are in generic position. This is not the case for the knots in Figures~\ref{fig:allknots} and \ref{fig:allknots2}.

\begin{figure}[t!]
\centering
\subfloat[1]{\includegraphics[width=1.5cm]{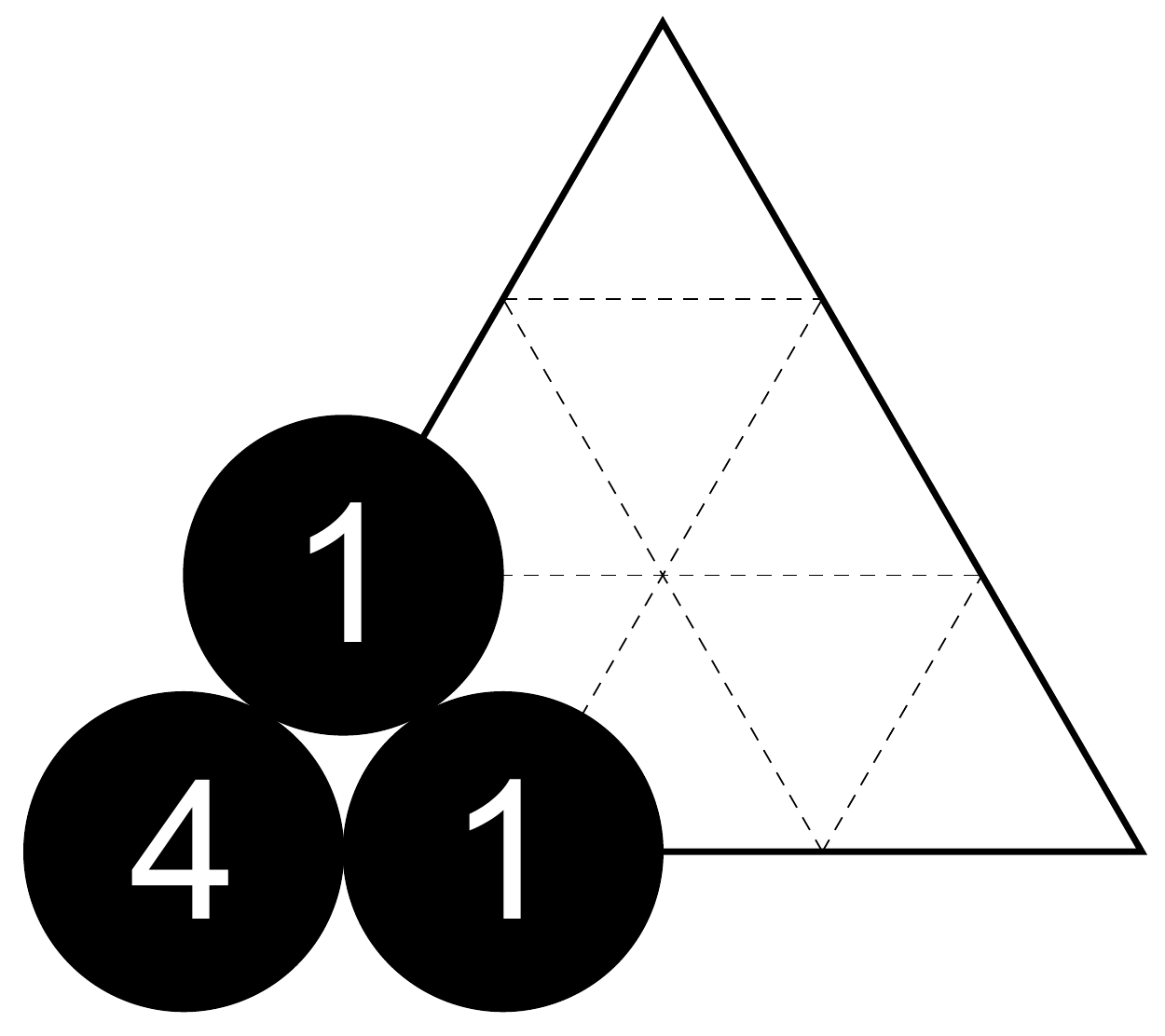}} \quad
\subfloat[2]{\includegraphics[width=1.5cm]{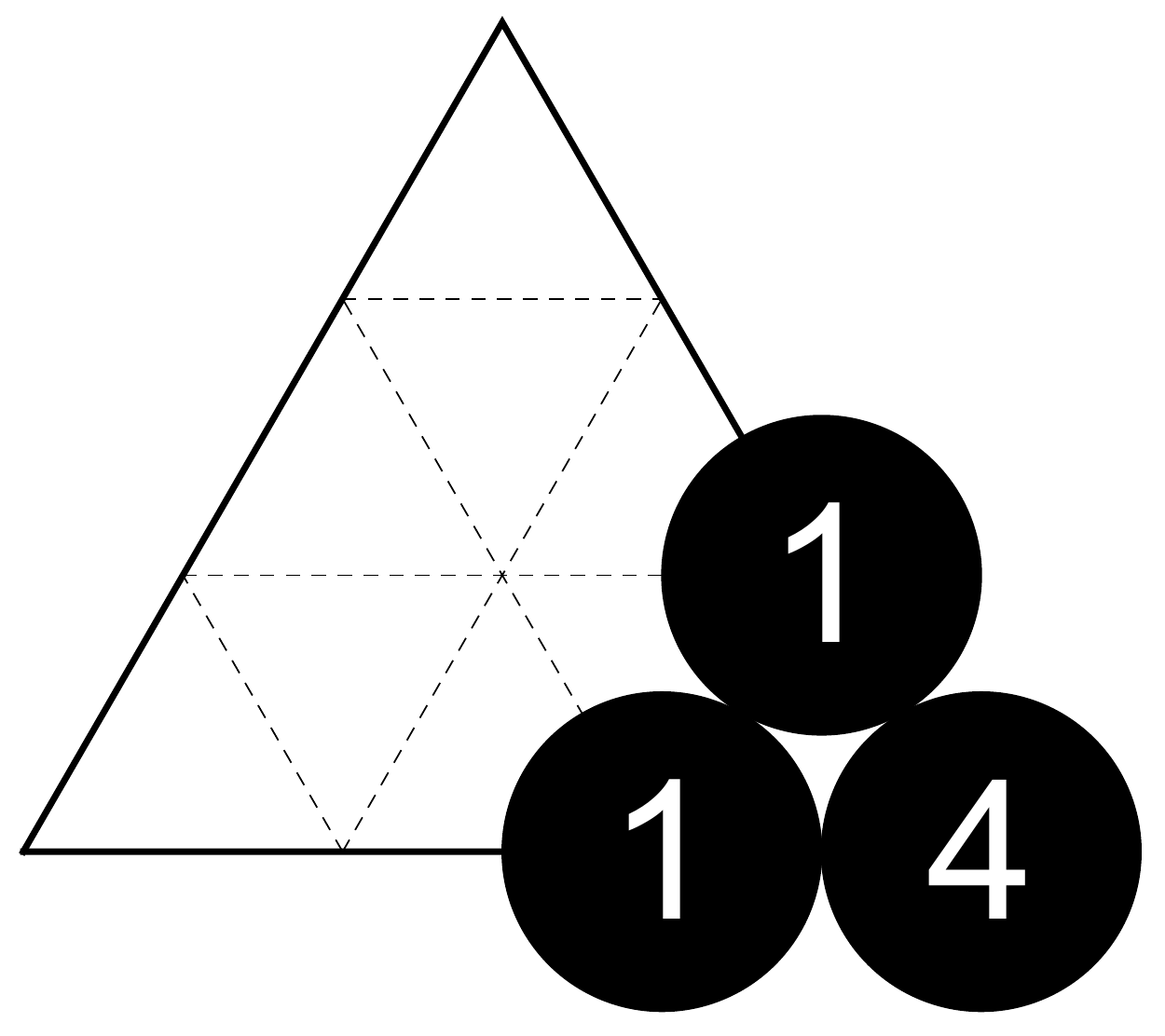}} \quad
\subfloat[3]{\includegraphics[width=1.3cm]{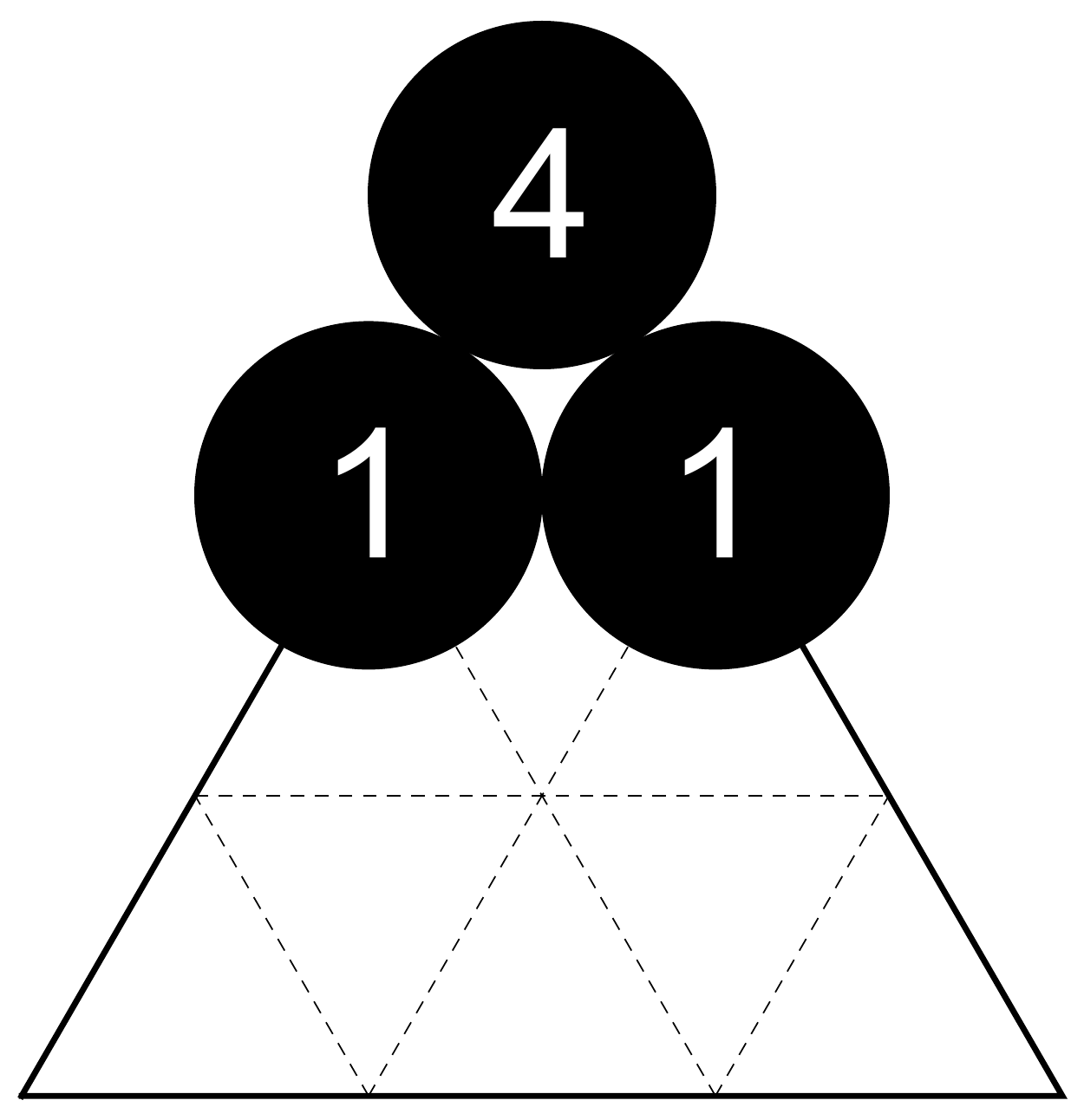}} \quad
\subfloat[4]{\includegraphics[width=1.5cm]{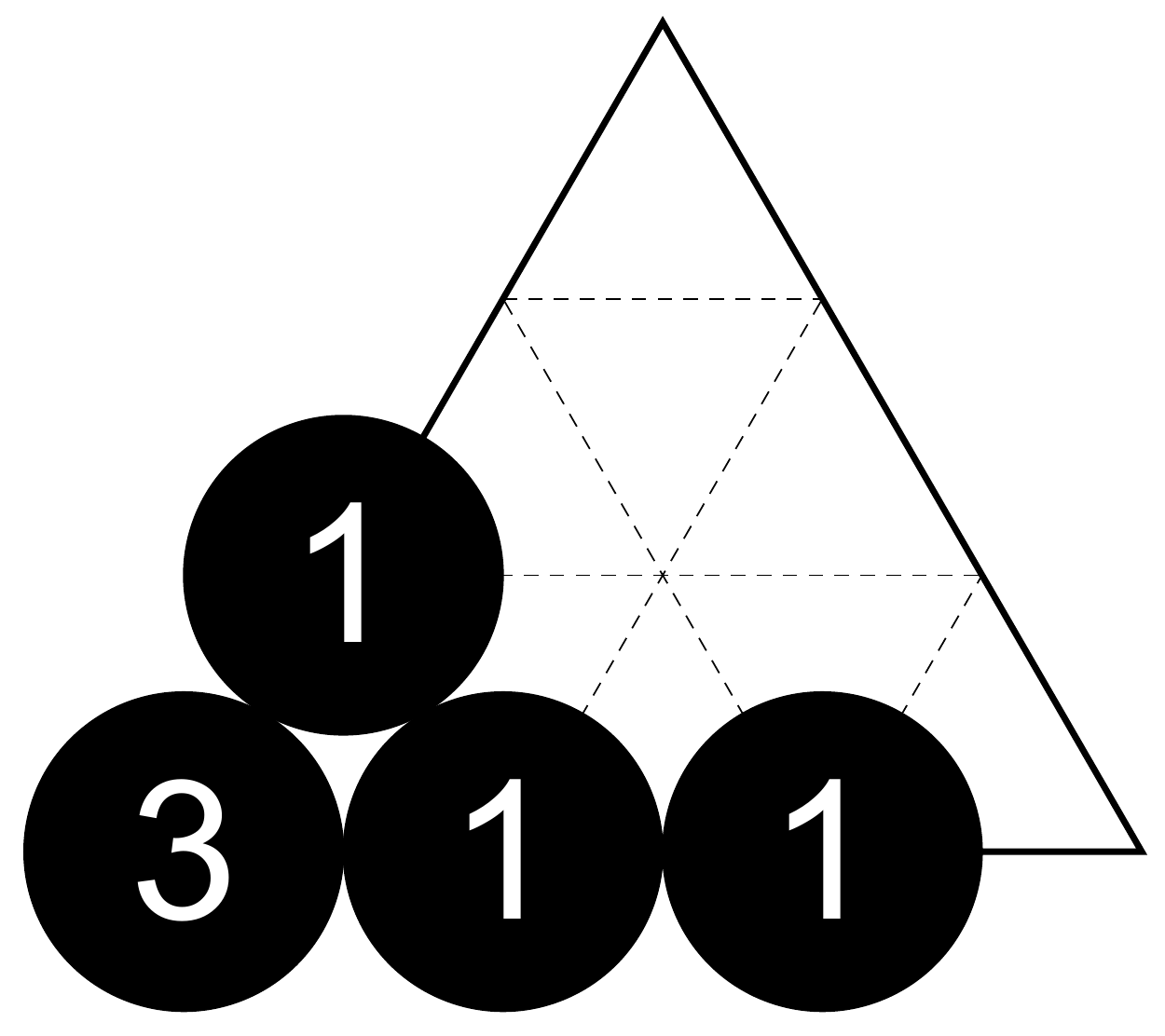}} \quad
\subfloat[5]{\includegraphics[width=1.5cm]{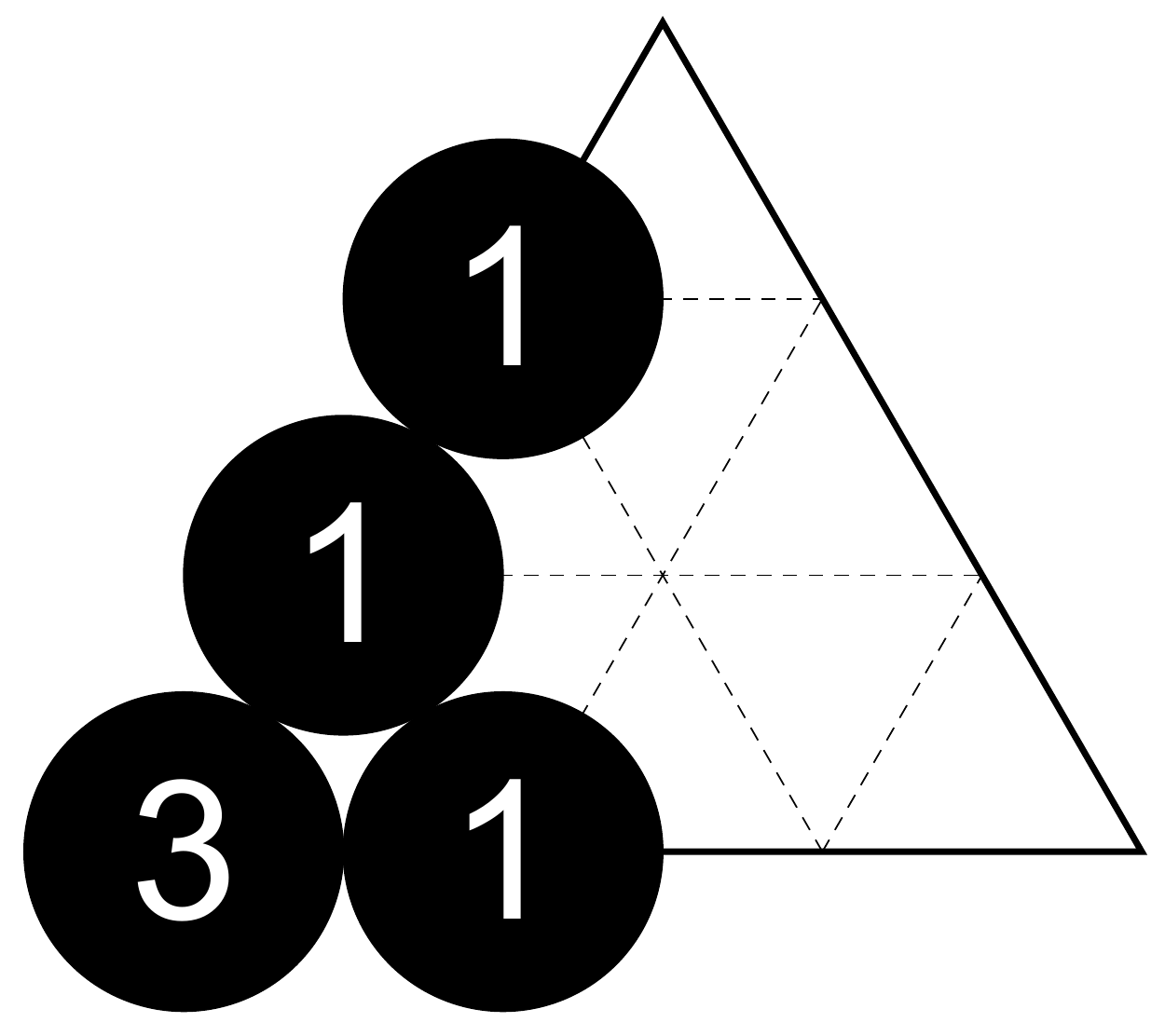}} \quad
\subfloat[6]{\includegraphics[width=1.5cm]{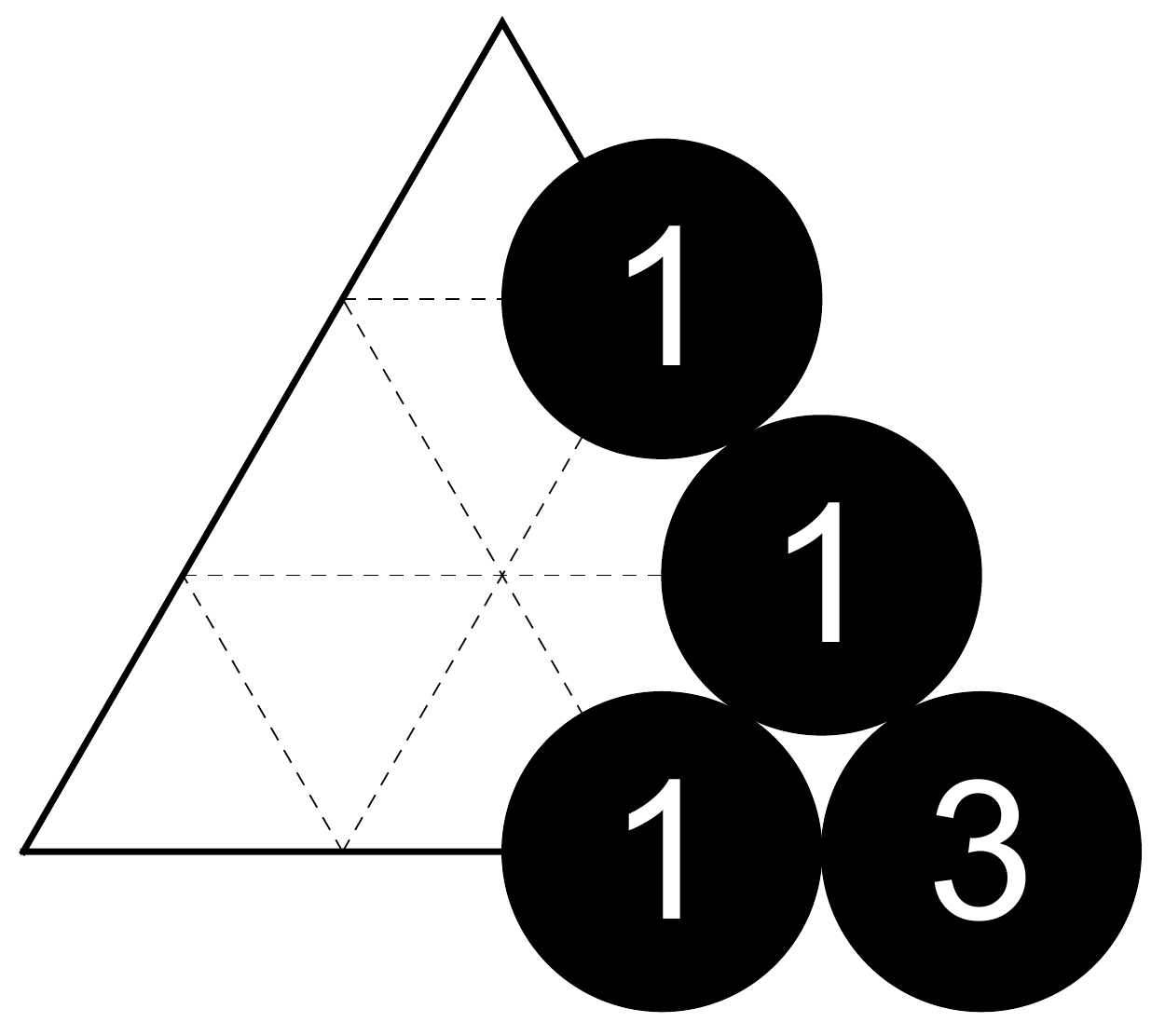}} \quad
\subfloat[7]{\includegraphics[width=1.5cm]{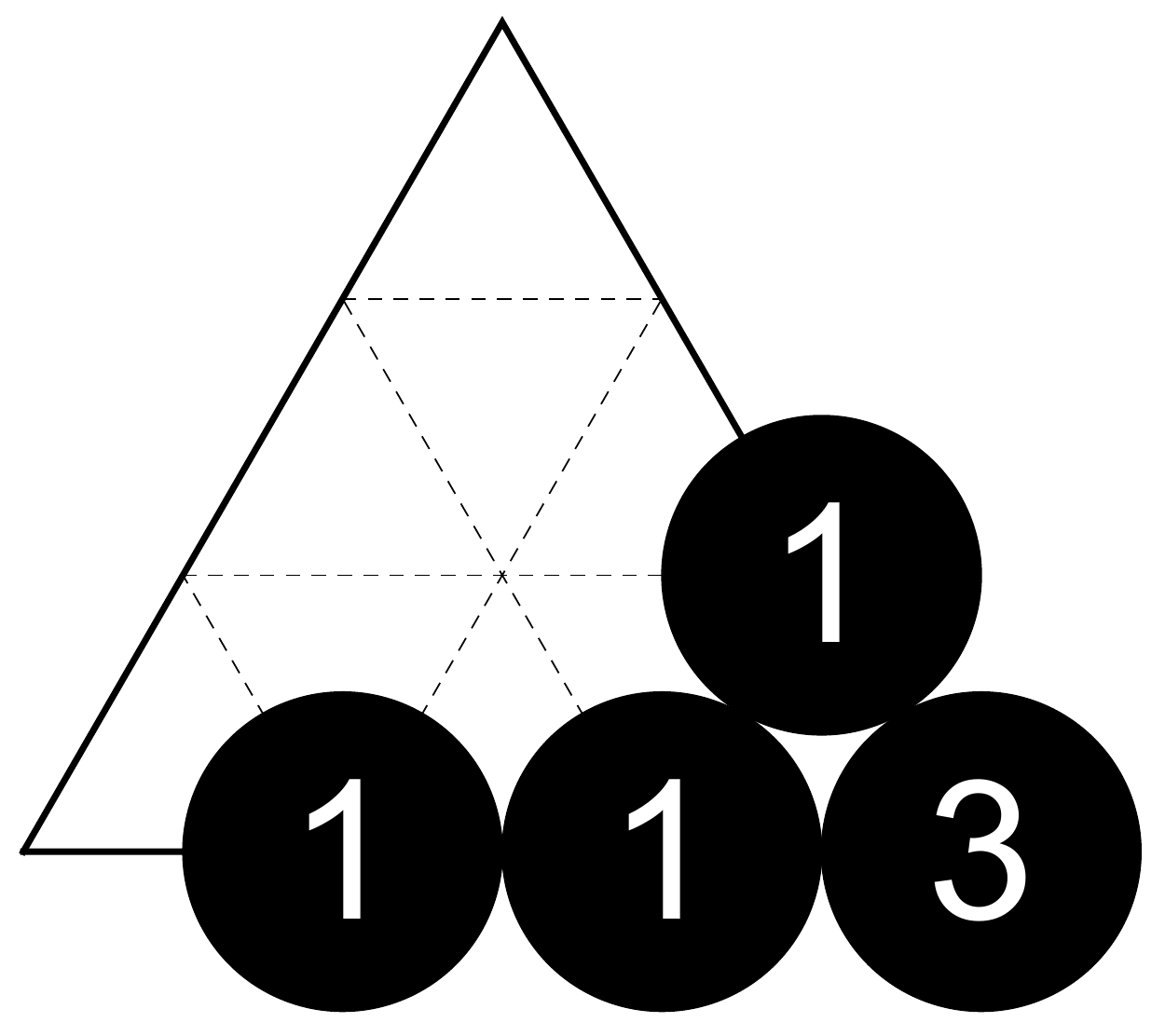}}
\\
\subfloat[8]{\includegraphics[width=1.35cm]{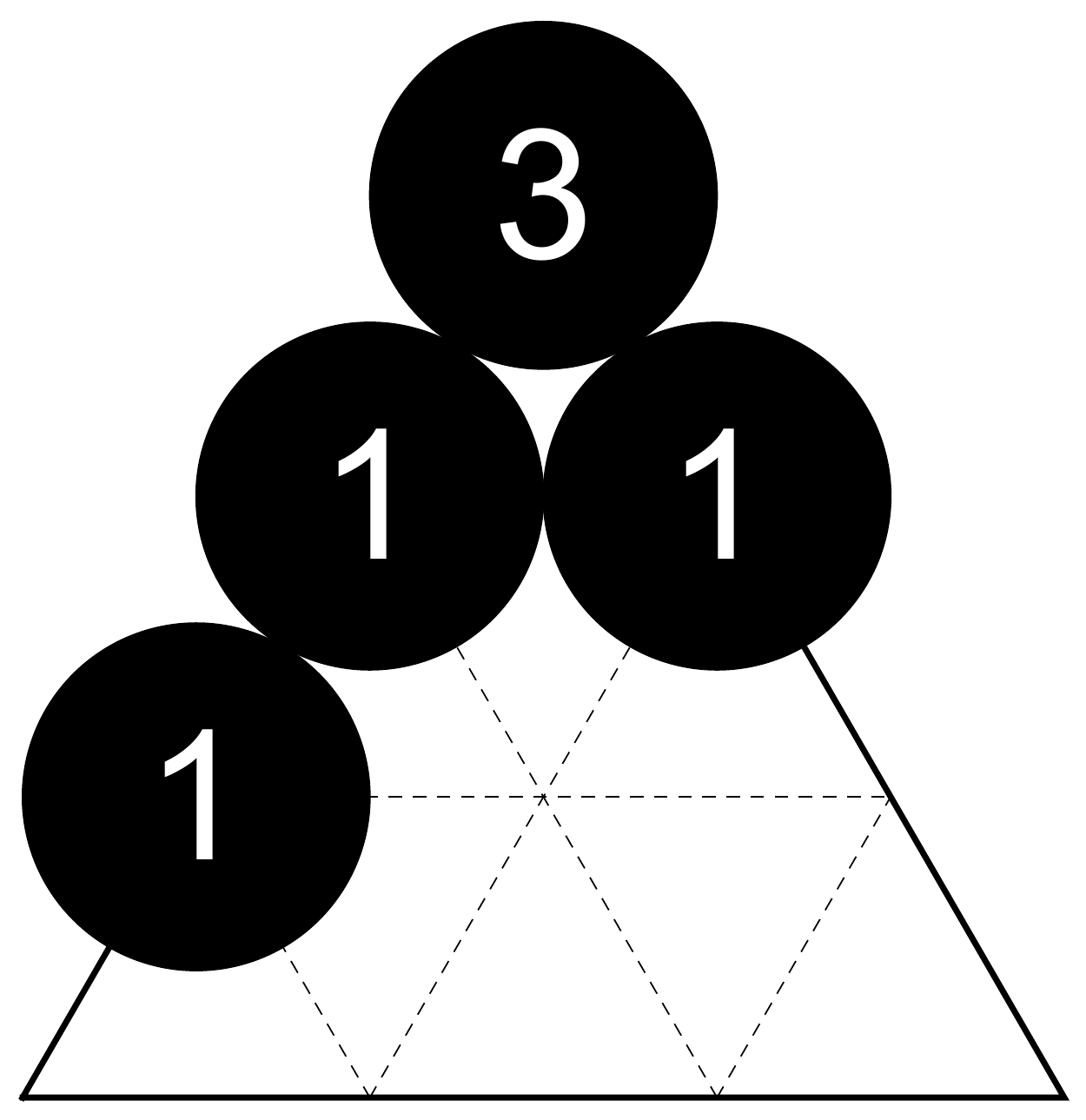}} \quad
\subfloat[9]{\includegraphics[width=1.35cm]{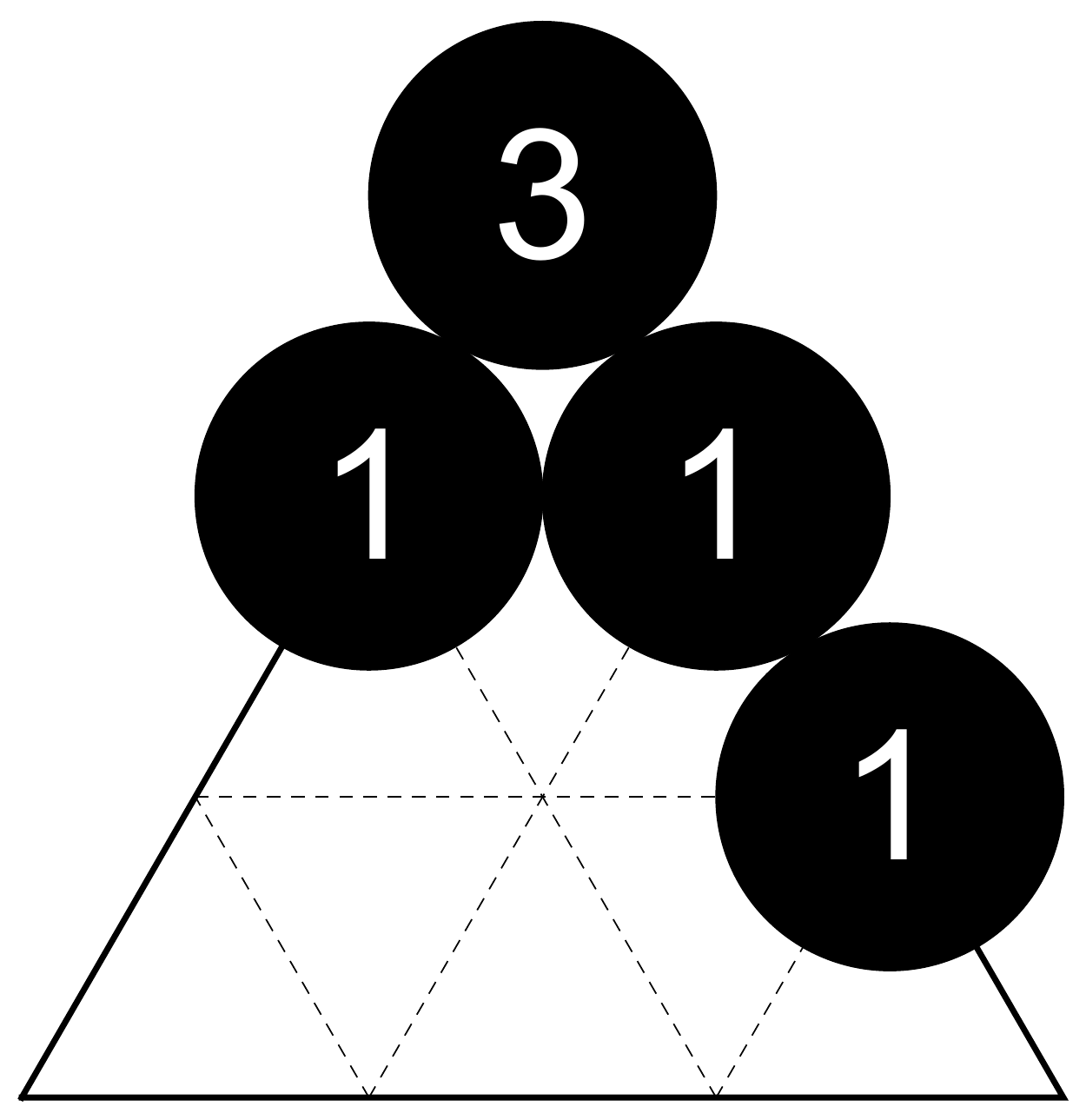}} \quad
\subfloat[10]{\includegraphics[width=1.7cm]{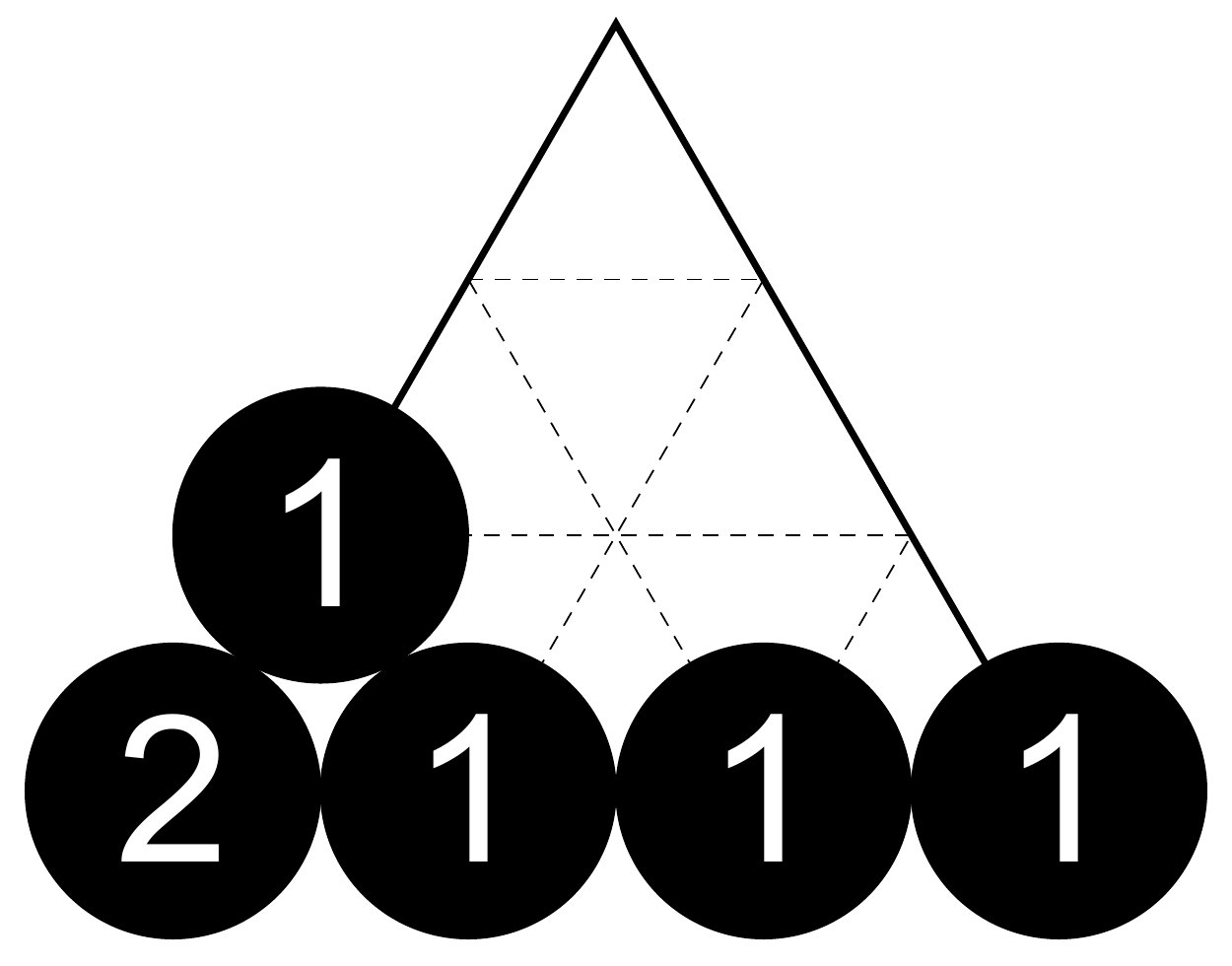}} \quad
\subfloat[11]{\includegraphics[width=1.5cm]{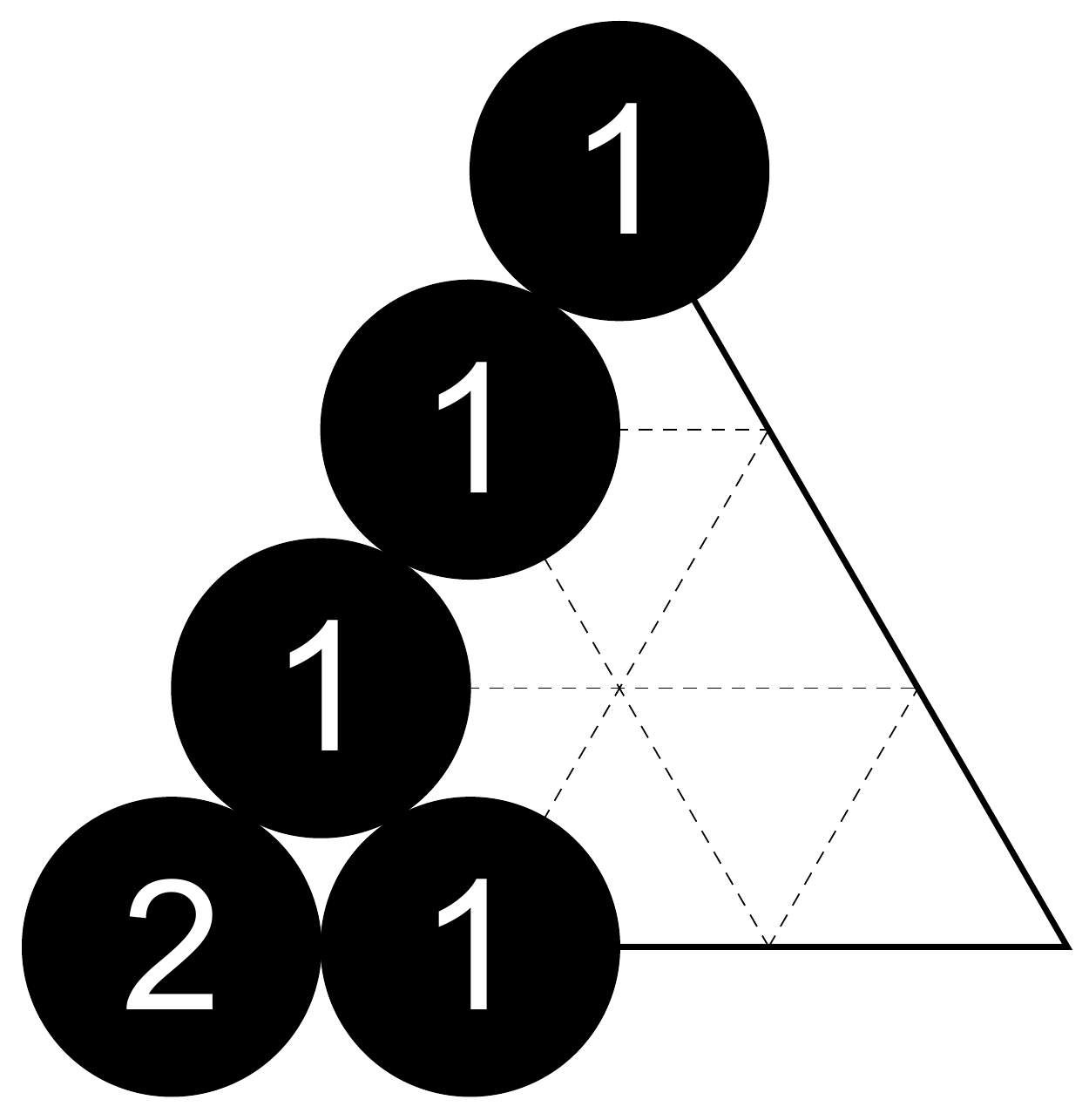}} \quad
\subfloat[12]{\includegraphics[width=1.5cm]{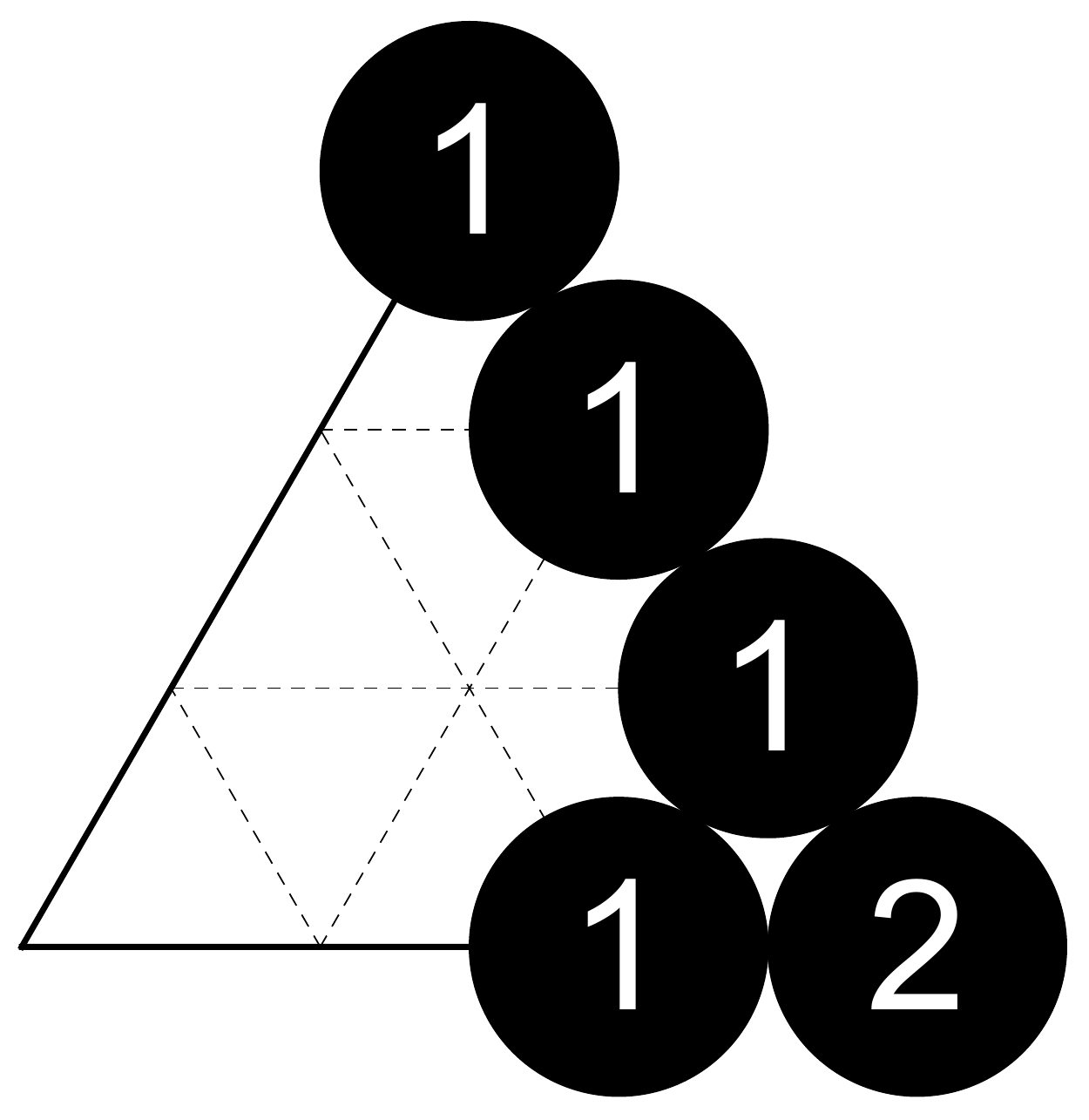}} \quad
\subfloat[13]{\includegraphics[width=1.5cm]{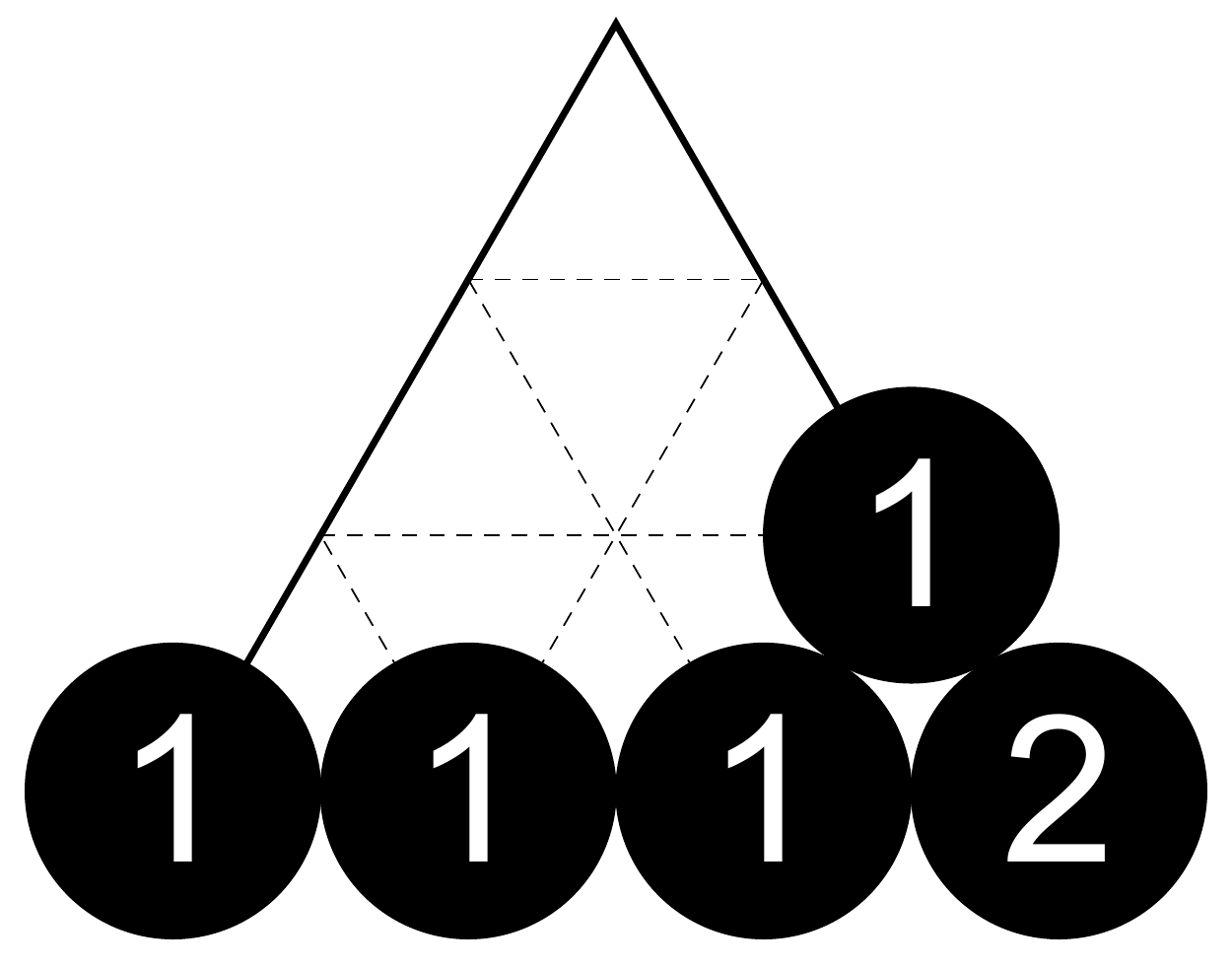}} \quad
\subfloat[14]{\includegraphics[width=1.5cm]{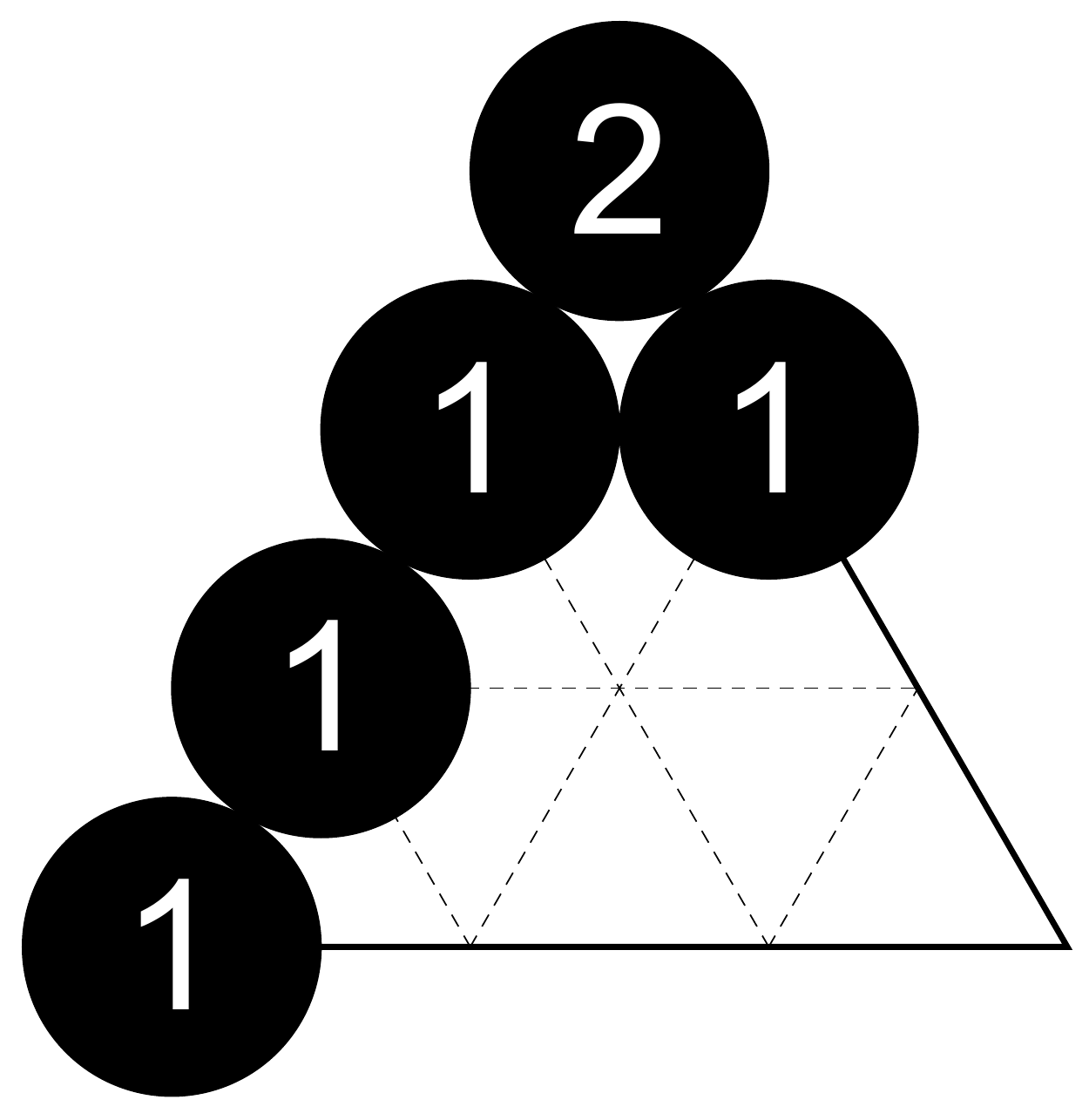}}
\\
\subfloat[15]{\includegraphics[width=1.5cm]{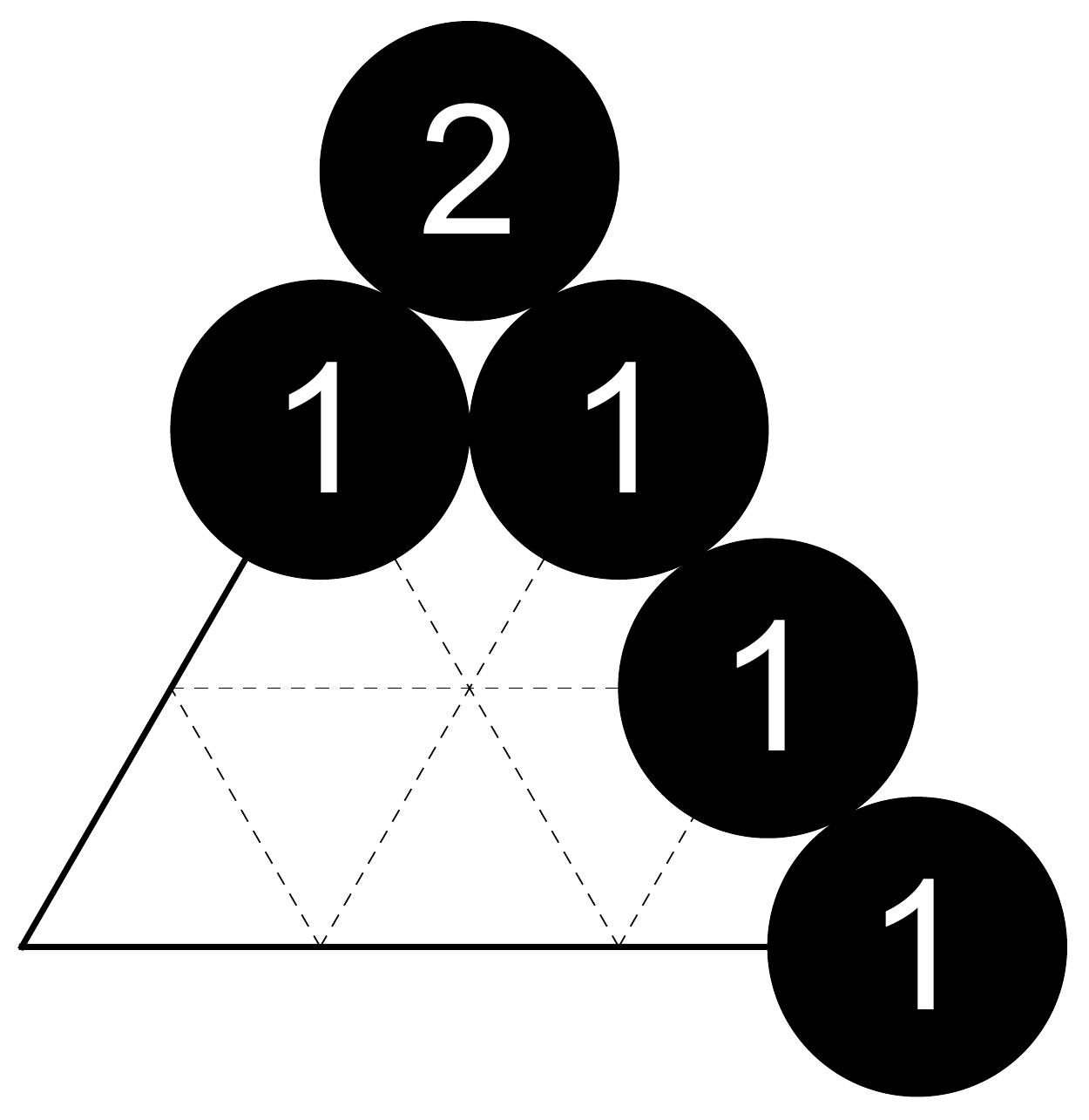}} \quad
\subfloat[16]{\includegraphics[width=1.35cm]{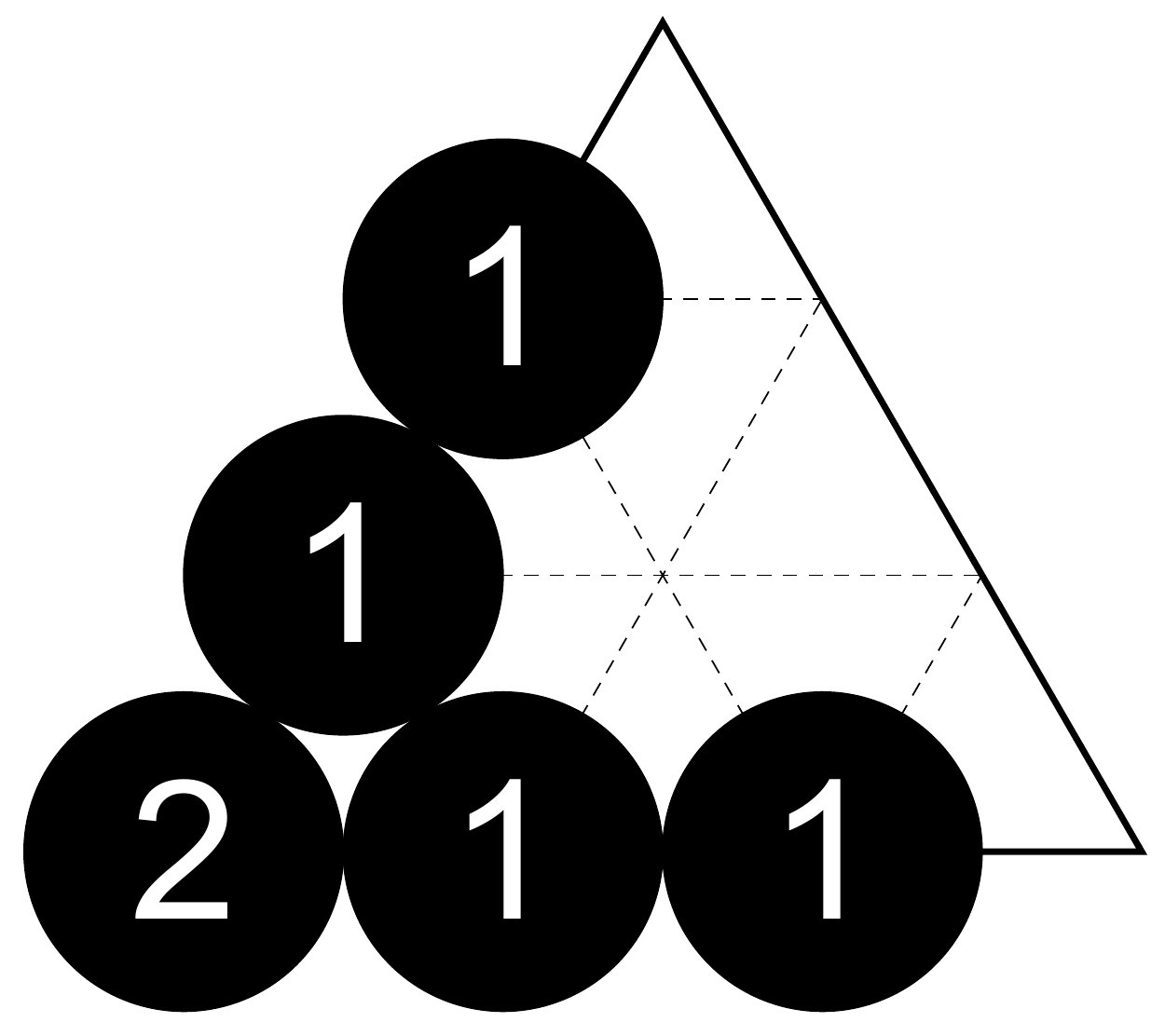}} \quad
\subfloat[17]{\includegraphics[width=1.5cm]{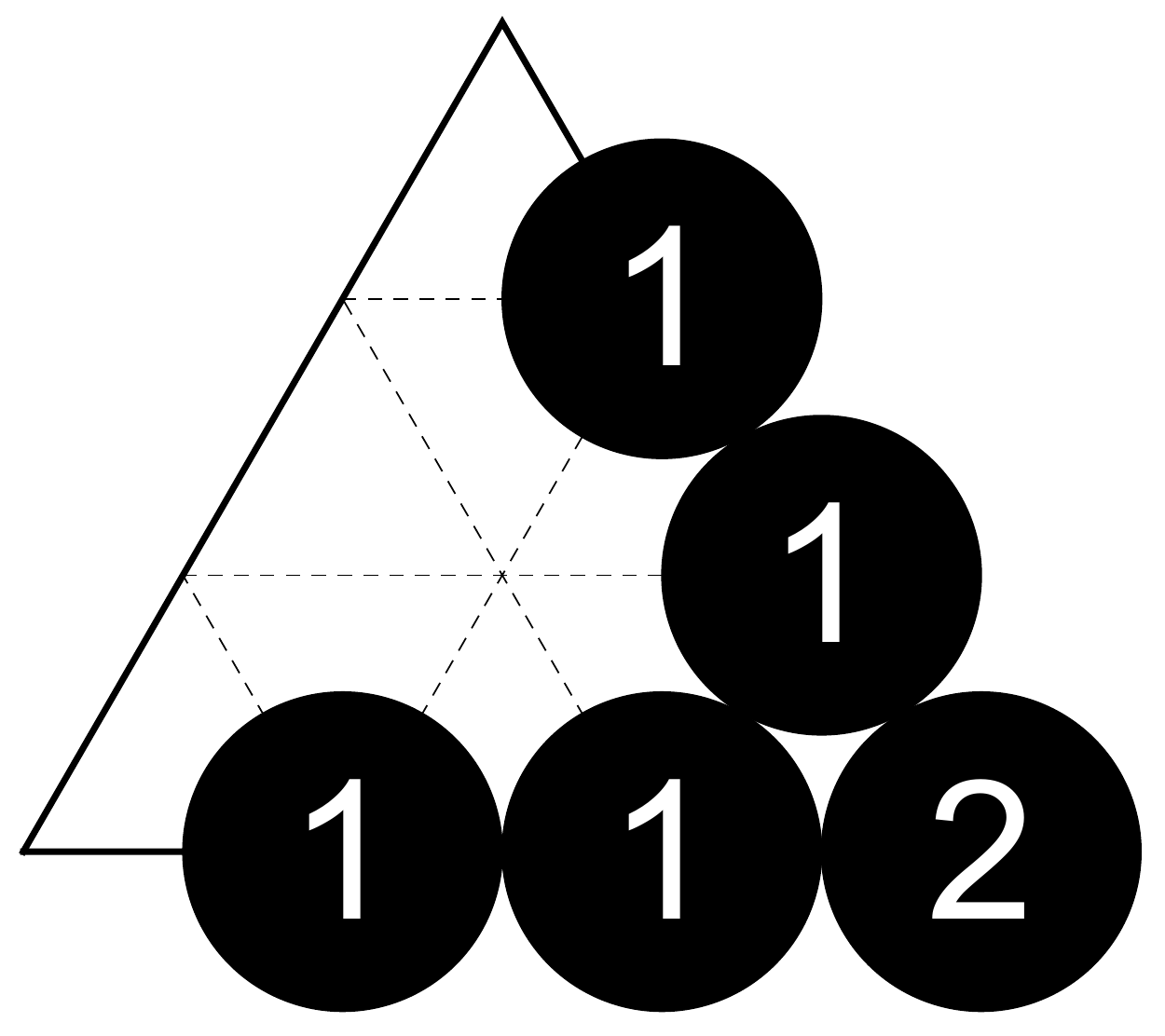}} \quad
\subfloat[18]{\includegraphics[width=1.4cm]{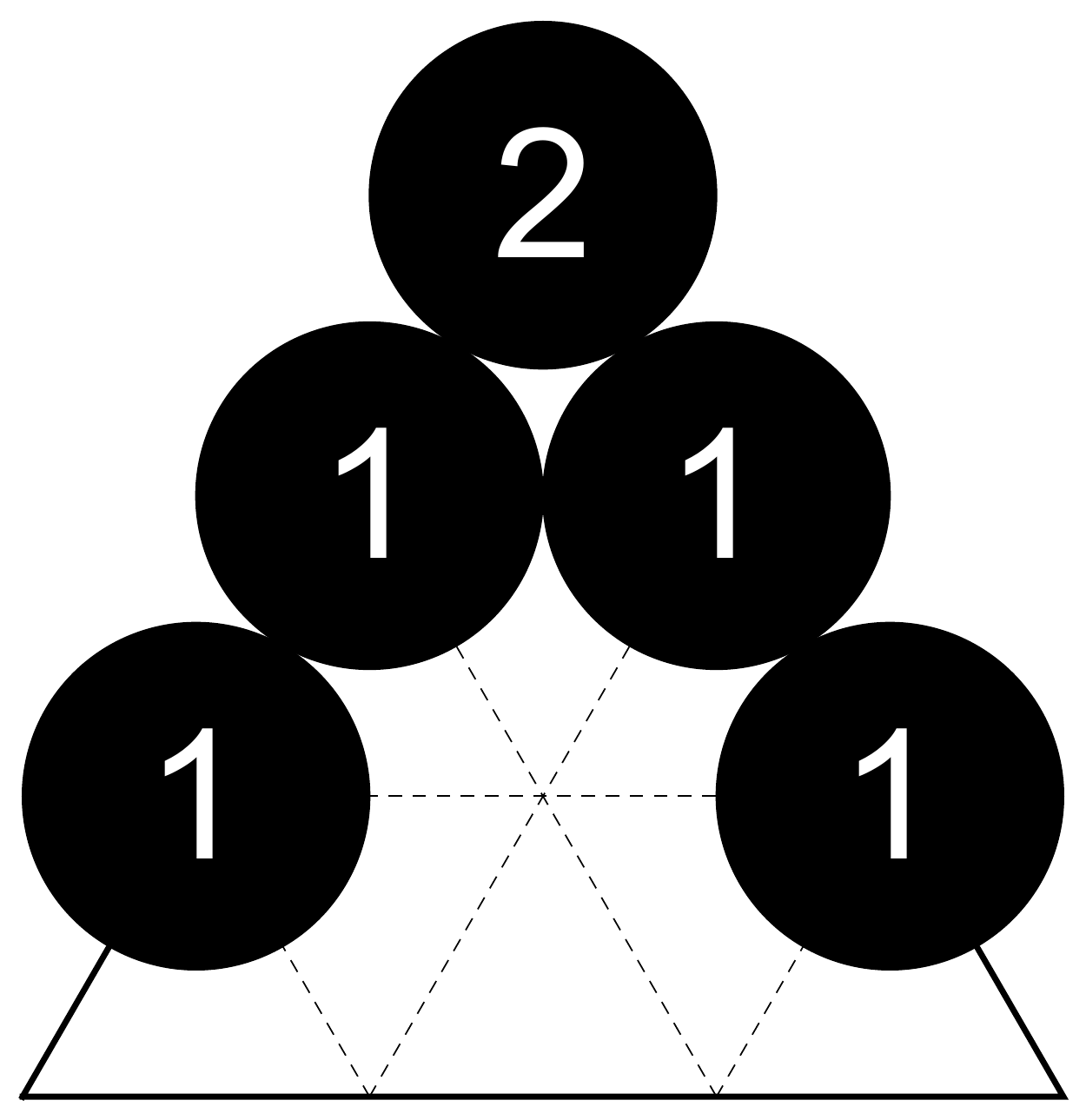}} \quad
\subfloat[19]{\includegraphics[width=1.6cm]{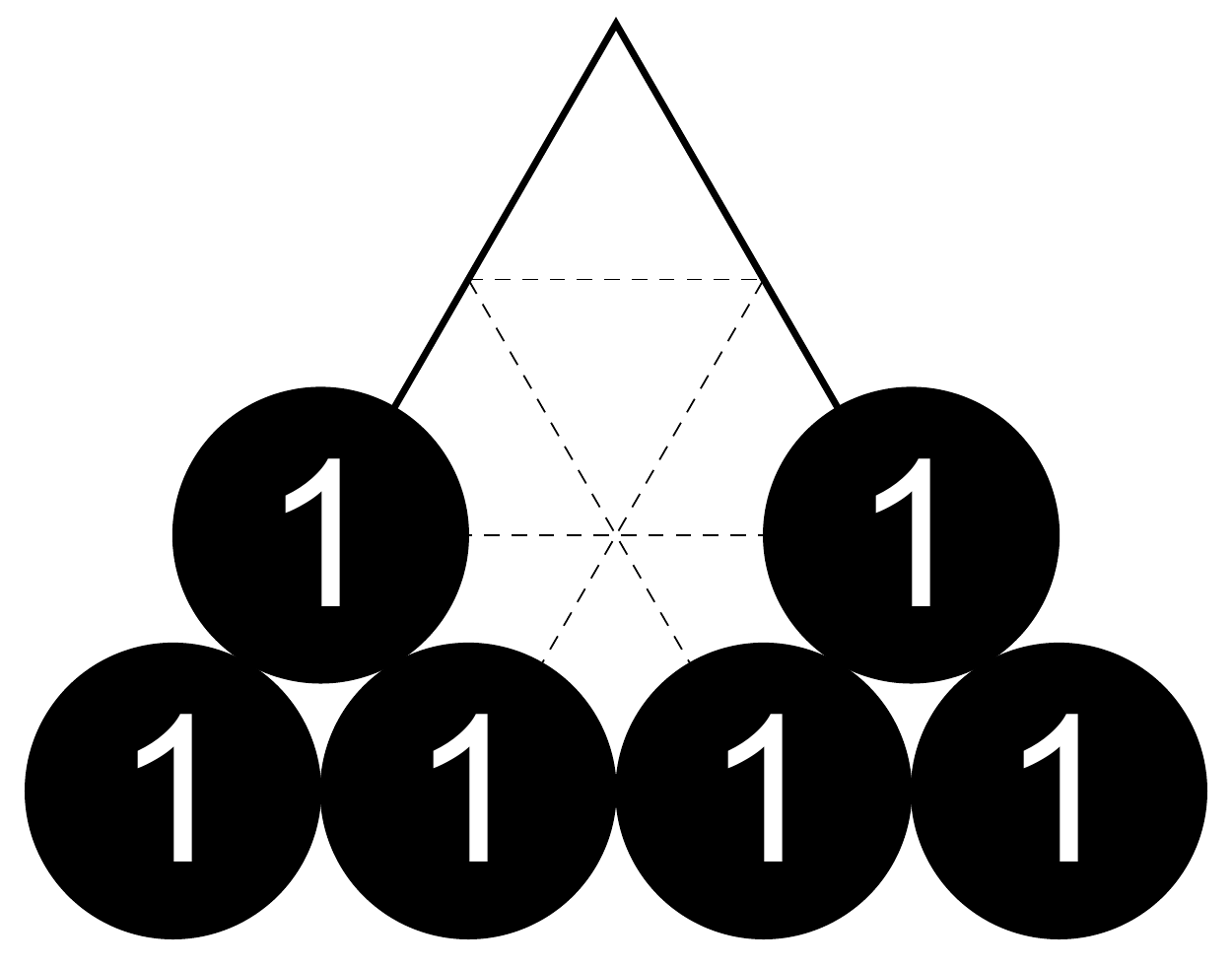}} \quad
\subfloat[20]{\includegraphics[width=1.5cm]{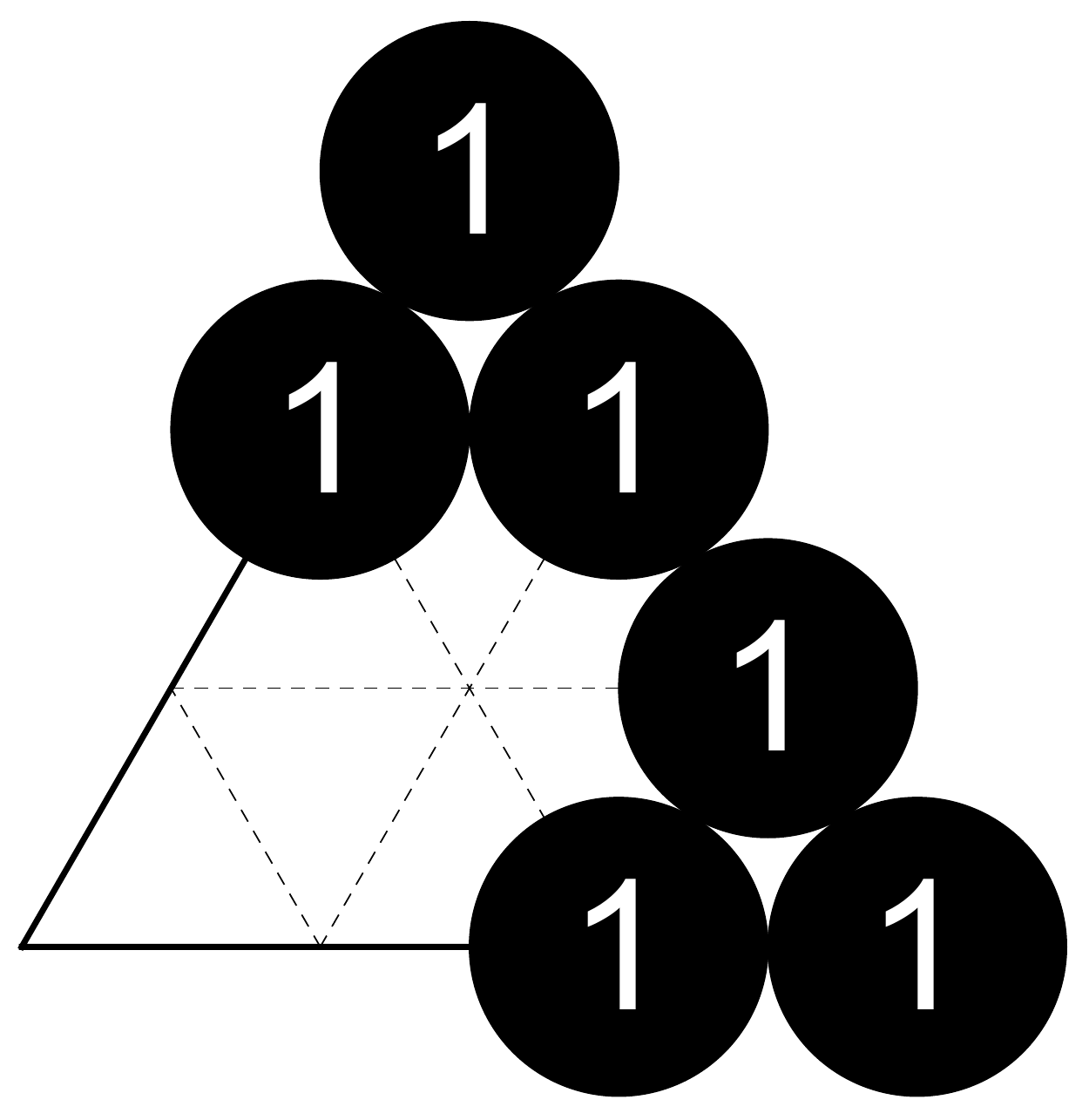}} \quad
\subfloat[21]{\includegraphics[width=1.5cm]{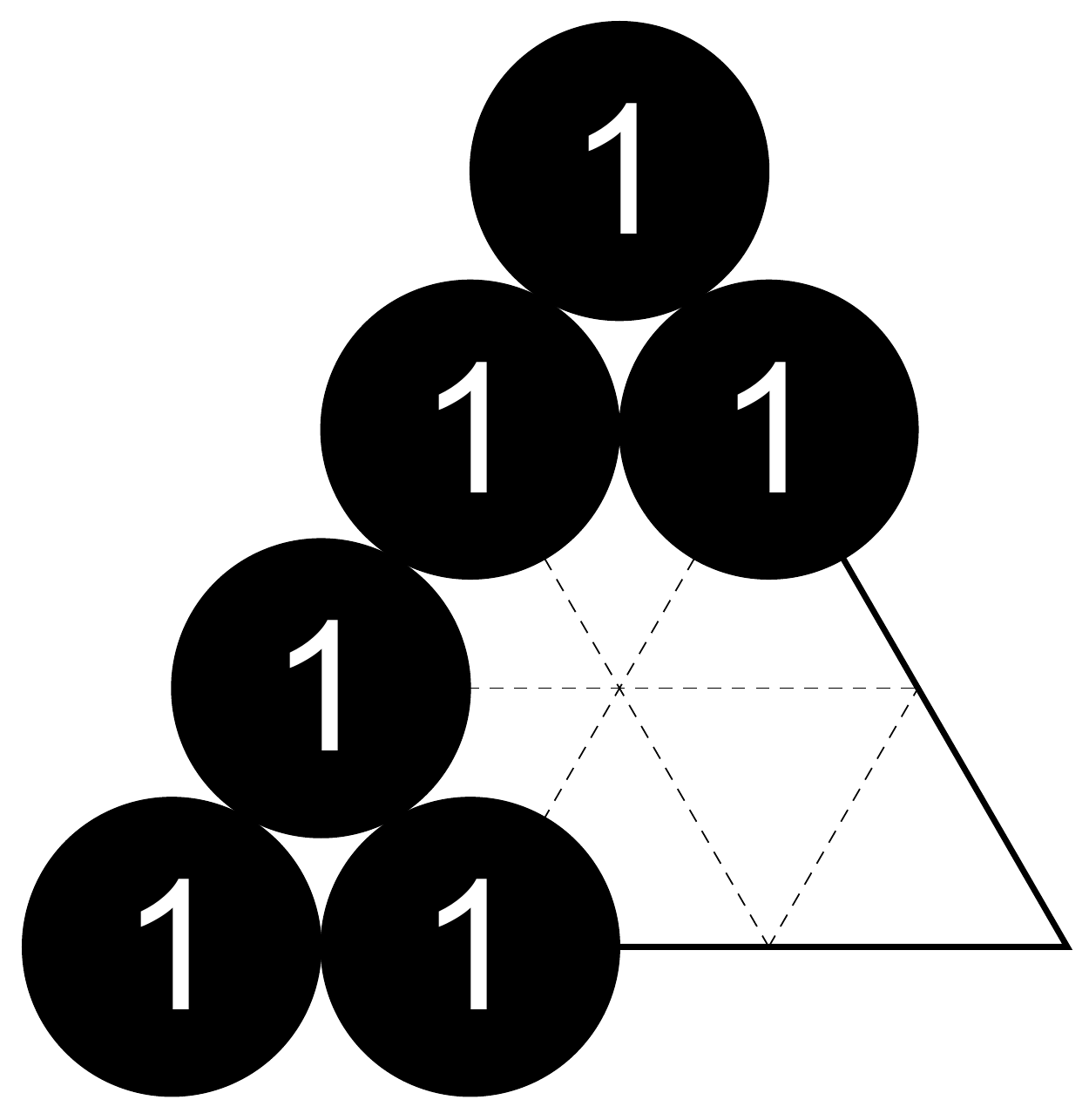}}
\\
\subfloat[22]{\includegraphics[width=1.5cm]{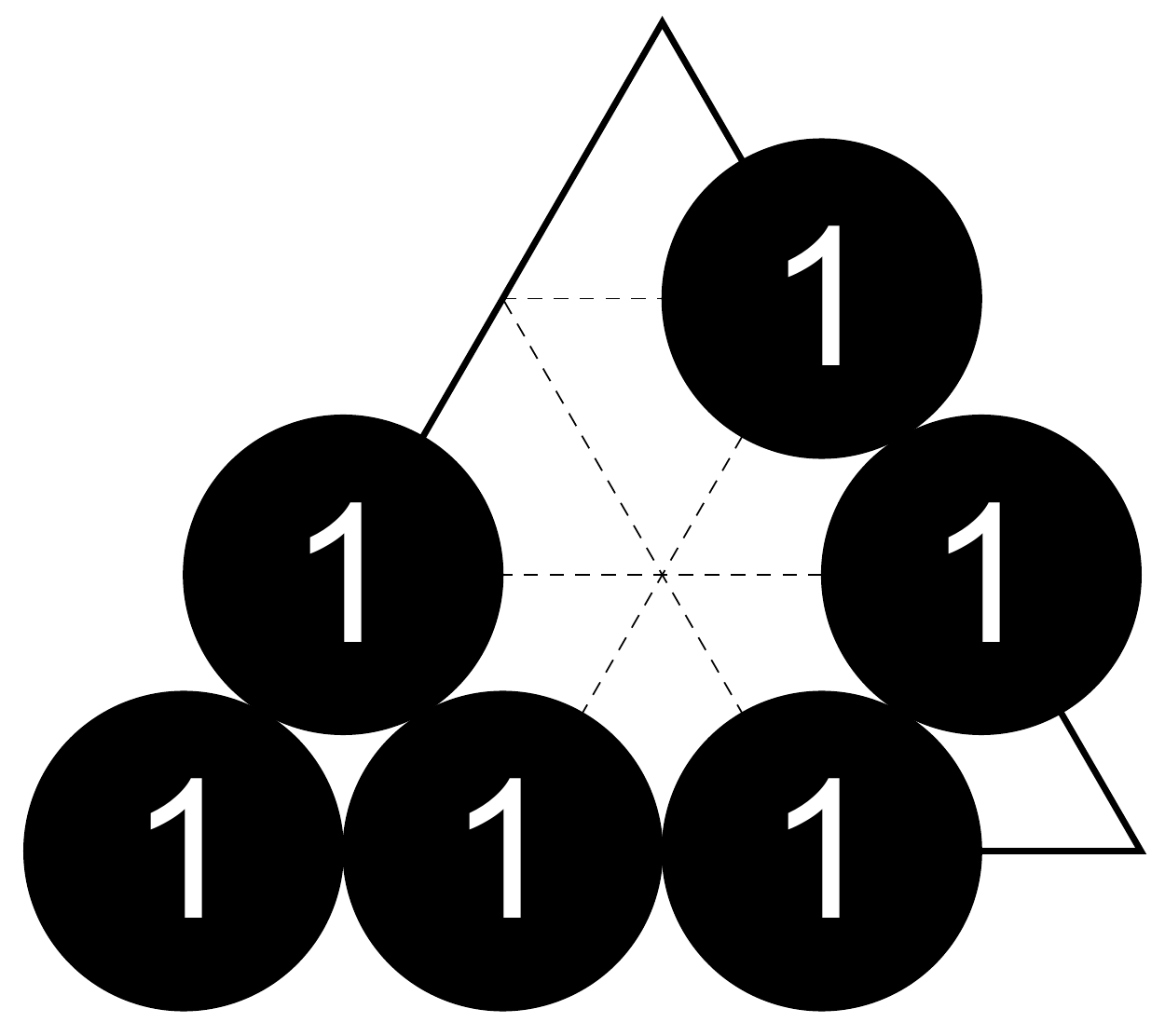}} \quad
\subfloat[23]{\includegraphics[width=1.5cm]{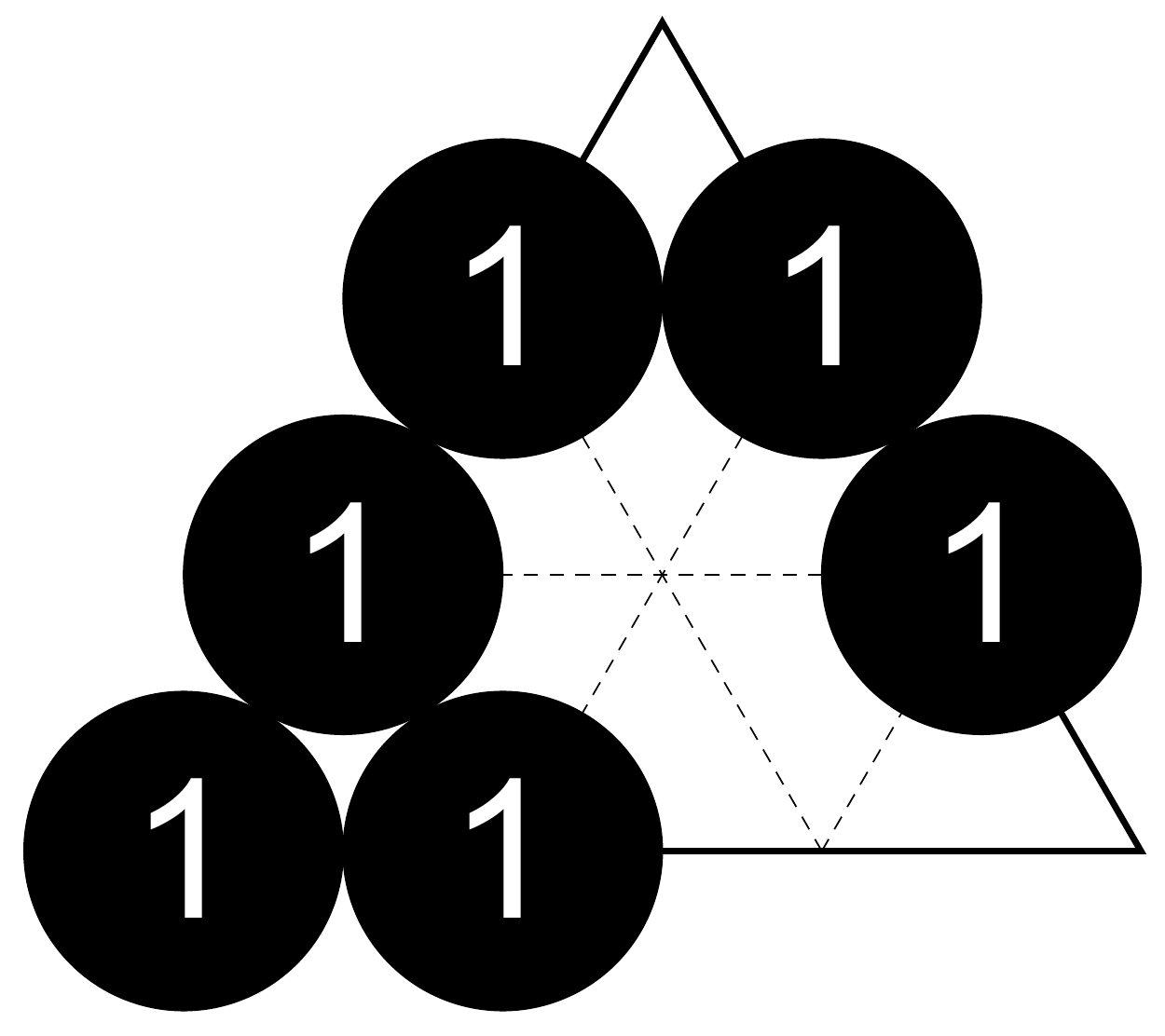}} \quad
\subfloat[24]{\includegraphics[width=1.35cm]{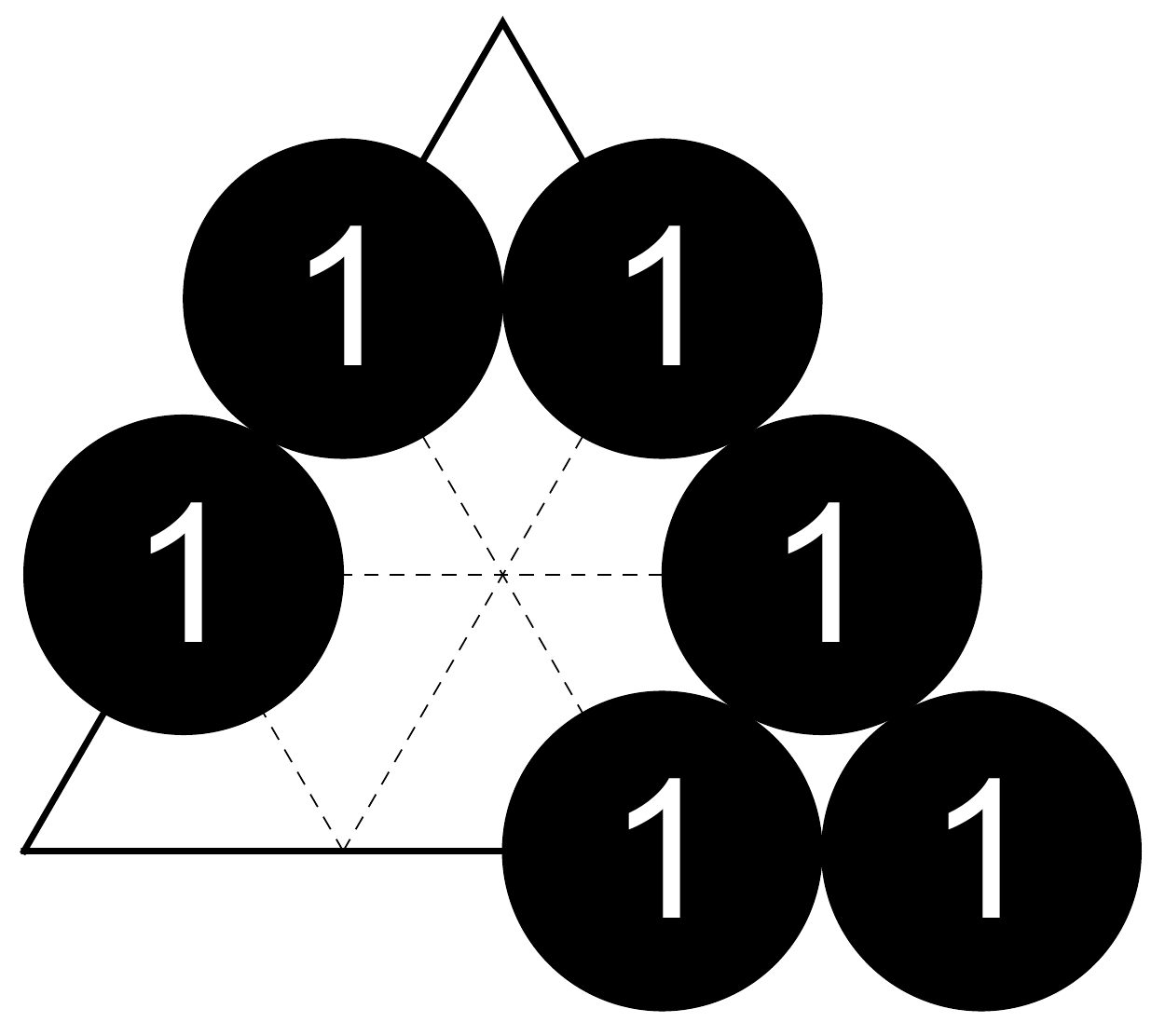}} \quad
\subfloat[25]{\includegraphics[width=1.5cm]{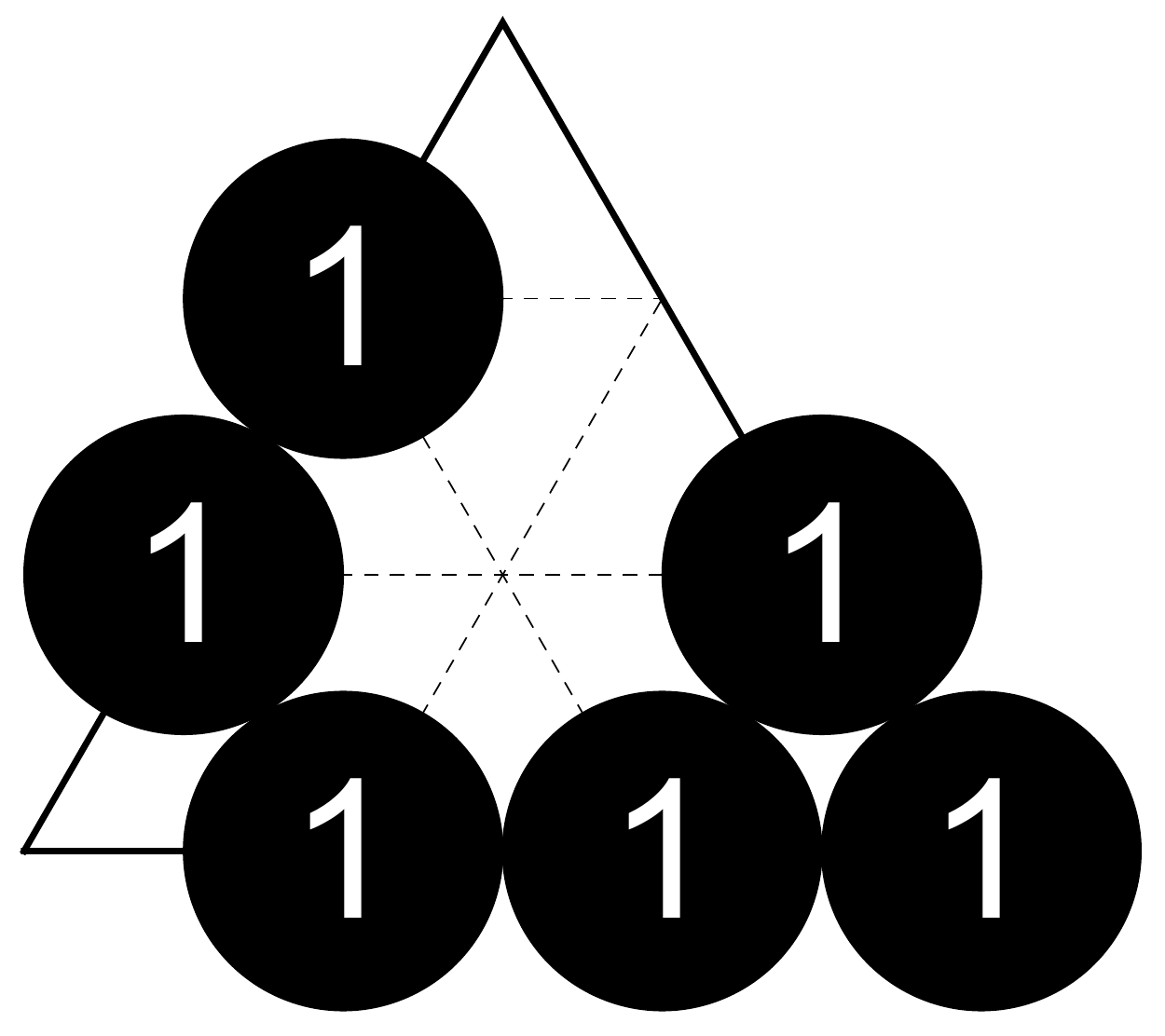}} \quad
\subfloat[26]{\includegraphics[width=1.35cm]{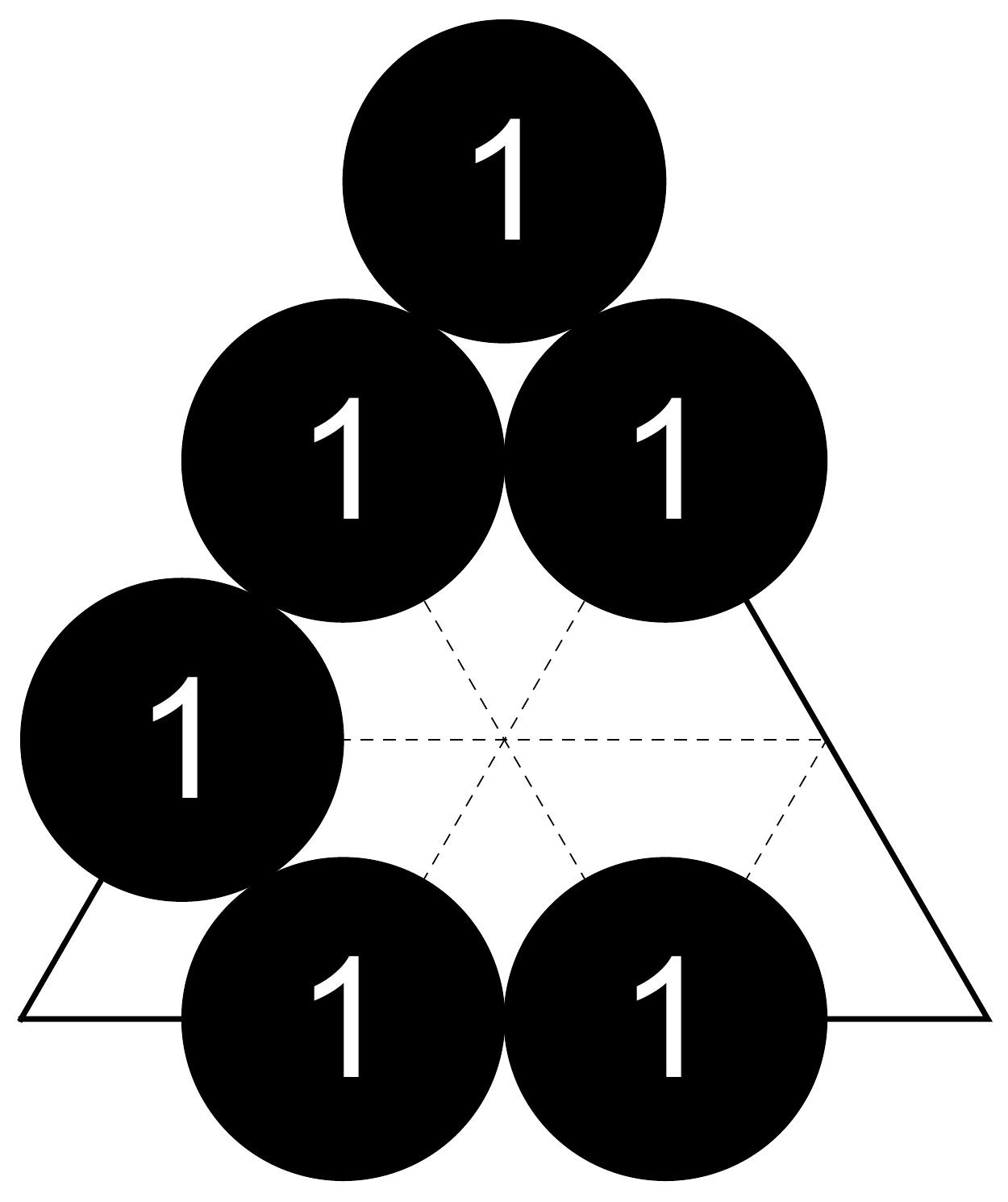}} \quad
\subfloat[27]{\includegraphics[width=1.3cm]{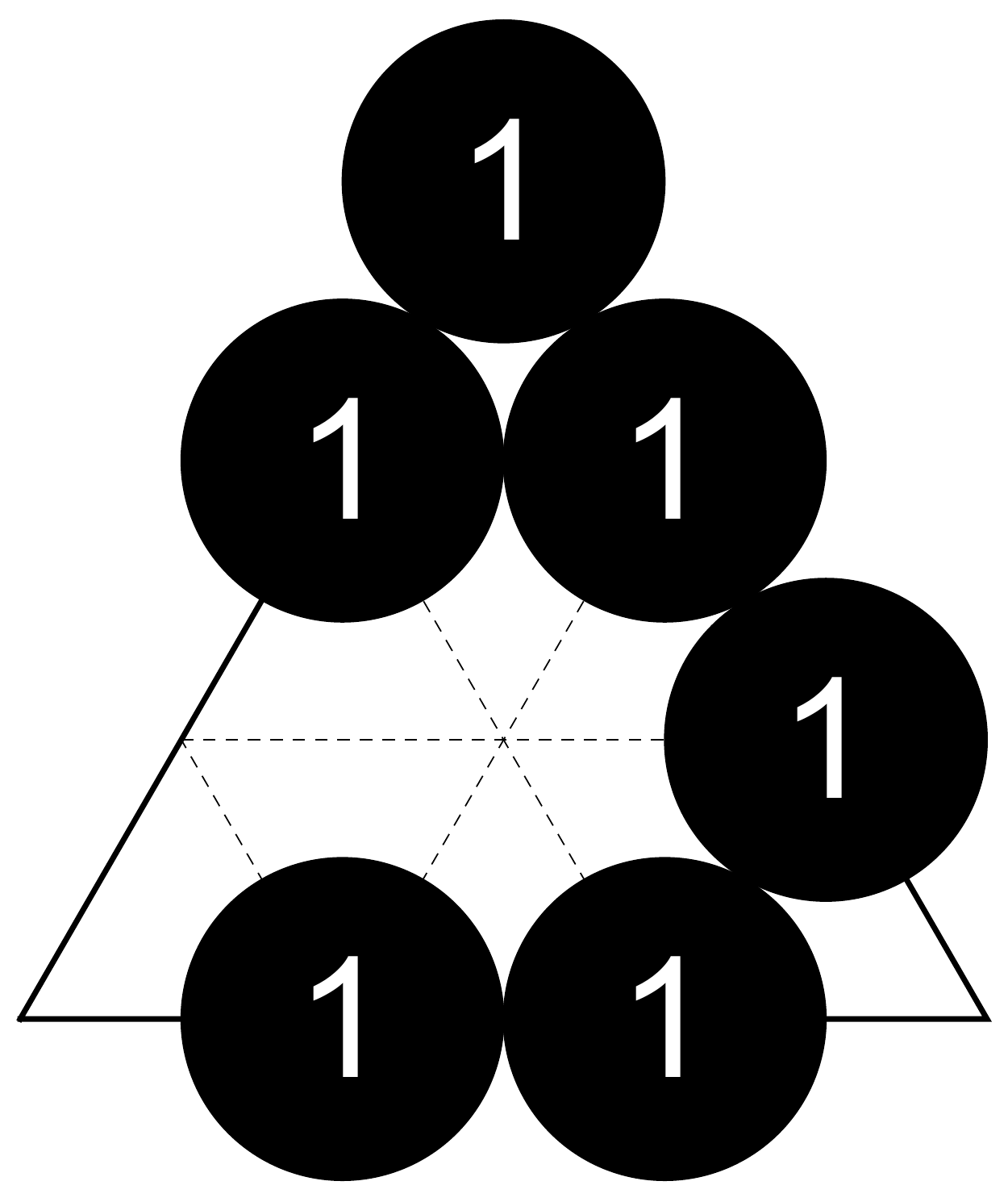}} \quad
\subfloat[28]{\includegraphics[width=1.3cm]{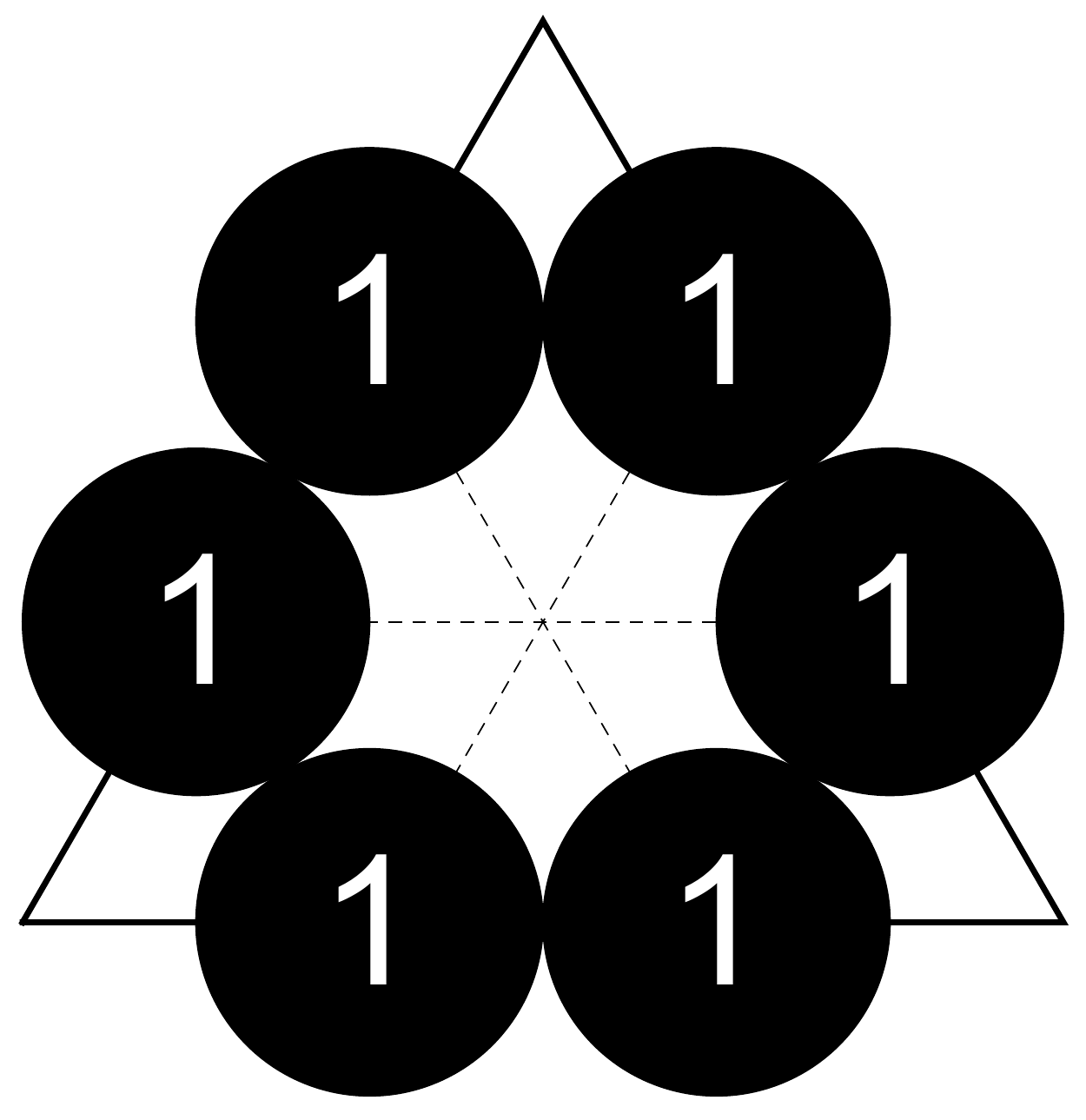}}
\caption{Sequences of knots for an alternative set of simplex spline basis functions for $\spS_3^2(\Delta_{\WS})$. Each black disc shows the position of a knot and the number inside indicates its multiplicity.}
\label{fig:allknots2}
\end{figure}

\subsection{An alternative simplex spline basis}\label{sec:simplex-basis-2}
An alternative basis for the space $\spS_3^2(\Delta_{\WS})$ is provided by the simplex splines identified by the knot sequences in Figure~\ref{fig:allknots2} and scaled to form a partition of unity. We denote this basis by
\begin{equation}\label{eq:basis-2}
\cBtilde=\{\Btilde_1,\ldots,\Btilde_{28}\}.
\end{equation}
It can be checked that
\begin{equation}\label{eq:alternative-basis}
\begin{alignedat}{2}
 \Btilde_i&= B_i, \quad i=1,\ldots, 21,  \\
\Btilde_{22}&=2B_{22}-B_{23}+\frac{1}{3}B_{28}, &\quad
\Btilde_{23}&=2B_{23}-B_{22}+\frac{1}{3}B_{28},
\\
\Btilde_{24}&=2B_{24}-B_{25}+\frac{1}{3}B_{28},  &
\Btilde_{25}&=2B_{25}-B_{24}+\frac{1}{3}B_{28},
\\
\Btilde_{26}&=2B_{26}-B_{27}+\frac{1}{3}B_{28},  &
\Btilde_{27}&=2B_{27}-B_{26}+\frac{1}{3}B_{28},
\\
\Btilde_{28}&=-B_{28}.
\end{alignedat}
\end{equation}
From \eqref{eq:alternative-basis} we easily deduce that the domain points $\vgrevilletilde_i$ associated with the set of functions in \eqref{eq:basis-2} are
\begin{equation}\label{eq:domain-points-bary-2}
\begin{alignedat}{3}
 \vgrevilletilde_i&=\vgreville_i,\quad i=1,\ldots,21,\\
 \vgrevilletilde_{22}&=\frac{2}{3}\vgreville_{22}+\frac{1}{3}\vgreville_{23} : \left(\frac{15}{27},\frac{7}{27},\frac{5}{27}\right), &\quad
 \vgrevilletilde_{23}&=\frac{2}{3}\vgreville_{23}+\frac{1}{3}\vgreville_{22} : \left(\frac{15}{27},\frac{5}{27},\frac{7}{27}\right), \\
 \vgrevilletilde_{24}&=\frac{2}{3}\vgreville_{24}+\frac{1}{3}\vgreville_{25} : \left(\frac{5}{27},\frac{15}{27},\frac{7}{27}\right), &
 \vgrevilletilde_{25}&=\frac{2}{3}\vgreville_{25}+\frac{1}{3}\vgreville_{24} : \left(\frac{7}{27},\frac{15}{27},\frac{5}{27}\right), \\
 \vgrevilletilde_{26}&=\frac{2}{3}\vgreville_{26}+\frac{1}{3}\vgreville_{27} : \left(\frac{7}{27},\frac{5}{27},\frac{15}{27}\right), &
 \vgrevilletilde_{27}&=\frac{2}{3}\vgreville_{27}+\frac{1}{3}\vgreville_{26} : \left(\frac{5}{27},\frac{7}{27},\frac{15}{27}\right),\\
 \vgrevilletilde_{28}&=\frac{1}{3}(\vgreville_{22}+\cdots+\vgreville_{27})-\vgreville_{28} : \left(\frac{1}{3},\frac{1}{3},\frac{1}{3}\right).
\end{alignedat}
\end{equation}
These points are depicted in Figure~\ref{fig:domainpoints} (right).
Note that Theorem~\ref{thm:basis} in combination with \eqref{eq:alternative-basis} confirms that the functions $\Btilde_1,\ldots,\Btilde_{28}$ are linearly independent and form a partition of unity. On the other hand, they are not all nonnegative. For subsequent use, some Hermite data of these basis functions are collected in the appendix (Tables~\ref{tab:hermiteB} and \ref{tab:hermiteBtilde-extra}).

Any spline $\spline\in \spS_3^2(\Delta_{\WS})$ can be represented in terms of this alternative basis, so
\begin{equation}
\label{eq:double-rep}
\spline=\sum_{i=1}^{28}b_iB_i=\sum_{i=1}^{28}\btilde_i\Btilde_i.
\end{equation}
Note that $\vbtilde={\vC}\vb$,
where $\vC$ is the conversion matrix already used to obtain \eqref{eq:domain-points-bary-2}. It can be easily checked that
$\|\vC\|_{\infty}=\|\vC^{-1}\|_{\infty}=3$.
Therefore, from \eqref{eq:local-stability} and \eqref{eq:double-rep} we immediately deduce
\begin{equation}
\label{eq:local-stability-2}
\frac{1}{111}\|\vbtilde\|_\infty\leq\biggl\|\sum_{i=1}^{28}\btilde_i\Btilde_i\biggr\|_\infty \leq 3\|\vbtilde\|_\infty,
\end{equation}
and the condition number can be bounded as $\kappa_\infty(\cBtilde)<333$.
We can also formulate the analogue of Proposition~\ref{prop:distance-cp} as well as a Marsden-like identity for the basis \eqref{eq:basis-2}. We omit the details for the sake of brevity.

The simplex spline basis \eqref{eq:basis} forms a convex partition of unity and so it is particularly useful for geometric modeling. On the other hand, as we will show in Section~\ref{sec:smootheness-cond}, the simplex spline basis \eqref{eq:basis-2} is more suited to handle $C^2$ smoothness conditions between spline functions on adjacent macro-triangles.
Of course, there are many more alternative sets of simplex spline basis functions. One could, for instance, take the 10 cubic Bernstein polynomials defined on $\Delta$ (they are special simplex splines; see \cite{Prautsch.Boehm.Paluszny02}) and enrich them with 18 more simplex splines that are linearly independent.

\section{$C^2$ cubic splines on the $\WS$ refinement of a triangulation}
\label{sec:spline-space}
In the previous section, we have provided simplex spline bases for
the spline space $\spS_3^2(\Delta_{\WS})$ of cubic $C^2$ splines on the $\WS$ split of a given triangle $\Delta$.
Let $\cT$ be a triangulation of a polygonal domain $\Omega$ and let $\cT_{\WS}$ denote its refinement obtained by taking the $\WS$ split of each of its triangles. In this section, we consider the spline space of $C^2$ cubic splines on $\cT_{\WS}$, i.e.,
$$
\spS_3^2(\cT_{\WS}):=\{\spline\in C^2(\Omega), \ \spline_{|\tau}\in \spP_3, \ \tau \text{ is polygon in } \cT_{\WS} \}.
$$
The unisolvency of the Hermite interpolation problem stated in Corollary~\ref{cor:hermite}
implies that the dimension of the space only depends on combinatorial properties of the triangulation, and so it is stable. From the corollary we directly deduce that (see also \cite{Wang.90})
\begin{equation}
\label{eq:dim-space}
\dim(\spS_3^2(\cT_{\WS}))=6n_V+3n_E+n_T,
\end{equation}
where $n_V$, $n_E$, $n_T$ are the number of vertices, edges, and triangles of $\cT$, respectively.
Moreover, any spline function of $\spS_3^2(\cT_{\WS})$ can be locally constructed on each (macro-)triangle $\Delta$ of $\cT$ via the Hermite data, and the corresponding spline piece on $\Delta$ can be represented in the form \eqref{eq:double-rep}.
Conversely, any function, which is represented locally in the form \eqref{eq:double-rep} on each $\Delta$ of $\cT$, is $C^2$ smooth over each $\Delta$ of $\cT$ but not necessary $C^2$ smooth across the edges of $\cal T$. First we derive conditions on the local spline coefficients to ensure global $C^2$ smoothness, and then we describe a stable global basis with local support for $\spS_3^2(\cT_{\WS})$.

\subsection{Smoothness conditions}
\label{sec:smootheness-cond}

Let $\cT$ be a triangulation of a polygonal domain $\Omega\subset\RR^2$.
We seek conditions on the local spline coefficients in \eqref{eq:double-rep} to guarantee $C^r$ smoothness across a common edge of two adjacent triangles of $\cT$ for $r=0,1,2$.

\begin{theorem}\label{thm:C1}
Suppose the triangles $\Delta^L:=\langle\vp_1,\vp_2,\vp_3\rangle$ and $\Delta^R:=\langle\vp_1,\vp_2,\vp_4\rangle$ share the common edge with vertices $\vp_1,\vp_2$, and let
\begin{equation}\label{eq:V4}
\vp_4=\eta_1\vp_1+\eta_2\vp_2+\eta_3\vp_3, \quad
\eta_1+\eta_2+\eta_3=1.
\end{equation}
Let
$\{B^L_i, \ i=1,\ldots,28\}$ and $\{B^R_i, \ i=1,\ldots,28\}$
be the scaled simplex spline basis defined by the knot sequences in Figure~\ref{fig:allknots} on $\Delta^L$ and $\Delta^R$, respectively. We assume the numbering of the basis functions in agreement with Figure~\ref{fig:allknots}.
Let us consider the spline functions
$$
\spline^L:=\sum_{i=1}^{28} b^L_iB^L_i, \quad \spline^R:=\sum_{i=1}^{28} b^R_iB^R_i.
$$
We have
\begin{itemize}
\item $\spline^L, \spline^R$ join $C^0$ across the common edge if and only if
\begin{equation}
\label{eq:C0}
b^R_i=b^L_i, \quad i=1,2,4,7,10,13;
\end{equation}
\item $\spline^L, \spline^R$ join $C^1$ across the common edge if and only if they join $C^0$ and in addition
\begin{equation}\label{eq:C1}
\begin{aligned}
b^R_5&=\eta_1b^L_1+\eta_2b^L_4+\eta_3b^L_5,\\
b^R_{16}&=\left(\eta_1+\frac{\eta_2}{2}\right)b^L_4+\frac{\eta_2}{2}b^L_{10}+\eta_3b^L_{16},\\
b^R_{19}&=\left(\frac{3}{5}\eta_1+\frac{2}{5}\eta_2\right)b^L_{10}+\left(\frac{2}{5}\eta_1+\frac{3}{5}\eta_2\right)b^L_{13}+\eta_3b^L_{19},\\
b^R_{17}&=\left(\frac{\eta_1}{2}+\eta_2\right)b^L_7+\frac{\eta_1}{2}b^L_{13}+\eta_3b^L_{17},\\
b^R_6&=\eta_1b^L_7+\eta_2b^L_2+\eta_3b^L_6.\\
\end{aligned}
\end{equation}
\end{itemize}
\end{theorem}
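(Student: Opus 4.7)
My plan is to treat \eqref{eq:C0} and \eqref{eq:C1} in turn, reducing each to a univariate analysis on the common edge $\vp_1\vp_2$.

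For the $C^0$ statement I will first identify which $B_i$ actually see the edge. The six indices $i\in\{1,2,4,7,10,13\}$ are precisely those whose knot multiset lies entirely in $\{\vp_1^4,\vp_{3,1},\vp_{3,2},\vp_2^4\}$; every other $B_i$ has a knot strictly off the edge with multiplicity one and so restricts to zero on the edge. For the six relevant functions the left and right bases coincide on the edge as the six univariate $C^2$ cubic B-splines on the open uniform knot sequence $\{\vp_1^4,\vp_{3,1},\vp_{3,2},\vp_2^4\}$, a fact already recorded in the paragraph preceding Theorem~\ref{thm:basis}. These six univariate B-splines are linearly independent (they form a basis of the six-dimensional spline space on that knot sequence), so $s^L-s^R$ vanishes on the edge if and only if the six coefficient pairs agree, yielding \eqref{eq:C0}.

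For the $C^1$ statement I will assume $C^0$ and show that the cross-edge directional derivative then matches along the edge, since tangential derivatives are automatic once values agree. The natural transverse direction on $\Delta^L$ is $\vp_3-\vp_1$; on $\Delta^R$ I will re-express it via the barycentric identity
$$
\vp_3-\vp_1 \;=\; \tfrac{1}{\eta_3}(\vp_4-\vp_1) - \tfrac{\eta_2}{\eta_3}(\vp_2-\vp_1),
$$
obtained by inverting $\vp_4-\vp_1=\eta_2(\vp_2-\vp_1)+\eta_3(\vp_3-\vp_1)$, together with the analogous identity based at $\vp_2$. The trace of $D_{\vp_1\vp_3}(s^L-s^R)$ on the edge is a univariate $C^1$ piecewise quadratic spline with interior breakpoints $\vp_{3,1},\vp_{3,2}$, a space of dimension $3\cdot 3-2\cdot 2=5$, which accounts for exactly five scalar conditions. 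The five right-hand coefficients not yet pinned down by \eqref{eq:C0} --- namely $b^R_5,b^R_{16},b^R_{19},b^R_{17},b^R_6$ --- form a natural dual family: appealing to the block upper triangular structure of $[\rho_j(B_i)]$ recorded in Table~\ref{tab:hermiteB}, each is isolated in turn by one of the Hermite functionals $\rho_5,\rho_{16},\rho_{19},\rho_{17},\rho_6$ once the lower-indexed coefficients are already fixed. I will equate $\rho_j(s^L)$ with the corresponding right-hand combination at each of the points $\vp_1,\vq_3,\vp_2$ (plain, mixed second-order, and mid-edge transverse data), substitute the barycentric identity, and read off the weights $\eta_1,\eta_2,\eta_3$ and their affine rearrangements that appear in \eqref{eq:C1}.

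The main technical obstacle will be the bookkeeping for the three intermediate conditions involving $b^R_{16},b^R_{19},b^R_{17}$, which mix second-order endpoint data with mid-edge normal data. A clean sanity check, and simultaneously an independent verification of necessity, comes from the Marsden-like identity of Theorem~\ref{thm:Marsden}: for every $\vy\in\spR^2$ the cubic $(1+\vy^T\vx)^3$ is globally $C^\infty$, so its left and right coefficient vectors $\psi^L(\vy),\psi^R(\vy)$ must satisfy both \eqref{eq:C0} and \eqref{eq:C1} identically in $\vy$. For instance, after factoring out $(1+\vy^T\vp_1)^2$, the relation $\eta_1\psi^L_1+\eta_2\psi^L_4+\eta_3\psi^L_5=\psi^R_5$ reduces to an affine identity in $\vy$ that is nothing but $\vp_4=\eta_1\vp_1+\eta_2\vp_2+\eta_3\vp_3$ in disguise, and each of the other four relations in \eqref{eq:C1} admits an analogous one-line verification. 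Sufficiency then follows because the eleven equations \eqref{eq:C0}--\eqref{eq:C1} are manifestly linearly independent (they express eleven distinct $b^R_i$'s as combinations of $b^L_j$'s) and their count matches the codimension of the $C^1$-smooth subspace on the two-triangle patch.
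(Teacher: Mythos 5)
Your overall strategy coincides with the paper's: reduce $C^0$ to agreement of the six univariate $C^2$ cubic B-splines on the edge, then reduce $C^1$ to agreement of a transversal derivative whose trace is a five-dimensional space of $C^1$ quadratic splines, pin that trace down by five Hermite functionals ($\rho_5,\rho_{16},\rho_{19},\rho_{17},\rho_6$, exactly the ones the paper uses implicitly via Table~\ref{tab:hermiteB}), and convert between left and right directional derivatives with the barycentric identity derived from \eqref{eq:V4}. The Marsden-identity cross-check is a nice extra, and your closing dimension count is harmless but redundant once you have observed that the five functionals determine the trace space.

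There is, however, one genuinely false supporting claim in your $C^0$ step. You assert that the edge-active indices $\{1,2,4,7,10,13\}$ are ``precisely those whose knot multiset lies entirely in $\{\vp_1^4,\vp_{3,1},\vp_{3,2},\vp_2^4\}$,'' and that any $B_i$ with a knot strictly off the edge restricts to zero there. Neither half is right: a simplex spline requires $\vol_2(\langle\Xi\rangle)>0$, so \emph{no} basis function has all six knots on the line through $\vp_1\vp_2$ (your criterion selects the empty set), and, e.g., $B_4$ with knots $\{\vp_1,\vp_1,\vp_1,\vp_{3,1},\vp_{3,2},\vp_{2,1}\}$ has the off-edge knot $\vp_{2,1}$ yet is one of the six that are nonzero on the edge. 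The correct mechanism is the smoothness rule $M_\Xi\in C^{d+1-\mu}$ across a knot line carrying $\mu$ knots: a basis function has a nonzero trace on the boundary edge exactly when at least $d+2=5$ of its six knots lie on that edge line, which is what singles out $i=1,2,4,7,10,13$. Since you also cite the paper's explicit statement of the edge-restriction fact, your proof survives, but the justification you offer for it should be replaced by the knot-multiplicity count (or simply by the cited fact together with the linear independence of the univariate B-splines).
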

\begin{proof}
Let us first discuss $C^0$ smoothness. Along the common edge $\vp_1\vp_2$ the two functions $\spline^L$ and $\spline^R$ are univariate cubic $C^2$ splines with (interior) knots at the points $\vp_{3,1}$ and $\vp_{3,2}$. Considering the restriction onto the edge of the basis functions
$\{B_i^L, \ i=1,\ldots,28\}$ and $\{B_i^R, \ i=1,\ldots,28\}$, we obtain that the only nonzero elements are
$$
(B_{i}^R)_{|\vp_1\vp_2}=(B_i^L)_{|\vp_1\vp_2}, \quad i=1,2,4,7,10,13.
$$
Since they are linearly independent, $C^0$ smoothness is equivalent to agreement of the corresponding coefficients. This proves \eqref{eq:C0}.

We now discuss $C^1$ smoothness across the common edge.
It suffices to prove that along the edge $\vp_1\vp_2$ the functions
$D_{\vq_3\vp_3}\spline^L$ and $D_{\vq_3\vp_3}\spline^R$
agree. These functions are univariate $C^1$ quadratic splines with (interior) knots at the points $\vp_{3,1}$ and $\vp_{3,2}$. Therefore, each of them is uniquely determined by its value and first derivative at the two endpoints of the edge and by the value at the midpoint $\vq_3$.
From \eqref{eq:V4} we obtain
$$
\vp_3=\frac{1}{\eta_3}(\vp_4-\eta_1\vp_1-\eta_2\vp_2),
$$
and so
$$
\vq_3\vp_3=\vp_1\vp_3-\frac{1}{2}\vp_1\vp_2=\frac{1}{\eta_3}\vp_1\vp_4-\frac{\eta_3+2\eta_2}{2\eta_3}\vp_1\vp_2.
$$
Then, by employing the $C^0$ smoothness conditions and the values in Table~\ref{tab:hermiteB} we get
\begin{align*}
D_{\vq_3\vp_3}\spline^L(\vp_1) &= -9b^L_1+9b^L_5-\frac{1}{2}(-9b^L_1+9b^L_4), \\
D_{\vq_3\vp_3}\spline^R(\vp_1) &= \frac{1}{\eta_3}(-9b^L_1+9b^R_5)-\frac{\eta_3+2\eta_2}{2\eta_3}(-9b^L_1+9b^L_4).
\end{align*}
Equating the above expressions results in the first condition of \eqref{eq:C1}. With the same line of arguments we deduce the remaining four conditions.
\end{proof}

From the relations in \eqref{eq:alternative-basis} it is clear that the conditions in \eqref{eq:C0} and \eqref{eq:C1} also ensure $C^0$ and $C^1$ smoothness, respectively, for local spline representations in the alternative basis \eqref{eq:basis-2}.

\begin{corollary}\label{cor:C1-geom}
Consider the same assumptions as in Theorem~\ref{thm:C1}. 
The $C^0$ smoothness conditions for the control points can be written as
\begin{equation*}
(\vgrevilleR_i,b^R_i)=(\vgrevilleL_i,b^L_i), \quad i=1,2,4,7,10,13;
\end{equation*}
and the $C^1$ smoothness conditions for the control points can be written as
\begin{equation*}
\begin{aligned}
(\vgrevilleR_5,b^R_5)&=\eta_1(\vgrevilleL_1,b^L_1)+\eta_2(\vgrevilleL_4,b^L_4)+\eta_3(\vgrevilleL_5,b^L_5),\\
(\vgrevilleR_{16},b^R_{16})&=\left(\eta_1+\frac{\eta_2}{2}\right)(\vgrevilleL_4,b^L_4)+\frac{\eta_2}{2}(\vgrevilleL_{10},b^L_{10})+\eta_3(\vgrevilleL_{16},b^L_{16}),\\
(\vgrevilleR_{19},b^R_{19})&=\left(\frac{3}{5}\eta_1+\frac{2}{5}\eta_2\right)(\vgrevilleL_{10},b^L_{10})+\left(\frac{2}{5}\eta_1+\frac{3}{5}\eta_2\right)(\vgrevilleL_{13},b^L_{13})
+\eta_3(\vgrevilleL_{19},b^L_{19}),\\
(\vgrevilleR_{17},b^R_{17})&=\left(\frac{\eta_1}{2}+\eta_2\right)(\vgrevilleL_7,b^L_7)+\frac{\eta_1}{2}(\vgrevilleL_{13},b^L_{13})+\eta_3(\vgrevilleL_{17},b^L_{17}),\\
(\vgrevilleR_6,b^R_6)&=\eta_1(\vgrevilleL_7,b^L_7)+\eta_2(\vgrevilleL_2,b^L_2)+\eta_3(\vgrevilleL_6,b^L_6).\\
\end{aligned}
\end{equation*}
\end{corollary}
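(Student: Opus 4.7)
My plan is to reduce the statement to two ingredients: the scalar coefficient relations, which are already supplied by Theorem~\ref{thm:C1}, and the companion relations on the domain points themselves. Indeed, an equality of the form $(\vgrevilleR_i, b_i^R) = \sum_j c_j (\vgrevilleL_{i_j}, b_{i_j}^L)$ in $\RR^2 \times \RR$ with $\sum_j c_j = 1$ is equivalent to the pair
\begin{equation*}
\vgrevilleR_i = \sum_j c_j \vgrevilleL_{i_j}, \qquad b_i^R = \sum_j c_j b_{i_j}^L.
\end{equation*}
The second equation in each line is exactly what Theorem~\ref{thm:C1} provides (and one checks that the coefficients in \eqref{eq:C0} and \eqref{eq:C1} do sum to $1$, which follows from $\eta_1+\eta_2+\eta_3=1$). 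The task therefore reduces to verifying the domain point identities.

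For the $C^0$ case, I would simply observe that the six indices $i=1,2,4,7,10,13$ correspond to domain points whose barycentric coordinates with respect to the relevant triangle have vanishing third component; in view of \eqref{eq:domain-points-bary} they all lie on the shared edge $\vp_1\vp_2$ and hence $\vgrevilleR_i$ and $\vgrevilleL_i$ denote the very same Cartesian point. For the $C^1$ case, the cleanest route is to invoke affine invariance: as noted in Section~\ref{sec:domain-point}, any affine function $f$ is represented in the local basis by $f = \sum_{i=1}^{28} f(\vgreville_i) B_i$ on each macro-triangle. Since such an $f$ is globally $C^\infty$, the conditions of Theorem~\ref{thm:C1} must be satisfied with $b_i^L = f(\vgrevilleL_i)$ and $b_i^R = f(\vgrevilleR_i)$. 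Applying this with $f(\vx)=x$ and $f(\vx)=y$ in turn and stacking the two resulting scalar equations yields the desired vector identities for the domain points.

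Alternatively, one could verify the five identities by direct computation: substitute $\vp_4 = \eta_1\vp_1+\eta_2\vp_2+\eta_3\vp_3$ into the barycentric expressions from \eqref{eq:domain-points-bary} for $\vgrevilleR_5,\vgrevilleR_{16},\vgrevilleR_{19},\vgrevilleR_{17},\vgrevilleR_6$ and compare componentwise with the proposed affine combinations of $\vgrevilleL_i$'s, repeatedly using $\eta_1+\eta_2+\eta_3=1$. I expect this step to be routine but slightly tedious; the affine-invariance argument sidesteps any computation.

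The only conceptual obstacle I foresee is ensuring that the coefficient patterns in \eqref{eq:C1} are correctly interpreted as affine combinations—one must check in each case that the weights on the right-hand side sum to $1$, which is essential both for the passage to control points and for the consistency of the argument via coordinate functions. Once this is in place, the corollary follows by combining Theorem~\ref{thm:C1} with the five domain point identities, written out coordinatewise.
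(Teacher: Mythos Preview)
Your argument is correct. The paper's own proof is a single sentence: it says the statements follow by direct computation from \eqref{eq:C1} and the explicit domain point coordinates in \eqref{eq:domain-points-bary}. This is exactly the ``alternative'' route you mention at the end of your proposal.

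Your primary argument, via affine invariance, is a genuinely different and cleaner approach. Rather than verifying five barycentric identities by hand, you exploit the very definition of the domain points: since every affine $f$ has coefficients $f(\vgreville_i)$ in the local basis and is automatically $C^1$ across the edge, the coefficient relations of Theorem~\ref{thm:C1} specialize to the desired identities among the $\vgreville_i$'s when $f$ runs over the coordinate functions. This costs no computation at all and makes transparent why the same affine weights appear on both the coefficients and the domain points. The paper's direct computation, by contrast, is more pedestrian but has the minor advantage of being self-contained and not relying on the reader recalling the characterizing property of the domain points from Section~\ref{sec:domain-point}. Your observation that the weights in each line of \eqref{eq:C1} sum to $1$ (a consequence of $\eta_1+\eta_2+\eta_3=1$) is the key consistency check, and it is worth stating explicitly as you do.
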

\begin{proof}
The statements follow by direct computation from \eqref{eq:C1} and from the expressions of the domain points in \eqref{eq:domain-points-bary}.
\end{proof}

The $C^1$ smoothness conditions in Corollary~\ref{cor:C1-geom} have a nice geometric interpretation. There are five sets of four control points that need to be coplanar.
In terms of our control net configuration in Figure~\ref{fig:domainpoints} (left), that means that the five triangles in both control nets along the common edge must be all pairwise coplanar. This is illustrated in Figure~\ref{fig:C1smoothnes}.

\begin{figure}[t!]
\centering
{\includegraphics[trim=40 50 40 40,clip,width=7cm]{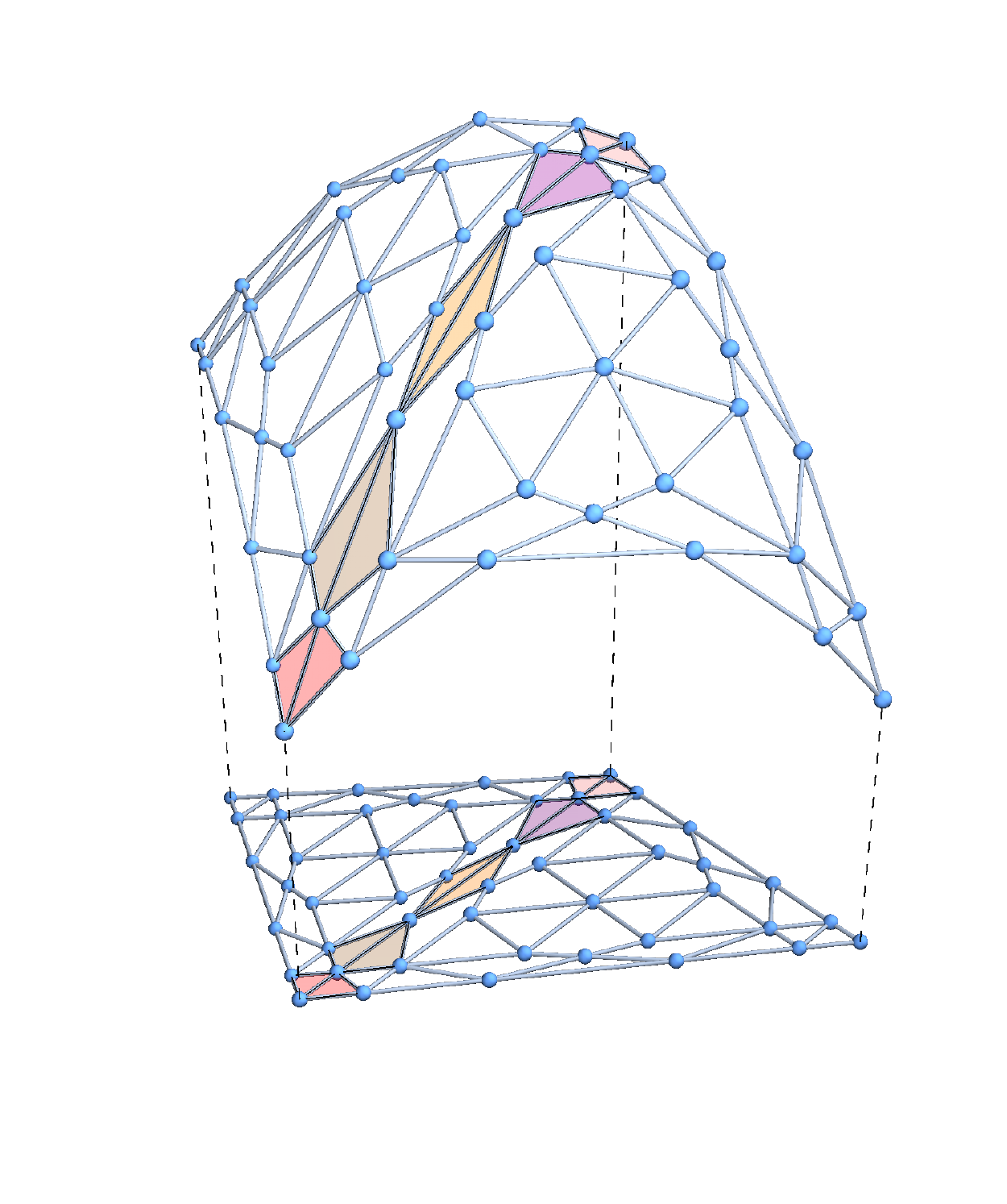}}\qquad
{\includegraphics[trim=40 50 40 40,clip,width=7cm]{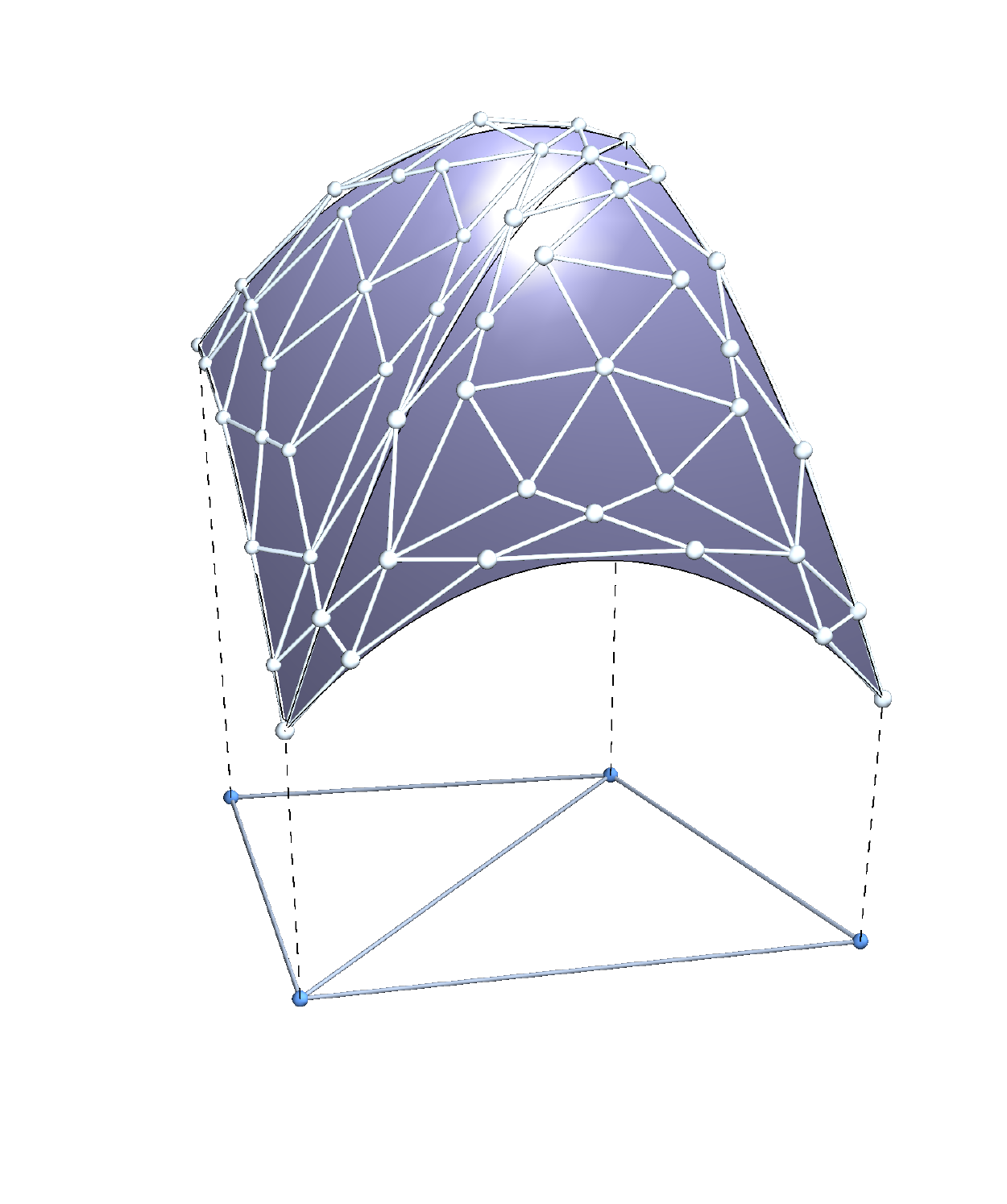}}
\caption{A $C^1$ spline surface on two adjacent domain triangles. The five pairs of triangles in the control nets that must be coplanar according to the smoothness conditions are colored.}
\label{fig:C1smoothnes}
\end{figure}

\begin{theorem}\label{thm:C2}
Consider the same assumptions as in Theorem~\ref{thm:C1}.
Let
$\{\Btilde^L_i, \ i=1,\ldots,28\}$ and $\{\Btilde^R_i, \ i=1,\ldots,28\}$
be the spline bases defined by \eqref{eq:alternative-basis} on $\Delta^L$ and $\Delta^R$, respectively.
Then, the spline functions
$$
\spline^L:=\sum_{i=1}^{28} b^L_iB^L_i = \sum_{i=1}^{21} b^L_i B^L_i+\sum_{i=22}^{27}\btilde^L_i \Btilde^L_i+ \btilde^L_{28}B^L_{28}
$$
and
$$
\spline^R:=\sum_{i=1}^{28} b^R_iB^R_i = \sum_{i=1}^{21} b^R_i B^R_i+\sum_{i=22}^{27}\btilde^R_i \Btilde^R_i+ \btilde^R_{28}B^R_{28}
$$
join $C^2$ across the common edge if and only if they join $C^1$ and in addition
\begin{equation}\label{eq:C2}
\begin{aligned}
b^R_{11} &= \eta_1(\eta_1-\eta_2-\eta_3)b^L_1+\eta_2(3\eta_1-\eta_3)b^L_4+\eta_3(3\eta_1-\eta_2)b^L_5+\eta_2^2b^L_{10}+\eta_3^2b^L_{11}+4\eta_2\eta_3b^L_{16},
\\
b^R_{12} &= \eta_2(\eta_2-\eta_1-\eta_3)b^L_2+\eta_1(3\eta_2-\eta_3)b^L_7+\eta_3(3\eta_2-\eta_1)b^L_6+\eta_1^2b^L_{13}+\eta_3^2b^L_{12}+4\eta_1\eta_3b^L_{17},
\\
\btilde^R_{22} &= \frac{1}{6}(\eta_1-\eta_3)(2\eta_1+\eta_2)b^L_4+
\left({\frac {5}{18}}\eta_{2}+{\frac {7}{18}}{\eta_{{2}}}^{2}+\frac{2}{3}
\eta_{{1}}+\frac{2}{3}\eta_{{2}}\eta_{{1}}\right)b^L_{10}\\
&\quad +\frac{1}{9} \left( 2\,\eta_{{1}}+3\,\eta_{{2}} \right)  \left( \eta_{2}-2\eta_{3} \right)b^L_{13} +\frac{1}{3}\eta_3(3\eta_1+\eta_2)b^L_{16}+\frac{10}{9}\eta_3(2\eta_2+\eta_1)b^L_{19}+\eta_3^2\btilde^L_{22},
\\
\btilde^R_{25}&=\frac{1}{6}(\eta_2-\eta_3)(2\eta_2+\eta_1)b^L_7+
\left({\frac {5}{18}}\,\eta_{{1}}+{\frac {7}{18}}\,{\eta_{{1}}}^{2}+\frac{2}{3}\,
\eta_{{2}}+\frac{2}{3}\eta_{{2}}\eta_{{1}}\right)b^L_{13} \\
&\quad +\frac{1}{9} \left( 3\eta_{1}+2\eta_{2} \right)  \left( \eta_{{1}}-2\eta_{3}\right)b^L_{10}+\frac{1}{3}\eta_3(3\eta_2+\eta_1)b^L_{17}+\frac{10}{9}\eta_3(2\eta_1+\eta_2)b^L_{19}+\eta_3^2\btilde^L_{25}.
\end{aligned}
\end{equation}
\end{theorem}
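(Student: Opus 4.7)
My approach mirrors that of Theorem~\ref{thm:C1}. Assuming the $C^1$ relations of Theorem~\ref{thm:C1} are in force, the difference $\spline^L-\spline^R$ vanishes to first order along the edge $\vp_1\vp_2$, so both the pure tangential second derivative $D^2_{\vp_1\vp_2}$ and the mixed derivative $D_{\vp_1\vp_2}D_{\vq_3\vp_3}$ of the difference automatically vanish on the edge. Hence $C^2$ smoothness across the edge is equivalent to the single equality
\[
D^2_{\vq_3\vp_3}\spline^L = D^2_{\vq_3\vp_3}\spline^R
\]
along the whole edge. Restricted to $\vp_1\vp_2$, each side is a $C^0$ piecewise linear function on the three subintervals delimited by the knots $\vp_1,\vp_{3,1},\vp_{3,2},\vp_2$, and is therefore determined by its four values at these breakpoints. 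So the $C^2$ join reduces to four scalar equalities, one per breakpoint, matching the number of new conditions in \eqref{eq:C2}.

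Next I convert each of the four scalar equalities into a suitable Hermite functional from \eqref{eq:rho-1}--\eqref{eq:rho-2}. Using $\vq_3\vp_3=\frac{1}{\eta_3}\vp_1\vp_4-\frac{\eta_3+2\eta_2}{2\eta_3}\vp_1\vp_2$ obtained from \eqref{eq:V4} (exactly as in the proof of Theorem~\ref{thm:C1}), the matching at $\vp_1$ expands, via bilinearity, into a combination of the values of $\rho_{10},\rho_{11},\rho_{16}$ on the two sides; after the already-established $C^0$ and $C^1$ relations cancel all tangent-containing terms, the only coefficient of $\spline^R$ that has not yet been fixed is $b^R_{11}$. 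Symmetrically at $\vp_2$ the new unknown is $b^R_{12}$. At the interior breakpoints $\vp_{3,1},\vp_{3,2}$ the natural functionals are $\rho_{22}$ and $\rho_{25}$: the direction $\vp_{3,1}\vp_3$ differs from $\vq_3\vp_3$ only by a tangential component, so matching $\rho_{22}(\spline^L)=\rho_{22}(\spline^R)$ is equivalent, under $C^1$, to matching $D^2_{\vq_3\vp_3}\spline$ at $\vp_{3,1}$, and similarly for $\rho_{25}$.

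The key practical observation --- and the reason for writing both splines in the alternative basis $\cBtilde$ on the indices $22,\ldots,28$ --- is that $\rho_{22}$ and $\rho_{25}$ act almost diagonally on the tilded basis, isolating $\btilde^R_{22}$ and $\btilde^R_{25}$ respectively among the coefficients with index $\geq 22$. Reading the values $\rho_j(B^L_i)$ from Table~\ref{tab:hermiteB} and the values $\rho_{22}(\Btilde^L_i),\rho_{25}(\Btilde^L_i)$ from Table~\ref{tab:hermiteBtilde-extra}, I would evaluate each of the four operators on both $\spline^L$ and $\spline^R$, substitute the $C^0$ and $C^1$ conditions \eqref{eq:C0}--\eqref{eq:C1} to eliminate $b^R_4,b^R_5,b^R_6,b^R_7,b^R_{16},b^R_{17},b^R_{19}$, and solve for the single remaining unknown. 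Collecting terms yields exactly the four identities in \eqref{eq:C2}.

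The main obstacle is bookkeeping, not concept. The last two identities mix six $\Delta^L$-coefficients and carry quadratic-in-$\eta$ weights such as $\frac{5}{18}\eta_2+\frac{7}{18}\eta_2^2$, so the substitution of the $C^1$ relations into the raw operator identities must be carried out with care, and the numerical weights from $\vw$ in \eqref{eq:basis} and from the conversion \eqref{eq:alternative-basis} have to be tracked consistently. The choice to express the interior-knot conditions in the tilded basis, rather than in the original $\cB$, is precisely what guarantees that each of the four equations determines a single coefficient on the $\Delta^R$ side rather than a linear combination.
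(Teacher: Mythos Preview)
Your approach is essentially the same as the paper's: reduce $C^2$ across the edge to agreement of a single transversal second derivative at the four breakpoints $\vp_1,\vp_{3,1},\vp_{3,2},\vp_2$, evaluate both sides via the tabulated Hermite functionals, substitute the $C^1$ relations, and use the alternative basis so that the interior conditions isolate $\btilde^R_{22}$ and $\btilde^R_{25}$.

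Two small remarks. First, the paper works with the fixed direction $\vp_1\vp_3$ at all four breakpoints rather than $\vq_3\vp_3$ and then $\vp_{3,1}\vp_3,\vp_{3,2}\vp_3$; as you correctly note, these differ only by tangential components, so under $C^1$ the choices are equivalent. Second, your table references are swapped: the values $\rho_{22}(\Btilde_i),\rho_{25}(\Btilde_i)$ sit in Table~\ref{tab:hermiteB}, while Table~\ref{tab:hermiteBtilde-extra} supplies the additional second-derivative data $\rho_{29},\ldots,\rho_{34}$ (e.g.\ $D^2_{\vp_1\vp_3}$, $D_{\vp_1\vp_3}D_{\vp_1\vp_2}$, $D^2_{\vp_1\vp_2}$ at $\vp_{3,1}$ and $\vp_{3,2}$) that you will in fact need once you expand the transversal direction on the $\Delta^R$ side in terms of $\vp_1\vp_4$ and $\vp_1\vp_2$.
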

\begin{proof}
Assume $\spline^L$ and $\spline^R$ join $C^1$ across the common edge $\vp_1\vp_2$. To prove $C^2$ smoothness across the same edge, it suffices to prove that along $\vp_1\vp_2$ the functions
$
D_{\vp_1\vp_3}^2\spline^L $ and $D_{\vp_1\vp_3}^2\spline^R
$
agree. Along the edge $\vp_1\vp_2$, these functions are univariate $C^0$ linear splines with (interior) knots at the points $\vp_{3,1}$ and $\vp_{3,2}$. Therefore, each of them is uniquely determined by its value at the two endpoints of the edge and by the value at the points $\vp_{3,1}$ and $\vp_{3,2}$.
From \eqref{eq:V4} we know that
$$
\vp_1\vp_3=\frac{1}{\eta_3}\vp_1\vp_4-\frac{\eta_2}{\eta_3}\vp_1\vp_2,
$$
so that
$$
D_{\vp_1\vp_3}^2=\left(\frac{1}{\eta_3}D_{\vp_1\vp_4}-\frac{\eta_2}{\eta_3}D_{\vp_1\vp_2}\right)^2.
$$
Then, by employing the values in Table~\ref{tab:hermiteB} we get
\begin{align*}
D_{\vp_1\vp_3}^2\spline^L(\vp_1) &= 54b^L_1-81b^L_5+27b^L_{11}, \\
D_{\vp_1\vp_3}^2\spline^R(\vp_1) &= \frac{1}{\eta_3^2}(54b^R_1-81b^R_5+27b^R_{11})-\frac{2\eta_2}{\eta_3^2}(54b^R_1-54b^R_4-54b^R_5+54b^R_{16}) \\
&\quad +\frac{\eta_2^2}{\eta_3^2}(54b^R_1-81b^R_5+27b^R_{10}).
\end{align*}
By equating the above expressions and taking into account the $C^1$ smoothness conditions, we obtain the first condition of \eqref{eq:C2}. 
With the same line of arguments, taking into account \eqref{eq:alternative-basis} and the additional values in Table~\ref{tab:hermiteBtilde-extra}, we deduce the remaining three conditions.
\end{proof}

\begin{corollary}\label{cor:C2-geom}
Consider the same assumptions as in Theorem~\ref{thm:C2}.
The $C^2$ smoothness conditions for the control points can be written
as
\begin{equation*}
\begin{aligned}
(\vgrevilleR_{11},b^R_{11}) &= \eta_1(\eta_1-\eta_2-\eta_3)(\vgrevilleL_1,b^L_1)+\eta_2(3\eta_1-\eta_3)(\vgrevilleL_4,b^L_4)\\
&\quad +\eta_3(3\eta_1-\eta_2)(\vgrevilleL_5,b^L_5)+\eta_2^2(\vgrevilleL_{10},b^L_{10})+\eta_3^2(\vgrevilleL_{11},b^L_{11})+4\eta_2\eta_3(\vgrevilleL_{16},b^L_{16}),
\\
(\vgrevilleR_{12},b^R_{12}) &= \eta_2(\eta_2-\eta_1-\eta_3)(\vgrevilleL_2,b^L_2)+\eta_3(3\eta_2-\eta_1)(\vgrevilleL_6,b^L_6)\\
&\quad +\eta_1(3\eta_2-\eta_3)(\vgrevilleL_7,b^L_7)+\eta_1^2(\vgrevilleL_{13},b^L_{13})+\eta_3^2(\vgrevilleL_{12},b^L_{12})+4\eta_1\eta_3(\vgrevilleL_{17},b^L_{17}),
\\
(\vgrevilletildeR_{22},\btilde^R_{22}) &= \frac{1}{6}(\eta_1-\eta_3)(2\eta_1+\eta_2)(\vgrevilleL_4,b^L_4)+\left({\frac{5}{18}}\eta_{2}+{\frac{7}{18}}{\eta_{2}}^{2}+\frac{2}{3}
\eta_{1}+\frac{2}{3}\eta_{2}\eta_{1}\right)(\vgrevilleL_{10},b^L_{10})\\
&\quad +\frac{1}{9} \left( 2\eta_{1}+3\eta_{2} \right) \left( \eta_{2}-2\eta_{3} \right)(\vgrevilleL_{13},b^L_{13})+\frac{1}{3}\eta_3(3\eta_1+\eta_2)(\vgrevilleL_{16},b^L_{16})\\
&\quad +\frac{10}{9}\eta_3(2\eta_2+\eta_1)(\vgrevilleL_{19},b^L_{19})+\eta_3^2(\vgrevilletildeL_{22},\btilde^L_{22}),
\\
(\vgrevilletildeR_{25},\btilde^R_{25}) &= \frac{1}{6}(\eta_2-\eta_3)(2\eta_2+\eta_1)(\vgrevilleL_7,b^L_7)+\left({\frac{5}{18}}\eta_{{1}}+{\frac{7}{18}}{\eta_{1}}^{2}+\frac{2}{3}\eta_{2}+\frac{2}{3}\eta_{2}\eta_{1}\right)(\vgrevilleL_{13},b^L_{13})\\
&\quad +\frac{1}{9} \left( 3\eta_{1}+2\eta_{2} \right)\left( \eta_{1}-2\eta_{3}\right)(\vgrevilleL_{10},b^L_{10})+\frac{1}{3}\eta_3(3\eta_2+\eta_1)(\vgrevilleL_{17},b^L_{17})\\
&\quad +\frac{10}{9}\eta_3(2\eta_1+\eta_2)(\vgrevilleL_{19},b^L_{19})+\eta_3^2(\vgrevilletildeL_{25},\btilde^L_{25}).
\end{aligned}
\end{equation*}
\end{corollary}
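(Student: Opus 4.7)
The plan is to mimic the proof of Corollary~\ref{cor:C1-geom}, exploiting that each claimed three-dimensional control-point identity $(\vgreville,b) = \sum_i c_i (\vgreville_i,b_i)$ decouples into a scalar coefficient identity (third component) and a planar domain-point identity (first two components). Theorem~\ref{thm:C2} supplies exactly the four coefficient identities, so the entire task reduces to verifying the four corresponding domain-point equalities in $\RR^2$.

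For each of the four equations, I would first check that the weights on the right-hand side sum to $(\eta_1+\eta_2+\eta_3)^2 = 1$, so that the right-hand side is genuinely an affine combination of planar points; this is a short algebraic check in each case. Next, I would translate everything into the common barycentric frame with vertices $\vp_1,\vp_2,\vp_3$: the left-triangle domain points $\vgrevilleL_i$ are read off from \eqref{eq:domain-points-bary}, and $\vgrevilletildeL_{22},\vgrevilletildeL_{25}$ from \eqref{eq:domain-points-bary-2}. The right-triangle points $\vgrevilleR_{11}, \vgrevilleR_{12}, \vgrevilletildeR_{22}, \vgrevilletildeR_{25}$ are initially expressed by their barycentric coordinates with respect to $(\vp_1,\vp_2,\vp_4)$, and I would eliminate $\vp_4$ via \eqref{eq:V4}, so that both sides of each identity become linear combinations of $\vp_1,\vp_2,\vp_3$ with polynomial coefficients in $\eta_1,\eta_2,\eta_3$. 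Matching the coefficients of $\vp_1$, $\vp_2$, $\vp_3$ then yields three scalar polynomial identities (modulo $\eta_1+\eta_2+\eta_3=1$) that can be checked by direct expansion.

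The main routine obstacle is the bookkeeping for the last two equations, where the alternative-basis domain points $\vgrevilletildeR_{22}$ and $\vgrevilletildeR_{25}$ appear on the left: by \eqref{eq:domain-points-bary-2} these are themselves affine combinations of $\vgrevilleR_{22},\vgrevilleR_{23}$ and of $\vgrevilleR_{24},\vgrevilleR_{25}$ respectively, so the substitution must be carried out before converting to the $(\vp_1,\vp_2,\vp_3)$ frame. Once this is done, nothing beyond careful expansion is required, and the verification runs exactly parallel to the direct computation used in the proof of Corollary~\ref{cor:C1-geom}.
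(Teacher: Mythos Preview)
Your proposal is correct and follows essentially the same approach as the paper, which simply states that the result follows by direct computation from \eqref{eq:C2} together with the domain-point expressions in \eqref{eq:domain-points-bary} and \eqref{eq:domain-points-bary-2}. Your write-up merely spells out the mechanics of that direct computation (splitting into coefficient and domain-point components, passing to the common barycentric frame via \eqref{eq:V4}, and handling $\vgrevilletildeR_{22},\vgrevilletildeR_{25}$ through \eqref{eq:domain-points-bary-2}), which is exactly what the paper's one-line proof leaves implicit.
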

\begin{proof}
The statements follow by direct computation from \eqref{eq:C2} and from the expressions of the domain points in \eqref{eq:domain-points-bary} and \eqref{eq:domain-points-bary-2}.
\end{proof}

Using the relations between the domain points $\vgreville_i$ and $\vgrevilletilde_i$ in \eqref{eq:domain-points-bary-2}, we can immediately rewrite the $C^2$ smoothness conditions solely in terms of the control points of the basis \eqref{eq:basis}. For instance, the third condition in Corollary~\ref{cor:C2-geom} reads as
\begin{align*}
\frac{2}{3}(\vgrevilleR_{22},b^R_{22})&+\frac{1}{3}(\vgrevilleR_{23},b^R_{23}) \\
&= \frac{1}{6}(\eta_1-\eta_3)(2\eta_1+\eta_2)(\vgrevilleL_4,b^L_4)+\left({\frac{5}{18}}\eta_{2}+{\frac{7}{18}}{\eta_{2}}^{2}+\frac{2}{3}
\eta_{1}+\frac{2}{3}\eta_{2}\eta_{1}\right)(\vgrevilleL_{10},b^L_{10})\\
&\quad +\frac{1}{9} \left( 2\eta_{1}+3\eta_{2} \right)  \left( \eta_{2}-2\eta_{3} \right)(\vgrevilleL_{13},b^L_{13})+\frac{1}{3}\eta_3(3\eta_1+\eta_2)(\vgrevilleL_{16},b^L_{16})\\
&\quad +\frac{10}{9}\eta_3(2\eta_2+\eta_1)(\vgrevilleL_{19},b^L_{19})+\eta_3^2\left(\frac{2}{3}(\vgrevilleL_{22},b^L_{22})+\frac{1}{3}(\vgrevilleL_{23},b^L_{23})\right),
\end{align*}
and the fourth condition as
\begin{align*}
\frac{2}{3}(\vgrevilleR_{25},b^R_{25})&+\frac{1}{3}(\vgrevilleR_{24},b^R_{24}) \\
&= \frac{1}{6}(\eta_2-\eta_3)(2\eta_2+\eta_1)(\vgrevilleL_7,b^L_7)+\left({\frac{5}{18}}\eta_{1}+{\frac{7}{18}}{\eta_{1}}^{2}+\frac{2}{3}\eta_{2}+\frac{2}{3}\eta_{2}\eta_{1}\right)(\vgrevilleL_{13},b^L_{13})\\
&\quad +\frac{1}{9} \left( 3\eta_{1}+2\eta_{2} \right)\left( \eta_{1}-2\eta_{3}\right)(\vgrevilleL_{10},b^L_{10})+\frac{1}{3}\eta_3(3\eta_2+\eta_1)(\vgrevilleL_{17},b^L_{17})\\
&\quad +\frac{10}{9}\eta_3(2\eta_1+\eta_2)(\vgrevilleL_{19},b^L_{19})+\eta_3^2\left(\frac{2}{3}(\vgrevilleL_{25},b^L_{25})+\frac{1}{3}(\vgrevilleL_{24},b^L_{24})\right).
\end{align*}

The smoothness conditions in Corollary~\ref{cor:C2-geom} show a structural similarity with the $C^2$ join of two adjacent triangular Bernstein--B\'ezier patches. We refer to \cite[Example~2]{Lai97} for a geometric interpretation.

\subsection{Stable bases for $\spS_3^2(\cT_{\WS})$}
In Sections~\ref{sec:simplex-basis-1} and~\ref{sec:simplex-basis-2}, we have constructed two simplex spline bases for the space of $C^2$ cubic splines on the $\WS$ split of a given triangle $\Delta$. We have also shown that these bases enjoy similar properties to the Bernstein polynomial basis defined on a triangle.
Here, we consider the space $\spS_3^2(\cT_{\WS})$ and we take the similarity between these bases one step further: we extend the concept of minimal determining sets developed for the Bernstein polynomial basis \cite{Lai.Schumaker07} to our simplex spline bases, with the aim of constructing stable bases with local support for the space $\spS_3^2(\cT_{\WS})$. For the sake of simplicity we focus on the simplex spline basis \eqref{eq:basis-2}.

To stress their dependence on a specific triangle $\Delta$, from now on we denote the simplex spline basis \eqref{eq:basis-2} by
$\{\Btilde_{i,\Delta}, \ i=1, \ldots, 28\}$.
Any spline $\spline\in\spS_3^2(\cT_{\WS})$ can be identified by its coefficients $\{\btilde_{i,\Delta}, \ i=1,\ldots, 28\}$ with respect to the above basis over any triangle $\Delta$ of $\cal T$, i.e.,
\begin{equation}
\label{eq:repr-piecewise}
\spline_{|\Delta}=\sum_{i=1}^{28}\btilde_{i,\Delta}\Btilde_{i,\Delta},
\end{equation}
where the coefficients in \eqref{eq:repr-piecewise} must satisfy the smoothness conditions derived in Section~\ref{sec:smootheness-cond} to ensure the $C^2$ joins across the edges of $\cT$.
Following \cite[Chapter~5]{Lai.Schumaker07}, for any triangle $\Delta$ of $\cT$ we denote by $\cDtilde_{\Delta}$ the set of domain points specified in \eqref{eq:domain-points-bary-2}. Then, we define a (minimal) determining set as follows.
\begin{definition}
Assume a set 
$\cDtilde \subseteq (\cup_{\Delta\in \cT}\cDtilde_{\Delta})$
is such that if $\spline\in\spS_3^2(\cT_{\WS})$ has all the coefficients corresponding to elements in $\cDtilde$ equal to zero then $s\equiv 0$. Then, $\cDtilde$ is a determining set for $\spS_3^2(\cT_{\WS})$.
A determining set is a minimal determining set in case it has the smallest possible cardinality.
\end{definition}
By using the same line of arguments as the proof of \cite[Theorem~5.13]{Lai.Schumaker07} we infer that the cardinality of a minimal determining set for $\spS_3^2(\cT_{\WS})$ equals the dimension of the space.
With the aim of specifying such a minimal determining set, let us first introduce some terminology regarding the domain points in a triangle $\Delta=\langle \vp_1, \vp_2, \vp_3\rangle$ of $\cT$: 
\begin{itemize}
\item the domain points associated with the vertex $\vp_i$ are the six points in \eqref{eq:domain-points-bary-2} with the $i$-th barycentric coordinate $\geq 2/3$, $i=1,2,3$ (for instance, the domain points associated with $\vp_1$ are $\vgrevilletilde_1, \vgrevilletilde_4, \vgrevilletilde_5, \vgrevilletilde_{10}, \vgrevilletilde_{11}, \vgrevilletilde_{16}$);
\item the domain points associated with the edge $\vp_1\vp_2$ are the three points $\vgrevilletilde_{19}, \vgrevilletilde_{22}, \vgrevilletilde_{25}$ in \eqref{eq:domain-points-bary-2};
\item the domain points associated with the edge $\vp_1\vp_3$ are the three points $\vgrevilletilde_{21}, \vgrevilletilde_{23}, \vgrevilletilde_{26}$ in \eqref{eq:domain-points-bary-2};
\item the domain points associated with the edge $\vp_2\vp_3$ are the three points $\vgrevilletilde_{20}, \vgrevilletilde_{24}, \vgrevilletilde_{27}$ in \eqref{eq:domain-points-bary-2};
\item the domain point associated with the triangle $\Delta$ is the point $\vgrevilletilde_{28}$ in \eqref{eq:domain-points-bary-2}.
\end{itemize}
We can construct a minimal determining set for $\spS_3^2(\cT_{\WS})$ as follows; see also Figure~\ref{fig:min-det-set}.

\begin{figure}[t!]
\centering
{\includegraphics[trim=0 0 0 0,clip,width=8cm]{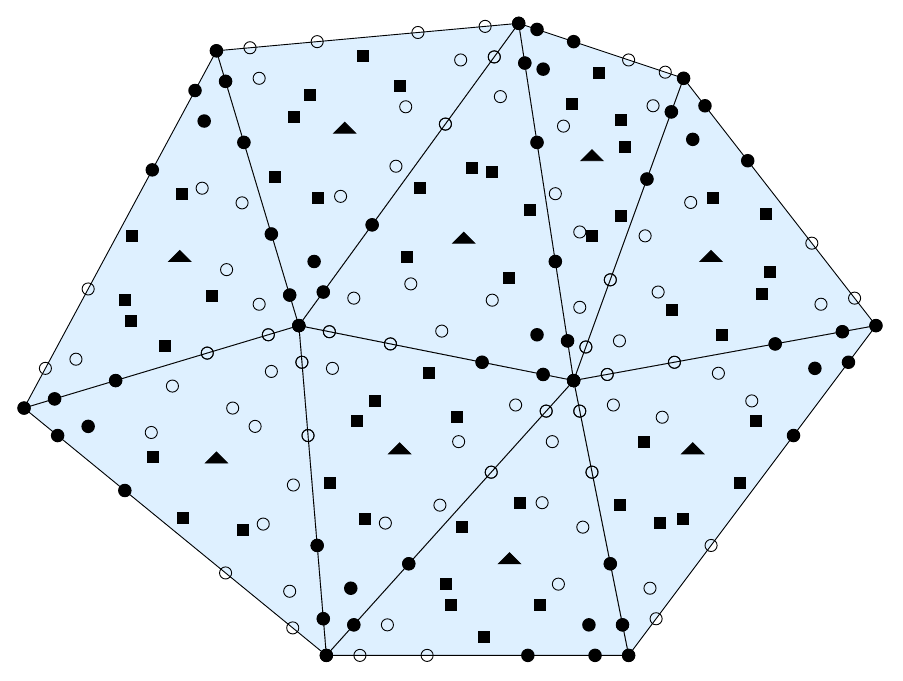}}
\caption{A minimal determining set for $\spS_3^2(\cT_{\WS})$. Filled circles: the domain points associated with a vertex. Filled squares: the domain points associated with an edge. Filled triangle: the domain points associated with a triangle. Empty circles: the domain points associated with coefficients obtained from those in the minimal determining set by imposing the smoothness conditions.}
\label{fig:min-det-set}
\end{figure}

\begin{theorem}
\label{thm:minimal-det-set}
For a given triangulation $\cal T$, let $\cMtilde$ be the set consisting of the following domain points:
\begin{itemize}
\item for each vertex $\vp$ of $\cT$, choose a triangle $\Delta$ of $\cT$ such that $\vp$ is a vertex of $\Delta$ and select the six domain points in $\Delta$ associated with $\vp$;
\item for each edge of $\cal T$, choose a triangle $\Delta$ of $\cT$ sharing this edge and select the three domain points in $\Delta$ associated with the edge;
\item for each triangle $\Delta$ of $\cT$, select the domain point associated with it.
\end{itemize}
Then, $\cMtilde$ is a minimal determining set for $\spS_3^2(\cT_{\WS})$.
\end{theorem}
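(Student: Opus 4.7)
The plan is to use the cardinality-equals-dimension reduction. First I would count the cardinality of $\cMtilde$ and confirm it equals $\dim\spS_3^2(\cT_{\WS})$. By construction, $\cMtilde$ contains six points per vertex, three points per edge, and one point per triangle, so $\#\cMtilde = 6n_V+3n_E+n_T$, which matches \eqref{eq:dim-space}. Since the paper already argues (following \cite{Lai.Schumaker07}) that the cardinality of a minimal determining set equals the dimension of $\spS_3^2(\cT_{\WS})$, it suffices to show that $\cMtilde$ is a determining set: any $\spline\in\spS_3^2(\cT_{\WS})$ whose coefficients corresponding to points of $\cMtilde$ all vanish must be identically zero.

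The argument would proceed in two propagation waves driven by the smoothness conditions of Section~\ref{sec:smootheness-cond}. The key structural feature, to be read off from \eqref{eq:C0}, \eqref{eq:C1} and \eqref{eq:C2}, is that the smoothness relations across an edge shared by $\Delta^L$ and $\Delta^R$ decouple the coefficient groups in the following triangular sense: the six coefficients on $\Delta^R$ associated with an endpoint $\vp$ of the shared edge are linear combinations only of the six coefficients on $\Delta^L$ associated with the same $\vp$; and the three edge coefficients on $\Delta^R$ are linear combinations of the three edge coefficients on $\Delta^L$ together with the (already processed) vertex coefficients of the two endpoints.

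In the first wave, fix a vertex $\vp$ of $\cT$. The six vertex-associated coefficients vanish in the selected triangle $\Delta_\vp$ by hypothesis. Walking through the star of $\vp$ one adjacent triangle at a time and applying the vertex-block of \eqref{eq:C0}, \eqref{eq:C1} and \eqref{eq:C2} (e.g.\ $b^R_1=b^L_1$, $b^R_5=\eta_1 b^L_1+\eta_2 b^L_4+\eta_3 b^L_5$, and $b^R_{11}=\eta_1(\eta_1-\eta_2-\eta_3)b^L_1+\cdots+\eta_3^2 b^L_{11}+4\eta_2\eta_3 b^L_{16}$), zero propagates homogeneously, and the six vertex coefficients at $\vp$ vanish in every triangle of $\cT$ containing $\vp$. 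In the second wave, fix an edge $e$ shared by $\Delta^L$ (the triangle selected for $e$) and $\Delta^R$. The three edge coefficients $\btilde^L_{19},\btilde^L_{22},\btilde^L_{25}$ are zero by hypothesis, and by the first wave every vertex coefficient appearing in the relevant conditions is already zero; the edge-block of the smoothness relations then collapses to $\btilde^R_{19}=\eta_3\btilde^L_{19}$, $\btilde^R_{22}=\eta_3^2\btilde^L_{22}$, $\btilde^R_{25}=\eta_3^2\btilde^L_{25}$, forcing the edge coefficients on $\Delta^R$ to vanish. Since the single triangle coefficient per $\Delta$ is itself in $\cMtilde$ and hence zero, all $28$ coefficients on every macro-triangle vanish, so $\spline\equiv 0$.

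The main obstacle I foresee is making the two decouplings above precise and complete: one must verify that the vertex-at-$\vp_1$ block and the vertex-at-$\vp_2$ block in \eqref{eq:C0}--\eqref{eq:C2} do not mix (so that wave one can be executed vertex by vertex independently), and that the edge block is strictly triangular over the vertex blocks with a nonzero diagonal scaling ($\eta_3$ or $\eta_3^2$, nonzero because $\vp_4$ does not lie on the line through $\vp_1$ and $\vp_2$). Both decouplings are visible by inspection of Theorems~\ref{thm:C1} and~\ref{thm:C2}, which is precisely why the basis \eqref{eq:basis-2} was tailored so that its coefficients split cleanly into vertex, edge, and triangle groups.
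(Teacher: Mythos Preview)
Your proposal is correct and follows essentially the same approach as the paper's proof: count $\#\cMtilde=6n_V+3n_E+n_T=\dim\spS_3^2(\cT_{\WS})$, then propagate zero coefficients first around each vertex star using the vertex-block of \eqref{eq:C0}--\eqref{eq:C2}, next across each edge using the remaining edge-block, and finally note the triangle coefficient is directly in $\cMtilde$. Your write-up is in fact more explicit than the paper's (which simply cites which conditions are used at each step); the only minor imprecision is that after zeroing the vertex coefficients the relation for $\btilde^R_{22}$ still contains the term $\tfrac{10}{9}\eta_3(2\eta_2+\eta_1)b^L_{19}$, but since $b^L_{19}$ is itself one of the already-vanishing edge coefficients on $\Delta^L$, your conclusion is unaffected.
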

\begin{proof}
Let $s\in \spS_3^2(\cT_{\WS})$.
Assume that all its coefficients associated with the domain points in $\cMtilde$ are set equal to $0$.
Let $\vp$ be any vertex of $\cT$. All the coefficients of $\spline$ corresponding to the domain points associated with $\vp$ (in any triangle of $\cT$
surrounding $\vp$) are zero because either they belong to $\cMtilde$ or they are uniquely determined by the conditions in \eqref{eq:C0}, the first two conditions in \eqref{eq:C1} and the first condition in \eqref{eq:C2} for $C^2$ smoothness across the edges emanating from $\vp$.
Let $\vp\vq$ be any edge of $\cT$. All the coefficients of $\spline$ corresponding to the domain points associated with the edge (in any of the two triangles of $\cT$ sharing the edge $\vp\vq$) are zero because either they belong to $\cMtilde$ or they are uniquely determined by the third condition in \eqref{eq:C1} and the third and fourth conditions in \eqref{eq:C2} for $C^2$ smoothness across the edge.
Finally, the domain point associated with any triangle in $\cT$ belongs to $\cMtilde$ and so the corresponding coefficient is $0$.
Hence, for any triangle $\Delta$ of $\cT$ all the coefficients in \eqref{eq:repr-piecewise} are $0$ and so $\spline\equiv 0$, i.e., $\cMtilde$ is a determining set.
Moreover, the cardinality of $\cMtilde$ clearly equals the dimension of the space, see \eqref{eq:dim-space}, and so $\cMtilde$ is a minimal determining set.
\end{proof}

Given a minimal determining set $\cMtilde$ for $\spS_3^2(\cT_{\WS})$, suppose we assign values to all the coefficients corresponding to the domain points in it. The proof of Theorem~\ref{thm:minimal-det-set} shows that these coefficients uniquely identify a spline function of $\spS_3^2(\cT_{\WS})$ because all the remaining coefficients in the representation \eqref{eq:repr-piecewise} can be deduced from the smoothness conditions.
Therefore, any minimal determining set enables us to built a basis for $\spS_3^2(\cT_{\WS})$. Let us consider the set of functions
\begin{equation}
\label{eq:basis-space-2}
\{B_{\vgrevilletildeindex,\cT},\ \vgrevilletildeindex\in \cMtilde\},
\end{equation}
where $B_{\vgrevilletildeindex,\cT}$ is the unique function of $\spS_3^2(\cT_{\WS})$ obtained by 
zeroing all the coefficients corresponding to the domain points in $\cMtilde$ except the one related to $\vgrevilletildeindex$ which is set equal to $1$. 
By construction, the functions in \eqref{eq:basis-space-2} are clearly linearly independent (see also \cite[Theorem~5.20]{Lai.Schumaker07}) and their number agrees with the dimension of the space because $\cMtilde $ is a minimal determining set. Thus, \eqref{eq:basis-space-2} is a basis for $\spS_3^2(\cT_{\WS})$.
The following proposition ensures that the elements of the above basis are uniformly bounded and have local support.
\begin{proposition}
\label{prop:loc-stab-basis}
For a given triangulation $\cal T$, let $\cMtilde$ be a minimal determining set for $\spS_3^2(\cT_{\WS})$ as specified in Theorem~\ref{thm:minimal-det-set}. 
For all $\vgrevilletildeindex\in \cMtilde$, the support of $B_{\vgrevilletildeindex,\cT}$ is contained in
\begin{enumerate}
\item [a)] the union of the triangles of $\cT$ sharing the vertex $\vp$, if $\vgrevilletildeindex$ is a domain point in $\cMtilde$ associated with the vertex $\vp$;
\item [b)] the union of the two triangles of $\cT$ sharing the edge $\vp\vq$, if $\vgrevilletildeindex$ is a domain point in $\cMtilde$ associated with the edge $\vp\vq$;
\item [c)] the triangle $\Delta$ of $\cT$, if $\vgrevilletildeindex$ is the domain point in $\cMtilde$ associated with the triangle $\Delta$.
\end{enumerate}
Moreover, there exists a constant $\Ktilde$ only depending on the minimal angle of $\cT$ such that
\begin{equation}\label{eq:bounded-basis}
\| B_{\vgrevilletildeindex,\cT}\|_{\infty}\leq \Ktilde, \quad \vgrevilletildeindex\in \cMtilde.
\end{equation}
\end{proposition}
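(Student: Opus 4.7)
The plan is to address the two claims separately. The support statement follows by a careful analysis of how coefficients propagate through the smoothness conditions of Section~\ref{sec:smootheness-cond}, while the uniform bound is obtained by combining a bound on the propagated coefficients with the local stability estimate \eqref{eq:local-stability-2}.

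The key observation underlying the support claim is that the relations \eqref{eq:C0}, \eqref{eq:C1}, \eqref{eq:C2}, rewritten for the basis \eqref{eq:basis-2} via \eqref{eq:alternative-basis}, are strictly local across a shared edge: each coefficient on the right triangle that belongs to the 15-element set of indices $\{1,2,4,5,6,7,10,11,12,13,16,17,19,22,25\}$ is expressed as a fixed linear combination of the 15 coefficients on the left triangle with the same indices, and all other coefficients on either triangle are independent of what lies on the opposite side of the shared edge. Starting from the configuration in which every coefficient in $\cMtilde$ equals $0$ except the one at $\vgrevilletildeindex$ which equals $1$, every remaining coefficient is computed by these relations. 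Case (c) is then immediate: index $28$ never occurs in any smoothness condition, so a triangle-type coefficient cannot propagate. For case (a), the six $\cMtilde$-coefficients at every vertex $\vp'\neq\vp$ are zero and remain zero under propagation around the star of $\vp'$; similarly every edge-type and triangle-type $\cMtilde$-coefficient is zero. Therefore in any triangle of $\cT$ not containing $\vp$, all coefficients appearing on the right-hand sides of the smoothness conditions vanish, so the propagated values vanish as well, and the support is confined to the star of $\vp$. For case (b), the specific feature to exploit is that the edge-type indices $19, 22, 25$ never appear on the right-hand side of any smoothness condition determining a vertex-type or triangle-type coefficient; combined with the vanishing of all vertex-type coefficients (established again by the star argument), one sees that the only coefficients that can become nonzero outside $\Delta^L$ are the three edge-type coefficients on the shared edge in $\Delta^R$, and propagation across the other edges of $\Delta^L$ and $\Delta^R$ yields zero throughout $\cT\setminus(\Delta^L\cup\Delta^R)$.

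For the uniform bound, inspection of the propagation coefficients in Corollaries~\ref{cor:C1-geom} and~\ref{cor:C2-geom} reveals that they are polynomials of degree at most two in the barycentric coordinates $\eta_1,\eta_2,\eta_3$ of the opposite vertex of the right triangle with respect to the left one. A standard shape-regularity argument shows that $|\eta_i|$ is bounded by a constant depending only on the minimal angle $\theta_{\min}(\cT)$. Since, by parts (a)--(c), the support of $B_{\vgrevilletildeindex,\cT}$ meets at most $O(1/\theta_{\min}(\cT))$ triangles, the number of propagation steps from the source triangle in $\cMtilde$ is uniformly bounded, and hence each local coefficient $\btilde_{i,\Delta}$ of $B_{\vgrevilletildeindex,\cT}$ satisfies $|\btilde_{i,\Delta}|\leq K'$ for some $K'$ depending only on $\theta_{\min}(\cT)$. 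Applying \eqref{eq:local-stability-2} triangle by triangle then gives
\begin{equation*}
\|B_{\vgrevilletildeindex,\cT}|_{\Delta}\|_\infty\leq 3\max_i |\btilde_{i,\Delta}|\leq 3K',
\end{equation*}
so \eqref{eq:bounded-basis} holds with $\Ktilde:=3K'$.

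The main obstacle is the careful bookkeeping required in cases (a) and (b): one must separate the vertex-type, edge-type, and triangle-type coefficients and argue, typically by a star-propagation argument around each vertex of $\cT$ before examining the edge propagation, that no type is contaminated by the others. Once this locality has been pinned down, both the support claim and the uniform bound follow in a rather mechanical fashion from the explicit formulas of Section~\ref{sec:smootheness-cond} and the local stability~\eqref{eq:local-stability-2}.
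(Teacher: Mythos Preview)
Your proposal is correct and follows essentially the same approach as the paper's proof: the support claim is obtained by inspecting the smoothness conditions of Theorems~\ref{thm:C1} and~\ref{thm:C2} to see that coefficients outside the claimed regions remain zero, and the uniform bound combines the fact that the propagation coefficients are quadratic in barycentric coordinates (bounded via the minimum angle, with the star size also bounded by the minimum angle) with the local stability estimate~\eqref{eq:local-stability-2}, yielding the same constant $\Ktilde=3K'$. Your version is more explicit than the paper's terse ``direct inspection'' --- in particular, your identification of the 15 propagated indices and the observation that edge-type indices never feed back into vertex-type or triangle-type conditions make the locality argument transparent --- but the underlying logic is identical.
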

\begin{proof}Let $\vgrevilletildeindex$ be fixed.
A direct inspection of the smoothness conditions in Theorems~\ref{thm:C1} and~\ref{thm:C2} immediately gives that the coefficients in \eqref{eq:repr-piecewise} for $B_{\vgrevilletildeindex,\cT}$ are $0$ whenever $\Delta$ is not a triangle listed in the items a)-b)-c) and so $B_{\vgrevilletildeindex,\cT}$ vanishes outside the union of those triangles.
In order to prove \eqref{eq:bounded-basis}, we first note that the number of triangles surrounding a vertex of $\cT$ and the (absolute value of the) barycentric coordinates of a point of a triangle with respect to an adjacent triangle are bounded in terms of the minimum angle of $\cT$ (see \cite[proof of Lemma~2.29]{Lai.Schumaker07}). Let $\Delta$ be a triangle of $\cT$ belonging to the support of $B_{\vgrevilletildeindex,\cT}$. We have that
$(B_{\vgrevilletildeindex,\cT})_{|\Delta}$
can be represented in the form \eqref{eq:repr-piecewise} where the coefficients $\btilde_{i,\Delta}$ are obtained from the value $1$ corresponding to the domain point $\vgrevilletildeindex \in \cMtilde$ by applying the smoothness conditions in Theorems~\ref{thm:C1} and~\ref{thm:C2}; these conditions consist of linear or quadratic relations involving barycentric coordinates of points in adjacent triangles. Therefore, denoting by $\vbtilde_{\Delta}$ the vector of these $28$ coefficients, we get
$\|\vbtilde_{\Delta}\|_\infty\leq K'$,
where $K'$ is a constant only depending on the minimum angle of $\cT$.
Hence, from \eqref{eq:local-stability-2} we get
$$
 \|(B_{\vgrevilletildeindex,\cT})_{|\Delta}\|_{\infty}\leq 3 \|\vbtilde_{\Delta}\|_\infty\leq 3K'.
$$
Taking the maximum over all the triangles of $\cT$ we arrive at \eqref{eq:bounded-basis} with $\Ktilde=3K'$.
\end{proof}

A basis with properties as in Proposition~\ref{prop:loc-stab-basis} is called a stable basis with local support. For such a basis, using the same line of arguments as the proof of \cite[Theorem~5.22]{Lai.Schumaker07}, we can show that for all $\vctilde:=(\ctilde_{\vgrevilletildeindex}\in\RR:\vgrevilletildeindex\in \cMtilde)^T$,
\begin{equation}
\label{eq:global:stable:basis}
K^{-}\| \vctilde\|_\infty \leq \biggl\|\sum_{\vgrevilletildeindex\in \cMtilde} \ctilde_{\vgrevilletildeindex}B_{\vgrevilletildeindex,\cT}\biggr\|_\infty\leq K^{+} \|\vctilde\|_\infty,
\end{equation}
where $K^{-}$ and $K^{+}$ are positive constants depending only on the smallest angle of $\cT$.
The inequalities in \eqref{eq:global:stable:basis} extend the local stability result in \eqref{eq:local-stability-2} to the full spline space $\spS_3^2(\cT_{\WS})$. Similar stability results in any $L_q$-norm for (a properly scaled version of) the basis can also be achieved; see again the proof of \cite[Theorem~5.22]{Lai.Schumaker07}.
Furthermore, from the partition of unity property of the local basis in \eqref{eq:basis-2}, we directly deduce that the global basis in \eqref{eq:basis-space-2} forms a partition of unity as well.

Note that the determining set in Theorem~\ref{thm:minimal-det-set} is a stable local determining set in the sense of \cite[Definition~5.16]{Lai.Schumaker07}. This feature is, roughly speaking, the key ingredient in the proof of Proposition~\ref{prop:loc-stab-basis}. Similarly to \cite[Section~5.7]{Lai.Schumaker07}, it also ensures that the full spline space $\spS_3^2(\cT_{\WS})$ has optimal approximation power.
More precisely, Theorems~5.18 and~5.19 in \cite{Lai.Schumaker07} hold true for $\spS_3^2(\cT_{\WS})$.

The global basis in \eqref{eq:basis-space-2} can be easily expressed over any triangle $\Delta$ with respect to the local basis \eqref{eq:basis} through the conversion \eqref{eq:alternative-basis}. Contrarily to this local basis, the functions in \eqref{eq:basis-space-2} are in general not nonnegative. However, paraphrasing \cite[Section~5.8]{Lai.Schumaker07}, we observe that the explicit basis in \eqref{eq:basis-space-2} has mainly a theoretical interest. For computation with splines belonging to $\spS_3^2(\cT_{\WS})$ it is more convenient to work directly with the local representations provided by the bases \eqref{eq:basis} or \eqref{eq:basis-2}, rather than with the basis for the full spline space.

\section{Concluding remarks}
\label{sec:conclusion}
In \cite{Cohen.Lyche.Riesenfeld13}, a simplex spline basis was described for the $C^1$ quadratic spline space on the Powell--Sabin 12 split, which is the quadratic member of the Wang--Shi split family. In this paper, we have addressed the $C^2$ cubic case and constructed two simplex spline bases for the $\WS$ split.
The characteristics of the $C^2$ cubic simplex spline bases make it unnecessary to consider separate polynomial representations on each of the numerous polygonal regions of the partitioned macro-triangle.
This paves the path for a practical construction of globally $C^2$ cubic splines on any triangulation by extending the concept of minimal determining sets. 

In the following, we outline some implementation aspects and we identify few problems where the provided simplex spline bases 
may be prosperous, in order to complement the theoretical interest of our investigation with an application-oriented perspective. We end with a discussion on a higher-order extension of the construction.
 
\subsection{Implementation aspects}
For computation with splines belonging to $\spS_3^2(\cT_{\WS})$, it is convenient to work directly with the local representations provided by the bases \eqref{eq:basis} or \eqref{eq:basis-2}.
On the one hand, evaluation of the simplex spline basis functions can be achieved by applying the recurrence relation (B-recurrence) of simplex splines; see Section~\ref{sec:summary_simplex}. On the other hand, it might be more convenient to use the explicit expressions of the simplex spline basis functions \eqref{eq:basis} given in Table~\ref{tab:polynomial-expressions} in the appendix. The alternative basis functions \eqref{eq:basis-2} can be immediately deduced from the previous ones by means of the linear relation \eqref{eq:alternative-basis}. 

Having at our disposal such a table, evaluation of any spline in $\spS_3^2(\Delta_{\WS})$ can be efficiently performed by combining a lookup-table process with a search algorithm based on boolean vectors. Given (the barycentric coordinates of) any point $\vp$ in $\Delta$, the values of the simplex spline basis functions \eqref{eq:basis} at $\vp$ can be directly obtained from Table~\ref{tab:polynomial-expressions} once we have figured out which polygonal region of the macro-triangle the evaluation point belongs to. 
Since the $\WS$ split is a cross-cut partition, any polygonal region in $\Delta_{\WS}$ is uniquely identified by the sign of the linear expressions of the 18 interior lines in the split. These signs can be interpreted as binary digits of an integer belonging to $[0, 2^{18}-1]$. Therefore, in order to detect which polygonal region of the macro-triangle a given point $\vp$ belongs to, it suffices to evaluate all the 18 interior lines at $\vp$, to collect the resulting signs in a boolean vector, and to interpret such a vector as binary digits of an integer. A similar search algorithm has been described in \cite[Algorithm~1.1]{Cohen.Lyche.Riesenfeld13}.

It is important to remark that the selection of the different polynomial pieces is just an implementation aspect.
Thanks to the characteristics of the simplex spline representation, there is a single control net to facilitate control and early visualization of a spline function over each element $\Delta$ in $\cT$. This single control net makes that the complex geometry of the $\WS$ split (consisting of $75$ polygons including triangles, quadrilaterals, and pentagons) is transparent to the user.
In this perspective, an interesting topic of possible future research is to investigate whether the control net introduced in the paper can give rise to a de Casteljau/de Boor-type algorithm for evaluation of splines in $\spS_3^2(\Delta_{\WS})$.

\subsection{Application areas} \label{sec:applications}
Splines on (refined) triangulations are valuable in several application areas.
When dealing with bivariate/multivariate problems, the straightforward approach is to rely on tensor-product structures, and in particular tensor-product splines. Tensor-product structures offer several advantages, mainly the simplicity of their use and the inheritance of univariate properties. Major drawbacks, however, are the lack of adequate local refinement and the struggle to represent geometries with complicated shapes. Although there are several appealing extensions of tensor-product splines towards local refinement (see, e.g., \cites{Dokken13,Giannelli12,Sederberg03}) and complex geometries (see, e.g., \cites{Bercovier17,Peters08,Reif97}), splines on triangulations emerge as the natural tool to efficiently deal with problems where local features has to be detected, modeled, or simulated.

As mentioned in the introduction, low degree splines are preferable due to their stable behavior and their low computational complexity. In particular, univariate $C^2$ cubic splines are one of the most used tools in modeling, approximation, and simulation. Constructing $C^2$ splines of low degree on triangulations is a difficult task, but their interest remains unquestionable in the bivariate setting. We limit ourselves to mention two important application areas: computer aided surface modeling and numerical simulation. 

In computer aided design/manufacturing (CAD/CAM) high quality free-form surfaces are of utmost importance. The quality of the surfaces can be checked by different techniques, such as the well-established isophotes \cite{Poeschl84}, to detect irregularities of intrinsic measures of surface smoothness like the Gaussian curvature or the distribution of the surface normals. For milling surfaces by five axis machines, second derivatives should not jump too much across edges and $C^2$ smoothness is desirable.
In this context, our $C^2$ cubic simplex spline representations on triangulations could be beneficial. The spline surfaces could be constructed by direct (interactive) modeling via the control net or by data fitting using quasi-interpolation schemes based on the Marsden-like identity, similar to \cite{Speleers15a}.
In CAD/CAM systems it is common to rely on general parametric surfaces; in our case such surfaces are specified on each macro-triangle by a control net consisting of triangles and quadrilaterals.
See also \cite{Zhang.17} for the use of simplex splines in the context of parametric surface reconstruction.
As a possible future work, it is of interest to investigate the interplay with
tensor-product (piecewise) bicubic parametric surfaces in Bernstein--B\'ezier (or B-spline) form, which are ubiquitous in industrial applications.
In particular, an important question is whether one can blend standard bicubic Bernstein--B\'ezier patches with parametric triangular patches whose components are $C^2$ cubic splines represented in terms of the simplex spline bases introduced in the paper.

Isogeometric analyis (IgA) is a numerical simulation paradigm that extends finite element analysis (FEA) by providing a true design-through-analysis methodology \cite{Hughesbook}. The isogeometric paradigm has some important advantages over traditional FEA. The geometry of the physical domain is exactly described, so the interaction with the CAD system during any further refinement process in the analysis phase is eliminated. Moreover, the discretization spaces possess an inherent higher smoothness (with respect to the polynomial degree) than classical FEA spaces, leading to a higher accuracy per degree of freedom \cites{Bressan19,Sande20}. The success of IgA roots in the above two properties, the latter being even more relevant. 
Besides the use of spline spaces based on (local) tensor-product structures and rather involved multipatch constructions (see, e.g., \cites{Blidia20,Chan18,Kapl19,Sangalli16,Toshniwal17}), a powerful IgA formulation has been obtained by considering spline spaces on triangulations (see, e.g., \cites{Cao19,Jaxon14,Speleers12,Speleers15c,Wang18}).
In particular, spline representations obtained from local Bernstein representations by means of minimal determining sets have been profitably applied and efficiently implemented via B\'ezier-extraction \cite{Jaxon14}.
In this context, the space of $C^2$ cubic splines defined on the $\WS$ refinement of a given triangulation is appealing because it combines low degree and high smoothness. Our simplex spline bases are the natural counterpart of Bernstein polynomials to define stable global bases by means of minimal determining sets (see Section~\ref{sec:spline-space}), and allow for a straightforward extension of the B\'ezier-extraction procedure for practical implementation.
Of course, in order to efficiently exploit the potential of the space $\spS_3^2(\cT_{\WS})$ and its local representations in terms of simplex spline bases in the context of IgA, several steps are still missing, for instance, the need for tailored quadrature rules.

\subsection{Higher-order extension of the basis}
Besides the application-oriented investigations mentioned in the previous subsection, an interesting follow-up work would be the generalization of the simplex spline construction to $C^{d-1}$ spline spaces $\spSd(\Delta_{\WSd})$ on the general $\WSd$ split of a triangle $\Delta:=\langle \vp_1,\vp_2,\vp_3\rangle$ for degree $d>3$. Under the assumption of Theorem~\ref{thm:dimension}, the dimension can be written as
\begin{align*}
\dim(\spSd(\Delta_{\WSd})) &= \frac{(d+2)(d+1)}{2}+3d(d-1) \\
&= \frac{3(d+1)d}{2}+\frac{3d(d-1)}{2}+\frac{(d-1)(d-2)}{2}.
\end{align*}
Then, similar to Corollary~\ref{cor:hermite}, we may formulate the following Hermite interpolation problem to characterize the space $\spSd(\Delta_{\WSd})$: for given data $f_{k,\alpha,\beta}$, $g_{k,\alpha,l}$, and $h_{\alpha,\beta}$, there is a unique spline $\spline\in\spSd(\Delta_{\WSd})$ such that
\begin{alignat*}{3}
D_x^\alpha D_y^\beta \spline(\vp_k) &= f_{k,\alpha,\beta}, &\quad &0\leq \alpha+\beta\leq d-1,\quad k=1,2,3, \\
D_{\vn_k}^\alpha \spline(\vq_{k,\alpha,l}) &= g_{k,\alpha,l}, &\quad &\alpha=1,\ldots,d-1, \quad l=1,\ldots,\alpha,\quad k=1,2,3, \\
D_x^\alpha D_y^\beta \spline(\vq) &= h_{\alpha,\beta}, &\quad &0\leq \alpha+\beta\leq d-3,
\end{alignat*}
where
$$
\vq:=\frac{\vp_1+\vp_2+\vp_3}{3}, \quad \vq_{k,\alpha,l}:=\frac{l\vp_{\mods{k}{3}+1}+(\alpha-l+1)\vp_{\mods{(k+1)}{3}+1}}{\alpha+1},
$$
and $\vn_k$ is the normal direction of the edge opposite to vertex $\vp_k$.
Given a general triangulation $\cT$, this scheme can be used to construct a globally $C^{d-1}$ spline of degree $d$ on $\cT$ where every triangle is refined with the $\WSd$ split. Such a construction is local, in the sense that the spline can be built on each macro-triangle $\Delta$ of $\cT$ separately, and the simplex spline basis would then be useful to represent the corresponding spline piece on $\Delta$, without considering explicitly the complicated geometry in the $\WSd$ split.

\section*{Acknowledgements}
This work was supported 
by the Beyond Borders Programme of the University of Rome Tor Vergata through the project ASTRID (CUP E84I19002250005) and 
by the MIUR Excellence Department Project awarded to the Department of Mathematics, University of Rome Tor Vergata (CUP E83C18000100006).
The authors are grateful to the Mathematisches Forschungsinstitut Oberwolfach for the Research in Pairs support (R1926).
C. Manni and H. Speleers are members of Gruppo Nazionale per il Calcolo Scientifico, Istituto Nazionale di Alta Matematica.

\begin{bibdiv}
\begin{biblist}

\bib{Alfeld.Schumaker02}{article}{
   author={Alfeld, P.},
   author={Schumaker, L. L.},
   title={Smooth macro-elements based on Powell--Sabin triangle splits},
   journal={Adv. Comput. Math.},
   volume={16},
   year={2002},
   pages={29--46},
}

\bib{Bercovier17}{book}{
   author={Bercovier, M.}, 
   author={Matskewich, T.},
   title={Smooth B\'ezier Surfaces over Unstructured Quadrilateral Meshes},
   series={Lecture Notes of the Unione Matematica Italiana},
   volume={22},
   publisher={Springer International Publishing},
   year={2017},
}

\bib{Blidia20}{article}{
  Author = {Blidia, A.},
  Author = {Mourrain, B.},
  Author = {Xu, G.},
  Title = {Geometrically smooth spline bases for data fitting and simulation},
  journal = {Comput. Aided Geom. Design},
  volume={78},
  year={2020},
  pages={101814}
}

\bib{Bressan19}{article}{
  Author = {Bressan, A.},
  Author = {Sande, E.},
  Title = {Approximation in FEM, DG and IGA: a theoretical comparison},
  journal = {Numer. Math.},
  volume={143},
  year={2019},
  pages={923--942},
}

\bib{Cao19}{article}{
   author = {Cao, J.},
   author = {Chen, Z.}, 
   author = {Wei, X.}, 
   author = {Zhang, Y. J.},
   title  = {A finite element framework based on bivariate simplex splines on triangle configurations},
   journal = {Comput. Methods Appl. Mech. Engrg.},
   volume = {357},
   year = {2019},
   pages = {112598},
}

\bib{Chan18}{article}{
   author = {Chan, C. L.},
   author = {Anitescu, C.}, 
   author = {Rabczuk, T.},
   title  = {Isogeometric analysis with strong multipatch $C^1$-coupling},
   journal = {Comput. Aided Geom. Design},
   volume = {62},
   year = {2018},
   pages = {294--310}
}

\bib{Chui.Wang.83}{article}{
   author = {Chui, C. K.},
   author = {Wang, R.-H.},
   title  = {Multivariate spline spaces},
   journal = {J. Math. Anal. Appl.},
   volume = {94},
   year = {1983},
   pages = {197--221},
}

\bib{Ciarlet.78}{book}{
   author={Ciarlet, P. G.},
   title={The Finite Element Method for Elliptic Problems},
   series={Classics in Applied Mathematics},
   volume={40},
   publisher={Society for Industrial and Applied Mathematics (SIAM)},
   place={Philadelphia},
   year={2002},
}

\bib{Clough.Tocher.65}{article}{
   author = {Clough, R. W.},
   author = {Tocher, J. L.},
   title  = {Finite element stiffness matrices for analysis of plates in bending},
   book={
     title={Proceedings of the Conference on Matrix Methods in Structural Mechanics},
     publisher={Wright-Patterson Air Force Base},
     year={1965}},
   pages={515--545},
}

\bib{Cohen.Riensenfeld.Elber01}{book}{
   author={Cohen, E.},
   author={Riesenfeld, R. F.},
   author={Elber, G.},
   title={Geometric Modeling with Splines},
   publisher={A K Peters Ltd.}, 
   place={Natick},
   year={2001},
}

\bib{Cohen.Lyche.Riesenfeld13}{article}{
   author={Cohen, E.},
   author={Lyche, T.},
   author={Riesenfeld, R. F.},
   title={A B-spline-like basis for the Powell--Sabin 12-split based on simplex splines},
   journal={Math. Comp.},
   volume={82},
   year={2013},
   pages={1667--1707},
}

\bib{Dierckx97}{article}{
   author={Dierckx, P.},
   title={On calculating normalized Powell--Sabin B-splines},
   journal={Comput. Aided Geom. Design},
   volume={15},
   year={1997},
   pages={61--78},
}

\bib{Hughesbook}{book}{
   author={Cottrell, J. A.},
   author={Hughes, T. J. R.},
   author={Bazilevs, Y.},
   title={Isogeometric Analysis: Toward Integration of CAD and FEA},
   publisher={Wiley Publishing},
   year={2009},
}

\bib{DD}{article}{
   author={Diener, D.},
   title={Instability in the dimension of spaces of bivariate piecewise polynomials of degree $2r$ and smoothness order $r$},
   journal={SIAM J. Numer. Anal.},
   volume={27},
   year={1990},
   pages={543--551},
}

\bib{Dokken13}{article}{
  author={Dokken, T.},
  author={Lyche, T.},
  author={Pettersen, K. F.},
  title={Polynomial splines over locally refined box-partitions},
  journal={Comput. Aided Geom. Design},
  volume={30},
  year={2013},
  pages={331--356},
}

\bib{Giannelli12}{article}{
  author={Giannelli, C.},
  author={J{\"u}ttler, B.},
  author={Speleers, H.},
  title={THB-splines: The truncated basis for hierarchical splines},
  journal={Comput. Aided Geom. Design},
  volume={29},
  year={2012},
  pages={485--498},
}

\bib{Groselj16}{article}{
   author={Gro\v{s}elj, J.},
   title={A normalized representation of super splines of arbitrary degree on Powell--Sabin triangulations},
   journal={BIT Numer. Math.},
   volume={56},
   year={2016},
   pages={1257--1280},
}

\bib{Groselj.Speleers17}{article}{
   author={Gro\v{s}elj, J.},
   author={Speleers, H.},
   title={Construction and analysis of cubic Powell--Sabin B-splines},
   journal={Comput. Aided Geom. Design},
   volume={57},
   year={2017},
   pages={1--22},
}

\bib{Groselj.Speleers21}{article}{
   author={Gro\v{s}elj, J.},
   author={Speleers, H.},
   title={Super-smooth cubic Powell--Sabin splines on three-directional triangulations: B-spline representation and subdivision},
   journal={J. Comput. Appl. Math.},
   volume={386},
   year={2021},
   pages={art. 113245},
}

\bib{IbrS91}{article}{
   author={Ibrahim, A. K.},
   author={Schumaker, L. L.},
   title={Super spline spaces of smoothness $r$ and degree $d\geq 3r + 2$},
   journal={Constr. Approx.},
   volume={7},
   year={1991},
   pages={401--423},
}

\bib{Jaxon14}{article}{
   author={Jaxon, N.},
   author={Qian, X.},
   title={Isogeometric analysis on triangulations},
   journal={Comput. Aided Design},
   volume={46},
   year={2014},
   pages={45--57},
}

\bib{Kapl19}{article}{
   author={Kapl, M.},
   author={Sangalli, G.},
   author={Takacs, T.},
   title={An isogeometric $C^1$ subspace on unstructured multi-patch planar domains},
   journal={Comput. Aided Geom. Design},
   volume={69},
   year={2019},
   pages={55--75},
}

\bib{Lai97}{article}{
   author={Lai, M.-J.},
   title={Geometric interpretation of smoothness conditions of
triangular polynomial patches},
   journal={Comput. Aided Geom. Design},
   volume={14},
   year={1997},
   pages={191--199},
}

\bib{Lai.Schumaker01}{article}{
   author={Lai, M.-J.},
   author={Schumaker, L. L.},
   title={Macro-elements and stable local bases for splines on {C}lough--{T}ocher triangulations},
   journal={Numer. Math.},
   volume={88},
   year={2001},
   pages={105--119},
}

\bib{Lai.Schumaker03}{article}{
   author={Lai, M.-J.},
   author={Schumaker, L. L.},
   title={Macro-elements and stable local bases for splines on Powell--Sabin triangulations},
   journal={Math. Comp.},
   volume={72},
   year={2003},
   pages={335--354},
}

\bib{Lai.Schumaker07}{book}{
   author={Lai, M.-J.},
   author={Schumaker, L. L.},
   title={Spline Functions on Triangulations},
   series={Encyclopedia of Mathematics and its Applications},
   volume={110},
   publisher={Cambridge University Press},
   place={Cambridge},
   year={2007},
}

\bib{LycheMS08}{article}{
   author={Lyche, T.},
   author={Manni, C.},
   author={Speleers, H.},
   title={Foundations of spline theory: B-splines, spline approximation, and hierarchical refinement},
   book={ 
      editor={Lyche, T.},
      editor={others},
      title={Splines and PDEs: From Approximation Theory to Numerical Linear Algebra},
      series={Lect. Notes Math.},
      volume={2219},
      publisher={Springer International Publishing},
   },
   year={2018},
   pages={1--76},
}

\bib{Lyche.Merrien18}{article}{
   author={Lyche, T.},
   author={Merrien, J.-L.},
   title={Simplex-splines on the Clough--Tocher element},
   journal={Comput. Aided Geom. Design},
   volume={65},
   year={2018},
   pages={76--92},
}

\bib{Lyche.Merrien.Sauer.21}{article}{
   author={Lyche, T.},
   author={Merrien, J.-L.},
   author={Sauer, T.},
   title={Simplex-splines on the Clough--Tocher split with arbitrary smoothness},
   book={
      editor={Manni, C.},
      editor={Speleers, H.},
      series={Springer INdAM Series},
      title={Geometric Challenges in Isogeometric Analysis},
      publisher={Springer International Publishing}, 
   },
   year={to appear},
}

\bib{Lyche.Muntingh17}{article}{
   author={Lyche, T.},
   author={Muntingh, G.},
   title={Stable simplex spline bases for $C^3$ quintics on the Powell--Sabin 12-split},
   journal={Constr. Approx.},
   volume={45},
   year={2017},
   pages={1--32},
}

\bib{Manni92}{article}{
   author={Manni, C.},
   title={On the dimension of bivariate spline spaces on generalized quasi-cross-cut partitions},
   journal={J. Approx. Theory},
   volume={69},
   year={1992},
   pages={141--155},
}

\bib{Micchelli79}{article}{
   author={Micchelli, C. A.},
   title={On a numerically efficient method for computing multivariate $B$-splines},
   conference={
      title={Multivariate Approximation Theory},
      series={Proc. Conf., Math. Res. Inst., Oberwolfach},
   },
   book={
      editor={Schempp, W.},
      editor={Zeller, K.},
      series={Internat. Ser. Numer. Math.},
      volume={51},
      publisher={Birkh\"auser}, 
      place={Basel--Boston},
   },
   year={1979},
   pages={211--248},
}

\bib{Neamtu07}{article}{
   author={Neamtu, M.},
   title={Delaunay configurations and multivariate splines: A generalization of a	result of B. N. Delaunay},
   journal={Trans. Amer. Math. Soc.},
   volume={359},
   year={2007},
   pages={2993--3004},
}

\bib{Peters08}{book}{
   author={Peters, J.},
   author={Reif, U.},
   title={Subdivision Surfaces},
   year={2008},
   publisher={Springer-Verlag},
}

\bib{Poeschl84}{article}{
   author={Poeschl, T.},
   title={Detecting surface irregularities using isophotes},
   journal={Comput. Aided Geom. Design},
   volume={1},
   year={1984},
   pages={163--168},
}

\bib{Powell.Sabin77}{article}{
   author={Powell, M. J. D.},
   author={Sabin, M. A.},
   title={Piecewise quadratic approximations on triangles},
   journal={ACM Trans. Math. Software},
   volume={3},
   year={1977},
   pages={316--325},
}

\bib{Prautsch.Boehm.Paluszny02}{book}{
   author={Prautzsch, H.},
   author={Boehm, W.},
   author={Paluszny, M.},
   title={B\'ezier and B-Spline Techniques},
   series={Mathematics and Visualization},
   publisher={Springer--Verlag},
   place={Berlin},
   year={2002},
}

\bib{Ramshaw89}{article}{
   author={Ramshaw, L.},
   title={Blossoms are polar forms},
   journal={Comput. Aided Geom. Design},
   volume={6},
   pages={323--358},
   year={1989},
}

\bib{Reif97}{article}{
   author={Reif, U.},
   title={A refineable space of smooth spline surfaces of arbitrary topological genus},
   journal={J. Approx. Theory},
   volume={90},
   pages={174--199},
   year={1997},
}

\bib{Sabl85}{article}{
   author={Sablonni\`ere, P.},
   title={Composite finite elements of class $C^k$},
   journal={J. Comput. Appl. Math.},
   volume={12--13},
   year={1985},
   pages={541--550},
}

\bib{Sande20}{article}{
   author={Sande, E.},
   author={Manni, C.},
   author={Speleers, H.},
   title={Explicit error estimates for spline approximation of arbitrary smoothness in isogeometric analysis},
   journal={Numer. Math.},
   volume={144},
   year={2020},
   pages={889--929},
}

\bib{Sangalli16}{article}{
   author={Sangalli, G.},
   author={Takacs, T.}, 
   author={V{\'a}zquez, R.},
   title={Unstructured spline spaces for isogeometric analysis based on spline manifolds},
   journal={Comput. Aided Geom. Design},
   volume={47},
   year={2016},
   pages={61--82},
}

\bib{Schumaker.Sorokina06}{article}{
   author={Schumaker, L. L.},
   author={Sorokina, T.},
   title={Smooth macro-elements on Powell--Sabin-12 splits},
   journal={Math. Comp.},
   volume={75},
   year={2006},
   pages={711--726},
}

\bib{Sederberg03}{article}{
  author={Sederberg, T. W.},
  author={Zheng, J.},
  author={Bakenov, A.},
  author={Nasri, A.},
  title={T-splines and T-NURCCs},
  journal={ACM Trans. Graph.},
  volume={22},
  year={2003},
  pages={477--484},
}

\bib{Speleers10}{article}{
   author={Speleers, H.},
   title={A normalized basis for quintic Powell--Sabin splines},
   journal={Comput. Aided Geom. Design},
   volume={27},
   year={2010},
   pages={438--457},
}

\bib{Speleers10b}{article}{
   author={Speleers, H.},
   title={A normalized basis for reduced Clough--Tocher splines},
   journal={Comput. Aided Geom. Design},
   volume={27},
   year={2010},
   pages={700–-712},
}

\bib{Speleers13}{article}{
   author={Speleers, H.},
   title={Construction of normalized B-splines for a family of smooth spline spaces over Powell--Sabin triangulations},
   journal={Constr. Approx.},
   volume={37},
   year={2013},
   pages={41--72},
}

\bib{Speleers15a}{article}{
   author={Speleers, H.},
   title={A family of smooth quasi-interpolants defined over Powell--Sabin triangulations},
   journal={Constr. Approx.},
   volume={41},
   year={2015},
   pages={297--324},
}

\bib{Speleers15b}{article}{
   author={Speleers, H.},
   title={A new B-spline representation for cubic splines over Powell--Sabin triangulations},
   journal={Comput. Aided Geom. Design},
   volume={37},
   year={2015},
   pages={42--56},
}

\bib{Speleers12}{article}{
   author={Speleers, H.},
   author={Manni, C.}, 
   author={Pelosi, F.}, 
   author={Sampoli, M. L.},
   title={Isogeometric analysis with Powell--Sabin splines for advection-diffusion-reaction problems},
   journal={Comput. Methods Appl. Mech. Engrg.},
   volume={221--222},
   year={2012},
   pages={132--148},
}

\bib{Speleers15c}{article}{
   author={Speleers, H.},
   author={Manni, C.}, 
   title={Optimizing domain parameterization in isogeometric analysis based on Powell--Sabin splines},
   journal={J. Comput. Appl. Math.},
   volume={289},
   year={2015},
   pages={68--86},
}

\bib{Toshniwal19}{article}{
   author={Toshniwal, D.},
   author={Hughes, T. J. R.},
   title={Polynomial splines of non-uniform degree on triangulations: Combinatorial bounds on the dimension},
   journal={Comput. Aided Geom. Design},
   volume={75},
   year={2019},
   pages={101763},
}

\bib{Toshniwal17}{article}{
   author={Toshniwal, D.},
   author={Speleers, H.},
   author={Hughes, T. J. R.},
   title={Smooth cubic spline spaces on unstructured quadrilateral meshes with particular emphasis on extraordinary points: Geometric design and isogeometric analysis considerations},
   journal={Comput. Methods Appl. Mech. Engrg.},
   volume={327},
   year={2017},
   pages={411--458}
}

\bib{Wang.90}{article}{
   author={Wang, R.-H.},
   author={Shi, X.-Q.},
   title={$S_{\mu+1}^{\mu}$ surface interpolations over triangulations},
   conference={
      title={Approximation, Optimization and Computing: Theory and Applications},
   },
   book={
      editor={Law, A. G.},
      editor={Wang, C. L.},
      publisher={Elsevier Science Publishers B.V.},
   },
   year={1990},
   pages={205--208},
}

\bib{Wang18}{article}{
   author={Wang, C.},
   author={Xia, S.},
   author={Wang, X.},
   author={Qian, X.},
   title={Isogeometric shape optimization on triangulations},
   journal={Comput. Methods Appl. Mech. Engrg.},
   volume={331},
   year={2018},
   pages={585--622},
}

\bib{Zenisek.74}{article}{
   author={{\v{Z}}en{\'{\i}}{\v{s}}ek, A.},
   title={A general theorem on triangular finite $C^{(m)}$-elements},
   journal={Rev. Fran{\c{c}}aise Automat. Informat. Recherche Op\'erationnelle
   S\'er. Rouge},
   volume={8},
   year={1974},
}

\bib{Zhang.17}{article}{
   author={Zhang, Y.},
   author={Cao, J.},
   author={Chen, Z. G.},
   author={Zeng, X. M.},
   title={Surface reconstruction using simplex splines on feature-sensitive configurations},
   journal={Comput. Aided Geom. Design},
   volume={50},
   year={2017},
   pages={14--28},
}

\end{biblist}
\end{bibdiv}

\appendix
\section{Appendix}

In this appendix, we collect data related to the spline basis functions $B_1,\ldots,B_{28}$ in \eqref{eq:basis} that might be useful for practical computations. We also provide few data for the alternative spline basis functions $\Btilde_1,\ldots,\Btilde_{28}$ in \eqref{eq:basis-2}.


\begin{figure}[t!]
\centering
\includegraphics[trim=0 0 0 10,width=6cm]{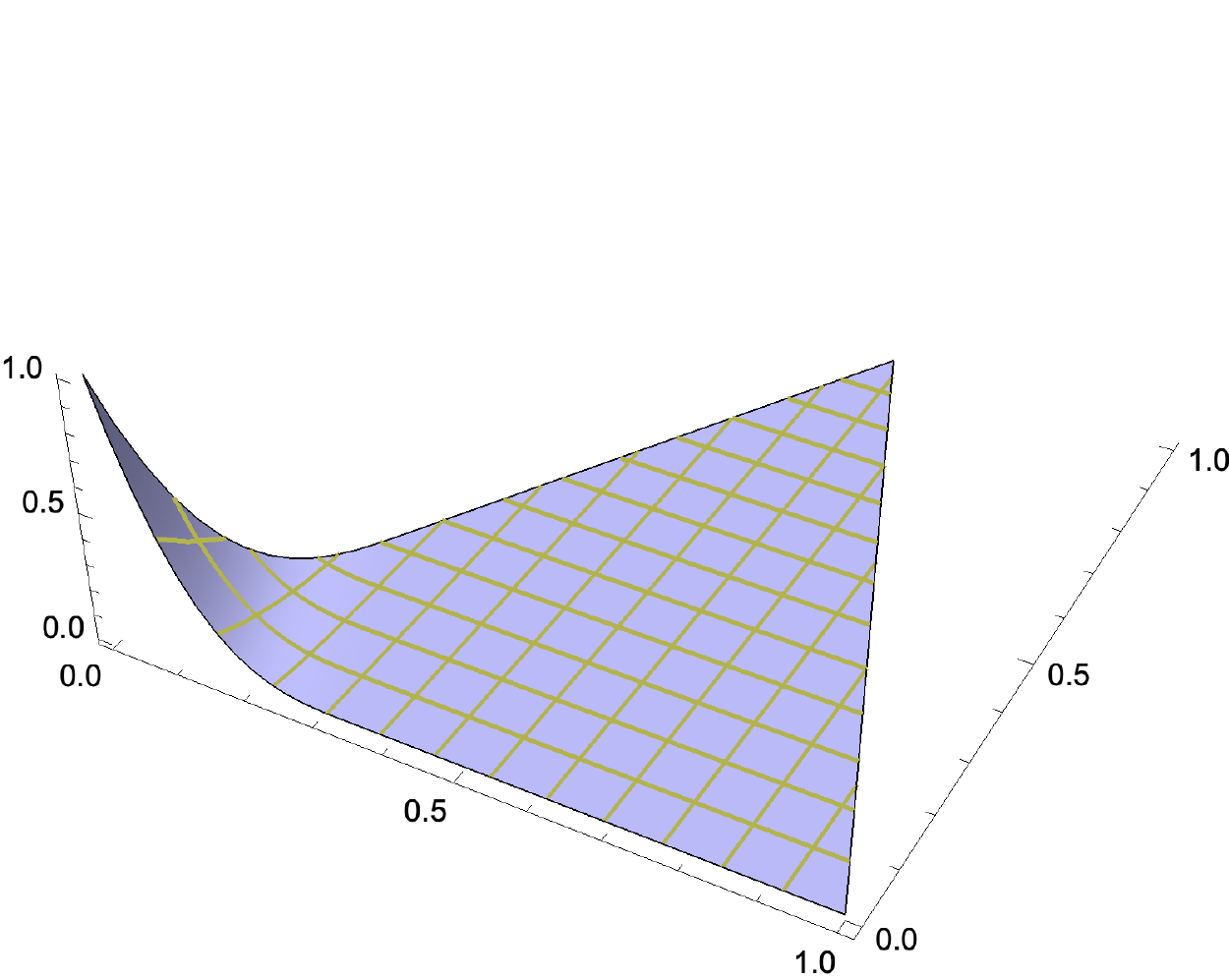}
\includegraphics[width=4cm]{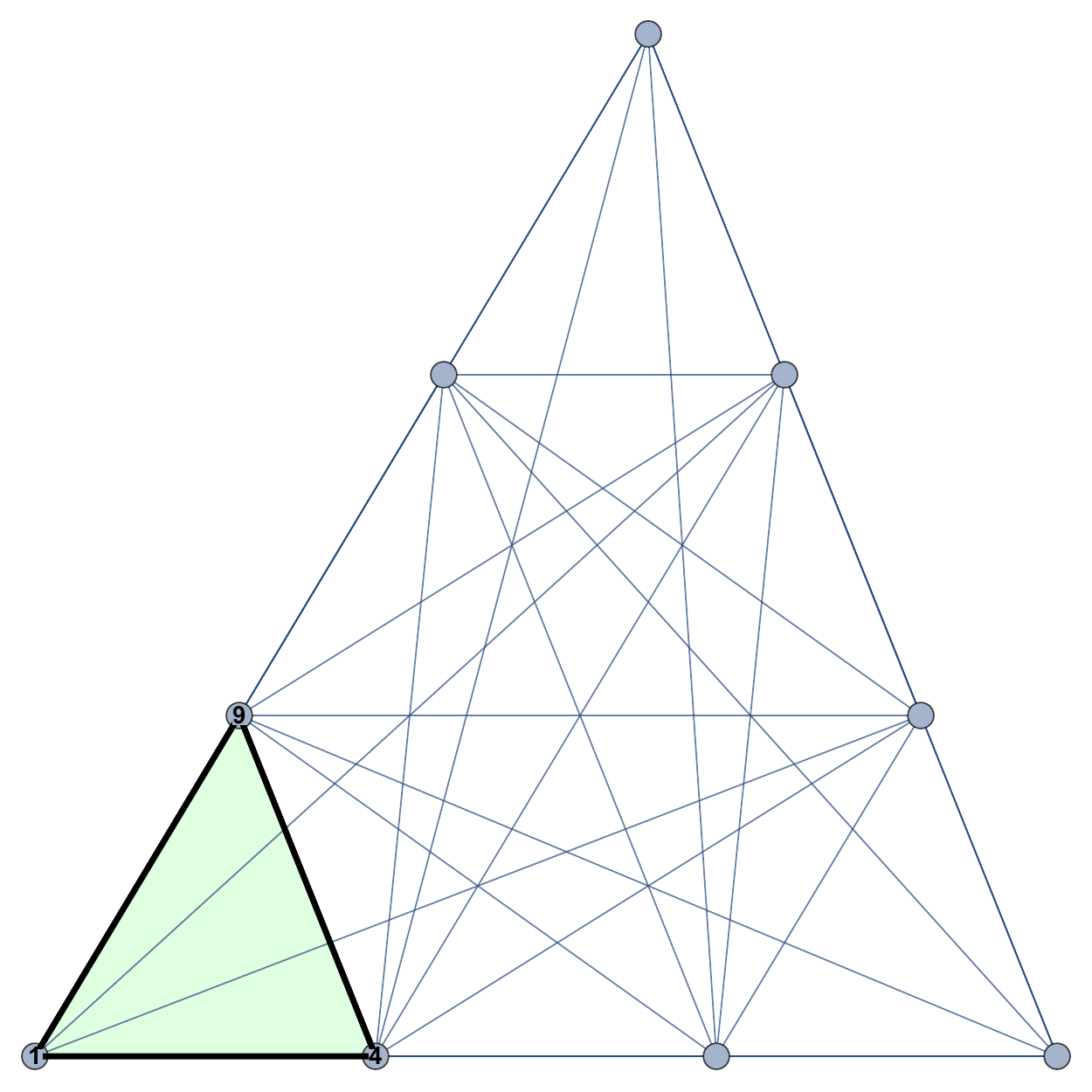}
\caption{The simplex spline basis function $B_1$ and its support.}
\label{fig:B1}
\end{figure}
\begin{figure}[t!]
\centering
\includegraphics[trim=0 0 0 10,width=6cm]{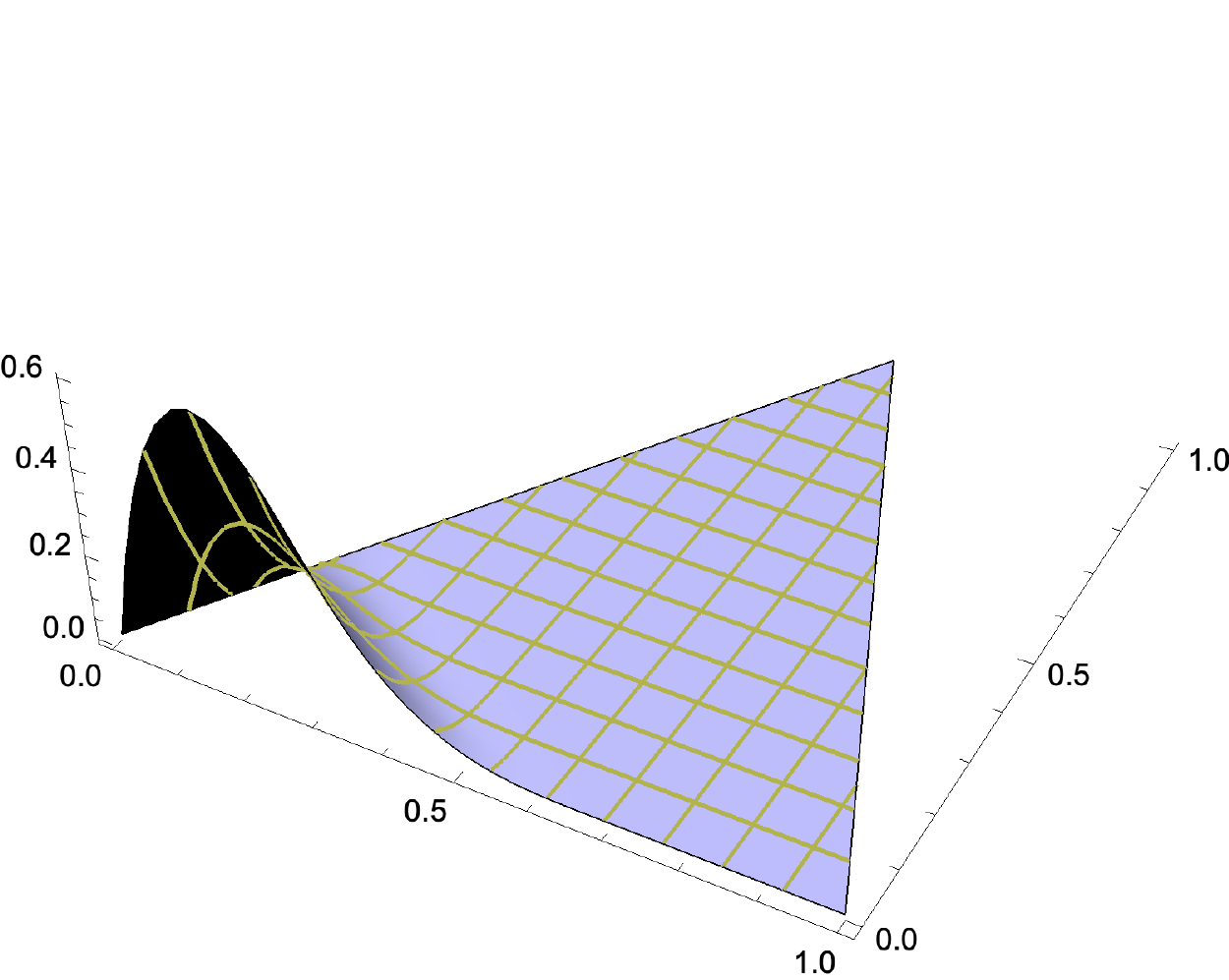}
\includegraphics[width=4cm]{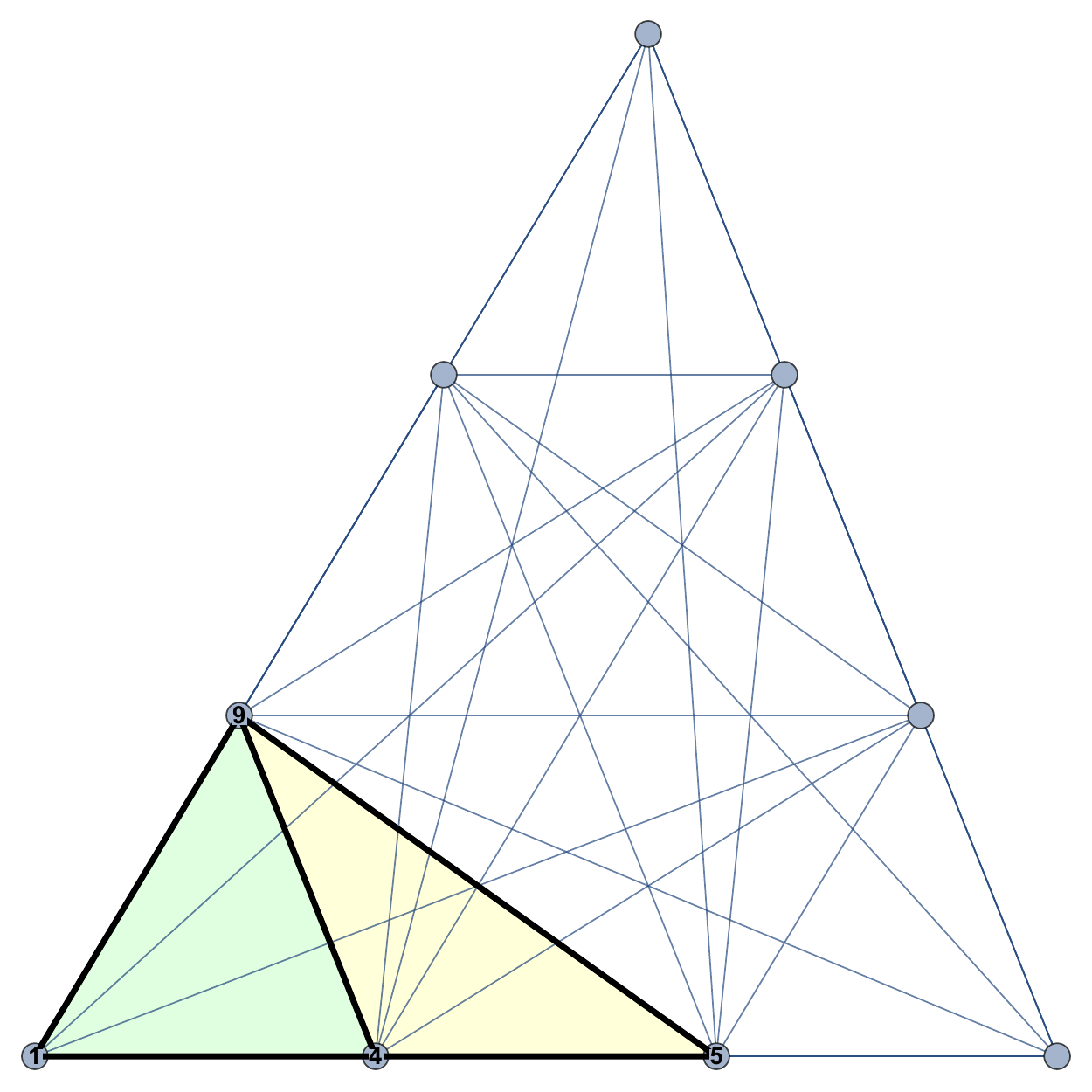}
\caption{The simplex spline basis function $B_4$ and its support.}
\label{fig:B4}
\centering
\includegraphics[trim=0 0 0 10,width=6cm]{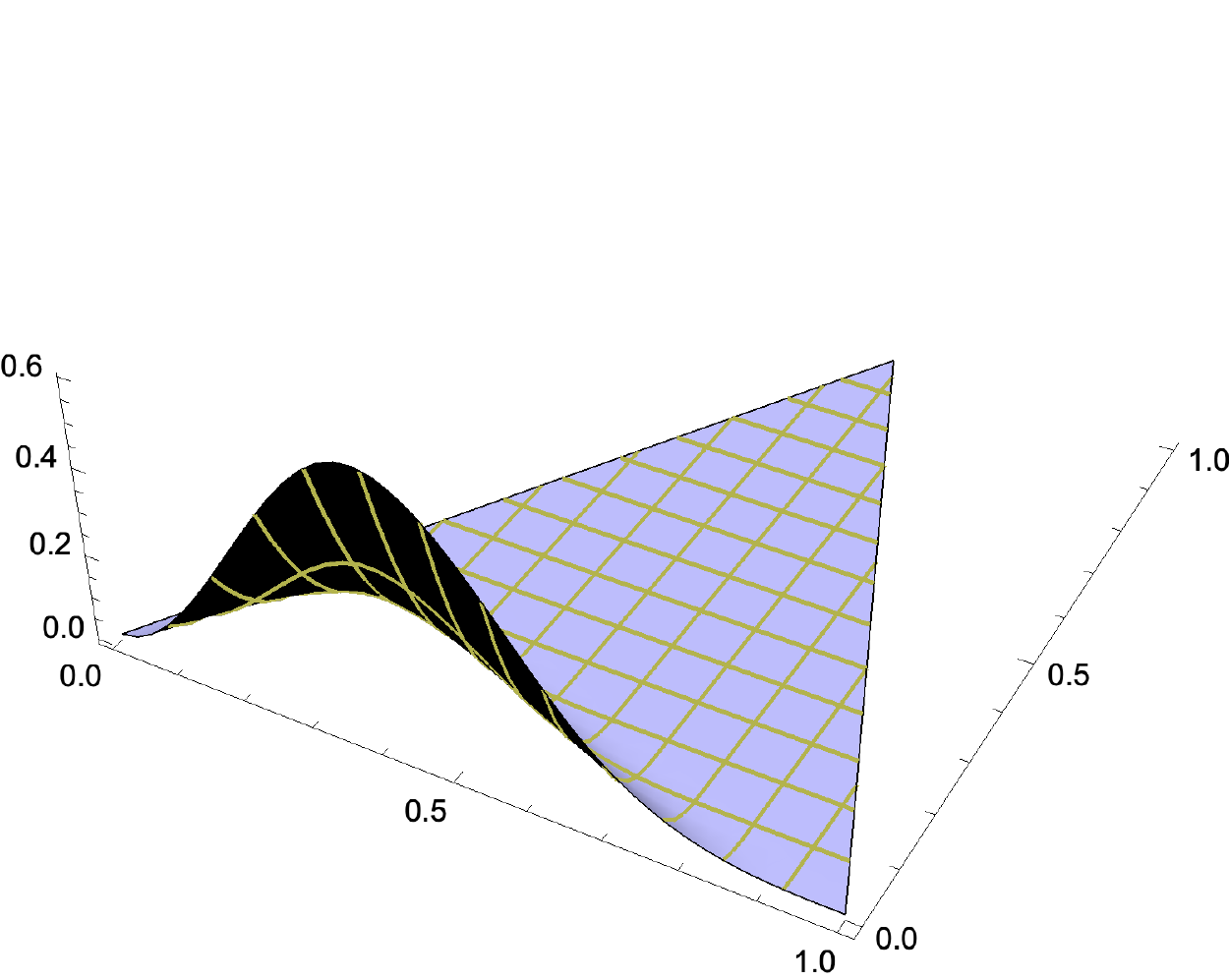}
\includegraphics[width=4cm]{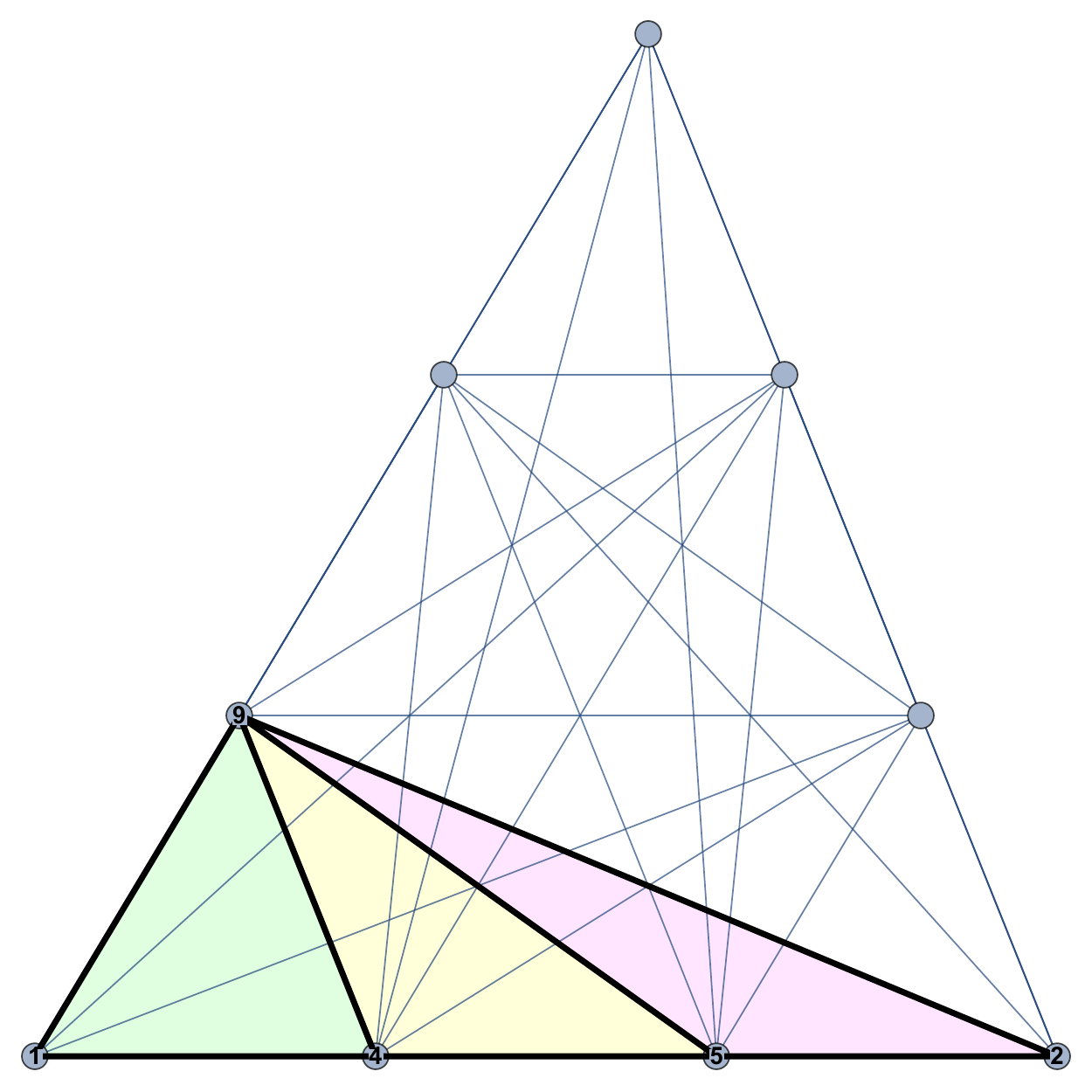}
\caption{The simplex spline basis function $B_{10}$ and its support.}
\label{fig:B10}
\centering
\includegraphics[trim=0 0 0 10,width=6cm]{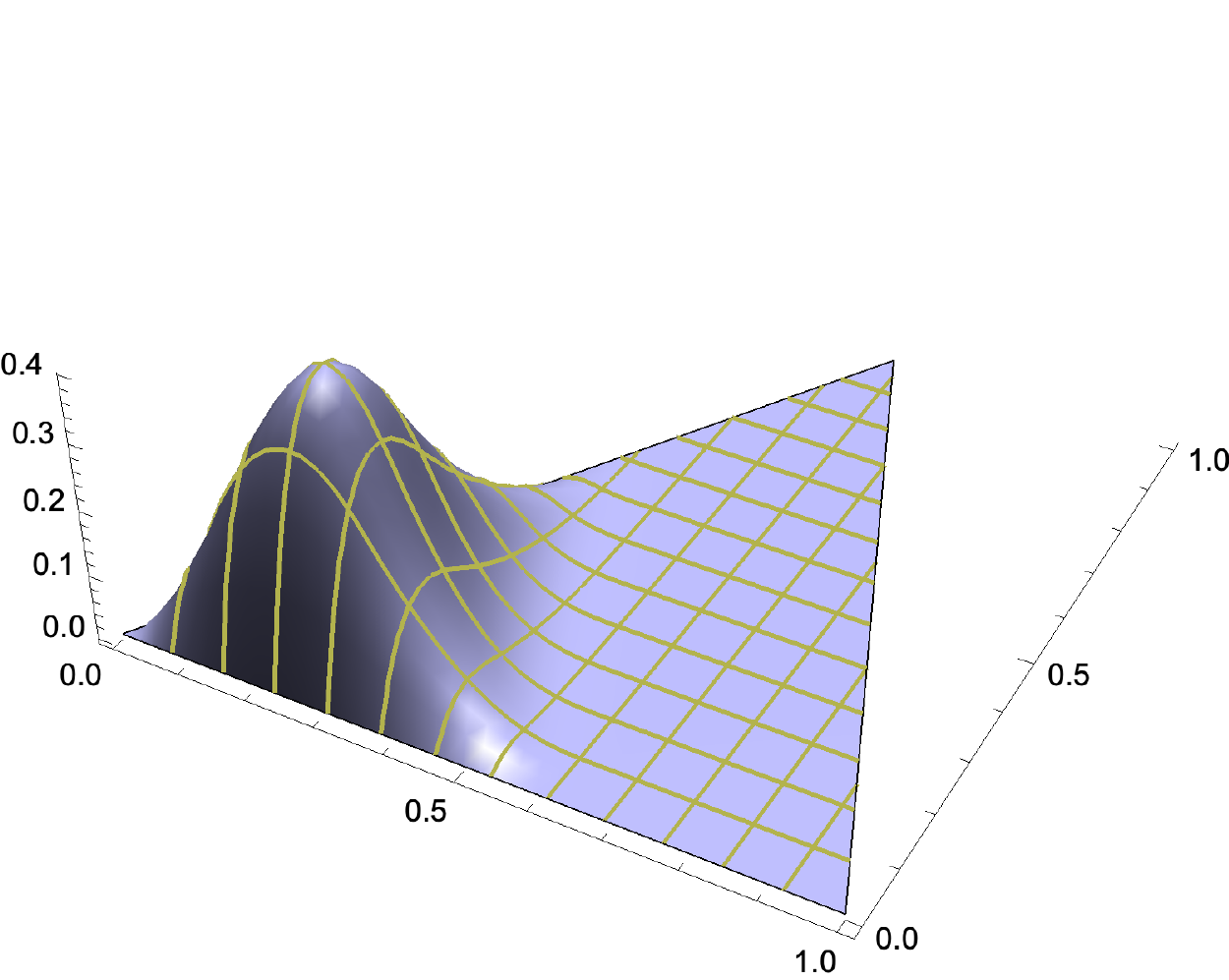}
\includegraphics[width=4cm]{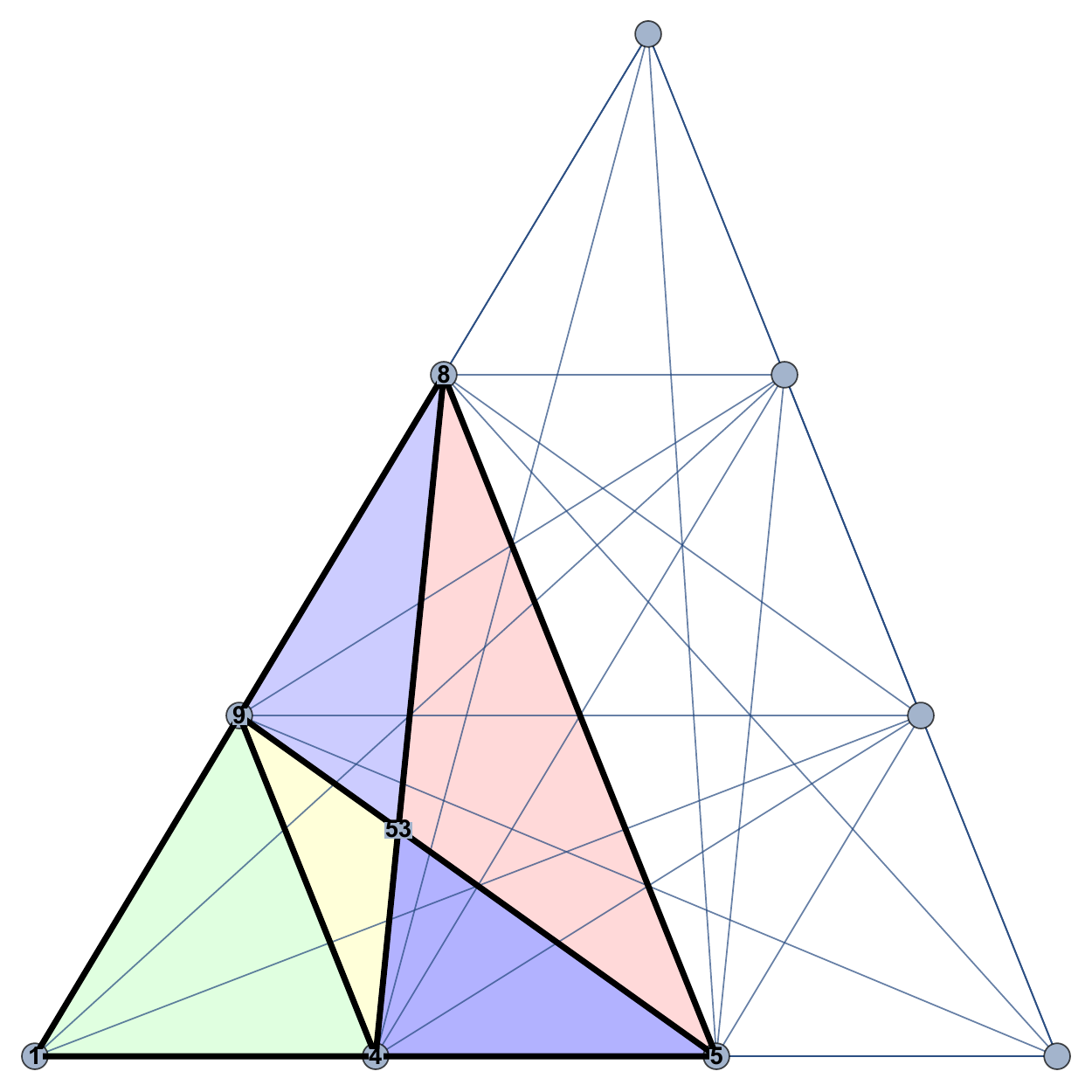}
\caption{The simplex spline basis function $B_{16}$ and its support.}
\label{fig:B16}
\centering
\includegraphics[trim=0 0 0 10,width=6cm]{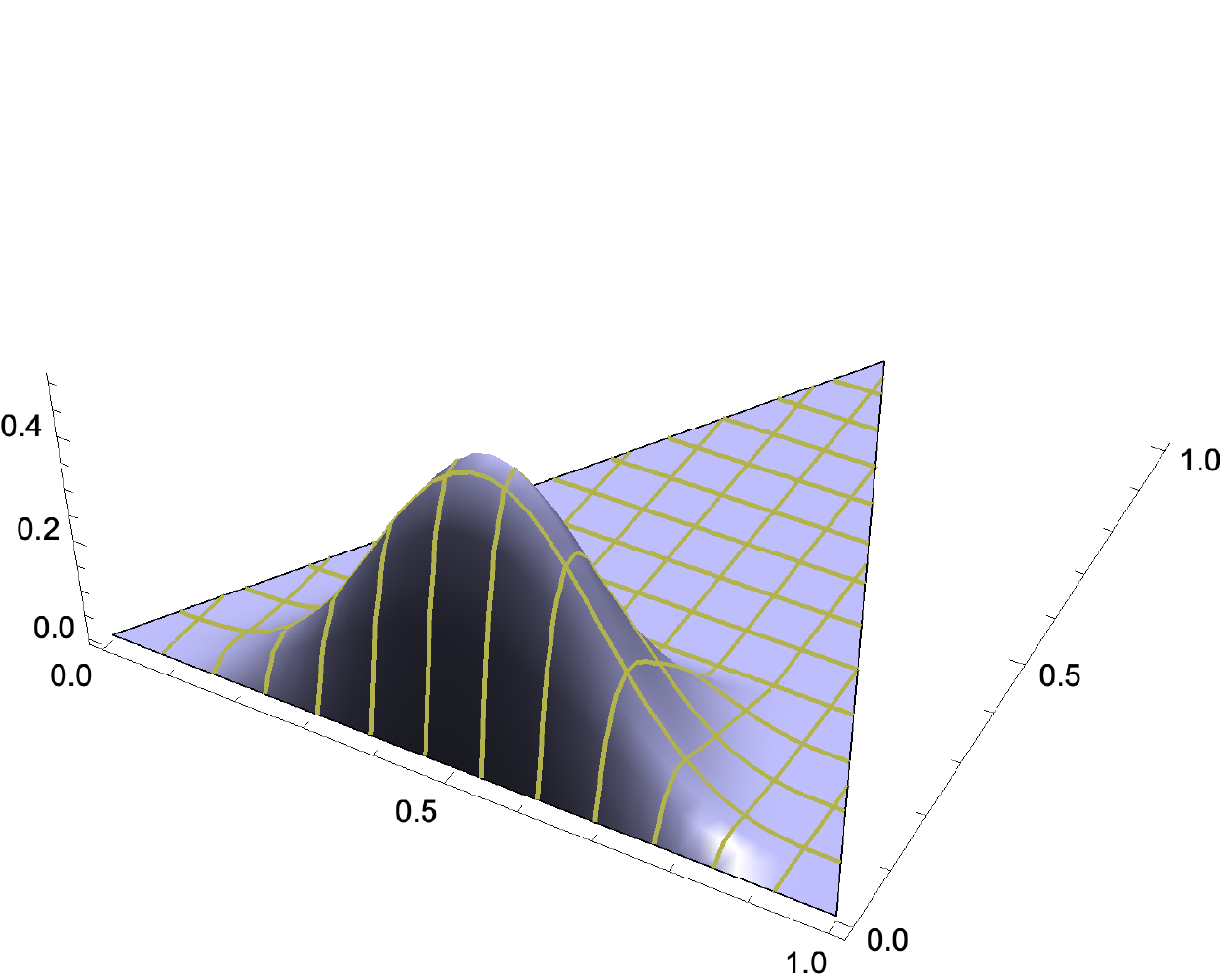}
\includegraphics[width=4cm]{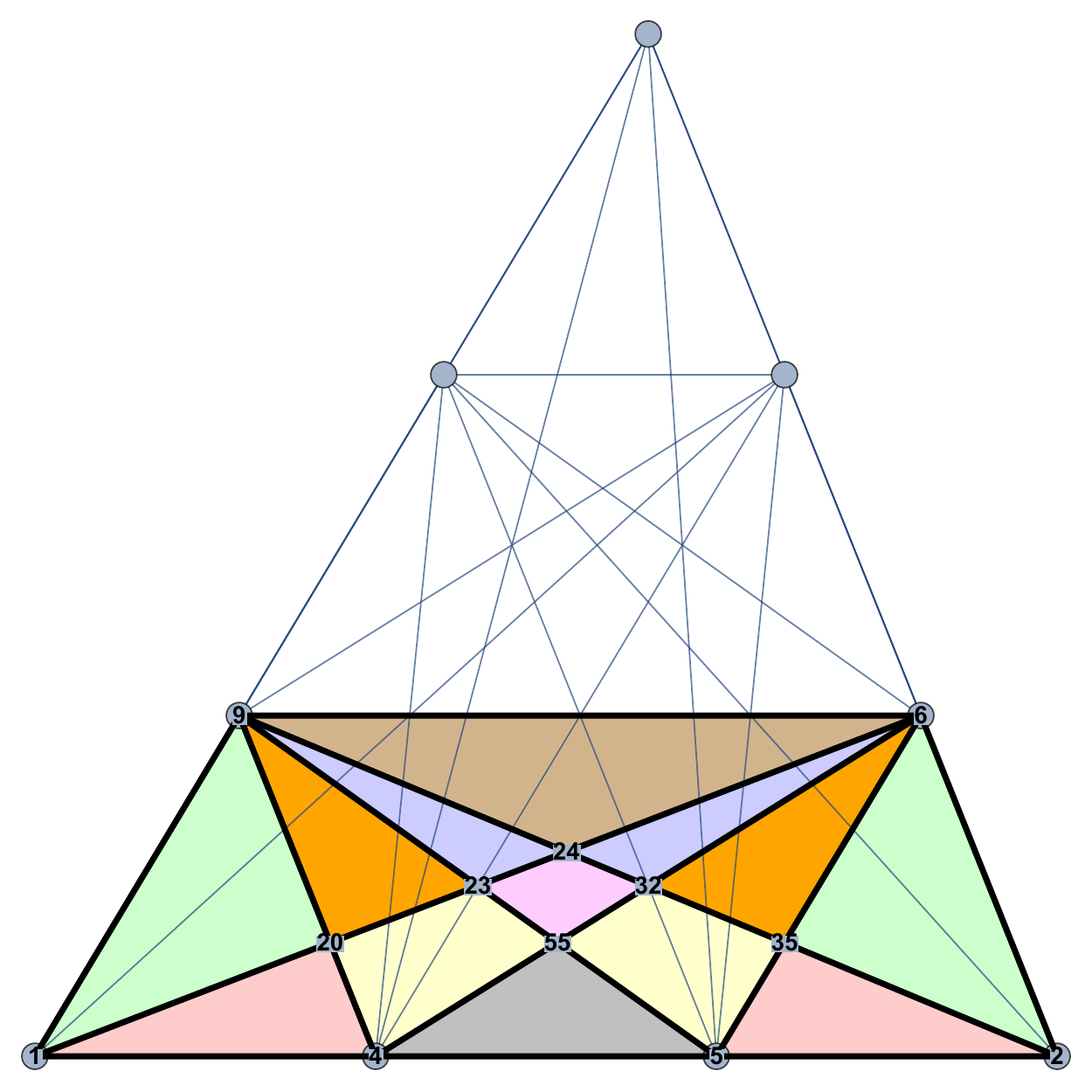}
\caption{The simplex spline basis function $B_{19}$ and its support.}
\label{fig:B19}
\end{figure}
\begin{figure}[t!]
\centering
\includegraphics[trim=0 0 0 10,width=6cm]{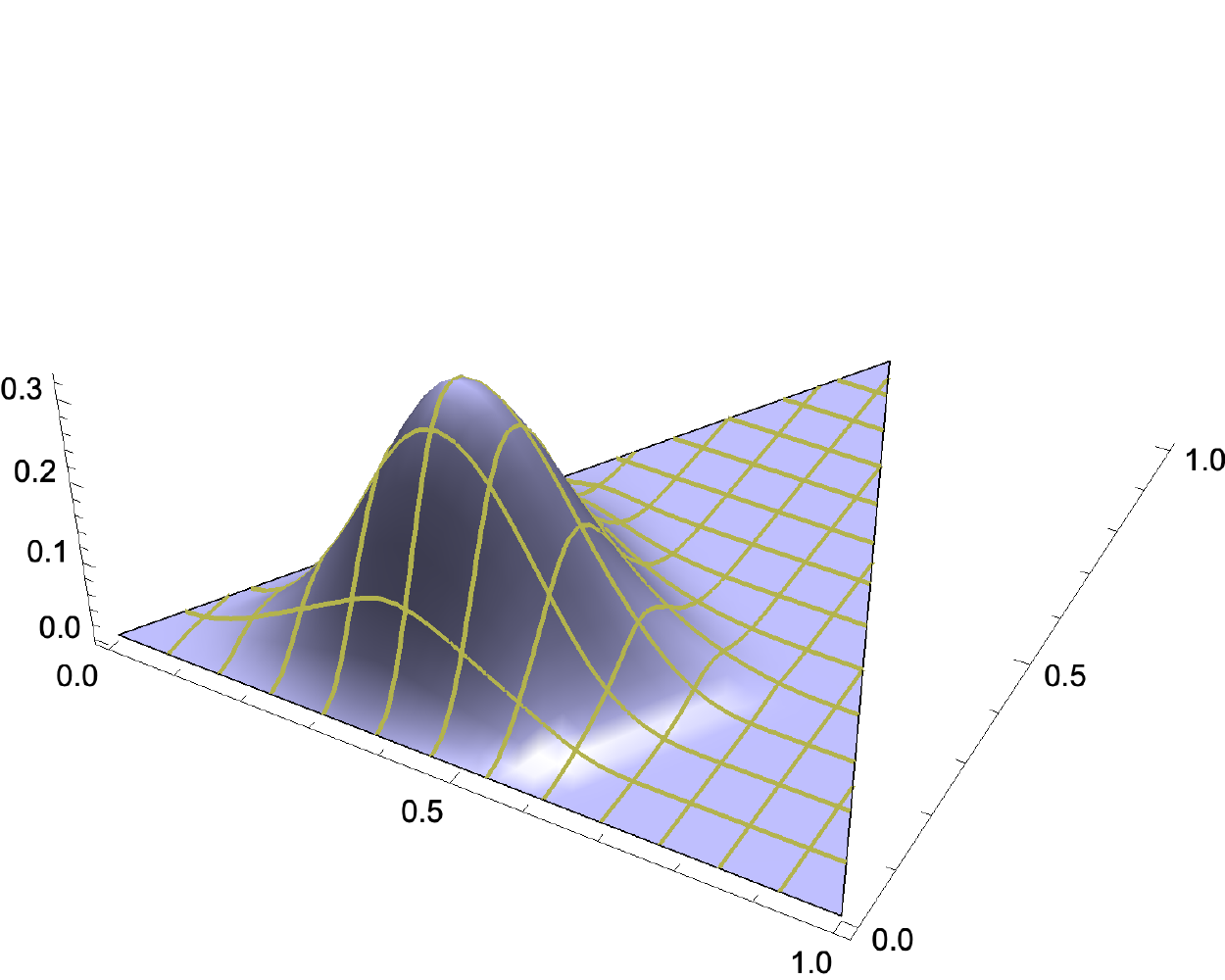}
\includegraphics[width=4cm]{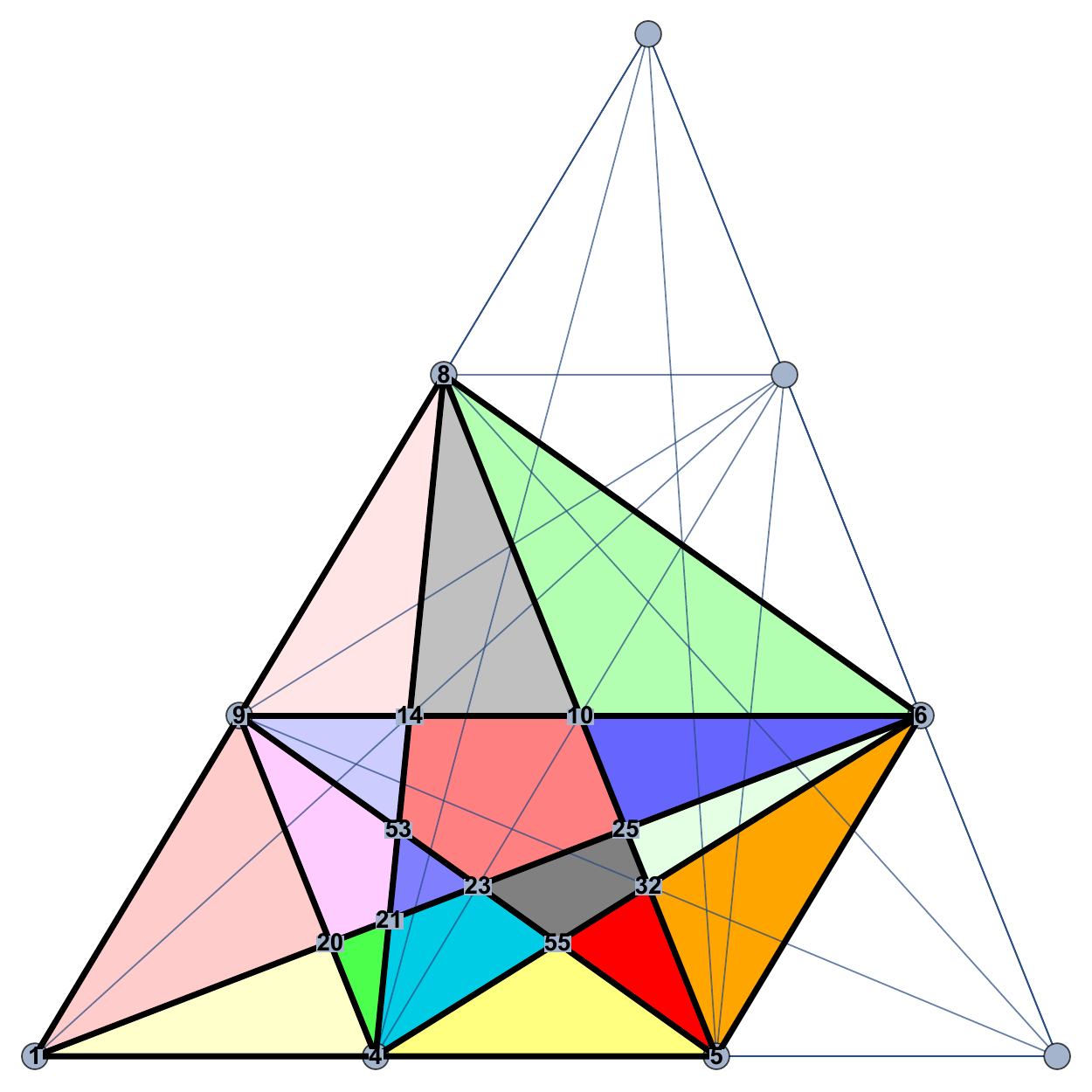}
\caption{The simplex spline basis function $B_{22}$ and its support.}
\label{fig:B22}
\centering
\includegraphics[trim=0 0 0 10,width=6cm]{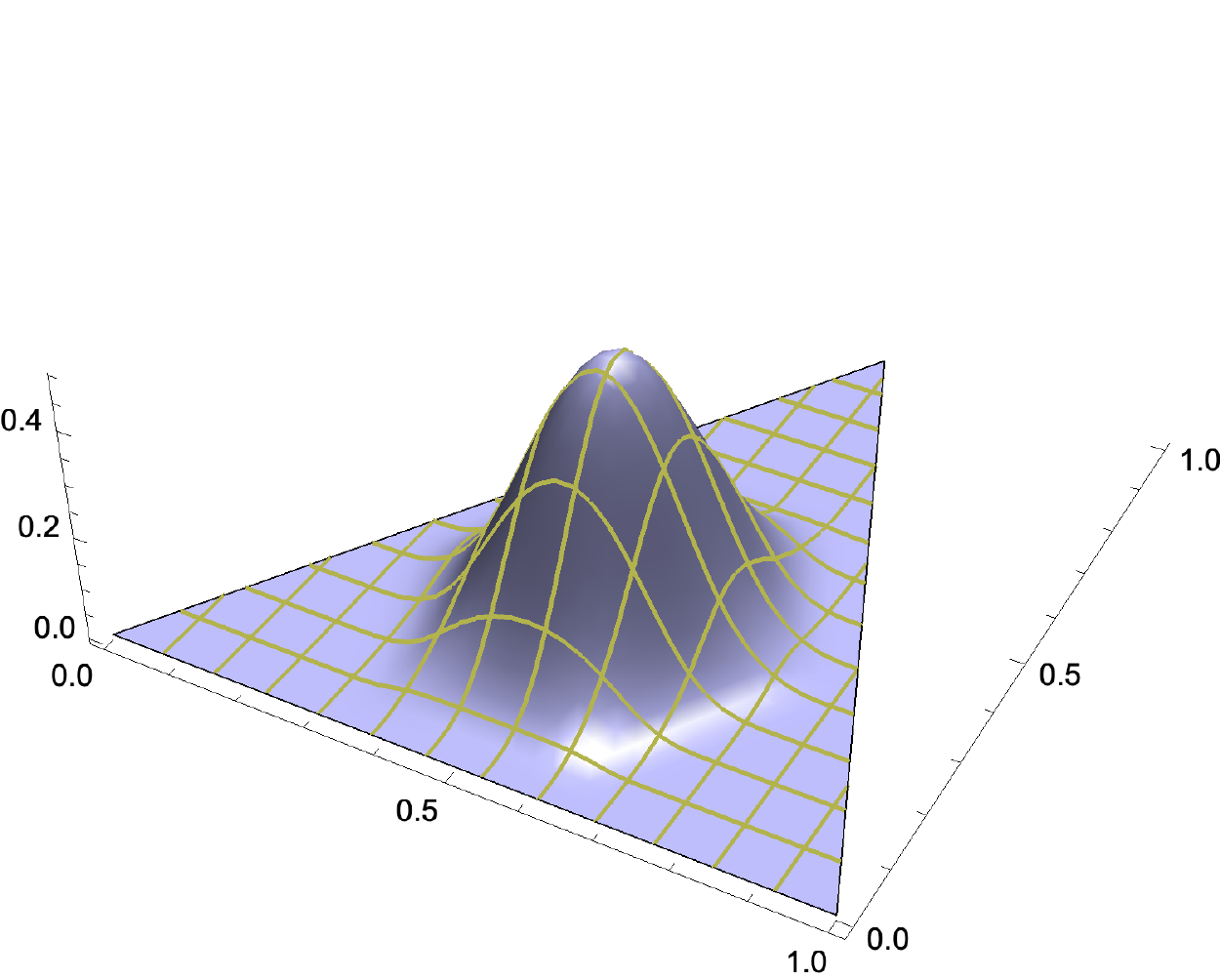}
\includegraphics[width=4cm]{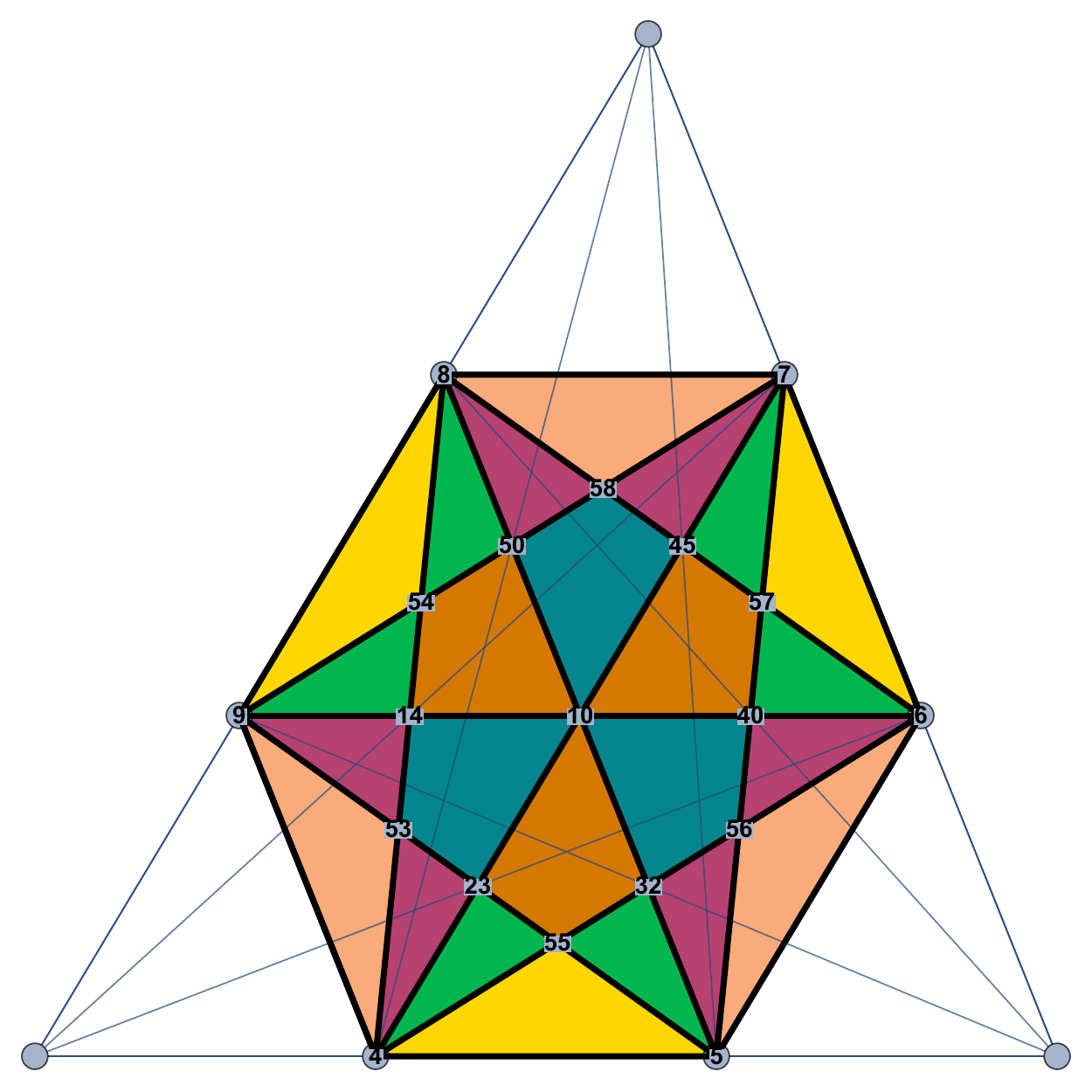}
\caption{The simplex spline basis function $B_{28}$ and its support.}
\label{fig:B28}
\end{figure}


\subsection{Visualization of basis functions}

Each simplex spline basis function $B_i$ is a piecewise polynomial of degree three on the partition formed by the complete graph of its knots. The different types of basis functions are depicted in Figures~\ref{fig:B1}--\ref{fig:B28}. For each basis function, its support is indicated in the figure and all polynomial pieces are marked by different colors.

\subsection{Explicit expressions of basis functions} \label{sec:appendix-expression}

\begin{figure}[t!]
\centering
\includegraphics[width=9.5cm]{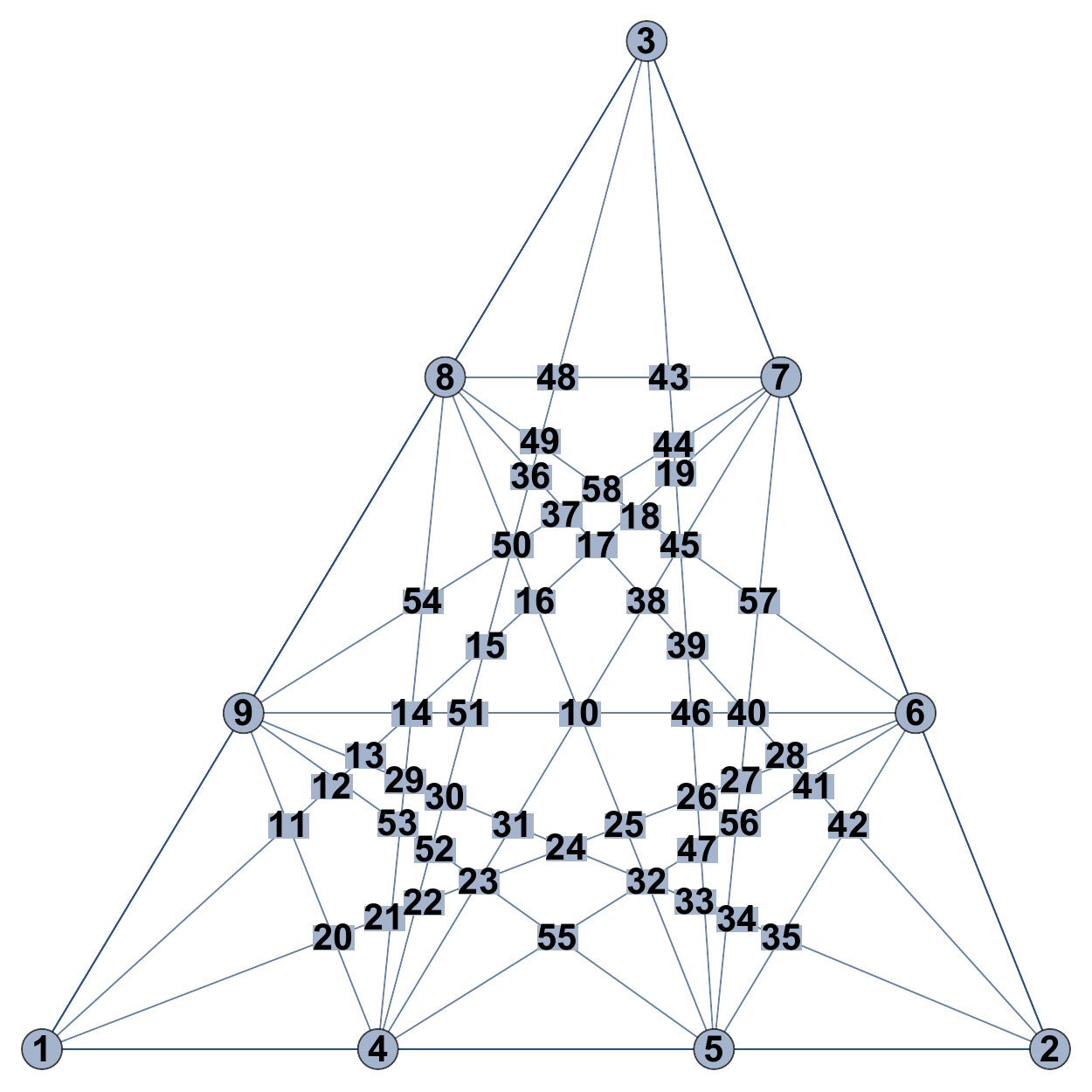}
\caption{Numbering of the 58 intersection points $\vv_k$ in the $\WS$ split.}
\label{fig:vertexnumbering}
\end{figure}

Here we provide the polynomial expressions of the basis functions $B_i$, $i=1,\ldots,28$, up to symmetries. These polynomials can be expressed in terms of the barycentric coordinates $(\beta_1,\beta_2,\beta_3)$ with respect to the macro-triangle $\Delta$. To this end, we define the following polynomials:
\begin{equation}\label{eq:l}
\begin{alignedat}{3}
l_{1,2} &:= \beta_3^3, &\quad
l_{1,3} &:= \beta_2^3, &\quad
l_{1,6} &:= (2\beta_3-\beta_2)^3, \\
l_{2,3} &:= \beta_1^3, &
l_{2,9} &:= (\beta_1-2\beta_3)^3, &
l_{4,6} &:= (3\beta_2-3\beta_3-1)^3, \\
l_{4,8} &:= (3\beta_1-3\beta_2-1)^3, &\quad
l_{4,9} &:= (3\beta_1-2)^3, &\quad
l_{5,6} &:= (3\beta_2-2)^3, \\
l_{5,7} &:= (3\beta_2-3\beta_1-1)^3, &
l_{5,8} &:= (3\beta_1-1)^3, &
l_{5,9} &:= (3\beta_1-3\beta_3-1)^3, \\
l_{6,8} &:= (3\beta_3-3\beta_1-1)^3, &
l_{6,9} &:= (3\beta_3-1)^3, &
l_{7,8} &:= (3\beta_3-2)^3, \\
l_{7,9} &:= (3\beta_3-3\beta_2-1)^3.
\end{alignedat}
\end{equation}
Let $\vv_k$ be the vertices of $\Delta_{\WS}$ visualized in Figure~\ref{fig:vertexnumbering}.
The equation $l_{i,j}=0$ represents the cubic power of the straight line connecting the points $\vv_i$ and $\vv_j$. Then, the polynomial pieces of the basis functions are described in Table~\ref{tab:polynomial-expressions}, up to symmetries. The corresponding regions are specified as the convex hull of the points $\vv_k$.

\subsection{Hermite data of basis functions}

With the aim of showing linear independence of the spline functions $B_1,\ldots,B_{28}$, we have set up a Hermite interpolation problem in the proof of Theorem~\ref{thm:basis}. The Hermite data is computed through the operators $\rho_1,\ldots,\rho_{28}$; they are defined in \eqref{eq:rho-1}, \eqref{eq:rho-2}, and \eqref{eq:rho-3}. The values of these operators applied to the $B_i$'s are collected in Table~\ref{tab:hermiteB}. The table also provides those values for the spline functions $\Btilde_{22},\ldots,\Btilde_{28}$. The other values are obtained through the identity $\Btilde_i=B_i$, $i=1,\ldots,21$.
Finally, we collect some additional second derivative values of the $\Btilde_i$'s in Table~\ref{tab:hermiteBtilde-extra}, where
\begin{equation} \label{eq:rho-4}
\begin{alignedat}{4}
\rho_{29}(f) &:= D^2_{\vp_1\vp_2}f(\vp_{3,1}), &\quad
\rho_{30}(f) &:= D^2_{\vp_1\vp_3}f(\vp_{3,1}), &\quad
\rho_{31}(f) &:= D_{\vp_1\vp_3}D_{\vp_1\vp_2}f(\vp_{3,1}), \\
\rho_{32}(f) &:= D^2_{\vp_1\vp_2}f(\vp_{3,2}), &
\rho_{33}(f) &:= D^2_{\vp_1\vp_3}f(\vp_{3,2}), &
\rho_{34}(f) &:= D_{\vp_1\vp_3}D_{\vp_1\vp_2}f(\vp_{3,2}).
\end{alignedat}
\end{equation}
These are useful to prove Theorem~\ref{thm:C2}.

\begin{landscape}
\begin{table}[t!]
\centering {\small
\begin{tabular}{|c|c|c|}
\hline
Basis & Region & Expression \\
\hline
$B_1$ & 1,4,9 & $l_{4,9}$\\
\hline
$B_4$ & 4,5,9 & $\frac{1}{4} l_{5,9}$\\
   & 1,4,9 & $\frac{1}{4} l_{5,9}-2 l_{4,9}$\\
\hline
$B_{16}$ & 5,8,53 & $\frac12l_{5,8}$\\
   & 4,5,53 & $\frac12l_{5,8}-\frac12l_{5,9}$\\
   & 8,9,53 & $\frac12l_{5,8}-\frac12l_{4,8}$\\
   & 1,4,9 & $\frac{27}{2} \beta_2\beta_3 (9 \beta_1-5 )$\\
   & 9,4,53 & $\frac{27}{2} \beta_2\beta_3 (9 \beta_1-5 )-4l_{4,9}$\\
\hline
 $B_{28}$ & 4,5,55 & $\frac{81}{2} l_{1,2}$\\
   & 6,7,57 & $\frac{81}{2}  l_{2,3}$\\
   & 8,9,54 & $\frac{81}{2}  l_{1,3}$\\
   & 5,6,56 & $-\frac{3}{2}l_{5,6}$\\
   & 7,8,58 & $-\frac{3}{2} l_{7,8}$\\
   & 4,53,9 & $-\frac{3}{2}l_{4,9}$\\
   & 4,55,23 & $\frac{81}{2}l_{1,2}+\frac12 l_{4,6}$\\
   & 5,32,55 & $\frac{81}{2}l_{1,2}+\frac12l_{5,9}$\\
   & 5,56,32 & $-\frac{3}{2}l_{5,6}+\frac12l_{5,7}$\\
   & 6,57,40 & $\frac{81}{2}l_{2,3}+\frac12l_{6,8}$\\
   & 8,54,50 & $\frac{81}{2}l_{1,3}+\frac12l_{4,8}$\\
   & 9,14,54 & $\frac{81}{2}l_{1,3}+\frac12l_{7,9}$\\
   & 4,23,53 & $-\frac{3}{2}l_{4,9}+\frac12l_{4,8}$\\
   & 6,40,56 & $-\frac{3}{2}l_{5,6}+\frac12l_{4,6}$\\
   & 7,58,45 & $-\frac{3}{2}l_{7,8}+\frac12l_{7,9}$\\
   & 8,50,58 & $-\frac{3}{2}l_{7,8}+\frac12l_{6,8}$\\
   & 7,45,57 & $\frac{81}{2}l_{2,3}+\frac12l_{5,7}$\\
   & 9,53,14 & $-\frac{3}{2}l_{4,9}+\frac12l_{5,9}$\\
   & 14,53,23,10 & $-\frac32 l_{4,9} + \frac12( l_{4,8} + l_{5,9})$\\
   & 23,55,32,10 & $\frac{81}{2} l_{1,2} + \frac12( l_{4,6} + l_{5,9})$\\
   & 32,56,40,10 & $-\frac32 l_{5,6} + \frac12(  l_{5,7} + l_{4,6})$\\
   & 40,57,45,10 & $\frac{81}{2} l_{2,3} + \frac12 ( l_{6,8} + l_{5,7})$\\
   & 45,58,50,10 & $-\frac{3}{2} l_{7,8} + \frac12 ( l_{6,8}+ l_{7,9}) $\\
   & 50,54,14,10 & $\frac{81}{2} l_{1,3} + \frac12 ( l_{4,8} + l_{7,9})$\\
\hline
&&\\
\hline
\end{tabular}
\hspace{1cm}
\begin{tabular}{|c|c|c|}
\hline
Basis & Region & Expression \\
\hline
$B_{10}$ & 1,4,9 & $\frac{9}{4} \beta_2^2 (6
   \beta_1-5 \beta_2-12 \beta_3)$\\
   & 4,5,9 & $\frac92 l_{2,9}-\frac34 l_{5,9}$\\
   & 5,2,9 & $\frac{9}{2} l_{2,9}$\\
\hline
$B_{19}$ & 1,20,9 & $\frac{45}{4}l_{1,3}$\\
   & 2,6,35 & $\frac{45}{4} l_{2,3}$\\
   & 6,9,24 & $-\frac{5}{2} l_{6,9}$\\
   & 1,4,20 & $\frac{45}{4} (l_{1,3}+l_{1,6})$\\
   & 5,2,35 & $\frac{45}{4} (l_{2,3}-l_{2,9})$\\
   & 6,24,32 & $-\frac{5}{2}l_{6,9}+\frac{45}{4} l_{1,6}$\\
   & 9,23,24 & $-\frac{5}{2} l_{6,9}-\frac{45}{4} l_{2,9}$\\
   & 9,20,23 & $\frac{45}{4}l_{1,3}+\frac54 l_{4,9} $\\
   & 6,32,35 & $\frac{45}{4} l_{2,3}+\frac54 l_{5,6} $\\
   & 4,55,23,20 & $\frac{45}{4} (l_{1,3}+l_{1,6})+\frac{5}{4} l_{4,9}$\\
   & 5,35,32,55 & $\frac{45}{4} (l_{2,3}-l_{2,9})+\frac{5}{4} l_{5,6}$\\
   & 4,5,55 & $\frac{45}{2}\beta_3(\beta_3^2+6\beta_1\beta_2-1)$\\
   & 55,32,24,23 & $-\frac{5}{2} l_{6,9}+\frac{45}{4}(l_{1,6} -l_{2,9})$\\
\hline
$B_{22}$ & 5,6,32 & $-\frac{1}{2} l_{5,6}$\\
   & 5,32,55 & $-\frac12 l_{5,6} - l_{5,8} $\\
   & 4,5,55 & $27 \beta_3^2 (3 \beta_1-1)$\\
   & 4,55,23,21 & $27 \beta_3^2 (3\beta_1-1)+\frac23 l_{4,6}$\\
   & 1,4,20 & $27 \beta_3^2 (3\beta_2-2 \beta_3)$\\
   & 4,21,20 & $27 \beta_3^2 (3\beta_2-2 \beta_3)+2 l_{4,9} $\\
   & 6,32,25 & $-\frac12l_{5,6}+\frac23l_{4,6}$\\
   & 55,32,25,23 & $-\frac12( 3\beta_2-2)^3 + \frac23 l_{4,6} - l_{5,8}$\\
   & 6,8,10 & $-\frac{1}{12}l_{6,8}$\\
   & 8,14,10 & $-\frac{1}{12}l_{6,8}-l_{5,8}$\\
   & 8,9,14 & $\frac{27}{4} \beta_2^2 (7 \beta_1+ \beta_3-3)$\\
   & 9,53,14 & $\frac{27}{4}\beta_2^2 (7\beta_1 +\beta_3 - 3) + 2 l_{6,9}$\\
   & 1,20,9 & $\frac{27}{4} \beta_2^2
   (6 \beta_3-\beta_2)$\\
   & 9,20,21,53,9 & $ \frac{27}{4} \beta_2^2 (6 \beta_3 - \beta_2) + 2 l_{4,9}$\\
   & 6,10,25 & $-\frac{1}{12}l_{6,8}+2l_{6,9}$\\
   & 23,25,10,14,53 & $-\frac{1}{12}l_{6,8}+2l_{6,9}-l_{5,8}$\\
   & 21,23,53 & $\frac{27}{4}\beta_2^2 (6\beta_3 -\beta_2) + 2 l_{4,9} - \frac13l_{4,8}$\\
\hline
\end{tabular}}
\caption{Explicit expressions of the polynomial pieces of the basis functions $B_i$ in terms of the functions defined in \eqref{eq:l}. The regions of the polynomial pieces are specified as the convex hull of the points $\vv_k$ numbered in Figure~\ref{fig:vertexnumbering}.}\label{tab:polynomial-expressions}
\end{table}
\end{landscape}

\begin{landscape}
\begin{table}[t!]
\centering{\scriptsize
\setlength{\tabcolsep}{5pt}
\begin{tabular}{|c|ccc|cccccc|cccccc|ccc|ccc|cccccc|c|}
\hline
& $\rho_1$ & $\rho_2$ & $\rho_3$ & $\rho_4$ & $\rho_5$ & $\rho_6$ & $\rho_7$ & $\rho_8$ & $\rho_9$ & $\rho_{10}$
& $\rho_{11}$ & $\rho_{12}$ & $\rho_{13}$ & $\rho_{14}$ & $\rho_{15}$ & $\rho_{16}$ & $\rho_{17}$ & $\rho_{18}$
& $\rho_{19}$ & $\rho_{20}$ & $\rho_{21}$ & $\rho_{22}$ & $\rho_{23}$ & $\rho_{24}$ & $\rho_{25}$ & $\rho_{26}$ & $\rho_{27}$ & $\rho_{28}$
 \\
\hline
$B_{1}$ & 1 & 0 & 0 & -9 & -9 & 0 & 0 & 0 & 0 & 54 & 54 &
   0 & 0 & 0 & 0 & 54 & 0 & 0 & 0 & 0 & 0 & 0 & 0 & 0 & 0 & 0 & 0 & 0 \\
$B_{2}$ & 0 & 1 & 0 & 0 & 0 & -9 & -9 & 0 & 0 & 0 & 0 &
   54 & 54 & 0 & 0 & 0 & 54 & 0 & 0 & 0 & 0 & 0 & 0 & 0 & 0 & 0 & 0 & 0 \\
$B_{3}$ & 0 & 0 & 1 & 0 & 0 & 0 & 0 & -9 & -9 & 0 & 0 & 0
   & 0 & 54 & 54 & 0 & 0 & 54 & 0 & 0 & 0 & 0 & 0 & 0 & 0 & 0 & 0 & 0  \\
   \hline
$B_{4}$ & 0 & 0 & 0 & 9 & 0 & 0 & 0 & 0 & 0 & -81 & 0 & 0
   & 0 & 0 & 0 & -54 & 0 & 0 & -$\frac{27}{32}$ & 0 & 0
   & $\frac{75}{2}$ & 0 & 0 & 0 & 0 & 0 & 0 \\
$B_{5}$ & 0 & 0 & 0 & 0 & 9 & 0 & 0 & 0 & 0 & 0 & -81 & 0
   & 0 & 0 & 0 & -54 & 0 & 0 & 0 & 0 & -$\frac{27}{32}$
   & 0 & $\frac{75}{2}$ & 0 & 0 & 0 & 0 & 0 \\
$B_{6}$ & 0 & 0 & 0 & 0 & 0 & 9 & 0 & 0 & 0 & 0 & 0 & -81
   & 0 & 0 & 0 & 0 & -54 & 0 & 0 & -$\frac{27}{32}$ & 0
   & 0 & 0 & $\frac{75}{2}$ & 0 & 0 & 0 & 0 \\
$B_{7}$ & 0 & 0 & 0 & 0 & 0 & 0 & 9 & 0 & 0 & 0 & 0 & 0
   & -81 & 0 & 0 & 0 & -54 & 0& -$\frac{27}{32}$ & 0 & 0
   & 0 & 0 & 0 & $\frac{75}{2}$ & 0 & 0 & 0 \\
$B_{8}$ & 0 & 0 & 0 & 0 & 0 & 0 & 0 & 9 & 0 & 0 & 0 & 0
   & 0 & -81 & 0 & 0 & 0 & -54 & 0 & 0 & -$\frac{27}{32}$
   & 0 & 0 & 0 & 0 & $\frac{75}{2}$ & 0 & 0 \\
$B_{9}$ & 0 & 0 & 0 & 0 & 0 & 0 & 0 & 0 & 9 & 0 & 0 & 0
   & 0 & 0 & -81 & 0 & 0 & -54 & 0 & -$\frac{27}{32}$ & 0
   & 0 & 0 & 0 & 0 & 0 & $\frac{75}{2}$ & 0 \\
   \hline
$B_{10}$ & 0 & 0 & 0 & 0 & 0 & 0 & 0 & 0 & 0 & 27 & 0 & 0
   & 0 & 0 & 0 & 0 & 0 & 0 & -$\frac{189}{32}$ & 0 & 0
   & $\frac{31}{2}$ & 0 & 0 & 49 & 0 & 0 & 0 \\
$B_{11}$ & 0 & 0 & 0 & 0 & 0 & 0 & 0 & 0 & 0 & 0 & 27 & 0
   & 0 & 0 & 0 & 0 & 0 & 0 & 0 & 0 & -$\frac{189}{32}$ & 0
   & $\frac{31}{2}$ & 0 & 0 & 49 & 0 & 0 \\
$B_{12}$ & 0 & 0 & 0 & 0 & 0 & 0 & 0 & 0 & 0 & 0 & 0 & 27
   & 0 & 0 & 0 & 0 & 0 & 0 & 0 & -$\frac{189}{32}$ & 0 & 0
   & 0 & $\frac{31}{2}$ & 0 & 0 & 49 & 0 \\
$B_{13}$ & 0 & 0 & 0 & 0 & 0 & 0 & 0 & 0 & 0 & 0 & 0 & 0
   & 27 & 0 & 0 & 0 & 0 & 0 & -$\frac{189}{32}$ & 0 & 0
   & 49 & 0 & 0 & $\frac{31}{2}$ & 0 & 0 & 0 \\
$B_{14}$ & 0 & 0 & 0 & 0 & 0 & 0 & 0 & 0 & 0 & 0 & 0 & 0
   & 0 & 27 & 0 & 0 & 0 & 0 & 0 & 0 & -$\frac{189}{32}$
   & 0 & 49 & 0 & 0 & $\frac{31}{2}$ & 0 & 0 \\
$B_{15}$ & 0 & 0 & 0 & 0 & 0 & 0 & 0 & 0 & 0 & 0 & 0 & 0
   & 0 & 0 & 27 & 0 & 0 & 0 & 0 & -$\frac{189}{32}$ & 0
   & 0 & 0 & 49 & 0 & 0 & $\frac{31}{2}$ & 0 \\
   \hline
$B_{16}$ & 0 & 0 & 0 & 0 & 0 & 0 & 0 & 0 & 0 & 0 & 0 & 0
   & 0 & 0 & 0 & 54 & 0 & 0 & $\frac{9}{8}$ & 0
   & $\frac{9}{8}$ & -63 & -63 & 0 & 0 & 0 & 0 & 0 \\
$B_{17}$ & 0 & 0 & 0 & 0 & 0 & 0 & 0 & 0 & 0 & 0 & 0 & 0
   & 0 & 0 & 0 & 0 & 54 & 0 & $\frac{9}{8}$ & $\frac{9}{8}$
   & 0 & 0 & 0 & -63 & -63 & 0 & 0 & 0 \\
$B_{18}$ & 0 & 0 & 0 & 0 & 0 & 0 & 0 & 0 & 0 & 0 & 0 & 0
   & 0 & 0 & 0 & 0 & 0 & 54 & 0 & $\frac{9}{8}$
   & $\frac{9}{8}$ & 0 & 0 & 0 & 0 & -63 & -63 & 0 \\
   \hline
$B_{19}$ & 0 & 0 & 0 & 0 & 0 & 0 & 0 & 0 & 0 & 0 & 0 & 0
   & 0 & 0 & 0 & 0 & 0 & 0 & $\frac{45}{4}$ & 0 & 0
   & -120 & 0 & 0 & -120 & 0 & 0 & 0 \\
$B_{20}$ & 0 & 0 & 0 & 0 & 0 & 0 & 0 & 0 & 0 & 0 & 0 & 0
   & 0 & 0 & 0 & 0 & 0 & 0 & 0 & $\frac{45}{4}$ & 0 & 0
   & 0 & -120 & 0 & 0 & -120 & 0 \\
$B_{21}$ & 0 & 0 & 0 & 0 & 0 & 0 & 0 & 0 & 0 & 0 & 0 & 0
   & 0 & 0 & 0 & 0 & 0 & 0 & 0 & 0 & $\frac{45}{4}$ & 0
   & -120 & 0 & 0 & -120 & 0 & 0 \\
   \hline
$B_{22}$ & 0 & 0 & 0 & 0 & 0 & 0 & 0 & 0 & 0 & 0 & 0 & 0
   & 0 & 0 & 0 & 0 & 0 & 0 & 0 & 0 & 0 & 54 & 27 & 0 & 0
   & 0 & 0 & $\frac{1}{12}$ \\
$B_{23}$ & 0 & 0 & 0 & 0 & 0 & 0 & 0 & 0 & 0 & 0 & 0 & 0
   & 0 & 0 & 0 & 0 & 0 & 0 & 0 & 0 & 0 & 27 & 54 & 0 & 0
   & 0 & 0 & $\frac{1}{12}$ \\
$B_{24}$ & 0 & 0 & 0 & 0 & 0 & 0 & 0 & 0 & 0 & 0 & 0 & 0
   & 0 & 0 & 0 & 0 & 0 & 0 & 0 & 0 & 0 & 0 & 0 & 54 & 27
   & 0 & 0 & $\frac{1}{12}$ \\
$B_{25}$ & 0 & 0 & 0 & 0 & 0 & 0 & 0 & 0 & 0 & 0 & 0 & 0
   & 0 & 0 & 0 & 0 & 0 & 0 & 0 & 0 & 0 & 0 & 0 & 27 & 54
   & 0 & 0 & $\frac{1}{12}$ \\
$B_{26}$ & 0 & 0 & 0 & 0 & 0 & 0 & 0 & 0 & 0 & 0 & 0 & 0
   & 0 & 0 & 0 & 0 & 0 & 0 & 0 & 0 & 0 & 0 & 0 & 0 & 0
   & 54 & 27& $\frac{1}{12}$ \\
$B_{27}$ & 0 & 0 & 0 & 0 & 0 & 0 & 0 & 0 & 0 & 0 & 0 & 0
   & 0 & 0 & 0 & 0 & 0 & 0 & 0 & 0 & 0 & 0 & 0 & 0 & 0
   & 27 & 54 & $\frac{1}{12}$ \\
   \hline
$B_{28}$ & 0 & 0 & 0 & 0 & 0 & 0 & 0 & 0 & 0 & 0 & 0 & 0
   & 0 & 0 & 0 & 0 & 0 & 0 & 0 & 0 & 0 & 0 & 0 & 0 & 0
   & 0 & 0 & $\frac{1}{2}$ \\
   \hline\hline
$\Btilde_{22}$ & 0 & 0 & 0 & 0 & 0 & 0 & 0 & 0 & 0 & 0 & 0
   & 0 & 0 & 0 & 0 & 0 & 0 & 0 & 0 & 0 & 0 & 81 & 0 & 0
   & 0 & 0 & 0 & $\frac{1}{4}$ \\
$\Btilde_{23}$ & 0 & 0 & 0 & 0 & 0 & 0 & 0 & 0 & 0 & 0 & 0
   & 0 & 0 & 0 & 0 & 0 & 0 & 0 & 0 & 0 & 0 & 0 & 81 & 0
   & 0 & 0 & 0 & $\frac{1}{4}$ \\
$\Btilde_{24}$ & 0 & 0 & 0 & 0 & 0 & 0 & 0 & 0 & 0 & 0 & 0
   & 0 & 0 & 0 & 0 & 0 & 0 & 0 & 0 & 0 & 0 & 0 & 0 & 81
   & 0 & 0 & 0 & $\frac{1}{4}$ \\
$\Btilde_{25}$ & 0 & 0 & 0 & 0 & 0 & 0 & 0 & 0 & 0 & 0 & 0
   & 0 & 0 & 0 & 0 & 0 & 0 & 0 & 0 & 0 & 0 & 0 & 0 & 0
   & 81 & 0 & 0 & $\frac{1}{4}$ \\
$\Btilde_{26}$ & 0 & 0 & 0 & 0 & 0 & 0 & 0 & 0 & 0 & 0 & 0
   & 0 & 0 & 0 & 0 & 0 & 0 & 0 & 0 & 0 & 0 & 0 & 0 & 0 & 0
   & 81 & 0 & $\frac{1}{4}$ \\
$\Btilde_{27}$ & 0 & 0 & 0 & 0 & 0 & 0 & 0 & 0 & 0 & 0 & 0
   & 0 & 0 & 0 & 0 & 0 & 0 & 0 & 0 & 0 & 0 & 0 & 0 & 0 & 0
   & 0 & 81 & $\frac{1}{4}$ \\
\hline
 $\Btilde_{28}$ & 0 & 0 & 0 & 0 & 0 & 0 & 0 & 0 & 0 & 0 & 0
   & 0 & 0 & 0 & 0 & 0 & 0 & 0 & 0 & 0 & 0 & 0 & 0 & 0 & 0
   & 0 & 0 & -$\frac{1}{2}$ \\
\hline
\end{tabular}}
\caption{Values of $\rho_j(B_i)$ and $\rho_j(\Btilde_i)$ for $i,j=1,\ldots,28$, where $\rho_j$ is defined in \eqref{eq:rho-1}, \eqref{eq:rho-2}, and \eqref{eq:rho-3}. Note that $\Btilde_i=B_i$ for $i=1,\ldots,21$.}
\label{tab:hermiteB}
\end{table}
\end{landscape}

\begin{table}[t!]
\centering{\scriptsize
\setlength{\tabcolsep}{5pt}
\begin{tabular}{|c|cccccc|}
\hline
& $\rho_{29}$ & $\rho_{30}$ & $\rho_{31}$ & $\rho_{32}$ & $\rho_{33}$ & $\rho_{34}$
\\
\hline
$\Btilde_{1}$ & 0 & 0 & 0 & 0 & 0 & 0 \\
$\Btilde_{2}$ & 0 & 0 & 0 & 0 & 0 & 0 \\
$\Btilde_{3}$ & 0 & 0 & 0 & 0 & 0 & 0 \\
\hline
$\Btilde_{4}$ & $\frac{27}{2}$ & 54 & 27 & 0 & 0 & 0 \\
$\Btilde_{5}$ & 0 & 0 & 0 & 0 & 0 & 0 \\
$\Btilde_{6}$ & 0 & 0 & 0 & 0 & 0 & 0 \\
$\Btilde_{7}$ & 0 & 0 & 0 & $\frac{27}{2}$ & $\frac{27}{2}$ & -$\frac{27}{2}$ \\
$\Btilde_{8}$ & 0 & 0 & 0 & 0 & 0 & 0 \\
$\Btilde_{9}$ & 0 & 0 & 0 & 0 & 0 & 0 \\
\hline
$\Btilde_{10}$ & -$\frac{45}{2}$ & 0 & -27 & 9 & 81 & 27 \\
$\Btilde_{11}$ & 0 & 0 & 0 & 0 & 0 & 0 \\
$\Btilde_{12}$ & 0 & 0 & 0 & 0 & 0 & 0 \\
$\Btilde_{13}$ & 9 & 36 & -18 & -$\frac{45}{2}$ & $\frac{63}{2}$ & $\frac{9}{2}$ \\
$\Btilde_{14}$ & 0 & 0 & 0 & 0 & 0 & 0 \\
$\Btilde_{15}$ & 0 & 0 & 0 & 0 & 0 & 0 \\
\hline
$\Btilde_{16}$ & 0 & -81 & -27 & 0 & 0 & 0 \\
$\Btilde_{17}$ & 0 & 0 & 0 & 0 & -27 & 27 \\
$\Btilde_{18}$ & 0 & 0 & 0 & 0 & 0 & 0 \\
\hline
$\Btilde_{19}$ & 0 & -90 & 45 & 0 & -180 & -45 \\
$\Btilde_{20}$ & 0 & 0 & 0 & 0 & 0 & 0 \\
$\Btilde_{21}$ & 0 & 0 & 0 & 0 & 0 & 0 \\
\hline
$\Btilde_{22}$ & 0 & 81 & 0 & 0 & 0 & 0 \\
$\Btilde_{23}$ & 0 & 0 & 0 & 0 & 0 & 0 \\
$\Btilde_{24}$ & 0 & 0 & 0 & 0 & 0 & 0 \\
$\Btilde_{25}$ & 0 & 0 & 0 & 0 & 81 & 0 \\
$\Btilde_{26}$ & 0 & 0 & 0 & 0 & 0 & 0 \\
$\Btilde_{27}$ & 0 & 0 & 0 & 0 & 0 & 0 \\
\hline
$\Btilde_{28}$ & 0 & 0 & 0 & 0 & 0 & 0 \\
\hline
\end{tabular}}
\caption{Values of $\rho_j(\Btilde_i)$ for $i=1,\ldots,28$ and $j=29,\ldots,34$, where $\rho_j$ is defined in \eqref{eq:rho-4}.}
\label{tab:hermiteBtilde-extra}
\end{table}

\end{document}